\definecolor{citegreen}{rgb}{0.2,0.2,0.6}
\newcommand\Kre{{Krej{\v{c}}i{\v{r}}{\'\i}k}}
\global\let\figforTeXisloaded=\relax\fi
\def\ctr@ln@m#1{\ifx#1\undefined\else%
    \immediate\write16{*** Fig4TeX WARNING : \string#1 already defined.}\fi}
\def\ctr@ld@f#1#2{\ctr@ln@m#2#1#2}
\def\ctr@ln@w#1#2{\ctr@ln@m#2\csname#1\endcsname#2}
{\catcode`\/=0 \catcode`/\=12 /ctr@ld@f/gdef/BS@{\}}
\ctr@ld@f\def\ctr@lcsn@m#1{\expandafter\ifx\csname#1\endcsname\relax\else%
    \immediate\write16{*** Fig4TeX WARNING : \BS@\expandafter\string#1\space already defined.}\fi}
\ctr@ld@f\edef\colonc@tcode{\the\catcode`\:}
\ctr@ld@f\edef\semicolonc@tcode{\the\catcode`\;}
\ctr@ld@f\def\t@stc@tcodech@nge{{\let\c@tcodech@nged=\z@%
    \ifnum\colonc@tcode=\the\catcode`\:\else\let\c@tcodech@nged=\@ne\fi%
    \ifnum\semicolonc@tcode=\the\catcode`\;\else\let\c@tcodech@nged=\@ne\fi%
    \ifx\c@tcodech@nged\@ne%
    \immediate\write16{}
    \immediate\write16{!!!=============================================================!!!}
    \immediate\write16{ Fig4TeX WARNING:}
    \immediate\write16{ The category code of some characters has been changed, which will}
    \immediate\write16{ result in an error (message "Runaway argument?").}
    \immediate\write16{ This probably comes from another package that changed the category}
    \immediate\write16{ code after Fig4TeX was loaded. If that proves to be exact, the}
    \immediate\write16{ solution is to exchange the loading commands on top of your file}
    \immediate\write16{ so that Fig4TeX is loaded last. For example, in LaTeX, we should}
    \immediate\write16{ say :}
    \immediate\write16{\BS@ usepackage[french]{babel}}
    \immediate\write16{\BS@ usepackage{fig4tex}}
    \immediate\write16{!!!=============================================================!!!}
    \immediate\write16{}
    \fi}}
\ctr@ld@f\def\FigforTeX{F\kern-.05em i\kern-.05em g\kern-.1em\raise-.14em\hbox{4}\kern-.19em\TeX}
\ctr@ld@f\def\W@rnmesoldA#1{\W@rnmesold}
\ctr@ld@f\def\W@rnmesoldAB#1(#2){\W@rnmesold}
\ctr@ld@f\def\W@rnmesold{%
    \immediate\write16{}
    \immediate\write16{!!!=============================================================!!!}
    \immediate\write16{ Fig4TeX WARNING:}
    \immediate\write16{ The file to be compiled is not compatible with the current version}
    \immediate\write16{ of Fig4TeX. To fix that, upgrade the source file (mainly change \BS@ ps*}
    \immediate\write16{ macros by \BS@ fig* macros), or use fig4tex184.tex instead (\BS@ input fig4tex184}
    \immediate\write16{ or \BS@ usepackage{fig4tex184}).}
    \immediate\write16{!!!=============================================================!!!}
    \immediate\write16{}}
\ctr@ln@m\psbeginfig\let\psbeginfig\W@rnmesoldA
\ctr@ln@m\psset\let\psset\W@rnmesoldAB
\ctr@ln@m\pssetdefault\let\pssetdefault\W@rnmesoldAB
\ctr@ln@m\pssetupdate\let\pssetupdate\W@rnmesoldA
\ctr@ln@w{newdimen}\epsil@n\epsil@n=0.00005pt
\ctr@ln@w{newdimen}\Cepsil@n\Cepsil@n=0.005pt
\ctr@ln@w{newdimen}\dcq@\dcq@=254pt
\ctr@ln@w{newdimen}\PI@\PI@=3.141592pt
\ctr@ln@w{newdimen}\DemiPI@deg\DemiPI@deg=90pt
\ctr@ln@w{newdimen}\PI@deg\PI@deg=180pt
\ctr@ln@w{newdimen}\DePI@deg\DePI@deg=360pt
\ctr@ld@f\chardef\t@n=10
\ctr@ld@f\chardef\c@nt=100
\ctr@ld@f\chardef\@lxxiv=74
\ctr@ld@f\chardef\@xci=91
\ctr@ld@f\mathchardef\@nMnCQn=9949
\ctr@ld@f\chardef\@vi=6
\ctr@ld@f\chardef\@xxx=30
\ctr@ld@f\chardef\@lvi=56
\ctr@ld@f\chardef\@@lxxi=71
\ctr@ld@f\chardef\@lxxxv=85
\ctr@ld@f\mathchardef\@@mmmmlxviii=4068
\ctr@ld@f\mathchardef\@ccclx=360
\ctr@ld@f\mathchardef\@dccxx=720
\ctr@ln@w{newcount}\p@rtent \ctr@ln@w{newcount}\f@ctech \ctr@ln@w{newcount}\result@tent
\ctr@ln@w{newdimen}\v@lmin \ctr@ln@w{newdimen}\v@lmax \ctr@ln@w{newdimen}\v@leur
\ctr@ln@w{newdimen}\result@t\ctr@ln@w{newdimen}\result@@t
\ctr@ln@w{newdimen}\mili@u \ctr@ln@w{newdimen}\c@rre \ctr@ln@w{newdimen}\delt@
\ctr@ld@f\def\degT@rd{0.017453 }  
\ctr@ld@f\def\rdT@deg{57.295779 } 
\ctr@ln@m\v@leurseule
{\catcode`p=12 \catcode`t=12 \gdef\v@leurseule#1pt{#1}}
\ctr@ld@f\def\repdecn@mb#1{\expandafter\v@leurseule\the#1\space}
\ctr@ld@f\def\arct@n#1(#2,#3){{\v@lmin=#2\v@lmax=#3%
    \maxim@m{\mili@u}{-\v@lmin}{\v@lmin}\maxim@m{\c@rre}{-\v@lmax}{\v@lmax}%
    \delt@=\mili@u\m@ech\mili@u%
    \ifdim\c@rre>\@nMnCQn\mili@u\divide\v@lmax\tw@\c@lATAN\v@leur(\z@,\v@lmax)
    \else%
    \maxim@m{\mili@u}{-\v@lmin}{\v@lmin}\maxim@m{\c@rre}{-\v@lmax}{\v@lmax}%
    \m@ech\c@rre%
    \ifdim\mili@u>\@nMnCQn\c@rre\divide\v@lmin\tw@
    \maxim@m{\mili@u}{-\v@lmin}{\v@lmin}\c@lATAN\v@leur(\mili@u,\z@)%
    \else\c@lATAN\v@leur(\delt@,\v@lmax)\fi\fi%
    \ifdim\v@lmin<\z@\v@leur=-\v@leur\ifdim\v@lmax<\z@\advance\v@leur-\PI@%
    \else\advance\v@leur\PI@\fi\fi%
    \global\result@t=\v@leur}#1=\result@t}
\ctr@ld@f\def\m@ech#1{\ifdim#1>1.646pt\divide\mili@u\t@n\divide\c@rre\t@n\m@ech#1\fi}
\ctr@ld@f\def\c@lATAN#1(#2,#3){{\v@lmin=#2\v@lmax=#3\v@leur=\z@\delt@=\tw@ pt%
    \un@iter{0.785398}{\v@lmax<}%
    \un@iter{0.463648}{\v@lmax<}%
    \un@iter{0.244979}{\v@lmax<}%
    \un@iter{0.124355}{\v@lmax<}%
    \un@iter{0.062419}{\v@lmax<}%
    \un@iter{0.031240}{\v@lmax<}%
    \un@iter{0.015624}{\v@lmax<}%
    \un@iter{0.007812}{\v@lmax<}%
    \un@iter{0.003906}{\v@lmax<}%
    \un@iter{0.001953}{\v@lmax<}%
    \un@iter{0.000976}{\v@lmax<}%
    \un@iter{0.000488}{\v@lmax<}%
    \un@iter{0.000244}{\v@lmax<}%
    \un@iter{0.000122}{\v@lmax<}%
    \un@iter{0.000061}{\v@lmax<}%
    \un@iter{0.000030}{\v@lmax<}%
    \un@iter{0.000015}{\v@lmax<}%
    \global\result@t=\v@leur}#1=\result@t}
\ctr@ld@f\def\un@iter#1#2{%
    \divide\delt@\tw@\edef\dpmn@{\repdecn@mb{\delt@}}%
    \mili@u=\v@lmin%
    \ifdim#2\z@%
      \advance\v@lmin-\dpmn@\v@lmax\advance\v@lmax\dpmn@\mili@u%
      \advance\v@leur-#1pt%
    \else%
      \advance\v@lmin\dpmn@\v@lmax\advance\v@lmax-\dpmn@\mili@u%
      \advance\v@leur#1pt%
    \fi}
\ctr@ld@f\def\c@ssin#1#2#3{\expandafter\ifx\csname COS@\number#3\endcsname\relax\c@lCS{#3pt}%
    \expandafter\xdef\csname COS@\number#3\endcsname{\repdecn@mb\result@t}%
    \expandafter\xdef\csname SIN@\number#3\endcsname{\repdecn@mb\result@@t}\fi%
    \edef#1{\csname COS@\number#3\endcsname}\edef#2{\csname SIN@\number#3\endcsname}}
\ctr@ld@f\def\c@lCS#1{{\mili@u=#1\p@rtent=\@ne%
    \relax\ifdim\mili@u<\z@\red@ng<-\else\red@ng>+\fi\f@ctech=\p@rtent%
    \relax\ifdim\mili@u<\z@\mili@u=-\mili@u\f@ctech=-\f@ctech\fi\c@@lCS}}
\ctr@ld@f\def\c@@lCS{\v@lmin=\mili@u\c@rre=-\mili@u\advance\c@rre\DemiPI@deg\v@lmax=\c@rre%
    \mili@u\@@lxxi\mili@u\divide\mili@u\@@mmmmlxviii%
    \edef\v@larg{\repdecn@mb{\mili@u}}\mili@u=-\v@larg\mili@u%
    \edef\v@lmxde{\repdecn@mb{\mili@u}}%
    \c@rre\@@lxxi\c@rre\divide\c@rre\@@mmmmlxviii%
    \edef\v@largC{\repdecn@mb{\c@rre}}\c@rre=-\v@largC\c@rre%
    \edef\v@lmxdeC{\repdecn@mb{\c@rre}}%
    \fctc@s\mili@u\v@lmin\global\result@t\p@rtent\v@leur%
    \let\t@mp=\v@larg\let\v@larg=\v@largC\let\v@largC=\t@mp%
    \let\t@mp=\v@lmxde\let\v@lmxde=\v@lmxdeC\let\v@lmxdeC=\t@mp%
    \fctc@s\c@rre\v@lmax\global\result@@t\f@ctech\v@leur}
\ctr@ld@f\def\fctc@s#1#2{\v@leur=#1\relax\ifdim#2<\@lxxxv\p@\cosser@h\else\sinser@t\fi}
\ctr@ld@f\def\cosser@h{\advance\v@leur\@lvi\p@\divide\v@leur\@lvi%
    \v@leur=\v@lmxde\v@leur\advance\v@leur\@xxx\p@%
    \v@leur=\v@lmxde\v@leur\advance\v@leur\@ccclx\p@%
    \v@leur=\v@lmxde\v@leur\advance\v@leur\@dccxx\p@\divide\v@leur\@dccxx}
\ctr@ld@f\def\sinser@t{\v@leur=\v@lmxdeC\p@\advance\v@leur\@vi\p@%
    \v@leur=\v@largC\v@leur\divide\v@leur\@vi}
\ctr@ld@f\def\red@ng#1#2{\relax\ifdim\mili@u#1#2\DemiPI@deg\advance\mili@u#2-\PI@deg%
    \p@rtent=-\p@rtent\red@ng#1#2\fi}
\ctr@ld@f\def\pr@c@lCS#1#2#3{\ctr@lcsn@m{COS@\number#3 }%
    \expandafter\xdef\csname COS@\number#3\endcsname{#1}%
    \expandafter\xdef\csname SIN@\number#3\endcsname{#2}}
\pr@c@lCS{1}{0}{0}
\pr@c@lCS{0.7071}{0.7071}{45}\pr@c@lCS{0.7071}{-0.7071}{-45}
\pr@c@lCS{0}{1}{90}          \pr@c@lCS{0}{-1}{-90}
\pr@c@lCS{-1}{0}{180}        \pr@c@lCS{-1}{0}{-180}
\pr@c@lCS{0}{-1}{270}        \pr@c@lCS{0}{1}{-270}
\ctr@ld@f\def\invers@#1#2{{\v@leur=#2\maxim@m{\v@lmax}{-\v@leur}{\v@leur}%
    \f@ctech=\@ne\m@inv@rs%
    \multiply\v@leur\f@ctech\edef\v@lv@leur{\repdecn@mb{\v@leur}}%
    \p@rtentiere{\p@rtent}{\v@leur}\v@lmin=\p@\divide\v@lmin\p@rtent%
    \inv@rs@\multiply\v@lmax\f@ctech\global\result@t=\v@lmax}#1=\result@t}
\ctr@ld@f\def\m@inv@rs{\ifdim\v@lmax<\p@\multiply\v@lmax\t@n\multiply\f@ctech\t@n\m@inv@rs\fi}
\ctr@ld@f\def\inv@rs@{\v@lmax=-\v@lmin\v@lmax=\v@lv@leur\v@lmax%
    \advance\v@lmax\tw@ pt\v@lmax=\repdecn@mb{\v@lmin}\v@lmax%
    \delt@=\v@lmax\advance\delt@-\v@lmin\ifdim\delt@<\z@\delt@=-\delt@\fi%
    \ifdim\delt@>\epsil@n\v@lmin=\v@lmax\inv@rs@\fi}
\ctr@ld@f\def\minim@m#1#2#3{\relax\ifdim#2<#3#1=#2\else#1=#3\fi}
\ctr@ld@f\def\maxim@m#1#2#3{\relax\ifdim#2>#3#1=#2\else#1=#3\fi}
\ctr@ld@f\def\p@rtentiere#1#2{#1=#2\divide#1by65536 }
\ctr@ld@f\def\r@undint#1#2{{\v@leur=#2\divide\v@leur\t@n\p@rtentiere{\p@rtent}{\v@leur}%
    \v@leur=\p@rtent pt\global\result@t=\t@n\v@leur}#1=\result@t}
\ctr@ld@f\def\sqrt@#1#2{{\v@leur=#2%
    \minim@m{\v@lmin}{\p@}{\v@leur}\maxim@m{\v@lmax}{\p@}{\v@leur}%
    \f@ctech=\@ne\m@sqrt@\sqrt@@%
    \mili@u=\v@lmin\advance\mili@u\v@lmax\divide\mili@u\tw@\multiply\mili@u\f@ctech%
    \global\result@t=\mili@u}#1=\result@t}
\ctr@ld@f\def\m@sqrt@{\ifdim\v@leur>\dcq@\divide\v@leur\c@nt\v@lmax=\v@leur%
    \multiply\f@ctech\t@n\m@sqrt@\fi}
\ctr@ld@f\def\sqrt@@{\mili@u=\v@lmin\advance\mili@u\v@lmax\divide\mili@u\tw@%
    \c@rre=\repdecn@mb{\mili@u}\mili@u%
    \ifdim\c@rre<\v@leur\v@lmin=\mili@u\else\v@lmax=\mili@u\fi%
    \delt@=\v@lmax\advance\delt@-\v@lmin\ifdim\delt@>\epsil@n\sqrt@@\fi}
\ctr@ld@f\def\extrairelepremi@r#1\de#2{\expandafter\lepremi@r#2@#1#2}
\ctr@ld@f\def\lepremi@r#1,#2@#3#4{\def#3{#1}\def#4{#2}\ignorespaces}
\ctr@ld@f\def\@cfor#1:=#2\do#3{%
  \edef\@fortemp{#2}%
  \ifx\@fortemp\empty\else\@cforloop#2,\@nil,\@nil\@@#1{#3}\fi}
\ctr@ln@m\@nextwhile
\ctr@ld@f\def\@cforloop#1,#2\@@#3#4{%
  \def#3{#1}%
  \ifx#3\Fig@nnil\let\@nextwhile=\Fig@fornoop\else#4\relax\let\@nextwhile=\@cforloop\fi%
  \@nextwhile#2\@@#3{#4}}

\ctr@ld@f\def\@ecfor#1:=#2\do#3{%
  \def\@@cfor{\@cfor#1:=}%
  \edef\@@@cfor{#2}%
  \expandafter\@@cfor\@@@cfor\do{#3}}
\ctr@ld@f\def\Fig@nnil{\@nil}
\ctr@ld@f\def\Fig@fornoop#1\@@#2#3{}
\ctr@ln@m\list@@rg
\ctr@ld@f\def\trtlis@rg#1#2{\def\list@@rg{#1}%
    \@ecfor\p@rv@l:=\list@@rg\do{\expandafter#2\p@rv@l|}}
\ctr@ld@f\def\trtlis@rgtok#1{\let@xte={}\let\n@xt\addt@t@xt\addt@t@xt #1}
\ctr@ln@m\M@cro
\ctr@ln@m\n@xt
\ctr@ld@f\def\addt@t@xt#1{\if#1|\let\n@xt\relax\else%
    \if#1,\expandafter\M@cro\the\let@xte|\let@xte={}%
    \else\let@xte=\expandafter{\the\let@xte #1}\fi\fi\n@xt}
\ctr@ln@w{newbox}\b@xvisu
\ctr@ln@w{newtoks}\let@xte
\ctr@ln@w{newif}\ifitis@K
\ctr@ln@w{newcount}\s@mme
\ctr@ln@w{newcount}\l@mbd@un \ctr@ln@w{newcount}\l@mbd@de
\ctr@ln@w{newcount}\superc@ntr@l\superc@ntr@l=\@ne        
\ctr@ln@w{newcount}\typec@ntr@l\typec@ntr@l=\superc@ntr@l 
\ctr@ln@w{newdimen}\v@lX  \ctr@ln@w{newdimen}\v@lY  \ctr@ln@w{newdimen}\v@lZ
\ctr@ln@w{newdimen}\v@lXa \ctr@ln@w{newdimen}\v@lYa \ctr@ln@w{newdimen}\v@lZa
\ctr@ln@w{newdimen}\unit@\unit@=\p@ 
\ctr@ld@f\def\unit@util{pt}
\ctr@ld@f\def\ptT@ptps{0.996264 }
\ctr@ld@f\def\ptpsT@pt{1.00375 }
\ctr@ld@f\def\ptT@unit@{1} 
\ctr@ld@f\def\setunit@#1{\def\unit@util{#1}\setunit@@#1:\invers@{\result@t}{\unit@}%
    \edef\ptT@unit@{\repdecn@mb\result@t}}
\ctr@ld@f\def\setunit@@#1#2:{\ifcat#1a\unit@=\@ne#1#2\else\unit@=#1#2\fi}
\ctr@ld@f\def\d@fm@cdim#1#2{{\v@leur=#2\v@leur=\ptT@unit@\v@leur\xdef#1{\repdecn@mb\v@leur}}}
\ctr@ln@w{newif}\ifBdingB@x\BdingB@xtrue
\ctr@ln@w{newdimen}\c@@rdXmin \ctr@ln@w{newdimen}\c@@rdYmin  
\ctr@ln@w{newdimen}\c@@rdXmax \ctr@ln@w{newdimen}\c@@rdYmax
\ctr@ld@f\def\b@undb@x#1#2{\ifBdingB@x%
    \relax\ifdim#1<\c@@rdXmin\global\c@@rdXmin=#1\fi%
    \relax\ifdim#2<\c@@rdYmin\global\c@@rdYmin=#2\fi%
    \relax\ifdim#1>\c@@rdXmax\global\c@@rdXmax=#1\fi%
    \relax\ifdim#2>\c@@rdYmax\global\c@@rdYmax=#2\fi\fi}
\ctr@ld@f\def\b@undb@xP#1{{\Figg@tXY{#1}\b@undb@x{\v@lX}{\v@lY}}}
\ctr@ld@f\def\ellBB@x#1;#2,#3(#4,#5,#6){{\s@uvc@ntr@l\et@tellBB@x%
    \setc@ntr@l{2}\figptell-2::#1;#2,#3(#4,#6)\b@undb@xP{-2}%
    \figptell-2::#1;#2,#3(#5,#6)\b@undb@xP{-2}%
    \c@ssin{\C@}{\S@}{#6}\v@lmin=\C@ pt\v@lmax=\S@ pt%
    \mili@u=#3\v@lmin\delt@=#2\v@lmax\arct@n\v@leur(\delt@,\mili@u)%
    \mili@u=-#3\v@lmax\delt@=#2\v@lmin\arct@n\c@rre(\delt@,\mili@u)%
    \v@leur=\rdT@deg\v@leur\advance\v@leur-\DePI@deg%
    \c@rre=\rdT@deg\c@rre\advance\c@rre-\DePI@deg%
    \v@lmin=#4pt\v@lmax=#5pt%
    \loop\ifdim\v@leur<\v@lmax\ifdim\v@leur>\v@lmin%
    \edef\@ngle{\repdecn@mb\v@leur}\figptell-2::#1;#2,#3(\@ngle,#6)%
    \b@undb@xP{-2}\fi\advance\v@leur\PI@deg\repeat%
    \loop\ifdim\c@rre<\v@lmax\ifdim\c@rre>\v@lmin%
    \edef\@ngle{\repdecn@mb\c@rre}\figptell-2::#1;#2,#3(\@ngle,#6)%
    \b@undb@xP{-2}\fi\advance\c@rre\PI@deg\repeat%
    \resetc@ntr@l\et@tellBB@x}\ignorespaces}
\ctr@ld@f\def\initb@undb@x{\c@@rdXmin=\maxdimen\c@@rdYmin=\maxdimen%
    \c@@rdXmax=-\maxdimen\c@@rdYmax=-\maxdimen}
\ctr@ld@f\def\c@ntr@lnum#1{%
    \relax\ifnum\typec@ntr@l=\@ne%
    \ifnum#1<\z@%
    \immediate\write16{*** Forbidden point number (#1). Abort.}\end\fi\fi%
    \set@bjc@de{#1}}
\ctr@ln@m\objc@de
\ctr@ld@f\def\set@bjc@de#1{\edef\objc@de{@BJ\ifnum#1<\z@ M\romannumeral-#1\else\romannumeral#1\fi}}
\s@mme=\m@ne\loop\ifnum\s@mme>-19
  \set@bjc@de{\s@mme}\ctr@lcsn@m\objc@de\ctr@lcsn@m{\objc@de T}
\advance\s@mme\m@ne\repeat
\s@mme=\@ne\loop\ifnum\s@mme<6
  \set@bjc@de{\s@mme}\ctr@lcsn@m\objc@de\ctr@lcsn@m{\objc@de T}
\advance\s@mme\@ne\repeat
\ctr@ld@f\def\setc@ntr@l#1{\ifnum\superc@ntr@l>#1\typec@ntr@l=\superc@ntr@l%
    \else\typec@ntr@l=#1\fi}
\ctr@ld@f\def\resetc@ntr@l#1{\global\superc@ntr@l=#1\setc@ntr@l{#1}}
\ctr@ld@f\def\s@uvc@ntr@l#1{\edef#1{\the\superc@ntr@l}}
\ctr@ln@m\c@lproscal
\ctr@ld@f\def\c@lproscalDD#1[#2,#3]{{\Figg@tXY{#2}%
    \edef\Xu@{\repdecn@mb{\v@lX}}\edef\Yu@{\repdecn@mb{\v@lY}}\Figg@tXY{#3}%
    \global\result@t=\Xu@\v@lX\global\advance\result@t\Yu@\v@lY}#1=\result@t}
\ctr@ld@f\def\c@lproscalTD#1[#2,#3]{{\Figg@tXY{#2}\edef\Xu@{\repdecn@mb{\v@lX}}%
    \edef\Yu@{\repdecn@mb{\v@lY}}\edef\Zu@{\repdecn@mb{\v@lZ}}%
    \Figg@tXY{#3}\global\result@t=\Xu@\v@lX\global\advance\result@t\Yu@\v@lY%
    \global\advance\result@t\Zu@\v@lZ}#1=\result@t}
\ctr@ld@f\def\c@lprovec#1{%
    \det@rmC\v@lZa(\v@lX,\v@lY,\v@lmin,\v@lmax)%
    \det@rmC\v@lXa(\v@lY,\v@lZ,\v@lmax,\v@leur)%
    \det@rmC\v@lYa(\v@lZ,\v@lX,\v@leur,\v@lmin)%
    \Figv@ctCreg#1(\v@lXa,\v@lYa,\v@lZa)}
\ctr@ld@f\def\det@rm#1[#2,#3]{{\Figg@tXY{#2}\Figg@tXYa{#3}%
    \delt@=\repdecn@mb{\v@lX}\v@lYa\advance\delt@-\repdecn@mb{\v@lY}\v@lXa%
    \global\result@t=\delt@}#1=\result@t}
\ctr@ld@f\def\det@rmC#1(#2,#3,#4,#5){{\global\result@t=\repdecn@mb{#2}#5%
    \global\advance\result@t-\repdecn@mb{#3}#4}#1=\result@t}
\ctr@ld@f\def\getredf@ctDD#1(#2,#3){{\maxim@m{\v@lXa}{-#2}{#2}\maxim@m{\v@lYa}{-#3}{#3}%
    \maxim@m{\v@lXa}{\v@lXa}{\v@lYa}
    \ifdim\v@lXa>\@xci pt\divide\v@lXa\@xci%
    \p@rtentiere{\p@rtent}{\v@lXa}\advance\p@rtent\@ne\else\p@rtent=\@ne\fi%
    \global\result@tent=\p@rtent}#1=\result@tent\ignorespaces}
\ctr@ld@f\def\getredf@ctTD#1(#2,#3,#4){{\maxim@m{\v@lXa}{-#2}{#2}\maxim@m{\v@lYa}{-#3}{#3}%
    \maxim@m{\v@lZa}{-#4}{#4}\maxim@m{\v@lXa}{\v@lXa}{\v@lYa}%
    \maxim@m{\v@lXa}{\v@lXa}{\v@lZa}
    \ifdim\v@lXa>\@lxxiv pt\divide\v@lXa\@lxxiv%
    \p@rtentiere{\p@rtent}{\v@lXa}\advance\p@rtent\@ne\else\p@rtent=\@ne\fi%
    \global\result@tent=\p@rtent}#1=\result@tent\ignorespaces}
\ctr@ln@m\getredf@ctB
\ctr@ld@f\def\getredf@ctBDD#1{\getredf@ctDD#1(\v@lX,\v@lY)}
\ctr@ld@f\def\getredf@ctBTD#1{\getredf@ctTD#1(\v@lX,\v@lY,\v@lZ)}
\ctr@ld@f\def\FigptintercircB@zDD#1:#2:#3,#4[#5,#6,#7,#8]{{\s@uvc@ntr@l\et@tfigptintercircB@zDD%
    \setc@ntr@l{2}\figvectPDD-1[#5,#8]\Figg@tXY{-1}\getredf@ctDD\f@ctech(\v@lX,\v@lY)%
    \mili@u=#4\unit@\divide\mili@u\f@ctech\c@rre=\repdecn@mb{\mili@u}\mili@u%
    \figptBezierDD-5::#3[#5,#6,#7,#8]%
    \v@lmin=#3\p@\v@lmax=\v@lmin\advance\v@lmax0.1\p@%
    \loop\edef\T@{\repdecn@mb{\v@lmax}}\figptBezierDD-2::\T@[#5,#6,#7,#8]%
    \figvectPDD-1[-5,-2]\n@rmeucCDD{\delt@}{-1}\ifdim\delt@<\c@rre\v@lmin=\v@lmax%
    \advance\v@lmax0.1\p@\repeat%
    \loop\mili@u=\v@lmin\advance\mili@u\v@lmax%
    \divide\mili@u\tw@\edef\T@{\repdecn@mb{\mili@u}}\figptBezierDD-2::\T@[#5,#6,#7,#8]%
    \figvectPDD-1[-5,-2]\n@rmeucCDD{\delt@}{-1}\ifdim\delt@>\c@rre\v@lmax=\mili@u%
    \else\v@lmin=\mili@u\fi\v@leur=\v@lmax\advance\v@leur-\v@lmin%
    \ifdim\v@leur>\epsil@n\repeat\figptcopyDD#1:#2/-2/%
    \resetc@ntr@l\et@tfigptintercircB@zDD}\ignorespaces}
\ctr@ln@m\figptinterlines
\ctr@ld@f\def\inters@cDD#1:#2[#3,#4;#5,#6]{{\s@uvc@ntr@l\et@tinters@cDD%
    \setc@ntr@l{2}\vecunit@{-1}{#4}\vecunit@{-2}{#6}%
    \Figg@tXY{-1}\setc@ntr@l{1}\Figg@tXYa{#3}%
    \edef\A@{\repdecn@mb{\v@lX}}\edef\B@{\repdecn@mb{\v@lY}}%
    \v@lmin=\B@\v@lXa\advance\v@lmin-\A@\v@lYa%
    \Figg@tXYa{#5}\setc@ntr@l{2}\Figg@tXY{-2}%
    \edef\C@{\repdecn@mb{\v@lX}}\edef\D@{\repdecn@mb{\v@lY}}%
    \v@lmax=\D@\v@lXa\advance\v@lmax-\C@\v@lYa%
    \delt@=\A@\v@lY\advance\delt@-\B@\v@lX%
    \invers@{\v@leur}{\delt@}\edef\v@ldelta{\repdecn@mb{\v@leur}}%
    \v@lXa=\A@\v@lmax\advance\v@lXa-\C@\v@lmin%
    \v@lYa=\B@\v@lmax\advance\v@lYa-\D@\v@lmin%
    \v@lXa=\v@ldelta\v@lXa\v@lYa=\v@ldelta\v@lYa%
    \setc@ntr@l{1}\Figp@intregDD#1:{#2}(\v@lXa,\v@lYa)%
    \resetc@ntr@l\et@tinters@cDD}\ignorespaces}
\ctr@ld@f\def\inters@cTD#1:#2[#3,#4;#5,#6]{{\s@uvc@ntr@l\et@tinters@cTD%
    \setc@ntr@l{2}\figvectNVTD-1[#4,#6]\figvectNVTD-2[#6,-1]\figvectPTD-1[#3,#5]%
    \r@pPSTD\v@leur[-2,-1,#4]\edef\v@lcoef{\repdecn@mb{\v@leur}}%
    \figpttraTD#1:{#2}=#3/\v@lcoef,#4/\resetc@ntr@l\et@tinters@cTD}\ignorespaces}
\ctr@ld@f\def\r@pPSTD#1[#2,#3,#4]{{\Figg@tXY{#2}\edef\Xu@{\repdecn@mb{\v@lX}}%
    \edef\Yu@{\repdecn@mb{\v@lY}}\edef\Zu@{\repdecn@mb{\v@lZ}}%
    \Figg@tXY{#3}\v@lmin=\Xu@\v@lX\advance\v@lmin\Yu@\v@lY\advance\v@lmin\Zu@\v@lZ%
    \Figg@tXY{#4}\v@lmax=\Xu@\v@lX\advance\v@lmax\Yu@\v@lY\advance\v@lmax\Zu@\v@lZ%
    \invers@{\v@leur}{\v@lmax}\global\result@t=\repdecn@mb{\v@leur}\v@lmin}%
    #1=\result@t}
\ctr@ln@m\n@rminf
\ctr@ld@f\def\n@rminfDD#1#2{{\Figg@tXY{#2}\maxim@m{\v@lX}{\v@lX}{-\v@lX}%
    \maxim@m{\v@lY}{\v@lY}{-\v@lY}\maxim@m{\global\result@t}{\v@lX}{\v@lY}}%
    #1=\result@t}
\ctr@ld@f\def\n@rminfTD#1#2{{\Figg@tXY{#2}\maxim@m{\v@lX}{\v@lX}{-\v@lX}%
    \maxim@m{\v@lY}{\v@lY}{-\v@lY}\maxim@m{\v@lZ}{\v@lZ}{-\v@lZ}%
    \maxim@m{\v@lX}{\v@lX}{\v@lY}\maxim@m{\global\result@t}{\v@lX}{\v@lZ}}%
    #1=\result@t}
\ctr@ln@m\n@rmeucC
\ctr@ld@f\def\n@rmeucCDD#1#2{\Figg@tXY{#2}\divide\v@lX\f@ctech\divide\v@lY\f@ctech%
    #1=\repdecn@mb{\v@lX}\v@lX\v@lX=\repdecn@mb{\v@lY}\v@lY\advance#1\v@lX}
\ctr@ld@f\def\n@rmeucCTD#1#2{\Figg@tXY{#2}%
    \divide\v@lX\f@ctech\divide\v@lY\f@ctech\divide\v@lZ\f@ctech%
    #1=\repdecn@mb{\v@lX}\v@lX\v@lX=\repdecn@mb{\v@lY}\v@lY\advance#1\v@lX%
    \v@lX=\repdecn@mb{\v@lZ}\v@lZ\advance#1\v@lX}
\ctr@ln@m\n@rmeucSV
\ctr@ld@f\def\n@rmeucSVDD#1#2{{\Figg@tXY{#2}%
    \v@lXa=\repdecn@mb{\v@lX}\v@lX\v@lYa=\repdecn@mb{\v@lY}\v@lY%
    \advance\v@lXa\v@lYa\sqrt@{\global\result@t}{\v@lXa}}#1=\result@t}
\ctr@ld@f\def\n@rmeucSVTD#1#2{{\Figg@tXY{#2}\v@lXa=\repdecn@mb{\v@lX}\v@lX%
    \v@lYa=\repdecn@mb{\v@lY}\v@lY\v@lZa=\repdecn@mb{\v@lZ}\v@lZ%
    \advance\v@lXa\v@lYa\advance\v@lXa\v@lZa\sqrt@{\global\result@t}{\v@lXa}}#1=\result@t}
\ctr@ln@m\n@rmeuc
\ctr@ld@f\def\n@rmeucDD#1#2{{\Figg@tXY{#2}\getredf@ctDD\f@ctech(\v@lX,\v@lY)%
    \divide\v@lX\f@ctech\divide\v@lY\f@ctech%
    \v@lXa=\repdecn@mb{\v@lX}\v@lX\v@lYa=\repdecn@mb{\v@lY}\v@lY%
    \advance\v@lXa\v@lYa\sqrt@{\global\result@t}{\v@lXa}%
    \global\multiply\result@t\f@ctech}#1=\result@t}
\ctr@ld@f\def\n@rmeucTD#1#2{{\Figg@tXY{#2}\getredf@ctTD\f@ctech(\v@lX,\v@lY,\v@lZ)%
    \divide\v@lX\f@ctech\divide\v@lY\f@ctech\divide\v@lZ\f@ctech%
    \v@lXa=\repdecn@mb{\v@lX}\v@lX%
    \v@lYa=\repdecn@mb{\v@lY}\v@lY\v@lZa=\repdecn@mb{\v@lZ}\v@lZ%
    \advance\v@lXa\v@lYa\advance\v@lXa\v@lZa\sqrt@{\global\result@t}{\v@lXa}%
    \global\multiply\result@t\f@ctech}#1=\result@t}
\ctr@ln@m\vecunit@
\ctr@ld@f\def\vecunit@DD#1#2{{\Figg@tXY{#2}\getredf@ctDD\f@ctech(\v@lX,\v@lY)%
    \divide\v@lX\f@ctech\divide\v@lY\f@ctech%
    \Figv@ctCreg#1(\v@lX,\v@lY)\n@rmeucSV{\v@lYa}{#1}%
    \invers@{\v@lXa}{\v@lYa}\edef\v@lv@lXa{\repdecn@mb{\v@lXa}}%
    \v@lX=\v@lv@lXa\v@lX\v@lY=\v@lv@lXa\v@lY%
    \Figv@ctCreg#1(\v@lX,\v@lY)\multiply\v@lYa\f@ctech\global\result@t=\v@lYa}}
\ctr@ld@f\def\vecunit@TD#1#2{{\Figg@tXY{#2}\getredf@ctTD\f@ctech(\v@lX,\v@lY,\v@lZ)%
    \divide\v@lX\f@ctech\divide\v@lY\f@ctech\divide\v@lZ\f@ctech%
    \Figv@ctCreg#1(\v@lX,\v@lY,\v@lZ)\n@rmeucSV{\v@lYa}{#1}%
    \invers@{\v@lXa}{\v@lYa}\edef\v@lv@lXa{\repdecn@mb{\v@lXa}}%
    \v@lX=\v@lv@lXa\v@lX\v@lY=\v@lv@lXa\v@lY\v@lZ=\v@lv@lXa\v@lZ%
    \Figv@ctCreg#1(\v@lX,\v@lY,\v@lZ)\multiply\v@lYa\f@ctech\global\result@t=\v@lYa}}
\ctr@ld@f\def\vecunitC@TD[#1,#2]{\Figg@tXYa{#1}\Figg@tXY{#2}%
    \advance\v@lX-\v@lXa\advance\v@lY-\v@lYa\advance\v@lZ-\v@lZa\c@lvecunitTD}
\ctr@ld@f\def\vecunitCV@TD#1{\Figg@tXY{#1}\c@lvecunitTD}
\ctr@ld@f\def\c@lvecunitTD{\getredf@ctTD\f@ctech(\v@lX,\v@lY,\v@lZ)%
    \divide\v@lX\f@ctech\divide\v@lY\f@ctech\divide\v@lZ\f@ctech%
    \v@lXa=\repdecn@mb{\v@lX}\v@lX%
    \v@lYa=\repdecn@mb{\v@lY}\v@lY\v@lZa=\repdecn@mb{\v@lZ}\v@lZ%
    \advance\v@lXa\v@lYa\advance\v@lXa\v@lZa\sqrt@{\v@lYa}{\v@lXa}%
    \invers@{\v@lXa}{\v@lYa}\edef\v@lv@lXa{\repdecn@mb{\v@lXa}}%
    \v@lX=\v@lv@lXa\v@lX\v@lY=\v@lv@lXa\v@lY\v@lZ=\v@lv@lXa\v@lZ}
\ctr@ln@m\figgetangle
\ctr@ld@f\def\figgetangleDD#1[#2,#3,#4]{\ifGR@cri{\s@uvc@ntr@l\et@tfiggetangleDD\setc@ntr@l{2}%
    \figvectPDD-1[#2,#3]\figvectPDD-2[#2,#4]\vecunit@{-1}{-1}%
    \c@lproscalDD\delt@[-2,-1]\figvectNVDD-1[-1]\c@lproscalDD\v@leur[-2,-1]%
    \arct@n\v@lmax(\delt@,\v@leur)\v@lmax=\rdT@deg\v@lmax%
    \ifdim\v@lmax<\z@\advance\v@lmax\DePI@deg\fi\xdef#1{\repdecn@mb{\v@lmax}}%
    \resetc@ntr@l\et@tfiggetangleDD}\ignorespaces\fi}
\ctr@ld@f\def\figgetangleTD#1[#2,#3,#4,#5]{\ifGR@cri{\s@uvc@ntr@l\et@tfiggetangleTD\setc@ntr@l{2}%
    \figvectPTD-1[#2,#3]\figvectPTD-2[#2,#5]\figvectNVTD-3[-1,-2]%
    \figvectPTD-2[#2,#4]\figvectNVTD-4[-3,-1]%
    \vecunit@{-1}{-1}\c@lproscalTD\delt@[-2,-1]\c@lproscalTD\v@leur[-2,-4]%
    \arct@n\v@lmax(\delt@,\v@leur)\v@lmax=\rdT@deg\v@lmax%
    \ifdim\v@lmax<\z@\advance\v@lmax\DePI@deg\fi\xdef#1{\repdecn@mb{\v@lmax}}%
    \resetc@ntr@l\et@tfiggetangleTD}\ignorespaces\fi}    
\ctr@ld@f\def\figgetdist#1[#2,#3]{\ifGR@cri{\s@uvc@ntr@l\et@tfiggetdist\setc@ntr@l{2}%
    \figvectP-1[#2,#3]\n@rmeuc{\v@lX}{-1}\v@lX=\ptT@unit@\v@lX\xdef#1{\repdecn@mb{\v@lX}}%
    \resetc@ntr@l\et@tfiggetdist}\ignorespaces\fi}
\ctr@ld@f\def\figget#1=#2[#3]{\keln@mun#1|%
    \def\n@mref{a}\ifx\l@debut\n@mref\figgetangle#2[#3]\else
    \def\n@mref{d}\ifx\l@debut\n@mref\figgetdist#2[#3]\else
    \W@rnmeskwd{figget}{#1}\fi\fi\ignorespaces}
\ctr@ld@f\def\Figg@tT#1{\c@ntr@lnum{#1}%
    {\expandafter\expandafter\expandafter\extr@ctT\csname\objc@de\endcsname:%
     \ifnum\B@@ltxt=\z@\ptn@me{#1}\else\csname\objc@de T\endcsname\fi}}
\ctr@ld@f\def\extr@ctT#1,#2,#3/#4:{\def\B@@ltxt{#3}}
\ctr@ld@f\def\Figg@tXY#1{\c@ntr@lnum{#1}%
    \expandafter\expandafter\expandafter\extr@ctC\csname\objc@de\endcsname:}
\ctr@ln@m\extr@ctC
\ctr@ld@f\def\extr@ctCDD#1/#2,#3,#4:{\v@lX=#2\v@lY=#3}
\ctr@ld@f\def\extr@ctCTD#1/#2,#3,#4:{\v@lX=#2\v@lY=#3\v@lZ=#4}
\ctr@ld@f\def\Figg@tXYa#1{\c@ntr@lnum{#1}%
    \expandafter\expandafter\expandafter\extr@ctCa\csname\objc@de\endcsname:}
\ctr@ln@m\extr@ctCa
\ctr@ld@f\def\extr@ctCaDD#1/#2,#3,#4:{\v@lXa=#2\v@lYa=#3}
\ctr@ld@f\def\extr@ctCaTD#1/#2,#3,#4:{\v@lXa=#2\v@lYa=#3\v@lZa=#4}
\ctr@ln@m\t@xt@
\ctr@ld@f\def\figinit#1{\t@stc@tcodech@nge\initpr@lim\Figinit@#1,:\initpss@ttings\ignorespaces}
\ctr@ld@f\def\Figinit@#1,#2:{\setunit@{#1}\def\t@xt@{#2}\ifx\t@xt@\empty\else\Figinit@@#2:\fi}
\ctr@ld@f\def\Figinit@@#1#2:{\if#12 \else\Figs@tproj{#1}\initTD@\fi}
\ctr@ln@w{newif}\ifTr@isDim
\ctr@ld@f\def\UnD@fined{UNDEFINED}
\ctr@ln@m\@utoFN
\ctr@ln@m\@utoFInDone
\ctr@ln@m\disob@unit
\ctr@ld@f\def\initpr@lim{\initb@undb@x\figsetmark{}\figsetptname{$A_{##1}$}\def\Sc@leFact{1}%
    \initDD@\figsetroundcoord{yes}\GR@critrue\expandafter\setupd@te\D@FTupdate:%
    \edef\disob@unit{\UnD@fined}\edef\t@rgetpt{\UnD@fined}\gdef\@utoFInDone{1}\gdef\@utoFN{0}}
\ctr@ld@f\def\initDD@{\Tr@isDimfalse%
    \ifPDFm@ke%
     \let\Ps@rcerc=\Ps@rcercBz%
     \let\Ps@rell=\Ps@rellBz%
    \fi
    \let\c@lDCUn=\c@lDCUnDD%
    \let\c@lDCDeux=\c@lDCDeuxDD%
    \let\c@ldefproj=\relax%
    \let\c@lproscal=\c@lproscalDD%
    \let\c@lprojSP=\relax%
    \let\extr@ctC=\extr@ctCDD%
    \let\extr@ctCa=\extr@ctCaDD%
    \let\extr@ctCF=\extr@ctCFDD%
    \let\Figp@intreg=\Figp@intregDD%
    \let\Figpts@xes=\Figpts@xesDD%
    \let\getredf@ctB=\getredf@ctBDD%
    \let\n@rmeucSV=\n@rmeucSVDD\let\n@rmeuc=\n@rmeucDD\let\n@rmeucC\n@rmeucCDD\let\n@rminf=\n@rminfDD%
    \let\pr@dMatV=\pr@dMatVDD%
    \let\Q@@xes=\Q@@xesDD%
    \let\vecunit@=\vecunit@DD%
    \let\figcoord=\figcoordDD%
    \let\figgetangle=\figgetangleDD%
    \let\figpt=\figptDD%
    \let\figptBezier=\figptBezierDD%
    \let\figptbary=\figptbaryDD%
    \let\figptcirc=\figptcircDD%
    \let\figptcircumcenter=\figptcircumcenterDD%
    \let\figptcopy=\figptcopyDD%
    \let\figptcurvcenter=\figptcurvcenterDD%
    \let\figptell=\figptellDD%
    \let\figptendnormal=\figptendnormalDD%
    \let\figptinterlineplane=\figptinterlineplaneDD%
    \let\figptinterlines=\inters@cDD%
    \let\figptorthocenter=\figptorthocenterDD%
    \let\figptorthoprojline=\figptorthoprojlineDD%
    \let\figptorthoprojplane=\figptorthoprojplaneDD%
    \let\figptrot=\figptrotDD%
    \let\figptscontrol=\figptscontrolDD%
    \let\figptsintercirc=\figptsintercircDD%
    \let\figptsinterlinell=\figptsinterlinellDD%
    \let\figptsorthoprojline=\figptsorthoprojlineDD%
    \let\figptorthoprojplane=\figptorthoprojplaneDD%
    \let\figptsrot=\figptsrotDD%
    \let\figptssym=\figptssymDD%
    \let\figptstra=\figptstraDD%
    \let\figptsym=\figptsymDD%
    \let\figpttraC=\figpttraCDD%
    \let\figpttra=\figpttraDD%
    \let\figptvisilimSL=\figptvisilimSLDD%
    \let\figsetobdist=\figsetobdistDD%
    \let\figsettarget=\figsettargetDD%
    \let\figsetview=\figsetviewDD%
    \let\figvectDBezier=\figvectDBezierDD%
    \let\figvectN=\figvectNDD%
    \let\figvectNV=\figvectNVDD%
    \let\figvectP=\figvectPDD%
    \let\figvectU=\figvectUDD%
    \let\figdrawarccircP=\Q@arccircPDD%
    \let\figdrawarccirc=\Q@arccircDD%
    \let\figdrawarcell=\Q@arcellDD%
    \let\figdrawarcellPA=\Q@arcellPADD%
    \let\figdrawarrowBezier=\Q@arrowBezierDD%
    \let\figdrawarrowcircP=\Q@arrowcircPDD%
    \let\figdrawarrowcirc=\Q@arrowcircDD%
    \let\figdrawarrowhead=\Q@arrowheadDD%
    \let\figdrawarrow=\Q@arrowDD%
    \let\figdrawBezier=\Q@BezierDD%
    \let\figdrawcirc=\Q@circDD%
    \let\figdrawcurve=\Q@curveDD%
    \let\figdrawnormal=\Q@normalDD%
    }
\ctr@ld@f\def\initTD@{\Tr@isDimtrue\initb@undb@xTD\newt@rgetptfalse\newdis@bfalse%
    \let\c@lDCUn=\c@lDCUnTD%
    \let\c@lDCDeux=\c@lDCDeuxTD%
    \let\c@ldefproj=\c@ldefprojTD%
    \let\c@lproscal=\c@lproscalTD%
    \let\extr@ctC=\extr@ctCTD%
    \let\extr@ctCa=\extr@ctCaTD%
    \let\extr@ctCF=\extr@ctCFTD%
    \let\Figp@intreg=\Figp@intregTD%
    \let\Figpts@xes=\Figpts@xesTD%
    \let\getredf@ctB=\getredf@ctBTD%
    \let\n@rmeucSV=\n@rmeucSVTD\let\n@rmeuc=\n@rmeucTD\let\n@rmeucC\n@rmeucCTD\let\n@rminf=\n@rminfTD%
    \let\pr@dMatV=\pr@dMatVTD%
    \let\Q@@xes=\Q@@xesTD%
    \let\vecunit@=\vecunit@TD%
    \let\figcoord=\figcoordTD%
    \let\figgetangle=\figgetangleTD%
    \let\figpt=\figptTD%
    \let\figptBezier=\figptBezierTD%
    \let\figptbary=\figptbaryTD%
    \let\figptcirc=\figptcircTD%
    \let\figptcircumcenter=\figptcircumcenterTD%
    \let\figptcopy=\figptcopyTD%
    \let\figptcurvcenter=\figptcurvcenterTD%
    \let\figptinterlineplane=\figptinterlineplaneTD%
    \let\figptinterlines=\inters@cTD%
    \let\figptorthocenter=\figptorthocenterTD%
    \let\figptorthoprojline=\figptorthoprojlineTD%
    \let\figptorthoprojplane=\figptorthoprojplaneTD%
    \let\figptrot=\figptrotTD%
    \let\figptscontrol=\figptscontrolTD%
    \let\figptsintercirc=\figptsintercircTD%
    \let\figptsorthoprojline=\figptsorthoprojlineTD%
    \let\figptsorthoprojplane=\figptsorthoprojplaneTD%
    \let\figptsrot=\figptsrotTD%
    \let\figptssym=\figptssymTD%
    \let\figptstra=\figptstraTD%
    \let\figptsym=\figptsymTD%
    \let\figpttraC=\figpttraCTD%
    \let\figpttra=\figpttraTD%
    \let\figptvisilimSL=\figptvisilimSLTD%
    \let\figsetobdist=\figsetobdistTD%
    \let\figsettarget=\figsettargetTD%
    \let\figsetview=\figsetviewTD%
    \let\figvectDBezier=\figvectDBezierTD%
    \let\figvectN=\figvectNTD%
    \let\figvectNV=\figvectNVTD%
    \let\figvectP=\figvectPTD%
    \let\figvectU=\figvectUTD%
    \let\figdrawarccircP=\Q@arccircPTD%
    \let\figdrawarccirc=\Q@arccircTD%
    \let\figdrawarcell=\Q@arcellTD%
    \let\figdrawarcellPA=\Q@arcellPATD%
    \let\figdrawarrowBezier=\Q@arrowBezierTD%
    \let\figdrawarrowcircP=\Q@arrowcircPTD%
    \let\figdrawarrowcirc=\Q@arrowcircTD%
    \let\figdrawarrowhead=\Q@arrowheadTD%
    \let\figdrawarrow=\Q@arrowTD%
    \let\figdrawBezier=\Q@BezierTD%
    \let\figdrawcirc=\Q@circTD%
    \let\figdrawcurve=\Q@curveTD%
    }
\ctr@ld@f\def\un@v@ilable#1{\immediate\write16{*** The macro #1 is not available in the current context.}}
\ctr@ld@f\def\figinsert#1{{\def\t@xt@{#1}\relax%
    \ifx\t@xt@\empty\ifnum\@utoFInDone>\z@\Figinsert@\DefGIfilen@me,:\fi%
    \else\expandafter\FiginsertNu@#1 :\fi}\ignorespaces}
\ctr@ld@f\def\FiginsertNu@#1 #2:{\def\t@xt@{#1}\relax\ifx\t@xt@\empty\def\t@xt@{#2}%
    \ifx\t@xt@\empty\ifnum\@utoFInDone>\z@\Figinsert@\DefGIfilen@me,:\fi%
    \else\FiginsertNu@#2:\fi\else\expandafter\FiginsertNd@#1 #2:\fi}
\ctr@ld@f\def\FiginsertNd@#1#2:{\ifcat#1a\Figinsert@#1#2,:\else%
    \ifnum\@utoFInDone>\z@\Figinsert@\DefGIfilen@me,#1#2,:\fi\fi}
\ctr@ln@m\Sc@leFact
\ctr@ld@f\def\Figinsert@#1,#2:{\def\t@xt@{#2}\ifx\t@xt@\empty\xdef\Sc@leFact{1}\else%
    \X@rgdeux@#2\xdef\Sc@leFact{\@rgdeux}\fi%
    \Figdisc@rdLTS{#1}{\t@xt@}\@psfgetbb{\t@xt@}%
    \v@lX=\@psfllx\p@\v@lX=\ptpsT@pt\v@lX\v@lX=\Sc@leFact\v@lX%
    \v@lY=\@psflly\p@\v@lY=\ptpsT@pt\v@lY\v@lY=\Sc@leFact\v@lY%
    \b@undb@x{\v@lX}{\v@lY}%
    \v@lX=\@psfurx\p@\v@lX=\ptpsT@pt\v@lX\v@lX=\Sc@leFact\v@lX%
    \v@lY=\@psfury\p@\v@lY=\ptpsT@pt\v@lY\v@lY=\Sc@leFact\v@lY%
    \b@undb@x{\v@lX}{\v@lY}%
    \ifPDFm@ke\Figinclud@PDF{\t@xt@}{\Sc@leFact}\else%
    \v@lX=\c@nt pt\v@lX=\Sc@leFact\v@lX\edef\F@ct{\repdecn@mb{\v@lX}}%
    \ifx\TeXturesonMacOSltX\special{postscriptfile #1 vscale=\F@ct\space hscale=\F@ct}%
    \else\includegraphics{#1}\fi\fi%
    \message{[\t@xt@]}\ignorespaces}
\ctr@ld@f\def\Figdisc@rdLTS#1#2{\expandafter\Figdisc@rdLTS@#1 :#2}
\ctr@ld@f\def\Figdisc@rdLTS@#1 #2:#3{\def#3{#1}\relax\ifx#3\empty\expandafter\Figdisc@rdLTS@#2:#3\fi}
\ctr@ld@f\def\figinsertE#1{\FiginsertE@#1,:\ignorespaces}
\ctr@ld@f\def\FiginsertE@#1,#2:{{\def\t@xt@{#2}\ifx\t@xt@\empty\xdef\Sc@leFact{1}\else%
    \X@rgdeux@#2\xdef\Sc@leFact{\@rgdeux}\fi%
    \Figdisc@rdLTS{#1}{\t@xt@}\pdfximage{\t@xt@}%
    \setbox\Gb@x=\hbox{\pdfrefximage\pdflastximage}%
    \v@lX=\z@\v@lY=-\Sc@leFact\dp\Gb@x\b@undb@x{\v@lX}{\v@lY}%
    \advance\v@lX\Sc@leFact\wd\Gb@x\advance\v@lY\Sc@leFact\dp\Gb@x%
    \advance\v@lY\Sc@leFact\ht\Gb@x\b@undb@x{\v@lX}{\v@lY}%
    \v@lX=\Sc@leFact\wd\Gb@x\pdfximage width \v@lX {\t@xt@}%
    \rlap{\pdfrefximage\pdflastximage}\message{[\t@xt@]}}\ignorespaces}
\ctr@ld@f\def\X@rgdeux@#1,{\edef\@rgdeux{#1}}
\ctr@ln@m\figpt
\ctr@ld@f\def\figptDD#1:#2(#3,#4){\ifGR@cri\c@ntr@lnum{#1}%
    {\v@lX=#3\unit@\v@lY=#4\unit@\Fig@dmpt{#2}{\z@}}\ignorespaces\fi}
\ctr@ld@f\def\Fig@dmpt#1#2{\def\t@xt@{#1}\ifx\t@xt@\empty\def\B@@ltxt{\z@}%
    \else\expandafter\gdef\csname\objc@de T\endcsname{#1}\def\B@@ltxt{\@ne}\fi%
    \expandafter\xdef\csname\objc@de\endcsname{\ifitis@vect@r\C@dCl@svect%
    \else\C@dCl@spt\fi,\z@,\B@@ltxt/\the\v@lX,\the\v@lY,#2}}
\ctr@ld@f\def\C@dCl@spt{P}
\ctr@ld@f\def\C@dCl@svect{V}
\ctr@ln@m\c@@rdYZ
\ctr@ln@m\c@@rdY
\ctr@ld@f\def\figptTD#1:#2(#3,#4){\ifGR@cri\c@ntr@lnum{#1}%
    \def\c@@rdYZ{#4,0,0}\extrairelepremi@r\c@@rdY\de\c@@rdYZ%
    \extrairelepremi@r\c@@rdZ\de\c@@rdYZ%
    {\v@lX=#3\unit@\v@lY=\c@@rdY\unit@\v@lZ=\c@@rdZ\unit@\Fig@dmpt{#2}{\the\v@lZ}%
    \b@undb@xTD{\v@lX}{\v@lY}{\v@lZ}}\ignorespaces\fi}
\ctr@ln@m\Figp@intreg
\ctr@ld@f\def\Figp@intregDD#1:#2(#3,#4){\c@ntr@lnum{#1}%
    {\result@t=#4\v@lX=#3\v@lY=\result@t\Fig@dmpt{#2}{\z@}}\ignorespaces}
\ctr@ld@f\def\Figp@intregTD#1:#2(#3,#4){\c@ntr@lnum{#1}%
    \def\c@@rdYZ{#4,\z@,\z@}\extrairelepremi@r\c@@rdY\de\c@@rdYZ%
    \extrairelepremi@r\c@@rdZ\de\c@@rdYZ%
    {\v@lX=#3\v@lY=\c@@rdY\v@lZ=\c@@rdZ\Fig@dmpt{#2}{\the\v@lZ}%
    \b@undb@xTD{\v@lX}{\v@lY}{\v@lZ}}\ignorespaces}
\ctr@ln@m\figptBezier
\ctr@ld@f\def\figptBezierDD#1:#2:#3[#4,#5,#6,#7]{\ifGR@cri{\s@uvc@ntr@l\et@tfigptBezierDD%
    \FigptBezier@#3[#4,#5,#6,#7]\Figp@intregDD#1:{#2}(\v@lX,\v@lY)%
    \resetc@ntr@l\et@tfigptBezierDD}\ignorespaces\fi}
\ctr@ld@f\def\figptBezierTD#1:#2:#3[#4,#5,#6,#7]{\ifGR@cri{\s@uvc@ntr@l\et@tfigptBezierTD%
    \FigptBezier@#3[#4,#5,#6,#7]\Figp@intregTD#1:{#2}(\v@lX,\v@lY,\v@lZ)%
    \resetc@ntr@l\et@tfigptBezierTD}\ignorespaces\fi}
\ctr@ld@f\def\FigptBezier@#1[#2,#3,#4,#5]{\setc@ntr@l{2}%
    \edef\T@{#1}\v@leur=\p@\advance\v@leur-#1pt\edef\UNmT@{\repdecn@mb{\v@leur}}%
    \figptcopy-4:/#2/\figptcopy-3:/#3/\figptcopy-2:/#4/\figptcopy-1:/#5/%
    \l@mbd@un=-4 \l@mbd@de=-\thr@@\p@rtent=\m@ne\c@lDecast%
    \l@mbd@un=-4 \l@mbd@de=-\thr@@\p@rtent=-\tw@\c@lDecast%
    \l@mbd@un=-4 \l@mbd@de=-\thr@@\p@rtent=-\thr@@\c@lDecast\Figg@tXY{-4}}
\ctr@ln@m\c@lDCUn
\ctr@ld@f\def\c@lDCUnDD#1#2{\Figg@tXY{#1}\v@lX=\UNmT@\v@lX\v@lY=\UNmT@\v@lY%
    \Figg@tXYa{#2}\advance\v@lX\T@\v@lXa\advance\v@lY\T@\v@lYa%
    \Figp@intregDD#1:(\v@lX,\v@lY)}
\ctr@ld@f\def\c@lDCUnTD#1#2{\Figg@tXY{#1}\v@lX=\UNmT@\v@lX\v@lY=\UNmT@\v@lY\v@lZ=\UNmT@\v@lZ%
    \Figg@tXYa{#2}\advance\v@lX\T@\v@lXa\advance\v@lY\T@\v@lYa\advance\v@lZ\T@\v@lZa%
    \Figp@intregTD#1:(\v@lX,\v@lY,\v@lZ)}
\ctr@ld@f\def\c@lDecast{\relax\ifnum\l@mbd@un<\p@rtent\c@lDCUn{\l@mbd@un}{\l@mbd@de}%
    \advance\l@mbd@un\@ne\advance\l@mbd@de\@ne\c@lDecast\fi}
\ctr@ld@f\def\figptmap#1:#2=#3/#4/#5/{\ifGR@cri{\s@uvc@ntr@l\et@tfigptmap%
    \setc@ntr@l{2}\figvectP-1[#4,#3]\Figg@tXY{-1}%
    \pr@dMatV/#5/\figpttra#1:{#2}=#4/1,-1/%
    \resetc@ntr@l\et@tfigptmap}\ignorespaces\fi}
\ctr@ln@m\pr@dMatV
\ctr@ld@f\def\pr@dMatVDD/#1,#2;#3,#4/{\v@lXa=#1\v@lX\advance\v@lXa#2\v@lY%
    \v@lYa=#3\v@lX\advance\v@lYa#4\v@lY\Figv@ctCreg-1(\v@lXa,\v@lYa)}
\ctr@ld@f\def\pr@dMatVTD/#1,#2,#3;#4,#5,#6;#7,#8,#9/{%
    \v@lXa=#1\v@lX\advance\v@lXa#2\v@lY\advance\v@lXa#3\v@lZ%
    \v@lYa=#4\v@lX\advance\v@lYa#5\v@lY\advance\v@lYa#6\v@lZ%
    \v@lZa=#7\v@lX\advance\v@lZa#8\v@lY\advance\v@lZa#9\v@lZ%
    \Figv@ctCreg-1(\v@lXa,\v@lYa,\v@lZa)}
\ctr@ln@m\figptbary
\ctr@ld@f\def\figptbaryDD#1:#2[#3;#4]{\ifGR@cri{\edef\list@num{#3}\extrairelepremi@r\p@int\de\list@num%
    \s@mme=\z@\@ecfor\c@ef:=#4\do{\advance\s@mme\c@ef}%
    \edef\listec@ef{#4,0}\extrairelepremi@r\c@ef\de\listec@ef%
    \Figg@tXY{\p@int}\divide\v@lX\s@mme\divide\v@lY\s@mme%
    \multiply\v@lX\c@ef\multiply\v@lY\c@ef%
    \@ecfor\p@int:=\list@num\do{\extrairelepremi@r\c@ef\de\listec@ef%
           \Figg@tXYa{\p@int}\divide\v@lXa\s@mme\divide\v@lYa\s@mme%
           \multiply\v@lXa\c@ef\multiply\v@lYa\c@ef%
           \advance\v@lX\v@lXa\advance\v@lY\v@lYa}%
    \Figp@intregDD#1:{#2}(\v@lX,\v@lY)}\ignorespaces\fi}
\ctr@ld@f\def\figptbaryTD#1:#2[#3;#4]{\ifGR@cri{\edef\list@num{#3}\extrairelepremi@r\p@int\de\list@num%
    \s@mme=\z@\@ecfor\c@ef:=#4\do{\advance\s@mme\c@ef}%
    \edef\listec@ef{#4,0}\extrairelepremi@r\c@ef\de\listec@ef%
    \Figg@tXY{\p@int}\divide\v@lX\s@mme\divide\v@lY\s@mme\divide\v@lZ\s@mme%
    \multiply\v@lX\c@ef\multiply\v@lY\c@ef\multiply\v@lZ\c@ef%
    \@ecfor\p@int:=\list@num\do{\extrairelepremi@r\c@ef\de\listec@ef%
           \Figg@tXYa{\p@int}\divide\v@lXa\s@mme\divide\v@lYa\s@mme\divide\v@lZa\s@mme%
           \multiply\v@lXa\c@ef\multiply\v@lYa\c@ef\multiply\v@lZa\c@ef%
           \advance\v@lX\v@lXa\advance\v@lY\v@lYa\advance\v@lZ\v@lZa}%
    \Figp@intregTD#1:{#2}(\v@lX,\v@lY,\v@lZ)}\ignorespaces\fi}
\ctr@ld@f\def\figptbaryR#1:#2[#3;#4]{\ifGR@cri{%
    \v@leur=\z@\@ecfor\c@ef:=#4\do{\maxim@m{\v@lmax}{\c@ef pt}{-\c@ef pt}%
    \ifdim\v@lmax>\v@leur\v@leur=\v@lmax\fi}%
    \ifdim\v@leur<\p@\f@ctech=\@M\else\ifdim\v@leur<\t@n\p@\f@ctech=\@m\else%
    \ifdim\v@leur<\c@nt\p@\f@ctech=\c@nt\else\ifdim\v@leur<\@m\p@\f@ctech=\t@n\else%
    \f@ctech=\@ne\fi\fi\fi\fi%
    \def\listec@ef{0}%
    \@ecfor\c@ef:=#4\do{\sc@lec@nvRI{\c@ef pt}\edef\listec@ef{\listec@ef,\the\s@mme}}%
    \extrairelepremi@r\c@ef\de\listec@ef\figptbary#1:#2[#3;\listec@ef]}\ignorespaces\fi}
\ctr@ld@f\def\sc@lec@nvRI#1{\v@leur=#1\p@rtentiere{\s@mme}{\v@leur}\advance\v@leur-\s@mme\p@%
    \multiply\v@leur\f@ctech\p@rtentiere{\p@rtent}{\v@leur}%
    \multiply\s@mme\f@ctech\advance\s@mme\p@rtent}
\ctr@ln@m\figptcirc
\ctr@ld@f\def\figptcircDD#1:#2:#3;#4(#5){\ifGR@cri{\s@uvc@ntr@l\et@tfigptcircDD%
    \c@lptellDD#1:{#2}:#3;#4,#4(#5)\resetc@ntr@l\et@tfigptcircDD}\ignorespaces\fi}
\ctr@ld@f\def\figptcircTD#1:#2:#3,#4,#5;#6(#7){\ifGR@cri{\s@uvc@ntr@l\et@tfigptcircTD%
    \setc@ntr@l{2}\c@lExtAxes#3,#4,#5(#6)\figptellP#1:{#2}:#3,-4,-5(#7)%
    \resetc@ntr@l\et@tfigptcircTD}\ignorespaces\fi}
\ctr@ln@m\figptcircumcenter
\ctr@ld@f\def\figptcircumcenterDD#1:#2[#3,#4,#5]{\ifGR@cri{\s@uvc@ntr@l\et@tfigptcircumcenterDD%
    \setc@ntr@l{2}\figvectNDD-5[#3,#4]\figptbaryDD-3:[#3,#4;1,1]%
                  \figvectNDD-6[#4,#5]\figptbaryDD-4:[#4,#5;1,1]%
    \resetc@ntr@l{2}\inters@cDD#1:{#2}[-3,-5;-4,-6]%
    \resetc@ntr@l\et@tfigptcircumcenterDD}\ignorespaces\fi}
\ctr@ld@f\def\figptcircumcenterTD#1:#2[#3,#4,#5]{\ifGR@cri{\s@uvc@ntr@l\et@tfigptcircumcenterTD%
    \setc@ntr@l{2}\figvectNTD-1[#3,#4,#5]%
    \figvectPTD-3[#3,#4]\figvectNVTD-5[-1,-3]\figptbaryTD-3:[#3,#4;1,1]%
    \figvectPTD-4[#4,#5]\figvectNVTD-6[-1,-4]\figptbaryTD-4:[#4,#5;1,1]%
    \resetc@ntr@l{2}\inters@cTD#1:{#2}[-3,-5;-4,-6]%
    \resetc@ntr@l\et@tfigptcircumcenterTD}\ignorespaces\fi}
\ctr@ln@m\figptcopy
\ctr@ld@f\def\figptcopyDD#1:#2/#3/{\ifGR@cri{\Figg@tXY{#3}%
    \Figp@intregDD#1:{#2}(\v@lX,\v@lY)}\ignorespaces\fi}
\ctr@ld@f\def\figptcopyTD#1:#2/#3/{\ifGR@cri{\Figg@tXY{#3}%
    \Figp@intregTD#1:{#2}(\v@lX,\v@lY,\v@lZ)}\ignorespaces\fi}
\ctr@ln@m\figptcurvcenter
\ctr@ld@f\def\figptcurvcenterDD#1:#2:#3[#4,#5,#6,#7]{\ifGR@cri{\s@uvc@ntr@l\et@tfigptcurvcenterDD%
    \setc@ntr@l{2}\c@lcurvradDD#3[#4,#5,#6,#7]\edef\Sprim@{\repdecn@mb{\result@t}}%
    \figptBezierDD-1::#3[#4,#5,#6,#7]\figpttraDD#1:{#2}=-1/\Sprim@,-5/%
    \resetc@ntr@l\et@tfigptcurvcenterDD}\ignorespaces\fi}
\ctr@ld@f\def\figptcurvcenterTD#1:#2:#3[#4,#5,#6,#7]{\ifGR@cri{\s@uvc@ntr@l\et@tfigptcurvcenterTD%
    \setc@ntr@l{2}\figvectDBezierTD -5:1,#3[#4,#5,#6,#7]%
    \figvectDBezierTD -6:2,#3[#4,#5,#6,#7]\vecunit@TD{-5}{-5}%
    \edef\Sprim@{\repdecn@mb{\result@t}}\figvectNVTD-1[-6,-5]%
    \figvectNVTD-5[-5,-1]\c@lproscalTD\v@leur[-6,-5]%
    \invers@{\v@leur}{\v@leur}\v@leur=\Sprim@\v@leur\v@leur=\Sprim@\v@leur%
    \figptBezierTD-1::#3[#4,#5,#6,#7]\edef\Sprim@{\repdecn@mb{\v@leur}}%
    \figpttraTD#1:{#2}=-1/\Sprim@,-5/\resetc@ntr@l\et@tfigptcurvcenterTD}\ignorespaces\fi}
\ctr@ld@f\def\c@lcurvradDD#1[#2,#3,#4,#5]{{\figvectDBezierDD -5:1,#1[#2,#3,#4,#5]%
    \figvectDBezierDD -6:2,#1[#2,#3,#4,#5]\vecunit@DD{-5}{-5}%
    \edef\Sprim@{\repdecn@mb{\result@t}}\figvectNVDD-5[-5]\c@lproscalDD\v@leur[-6,-5]%
    \invers@{\v@leur}{\v@leur}\v@leur=\Sprim@\v@leur\v@leur=\Sprim@\v@leur%
    \global\result@t=\v@leur}}
\ctr@ln@m\figptell
\ctr@ld@f\def\figptellDD#1:#2:#3;#4,#5(#6,#7){\ifGR@cri{\s@uvc@ntr@l\et@tfigptell%
    \c@lptellDD#1::#3;#4,#5(#6)\figptrotDD#1:{#2}=#1/#3,#7/%
    \resetc@ntr@l\et@tfigptell}\ignorespaces\fi}
\ctr@ld@f\def\c@lptellDD#1:#2:#3;#4,#5(#6){\c@ssin{\C@}{\S@}{#6}\v@lmin=\C@ pt\v@lmax=\S@ pt%
    \v@lmin=#4\v@lmin\v@lmax=#5\v@lmax%
    \edef\Xc@mp{\repdecn@mb{\v@lmin}}\edef\Yc@mp{\repdecn@mb{\v@lmax}}%
    \setc@ntr@l{2}\figvectC-1(\Xc@mp,\Yc@mp)\figpttraDD#1:{#2}=#3/1,-1/}
\ctr@ld@f\def\figptellP#1:#2:#3,#4,#5(#6){\ifGR@cri{\s@uvc@ntr@l\et@tfigptellP%
    \setc@ntr@l{2}\figvectP-1[#3,#4]\figvectP-2[#3,#5]%
    \v@leur=#6pt\c@lptellP{#3}{-1}{-2}\figptcopy#1:{#2}/-3/%
    \resetc@ntr@l\et@tfigptellP}\ignorespaces\fi}
\ctr@ln@m\@ngle
\ctr@ld@f\def\c@lptellP#1#2#3{\edef\@ngle{\repdecn@mb\v@leur}\c@ssin{\C@}{\S@}{\@ngle}%
    \figpttra-3:=#1/\C@,#2/\figpttra-3:=-3/\S@,#3/}
\ctr@ln@m\figptendnormal
\ctr@ld@f\def\figptendnormalDD#1:#2:#3,#4[#5,#6]{\ifGR@cri{\s@uvc@ntr@l\et@tfigptendnormal%
    \Figg@tXYa{#5}\Figg@tXY{#6}%
    \advance\v@lX-\v@lXa\advance\v@lY-\v@lYa%
    \setc@ntr@l{2}\Figv@ctCreg-1(\v@lX,\v@lY)\vecunit@{-1}{-1}\Figg@tXY{-1}%
    \delt@=#3\unit@\maxim@m{\delt@}{\delt@}{-\delt@}\edef\l@ngueur{\repdecn@mb{\delt@}}%
    \v@lX=\l@ngueur\v@lX\v@lY=\l@ngueur\v@lY%
    \delt@=\p@\advance\delt@-#4pt\edef\l@ngueur{\repdecn@mb{\delt@}}%
    \figptbaryR-1:[#5,#6;#4,\l@ngueur]\Figg@tXYa{-1}%
    \advance\v@lXa\v@lY\advance\v@lYa-\v@lX%
    \setc@ntr@l{1}\Figp@intregDD#1:{#2}(\v@lXa,\v@lYa)\resetc@ntr@l\et@tfigptendnormal}%
    \ignorespaces\fi}
\ctr@ld@f\def\figptexcenter#1:#2[#3,#4,#5]{\ifGR@cri{\let@xte={-}
    \Figptexinsc@nter#1:#2[#3,#4,#5]}\ignorespaces\fi}
\ctr@ld@f\def\figptincenter#1:#2[#3,#4,#5]{\ifGR@cri{\let@xte={}
    \Figptexinsc@nter#1:#2[#3,#4,#5]}\ignorespaces\fi}
\ctr@ld@f
\ctr@ld@f\def\Figptexinsc@nter#1:#2[#3,#4,#5]{%
    \figgetdist\LA@[#4,#5]\figgetdist\LB@[#3,#5]\figgetdist\LC@[#3,#4]%
    \figptbaryR#1:{#2}[#3,#4,#5;\the\let@xte\LA@,\LB@,\LC@]}
\ctr@ln@m\figptinterlineplane
\ctr@ld@f\def\figptinterlineplaneDD{\un@v@ilable{figptinterlineplane}}
\ctr@ld@f\def\figptinterlineplaneTD#1:#2[#3,#4;#5,#6]{\ifGR@cri{\s@uvc@ntr@l\et@tfigptinterlineplane%
    \setc@ntr@l{2}\figvectPTD-1[#3,#5]\vecunit@TD{-2}{#6}%
    \r@pPSTD\v@leur[-2,-1,#4]\edef\v@lcoef{\repdecn@mb{\v@leur}}%
    \figpttraTD#1:{#2}=#3/\v@lcoef,#4/\resetc@ntr@l\et@tfigptinterlineplane}\ignorespaces\fi}
\ctr@ln@m\figptorthocenter
\ctr@ld@f\def\figptorthocenterDD#1:#2[#3,#4,#5]{\ifGR@cri{\s@uvc@ntr@l\et@tfigptorthocenterDD%
    \setc@ntr@l{2}\figvectNDD-3[#3,#4]\figvectNDD-4[#4,#5]%
    \resetc@ntr@l{2}\inters@cDD#1:{#2}[#5,-3;#3,-4]%
    \resetc@ntr@l\et@tfigptorthocenterDD}\ignorespaces\fi}
\ctr@ld@f\def\figptorthocenterTD#1:#2[#3,#4,#5]{\ifGR@cri{\s@uvc@ntr@l\et@tfigptorthocenterTD%
    \setc@ntr@l{2}\figvectNTD-1[#3,#4,#5]%
    \figvectPTD-2[#3,#4]\figvectNVTD-3[-1,-2]%
    \figvectPTD-2[#4,#5]\figvectNVTD-4[-1,-2]%
    \resetc@ntr@l{2}\inters@cTD#1:{#2}[#5,-3;#3,-4]%
    \resetc@ntr@l\et@tfigptorthocenterTD}\ignorespaces\fi}
\ctr@ln@m\figptorthoprojline
\ctr@ld@f\def\figptorthoprojlineDD#1:#2=#3/#4,#5/{\ifGR@cri{\s@uvc@ntr@l\et@tfigptorthoprojlineDD%
    \setc@ntr@l{2}\figvectPDD-3[#4,#5]\figvectNVDD-4[-3]\resetc@ntr@l{2}%
    \inters@cDD#1:{#2}[#3,-4;#4,-3]\resetc@ntr@l\et@tfigptorthoprojlineDD}\ignorespaces\fi}
\ctr@ld@f\def\figptorthoprojlineTD#1:#2=#3/#4,#5/{\ifGR@cri{\s@uvc@ntr@l\et@tfigptorthoprojlineTD%
    \setc@ntr@l{2}\figvectPTD-1[#4,#3]\figvectPTD-2[#4,#5]\vecunit@TD{-2}{-2}%
    \c@lproscalTD\v@leur[-1,-2]\edef\v@lcoef{\repdecn@mb{\v@leur}}%
    \figpttraTD#1:{#2}=#4/\v@lcoef,-2/\resetc@ntr@l\et@tfigptorthoprojlineTD}\ignorespaces\fi}
\ctr@ln@m\figptorthoprojplane
\ctr@ld@f\def\figptorthoprojplaneDD{\un@v@ilable{figptorthoprojplane}}
\ctr@ld@f\def\figptorthoprojplaneTD#1:#2=#3/#4,#5/{\ifGR@cri{\s@uvc@ntr@l\et@tfigptorthoprojplane%
    \setc@ntr@l{2}\figvectPTD-1[#3,#4]\vecunit@TD{-2}{#5}%
    \c@lproscalTD\v@leur[-1,-2]\edef\v@lcoef{\repdecn@mb{\v@leur}}%
    \figpttraTD#1:{#2}=#3/\v@lcoef,-2/\resetc@ntr@l\et@tfigptorthoprojplane}\ignorespaces\fi}
\ctr@ld@f\def\figpthom#1:#2=#3/#4,#5/{\ifGR@cri{\s@uvc@ntr@l\et@tfigpthom%
    \setc@ntr@l{2}\figvectP-1[#4,#3]\figpttra#1:{#2}=#4/#5,-1/%
    \resetc@ntr@l\et@tfigpthom}\ignorespaces\fi}
\ctr@ld@f\def\figptinv#1:#2=#3/#4,#5/{\ifGR@cri{\s@uvc@ntr@l\et@tfigptinv%
    \setc@ntr@l{2}\figvectP-1[#4,#3]\Figg@tXY{-1}%
    \getredf@ctB\f@ctech\n@rmeucC{\delt@}{-1}%
    \delt@=\ptT@unit@\delt@\delt@=\ptT@unit@\delt@%
    \invers@{\delt@}{\delt@}\multiply\f@ctech\f@ctech\divide\delt@\f@ctech%
    \delt@=#5\delt@\edef\v@lcoef{\repdecn@mb{\delt@}}\figpttra#1:{#2}=#4/\v@lcoef,-1/%
    \resetc@ntr@l\et@tfigptinv}\ignorespaces\fi}
\ctr@ln@m\figptrot
\ctr@ld@f\def\figptrotDD#1:#2=#3/#4,#5/{\ifGR@cri{\s@uvc@ntr@l\et@tfigptrotDD%
    \c@ssin{\C@}{\S@}{#5}\setc@ntr@l{2}\figvectPDD-1[#4,#3]\Figg@tXY{-1}%
    \v@lXa=\C@\v@lX\advance\v@lXa-\S@\v@lY%
    \v@lYa=\S@\v@lX\advance\v@lYa\C@\v@lY%
    \Figv@ctCreg-1(\v@lXa,\v@lYa)\figpttraDD#1:{#2}=#4/1,-1/%
    \resetc@ntr@l\et@tfigptrotDD}\ignorespaces\fi}
\ctr@ld@f\def\figptrotTD#1:#2=#3/#4,#5,#6/{\ifGR@cri{\s@uvc@ntr@l\et@tfigptrotTD%
    \c@ssin{\C@}{\S@}{#5}%
    \setc@ntr@l{2}\figptorthoprojplaneTD-3:=#4/#3,#6/\figvectPTD-2[-3,#3]%
    \n@rmeucTD\v@leur{-2}\ifdim\v@leur<\Cepsil@n\Figg@tXYa{#3}\else%
    \edef\v@lcoef{\repdecn@mb{\v@leur}}\figvectNVTD-1[#6,-2]%
    \Figg@tXYa{-1}\v@lXa=\v@lcoef\v@lXa\v@lYa=\v@lcoef\v@lYa\v@lZa=\v@lcoef\v@lZa%
    \v@lXa=\S@\v@lXa\v@lYa=\S@\v@lYa\v@lZa=\S@\v@lZa\Figg@tXY{-2}%
    \advance\v@lXa\C@\v@lX\advance\v@lYa\C@\v@lY\advance\v@lZa\C@\v@lZ%
    \Figg@tXY{-3}\advance\v@lXa\v@lX\advance\v@lYa\v@lY\advance\v@lZa\v@lZ\fi%
    \Figp@intregTD#1:{#2}(\v@lXa,\v@lYa,\v@lZa)\resetc@ntr@l\et@tfigptrotTD}\ignorespaces\fi}
\ctr@ln@m\figptsym
\ctr@ld@f\def\figptsymDD#1:#2=#3/#4,#5/{\ifGR@cri{\s@uvc@ntr@l\et@tfigptsymDD%
    \resetc@ntr@l{2}\figptorthoprojlineDD-5:=#3/#4,#5/\figvectPDD-2[#3,-5]%
    \figpttraDD#1:{#2}=#3/2,-2/\resetc@ntr@l\et@tfigptsymDD}\ignorespaces\fi}
\ctr@ld@f\def\figptsymTD#1:#2=#3/#4,#5/{\ifGR@cri{\s@uvc@ntr@l\et@tfigptsymTD%
    \resetc@ntr@l{2}\figptorthoprojplaneTD-3:=#3/#4,#5/\figvectPTD-2[#3,-3]%
    \figpttraTD#1:{#2}=#3/2,-2/\resetc@ntr@l\et@tfigptsymTD}\ignorespaces\fi}
\ctr@ln@m\figpttra
\ctr@ld@f\def\figpttraDD#1:#2=#3/#4,#5/{\ifGR@cri{\Figg@tXYa{#5}\v@lXa=#4\v@lXa\v@lYa=#4\v@lYa%
    \Figg@tXY{#3}\advance\v@lX\v@lXa\advance\v@lY\v@lYa%
    \Figp@intregDD#1:{#2}(\v@lX,\v@lY)}\ignorespaces\fi}
\ctr@ld@f\def\figpttraTD#1:#2=#3/#4,#5/{\ifGR@cri{\Figg@tXYa{#5}\v@lXa=#4\v@lXa\v@lYa=#4\v@lYa%
    \v@lZa=#4\v@lZa\Figg@tXY{#3}\advance\v@lX\v@lXa\advance\v@lY\v@lYa%
    \advance\v@lZ\v@lZa\Figp@intregTD#1:{#2}(\v@lX,\v@lY,\v@lZ)}\ignorespaces\fi}
\ctr@ln@m\figpttraC
\ctr@ld@f\def\figpttraCDD#1:#2=#3/#4,#5/{\ifGR@cri{\v@lXa=#4\unit@\v@lYa=#5\unit@%
    \Figg@tXY{#3}\advance\v@lX\v@lXa\advance\v@lY\v@lYa%
    \Figp@intregDD#1:{#2}(\v@lX,\v@lY)}\ignorespaces\fi}
\ctr@ld@f\def\figpttraCTD#1:#2=#3/#4,#5,#6/{\ifGR@cri{\v@lXa=#4\unit@\v@lYa=#5\unit@\v@lZa=#6\unit@%
    \Figg@tXY{#3}\advance\v@lX\v@lXa\advance\v@lY\v@lYa\advance\v@lZ\v@lZa%
    \Figp@intregTD#1:{#2}(\v@lX,\v@lY,\v@lZ)}\ignorespaces\fi}
\ctr@ld@f\def\figptsaxes#1:#2(#3){\ifGR@cri{\an@lys@xes#3,:\ifx\t@xt@\empty%
    \ifTr@isDim\Figpts@xes#1:#2(0,#3,0,#3,0,#3)\else\Figpts@xes#1:#2(0,#3,0,#3)\fi%
    \else\Figpts@xes#1:#2(#3)\fi}\ignorespaces\fi}
\ctr@ln@m\Figpts@xes
\ctr@ld@f\def\Figpts@xesDD#1:#2(#3,#4,#5,#6){%
    \s@mme=#1\figpttraC\the\s@mme:$x$=#2/#4,0/%
    \advance\s@mme\@ne\figpttraC\the\s@mme:$y$=#2/0,#6/}
\ctr@ld@f\def\Figpts@xesTD#1:#2(#3,#4,#5,#6,#7,#8){%
    \s@mme=#1\figpttraC\the\s@mme:$x$=#2/#4,0,0/%
    \advance\s@mme\@ne\figpttraC\the\s@mme:$y$=#2/0,#6,0/%
    \advance\s@mme\@ne\figpttraC\the\s@mme:$z$=#2/0,0,#8/}
\ctr@ld@f\def\figptsmap#1=#2/#3/#4/{\ifGR@cri{\s@uvc@ntr@l\et@tfigptsmap%
    \setc@ntr@l{2}\def\list@num{#2}\s@mme=#1%
    \@ecfor\p@int:=\list@num\do{\figvectP-1[#3,\p@int]\Figg@tXY{-1}%
    \pr@dMatV/#4/\figpttra\the\s@mme:=#3/1,-1/\advance\s@mme\@ne}%
    \resetc@ntr@l\et@tfigptsmap}\ignorespaces\fi}
\ctr@ln@m\figptscontrol
\ctr@ld@f\def\figptscontrolDD#1[#2,#3,#4,#5]{\ifGR@cri{\s@uvc@ntr@l\et@tfigptscontrolDD\setc@ntr@l{2}%
    \v@lX=\z@\v@lY=\z@\Figtr@nptDD{-5}{#2}\Figtr@nptDD{2}{#5}%
    \divide\v@lX\@vi\divide\v@lY\@vi%
    \Figtr@nptDD{3}{#3}\Figtr@nptDD{-1.5}{#4}\Figp@intregDD-1:(\v@lX,\v@lY)%
    \v@lX=\z@\v@lY=\z@\Figtr@nptDD{2}{#2}\Figtr@nptDD{-5}{#5}%
    \divide\v@lX\@vi\divide\v@lY\@vi\Figtr@nptDD{-1.5}{#3}\Figtr@nptDD{3}{#4}%
    \s@mme=#1\advance\s@mme\@ne\Figp@intregDD\the\s@mme:(\v@lX,\v@lY)%
    \figptcopyDD#1:/-1/\resetc@ntr@l\et@tfigptscontrolDD}\ignorespaces\fi}
\ctr@ld@f\def\figptscontrolTD#1[#2,#3,#4,#5]{\ifGR@cri{\s@uvc@ntr@l\et@tfigptscontrolTD\setc@ntr@l{2}%
    \v@lX=\z@\v@lY=\z@\v@lZ=\z@\Figtr@nptTD{-5}{#2}\Figtr@nptTD{2}{#5}%
    \divide\v@lX\@vi\divide\v@lY\@vi\divide\v@lZ\@vi%
    \Figtr@nptTD{3}{#3}\Figtr@nptTD{-1.5}{#4}\Figp@intregTD-1:(\v@lX,\v@lY,\v@lZ)%
    \v@lX=\z@\v@lY=\z@\v@lZ=\z@\Figtr@nptTD{2}{#2}\Figtr@nptTD{-5}{#5}%
    \divide\v@lX\@vi\divide\v@lY\@vi\divide\v@lZ\@vi\Figtr@nptTD{-1.5}{#3}\Figtr@nptTD{3}{#4}%
    \s@mme=#1\advance\s@mme\@ne\Figp@intregTD\the\s@mme:(\v@lX,\v@lY,\v@lZ)%
    \figptcopyTD#1:/-1/\resetc@ntr@l\et@tfigptscontrolTD}\ignorespaces\fi}
\ctr@ld@f\def\Figtr@nptDD#1#2{\Figg@tXYa{#2}\v@lXa=#1\v@lXa\v@lYa=#1\v@lYa%
    \advance\v@lX\v@lXa\advance\v@lY\v@lYa}
\ctr@ld@f\def\Figtr@nptTD#1#2{\Figg@tXYa{#2}\v@lXa=#1\v@lXa\v@lYa=#1\v@lYa\v@lZa=#1\v@lZa%
    \advance\v@lX\v@lXa\advance\v@lY\v@lYa\advance\v@lZ\v@lZa}
\ctr@ld@f\def\figptscontrolcurve#1,#2[#3]{\ifGR@cri{\s@uvc@ntr@l\et@tfigptscontrolcurve%
    \def\list@num{#3}\extrairelepremi@r\Ak@\de\list@num%
    \extrairelepremi@r\Ai@\de\list@num\extrairelepremi@r\Aj@\de\list@num%
    \s@mme=#1\figptcopy\the\s@mme:/\Ai@/%
    \setc@ntr@l{2}\figvectP -1[\Ak@,\Aj@]%
    \@ecfor\Ak@:=\list@num\do{\advance\s@mme\@ne\figpttra\the\s@mme:=\Ai@/\curv@roundness,-1/%
       \figvectP -1[\Ai@,\Ak@]\advance\s@mme\@ne\figpttra\the\s@mme:=\Aj@/-\curv@roundness,-1/%
       \advance\s@mme\@ne\figptcopy\the\s@mme:/\Aj@/%
       \edef\Ai@{\Aj@}\edef\Aj@{\Ak@}}\advance\s@mme-#1\divide\s@mme\thr@@%
       \xdef#2{\the\s@mme}%
    \resetc@ntr@l\et@tfigptscontrolcurve}\ignorespaces\fi}
\ctr@ln@m\figptsintercirc
\ctr@ld@f\def\figptsintercircDD#1[#2,#3;#4,#5]{\ifGR@cri{\s@uvc@ntr@l\et@tfigptsintercircDD%
    \setc@ntr@l{2}\let\c@lNVintc=\c@lNVintcDD\Figptsintercirc@#1[#2,#3;#4,#5]%
    \resetc@ntr@l\et@tfigptsintercircDD}\ignorespaces\fi}
\ctr@ld@f\def\figptsintercircTD#1[#2,#3;#4,#5;#6]{\ifGR@cri{\s@uvc@ntr@l\et@tfigptsintercircTD%
    \setc@ntr@l{2}\let\c@lNVintc=\c@lNVintcTD\vecunitC@TD[#2,#6]%
    \Figv@ctCreg-3(\v@lX,\v@lY,\v@lZ)\Figptsintercirc@#1[#2,#3;#4,#5]%
    \resetc@ntr@l\et@tfigptsintercircTD}\ignorespaces\fi}
\ctr@ld@f\def\Figptsintercirc@#1[#2,#3;#4,#5]{\figvectP-1[#2,#4]%
    \vecunit@{-1}{-1}\delt@=\result@t\f@ctech=\result@tent%
    \s@mme=#1\advance\s@mme\@ne\figptcopy#1:/#2/\figptcopy\the\s@mme:/#4/%
    \ifdim\delt@=\z@\else%
    \v@lmin=#3\unit@\v@lmax=#5\unit@\v@leur=\v@lmin\advance\v@leur\v@lmax%
    \ifdim\v@leur>\delt@%
    \v@leur=\v@lmin\advance\v@leur-\v@lmax\maxim@m{\v@leur}{\v@leur}{-\v@leur}%
    \ifdim\v@leur<\delt@%
    \divide\v@lmin\f@ctech\divide\v@lmax\f@ctech\divide\delt@\f@ctech%
    \v@lmin=\repdecn@mb{\v@lmin}\v@lmin\v@lmax=\repdecn@mb{\v@lmax}\v@lmax%
    \invers@{\v@leur}{\delt@}\advance\v@lmax-\v@lmin%
    \v@lmax=-\repdecn@mb{\v@leur}\v@lmax\advance\delt@\v@lmax\delt@=.5\delt@%
    \v@lmax=\delt@\multiply\v@lmax\f@ctech%
    \edef\t@ille{\repdecn@mb{\v@lmax}}\figpttra-2:=#2/\t@ille,-1/%
    \delt@=\repdecn@mb{\delt@}\delt@\advance\v@lmin-\delt@%
    \sqrt@{\v@leur}{\v@lmin}\multiply\v@leur\f@ctech\edef\t@ille{\repdecn@mb{\v@leur}}%
    \c@lNVintc\figpttra#1:=-2/-\t@ille,-1/\figpttra\the\s@mme:=-2/\t@ille,-1/\fi\fi\fi}
\ctr@ld@f\def\c@lNVintcDD{\Figg@tXY{-1}\Figv@ctCreg-1(-\v@lY,\v@lX)} 
\ctr@ld@f\def\c@lNVintcTD{{\Figg@tXY{-3}\v@lmin=\v@lX\v@lmax=\v@lY\v@leur=\v@lZ%
    \Figg@tXY{-1}\c@lprovec{-3}\vecunit@{-3}{-3}
    \Figg@tXY{-1}\v@lmin=\v@lX\v@lmax=\v@lY%
    \v@leur=\v@lZ\Figg@tXY{-3}\c@lprovec{-1}}} 
\ctr@ln@m\figptsinterlinell
\ctr@ld@f\def\figptsinterlinellDD#1[#2,#3,#4,#5;#6,#7]{\ifGR@cri{\s@uvc@ntr@l\et@tfigptsinterlinellDD%
    \figptcopy#1:/#6/\s@mme=#1\advance\s@mme\@ne\figptcopy\the\s@mme:/#7/%
    \v@lmin=#3\unit@\v@lmax=#4\unit@
    \setc@ntr@l{2}\figptbaryDD-4:[#6,#7;1,1]\figptsrotDD-3=-4,#7/#2,-#5/
    \Figg@tXY{-3}\Figg@tXYa{#2}\advance\v@lX-\v@lXa\advance\v@lY-\v@lYa
    \figvectP-1[-3,-2]\Figg@tXYa{-1}\figvectP-3[-4,#7]\Figptsint@rLE{#1}
    \resetc@ntr@l\et@tfigptsinterlinellDD}\ignorespaces\fi}
\ctr@ld@f\def\figptsinterlinellP#1[#2,#3,#4;#5,#6]{\ifGR@cri{\s@uvc@ntr@l\et@tfigptsinterlinellP%
    \figptcopy#1:/#5/\s@mme=#1\advance\s@mme\@ne\figptcopy\the\s@mme:/#6/\setc@ntr@l{2}%
    \figvectP-1[#2,#3]\vecunit@{-1}{-1}\v@lmin=\result@t
    \figvectP-2[#2,#4]\vecunit@{-2}{-2}\v@lmax=\result@t
    \figptbary-4:[#5,#6;1,1]
    \figvectP-3[#2,-4]\c@lproscal\v@lX[-3,-1]\c@lproscal\v@lY[-3,-2]
    \figvectP-3[-4,#6]\c@lproscal\v@lXa[-3,-1]\c@lproscal\v@lYa[-3,-2]
    \Figptsint@rLE{#1}\resetc@ntr@l\et@tfigptsinterlinellP}\ignorespaces\fi}
\ctr@ld@f\def\Figptsint@rLE#1{%
    \getredf@ctDD\f@ctech(\v@lmin,\v@lmax)%
    \getredf@ctDD\p@rtent(\v@lX,\v@lY)\ifnum\p@rtent>\f@ctech\f@ctech=\p@rtent\fi%
    \getredf@ctDD\p@rtent(\v@lXa,\v@lYa)\ifnum\p@rtent>\f@ctech\f@ctech=\p@rtent\fi%
    \divide\v@lmin\f@ctech\divide\v@lmax\f@ctech\divide\v@lX\f@ctech\divide\v@lY\f@ctech%
    \divide\v@lXa\f@ctech\divide\v@lYa\f@ctech%
    \c@rre=\repdecn@mb\v@lXa\v@lmax\mili@u=\repdecn@mb\v@lYa\v@lmin%
    \getredf@ctDD\f@ctech(\c@rre,\mili@u)%
    \c@rre=\repdecn@mb\v@lX\v@lmax\mili@u=\repdecn@mb\v@lY\v@lmin%
    \getredf@ctDD\p@rtent(\c@rre,\mili@u)\ifnum\p@rtent>\f@ctech\f@ctech=\p@rtent\fi%
    \divide\v@lmin\f@ctech\divide\v@lmax\f@ctech\divide\v@lX\f@ctech\divide\v@lY\f@ctech%
    \divide\v@lXa\f@ctech\divide\v@lYa\f@ctech%
    \v@lmin=\repdecn@mb{\v@lmin}\v@lmin\v@lmax=\repdecn@mb{\v@lmax}\v@lmax%
    \edef\G@xde{\repdecn@mb\v@lmin}\edef\P@xde{\repdecn@mb\v@lmax}%
    \c@rre=-\v@lmax\v@leur=\repdecn@mb\v@lY\v@lY\advance\c@rre\v@leur\c@rre=\G@xde\c@rre%
    \v@leur=\repdecn@mb\v@lX\v@lX\v@leur=\P@xde\v@leur\advance\c@rre\v@leur
    \v@lmin=\repdecn@mb\v@lYa\v@lmin\v@lmax=\repdecn@mb\v@lXa\v@lmax%
    \mili@u=\repdecn@mb\v@lX\v@lmax\advance\mili@u\repdecn@mb\v@lY\v@lmin
    \v@lmax=\repdecn@mb\v@lXa\v@lmax\advance\v@lmax\repdecn@mb\v@lYa\v@lmin
    \ifdim\v@lmax>\epsil@n%
    \maxim@m{\v@leur}{\c@rre}{-\c@rre}\maxim@m{\v@lmin}{\mili@u}{-\mili@u}%
    \maxim@m{\v@leur}{\v@leur}{\v@lmin}\maxim@m{\v@lmin}{\v@lmax}{-\v@lmax}%
    \maxim@m{\v@leur}{\v@leur}{\v@lmin}\p@rtentiere{\p@rtent}{\v@leur}\advance\p@rtent\@ne%
    \divide\c@rre\p@rtent\divide\mili@u\p@rtent\divide\v@lmax\p@rtent%
    \delt@=\repdecn@mb{\mili@u}\mili@u\v@leur=\repdecn@mb{\v@lmax}\c@rre%
    \advance\delt@-\v@leur\ifdim\delt@<\z@\else\sqrt@\delt@\delt@%
    \invers@\v@lmax\v@lmax\edef\Uns@rAp{\repdecn@mb\v@lmax}%
    \v@leur=-\mili@u\advance\v@leur-\delt@\v@leur=\Uns@rAp\v@leur%
    \edef\t@ille{\repdecn@mb\v@leur}\figpttra#1:=-4/\t@ille,-3/\s@mme=#1\advance\s@mme\@ne%
    \v@leur=-\mili@u\advance\v@leur\delt@\v@leur=\Uns@rAp\v@leur%
    \edef\t@ille{\repdecn@mb\v@leur}\figpttra\the\s@mme:=-4/\t@ille,-3/\fi\fi}
\ctr@ln@m\figptsorthoprojline
\ctr@ld@f\def\figptsorthoprojlineDD#1=#2/#3,#4/{\ifGR@cri{\s@uvc@ntr@l\et@tfigptsorthoprojlineDD%
    \setc@ntr@l{2}\figvectPDD-3[#3,#4]\figvectNVDD-4[-3]\resetc@ntr@l{2}%
    \def\list@num{#2}\s@mme=#1\@ecfor\p@int:=\list@num\do{%
    \inters@cDD\the\s@mme:[\p@int,-4;#3,-3]\advance\s@mme\@ne}%
    \resetc@ntr@l\et@tfigptsorthoprojlineDD}\ignorespaces\fi}
\ctr@ld@f\def\figptsorthoprojlineTD#1=#2/#3,#4/{\ifGR@cri{\s@uvc@ntr@l\et@tfigptsorthoprojlineTD%
    \setc@ntr@l{2}\figvectPTD-2[#3,#4]\vecunit@TD{-2}{-2}%
    \def\list@num{#2}\s@mme=#1\@ecfor\p@int:=\list@num\do{%
    \figvectPTD-1[#3,\p@int]\c@lproscalTD\v@leur[-1,-2]%
    \edef\v@lcoef{\repdecn@mb{\v@leur}}\figpttraTD\the\s@mme:=#3/\v@lcoef,-2/%
    \advance\s@mme\@ne}\resetc@ntr@l\et@tfigptsorthoprojlineTD}\ignorespaces\fi}
\ctr@ln@m\figptsorthoprojplane
\ctr@ld@f\def\figptsorthoprojplaneDD{\un@v@ilable{figptsorthoprojplane}}
\ctr@ld@f\def\figptsorthoprojplaneTD#1=#2/#3,#4/{\ifGR@cri{\s@uvc@ntr@l\et@tfigptsorthoprojplane%
    \setc@ntr@l{2}\vecunit@TD{-2}{#4}%
    \def\list@num{#2}\s@mme=#1\@ecfor\p@int:=\list@num\do{\figvectPTD-1[\p@int,#3]%
    \c@lproscalTD\v@leur[-1,-2]\edef\v@lcoef{\repdecn@mb{\v@leur}}%
    \figpttraTD\the\s@mme:=\p@int/\v@lcoef,-2/\advance\s@mme\@ne}%
    \resetc@ntr@l\et@tfigptsorthoprojplane}\ignorespaces\fi}
\ctr@ld@f\def\figptshom#1=#2/#3,#4/{\ifGR@cri{\s@uvc@ntr@l\et@tfigptshom%
    \setc@ntr@l{2}\def\list@num{#2}\s@mme=#1%
    \@ecfor\p@int:=\list@num\do{\figvectP-1[#3,\p@int]%
    \figpttra\the\s@mme:=#3/#4,-1/\advance\s@mme\@ne}%
    \resetc@ntr@l\et@tfigptshom}\ignorespaces\fi}
\ctr@ld@f\def\figptsinv#1=#2/#3,#4/{\ifGR@cri{\s@uvc@ntr@l\et@tfigptsinv%
    \setc@ntr@l{2}\def\list@num{#2}\s@mme=#1%
    \@ecfor\p@int:=\list@num\do{\figvectP-1[#3,\p@int]\Figg@tXY{-1}%
    \getredf@ctB\f@ctech\n@rmeucC{\delt@}{-1}%
    \delt@=\ptT@unit@\delt@\delt@=\ptT@unit@\delt@%
    \invers@{\delt@}{\delt@}\multiply\f@ctech\f@ctech\divide\delt@\f@ctech%
    \delt@=#4\delt@\edef\v@lcoef{\repdecn@mb{\delt@}}\figpttra\the\s@mme:=#3/\v@lcoef,-1/%
    \advance\s@mme\@ne}\resetc@ntr@l\et@tfigptsinv}\ignorespaces\fi}
\ctr@ln@m\figptsrot
\ctr@ld@f\def\figptsrotDD#1=#2/#3,#4/{\ifGR@cri{\s@uvc@ntr@l\et@tfigptsrotDD%
    \c@ssin{\C@}{\S@}{#4}\setc@ntr@l{2}\def\list@num{#2}\s@mme=#1%
    \@ecfor\p@int:=\list@num\do{\figvectPDD-1[#3,\p@int]\Figg@tXY{-1}%
    \v@lXa=\C@\v@lX\advance\v@lXa-\S@\v@lY%
    \v@lYa=\S@\v@lX\advance\v@lYa\C@\v@lY%
    \Figv@ctCreg-1(\v@lXa,\v@lYa)\figpttraDD\the\s@mme:=#3/1,-1/\advance\s@mme\@ne}%
    \resetc@ntr@l\et@tfigptsrotDD}\ignorespaces\fi}
\ctr@ld@f\def\figptsrotTD#1=#2/#3,#4,#5/{\ifGR@cri{\s@uvc@ntr@l\et@tfigptsrotTD%
    \c@ssin{\C@}{\S@}{#4}%
    \setc@ntr@l{2}\def\list@num{#2}\s@mme=#1%
    \@ecfor\p@int:=\list@num\do{\figptorthoprojplaneTD-3:=#3/\p@int,#5/%
    \figvectPTD-2[-3,\p@int]%
    \figvectNVTD-1[#5,-2]\n@rmeucTD\v@leur{-2}\edef\v@lcoef{\repdecn@mb{\v@leur}}%
    \Figg@tXYa{-1}\v@lXa=\v@lcoef\v@lXa\v@lYa=\v@lcoef\v@lYa\v@lZa=\v@lcoef\v@lZa%
    \v@lXa=\S@\v@lXa\v@lYa=\S@\v@lYa\v@lZa=\S@\v@lZa\Figg@tXY{-2}%
    \advance\v@lXa\C@\v@lX\advance\v@lYa\C@\v@lY\advance\v@lZa\C@\v@lZ%
    \Figg@tXY{-3}\advance\v@lXa\v@lX\advance\v@lYa\v@lY\advance\v@lZa\v@lZ%
    \Figp@intregTD\the\s@mme:(\v@lXa,\v@lYa,\v@lZa)\advance\s@mme\@ne}%
    \resetc@ntr@l\et@tfigptsrotTD}\ignorespaces\fi}
\ctr@ln@m\figptssym
\ctr@ld@f\def\figptssymDD#1=#2/#3,#4/{\ifGR@cri{\s@uvc@ntr@l\et@tfigptssymDD%
    \setc@ntr@l{2}\figvectPDD-3[#3,#4]\Figg@tXY{-3}\Figv@ctCreg-4(-\v@lY,\v@lX)%
    \resetc@ntr@l{2}\def\list@num{#2}\s@mme=#1%
    \@ecfor\p@int:=\list@num\do{\inters@cDD-5:[#3,-3;\p@int,-4]\figvectPDD-2[\p@int,-5]%
    \figpttraDD\the\s@mme:=\p@int/2,-2/\advance\s@mme\@ne}%
    \resetc@ntr@l\et@tfigptssymDD}\ignorespaces\fi}
\ctr@ld@f\def\figptssymTD#1=#2/#3,#4/{\ifGR@cri{\s@uvc@ntr@l\et@tfigptssymTD%
    \setc@ntr@l{2}\vecunit@TD{-2}{#4}\def\list@num{#2}\s@mme=#1%
    \@ecfor\p@int:=\list@num\do{\figvectPTD-1[\p@int,#3]%
    \c@lproscalTD\v@leur[-1,-2]\v@leur=2\v@leur\edef\v@lcoef{\repdecn@mb{\v@leur}}%
    \figpttraTD\the\s@mme:=\p@int/\v@lcoef,-2/\advance\s@mme\@ne}%
    \resetc@ntr@l\et@tfigptssymTD}\ignorespaces\fi}
\ctr@ln@m\figptstra
\ctr@ld@f\def\figptstraDD#1=#2/#3,#4/{\ifGR@cri{\Figg@tXYa{#4}\v@lXa=#3\v@lXa\v@lYa=#3\v@lYa%
    \def\list@num{#2}\s@mme=#1\@ecfor\p@int:=\list@num\do{\Figg@tXY{\p@int}%
    \advance\v@lX\v@lXa\advance\v@lY\v@lYa%
    \Figp@intregDD\the\s@mme:(\v@lX,\v@lY)\advance\s@mme\@ne}}\ignorespaces\fi}
\ctr@ld@f\def\figptstraTD#1=#2/#3,#4/{\ifGR@cri{\Figg@tXYa{#4}\v@lXa=#3\v@lXa\v@lYa=#3\v@lYa%
    \v@lZa=#3\v@lZa\def\list@num{#2}\s@mme=#1\@ecfor\p@int:=\list@num\do{\Figg@tXY{\p@int}%
    \advance\v@lX\v@lXa\advance\v@lY\v@lYa\advance\v@lZ\v@lZa%
    \Figp@intregTD\the\s@mme:(\v@lX,\v@lY,\v@lZ)\advance\s@mme\@ne}}\ignorespaces\fi}
\ctr@ln@m\figptvisilimSL
\ctr@ld@f\def\figptvisilimSLDD{\un@v@ilable{figptvisilimSL}}
\ctr@ld@f\def\figptvisilimSLTD#1:#2[#3,#4;#5,#6]{\ifGR@cri{\s@uvc@ntr@l\et@tfigptvisilimSLTD%
    \setc@ntr@l{2}\figvectP-1[#3,#4]\n@rminf{\delt@}{-1}%
    \ifcase\CUR@proj\v@lX=\cxa@\p@\v@lY=-\p@\v@lZ=\cxb@\p@
    \Figv@ctCreg-2(\v@lX,\v@lY,\v@lZ)\figvectP-3[#5,#6]\figvectNV-1[-2,-3]%
    \or\figvectP-1[#5,#6]\vecunitCV@TD{-1}\v@lmin=\v@lX\v@lmax=\v@lY
    \v@leur=\v@lZ\v@lX=\cza@\p@\v@lY=\czb@\p@\v@lZ=\czc@\p@\c@lprovec{-1}%
    \or\c@ley@pt{-2}\figvectN-1[#5,#6,-2]\fi
    \edef\Ai@{#3}\edef\Aj@{#4}\figvectP-2[#5,\Ai@]\c@lproscal\v@leur[-1,-2]%
    \ifdim\v@leur>\z@\p@rtent=\@ne\else\p@rtent=\m@ne\fi%
    \figvectP-2[#5,\Aj@]\c@lproscal\v@leur[-1,-2]%
    \ifdim\p@rtent\v@leur>\z@\figptcopy#1:#2/#3/%
    \message{*** \BS@ figptvisilimSL: points are on the same side.}\else%
    \figptcopy-3:/#3/\figptcopy-4:/#4/%
    \loop\figptbary-5:[-3,-4;1,1]\figvectP-2[#5,-5]\c@lproscal\v@leur[-1,-2]%
    \ifdim\p@rtent\v@leur>\z@\figptcopy-3:/-5/\else\figptcopy-4:/-5/\fi%
    \divide\delt@\tw@\ifdim\delt@>\epsil@n\repeat%
    \figptbary#1:#2[-3,-4;1,1]\fi\resetc@ntr@l\et@tfigptvisilimSLTD}\ignorespaces\fi}
\ctr@ld@f\def\c@ley@pt#1{\t@stp@r\ifitis@K\v@lX=\cza@\p@\v@lY=\czb@\p@\v@lZ=\czc@\p@%
    \Figv@ctCreg-1(\v@lX,\v@lY,\v@lZ)\Figp@intreg-2:(\wd\Bt@rget,\ht\Bt@rget,\dp\Bt@rget)%
    \figpttra#1:=-2/-\disob@intern,-1/\else\end\fi}
\ctr@ld@f\def\t@stp@r{\itis@Ktrue\ifnewt@rgetpt\else\itis@Kfalse%
    \message{*** \BS@ figptvisilimXX: target point undefined.}\fi\ifnewdis@b\else%
    \itis@Kfalse\message{*** \BS@ figptvisilimXX: observation distance undefined.}\fi%
    \ifitis@K\else\message{*** This macro must be called after \BS@ figdrawbegin or after
    having set the missing parameter(s) with \BS@ figset proj()}\fi}
\ctr@ld@f\def\figscan#1(#2,#3){{\s@uvc@ntr@l\et@tfigscan\@psfgetbb{#1}\if@psfbbfound\else%
    \def\@psfllx{0}\def\@psflly{20}\def\@psfurx{540}\def\@psfury{640}\fi\figscan@{#2}{#3}%
    \resetc@ntr@l\et@tfigscan}\ignorespaces}
\ctr@ld@f\def\figscan@#1#2{%
    \unit@=\@ne bp\setc@ntr@l{2}\figsetmark{}%
    \def\minst@p{20pt}%
    \v@lX=\@psfllx\p@\v@lX=\Sc@leFact\v@lX\r@undint\v@lX\v@lX%
    \v@lY=\@psflly\p@\v@lY=\Sc@leFact\v@lY\ifdim\v@lY>\z@\r@undint\v@lY\v@lY\fi%
    \delt@=\@psfury\p@\delt@=\Sc@leFact\delt@%
    \advance\delt@-\v@lY\v@lXa=\@psfurx\p@\v@lXa=\Sc@leFact\v@lXa\v@leur=\minst@p%
    \edef\valv@lY{\repdecn@mb{\v@lY}}\edef\LgTr@it{\the\delt@}%
    \loop\ifdim\v@lX<\v@lXa\edef\valv@lX{\repdecn@mb{\v@lX}}%
    \figptDD -1:(\valv@lX,\valv@lY)\figwriten -1:\hbox{\vrule height\LgTr@it}(0)%
    \ifdim\v@leur<\minst@p\else\figsetmark{\raise-8bp\hbox{$\scriptscriptstyle\triangle$}}%
    \figwrites -1:\@ffichnb{0}{\valv@lX}(6)\v@leur=\z@\figsetmark{}\fi%
    \advance\v@leur#1pt\advance\v@lX#1pt\repeat%
    \def\minst@p{10pt}%
    \v@lX=\@psfllx\p@\v@lX=\Sc@leFact\v@lX\ifdim\v@lX>\z@\r@undint\v@lX\v@lX\fi%
    \v@lY=\@psflly\p@\v@lY=\Sc@leFact\v@lY\r@undint\v@lY\v@lY%
    \delt@=\@psfurx\p@\delt@=\Sc@leFact\delt@%
    \advance\delt@-\v@lX\v@lYa=\@psfury\p@\v@lYa=\Sc@leFact\v@lYa\v@leur=\minst@p%
    \edef\valv@lX{\repdecn@mb{\v@lX}}\edef\LgTr@it{\the\delt@}%
    \loop\ifdim\v@lY<\v@lYa\edef\valv@lY{\repdecn@mb{\v@lY}}%
    \figptDD -1:(\valv@lX,\valv@lY)\figwritee -1:\vbox{\hrule width\LgTr@it}(0)%
    \ifdim\v@leur<\minst@p\else\figsetmark{$\triangleright$\kern4bp}%
    \figwritew -1:\@ffichnb{0}{\valv@lY}(6)\v@leur=\z@\figsetmark{}\fi%
    \advance\v@leur#2pt\advance\v@lY#2pt\repeat}
\ctr@ld@f
\ctr@ld@f\def\figscan@E#1(#2,#3){{\s@uvc@ntr@l\et@tfigscan@E%
    \Figdisc@rdLTS{#1}{\t@xt@}\pdfximage{\t@xt@}%
    \setbox\Gb@x=\hbox{\pdfrefximage\pdflastximage}%
    \edef\@psfllx{0}\v@lY=-\dp\Gb@x\edef\@psflly{\repdecn@mb{\v@lY}}%
    \edef\@psfurx{\repdecn@mb{\wd\Gb@x}}%
    \v@lY=\dp\Gb@x\advance\v@lY\ht\Gb@x\edef\@psfury{\repdecn@mb{\v@lY}}%
    \figscan@{#2}{#3}\resetc@ntr@l\et@tfigscan@E}\ignorespaces}
\ctr@ld@f\def\figshowpts[#1,#2]{{\figsetmark{$\bullet$}\figsetptname{\bf ##1}%
    \p@rtent=#2\relax\ifnum\p@rtent<\z@\p@rtent=\z@\fi%
    \s@mme=#1\relax\ifnum\s@mme<\z@\s@mme=\z@\fi%
    \loop\ifnum\s@mme<\p@rtent\pt@rvect{\s@mme}%
    \ifitis@K\figwriten{\the\s@mme}:(4pt)\fi\advance\s@mme\@ne\repeat%
    \pt@rvect{\s@mme}\ifitis@K\figwriten{\the\s@mme}:(4pt)\fi}\ignorespaces}
\ctr@ld@f\def\pt@rvect#1{\set@bjc@de{#1}%
    \expandafter\expandafter\expandafter\inqpt@rvec\csname\objc@de\endcsname:}
\ctr@ld@f\def\inqpt@rvec#1#2:{\if#1\C@dCl@spt\itis@Ktrue\else\itis@Kfalse\fi}
\ctr@ld@f\def\figshowsettings{{%
    \immediate\write16{====================================================================}%
    \immediate\write16{ Current settings are (DDV means "with dynamic default value"):}%
    \immediate\write16{ --- GENERAL ---}%
    \immediate\write16{Scale factor and Unit = \unit@util\space (\the\unit@)
     \space -> \BS@ figinit{ScaleFactorUnit}}%
    \immediate\write16{Update mode = \ifGRupdatem@de yes\else no\fi
     \space-> \BS@ figset(update=yes/no) or \BS@ figsetdefault(update=yes/no)}%
    \immediate\write16{ --- WRITING ---}%
    \immediate\write16{Implicit point name = \ptn@me{i} \space-> \BS@ figset write(ptname={Name})}%
    \immediate\write16{Point marker = \the\c@nsymb \space -> \BS@ figset write(mark=Mark)}%
    \immediate\write16{Print rounded coordinates = \ifr@undcoord yes\else no\fi
     \space-> \BS@ figset write(roundcoord=yes/no)}%
    \immediate\write16{ --- GRAPHICAL (general) ---}%
    \immediate\write16{Color = \CUR@color \space-> \BS@ figset(color=ColorDefinition)}%
    \immediate\write16{Filling mode = \iffillm@de yes\else no\fi
     \space-> \BS@ figset(fillmode=yes/no)}%
    \immediate\write16{Line join = \CUR@join \space-> \BS@ figset(join=miter/round/bevel)}%
    \immediate\write16{Line style = \CUR@dash \space-> \BS@ figset(dash=Index/Pattern)}%
    \immediate\write16{Line width = \CUR@width
     \space-> \BS@ figset(width=real in PostScript units)}%
    \immediate\write16{ --- GRAPHICAL (specific) ---}%
    \immediate\write16{Altitude (all the following attributes are DDV):}%
    \immediate\write16{ Base line color =
     \ifx\DDV@blcolor\D@FTref general color\else\DDV@blcolor\fi
     \space-> \BS@ figset altitude(blcolor=ColorDefinition)}%
    \immediate\write16{ Base line style =
     \ifx\DDV@bldash\D@FTref general style\else\DDV@bldash\fi
     \space-> \BS@ figset altitude(bldash=Index/Pattern)}%
    \immediate\write16{ Base line width =
     \ifx\DDV@blwidth\D@FTref general width\else\DDV@blwidth\fi
     \space-> \BS@ figset altitude(blwidth=real in PostScript units)}%
    \immediate\write16{ Square line color =
     \ifx\DDV@sqcolor\D@FTref general color\else\DDV@sqcolor\fi
     \space-> \BS@ figset altitude(sqcolor=ColorDefinition)}%
    \immediate\write16{ Square line style =
     \ifx\DDV@sqdash\D@FTref general style\else\DDV@sqdash\fi
     \space-> \BS@ figset altitude(sqdash=Index/Pattern)}%
    \immediate\write16{ Square line width =
     \ifx\DDV@sqwidth\D@FTref general width\else\DDV@sqwidth\fi
     \space-> \BS@ figset altitude(sqwidth=real in PostScript units)}%
    \immediate\write16{Arrowhead:}%
    \immediate\write16{ (half-)Angle = \@rrowheadangle
     \space-> \BS@ figset arrowhead(angle=real in degrees)}%
    \immediate\write16{ Filling mode = \if@rrowhfill yes\else no\fi
     \space-> \BS@ figset arrowhead(fillmode=yes/no)}%
    \immediate\write16{ "Outside" = \if@rrowhout yes\else no\fi
     \space-> \BS@ figset arrowhead(out=yes/no)}%
    \immediate\write16{ Length = \@rrowheadlength
     \if@rrowratio\space(not active)\else\space(active)\fi
     \space-> \BS@ figset arrowhead(length=real in user coord.)}%
    \immediate\write16{ Ratio = \@rrowheadratio
     \if@rrowratio\space(active)\else\space(not active)\fi
     \space-> \BS@ figset arrowhead(ratio=real in [0,1])}%
    \immediate\write16{Curve:}%
    \immediate\write16{ Roundness = \curv@roundness
     \space-> \BS@ figset curve(roundness=real in [0,0.5])}%
    \immediate\write16{Flow chart:}%
    \immediate\write16{ Arrow position = \@rrowp@s
     \space-> \BS@ figset flowchart(arrowposition=real in [0,1])}%
    \immediate\write16{ Arrow reference point = \ifcase\@rrowr@fpt start\else end\fi
     \space-> \BS@ figset flowchart(arrowrefpt = start/end)}%
    \immediate\write16{ Background color = \fcbgc@lor
     \space-> \BS@ figset flowchart(bgcolor=ColorDefinition)}%
    \immediate\write16{ Line type = \ifcase\fclin@typ@ curve\else polygon\fi
     \space-> \BS@ figset flowchart(line=polygon/curve)}%
    \immediate\write16{ Padding = (\Xp@dd, \Yp@dd)
     \space-> \BS@ figset flowchart(padding = real in user coord.)}%
    \immediate\write16{\space\space\space\space(or
     \BS@ figset flowchart(xpadding=real, ypadding=real) )}%
    \immediate\write16{ Radius = \fclin@r@d
     \space-> \BS@ figset flowchart(radius=positive real in user coord.)}%
    \immediate\write16{ Shape = \fcsh@pe
     \space-> \BS@ figset flowchart(shape = rectangle, ellipse or lozenge)}%
    \immediate\write16{ Thickness color (DDV) = 
     \ifx\DDV@thickcolor\D@FTref general color\else\DDV@thickcolor\fi
     \space-> \BS@ figset flowchart(thickcolor=ColorDefinition)}%
    \immediate\write16{ Thickness = \thickn@ss
     \space-> \BS@ figset flowchart(thickness = real in user coord.)}%
    \immediate\write16{Mesh:}%
    \immediate\write16{ Diagonal = \c@ntrolmesh
     \space-> \BS@ figset mesh(diag=integer in {-1,0,1})}%
    \immediate\write16{ Lines color (DDV) =
     \ifx\DDV@meshcolor\D@FTref general color\else\DDV@meshcolor\fi
     \space-> \BS@ figset mesh(color=ColorDefinition)}%
    \immediate\write16{ Lines style (DDV) =
     \ifx\DDV@meshdash\D@FTref general style\else\DDV@meshdash\fi
     \space-> \BS@ figset mesh(dash=Index/Pattern)}%
    \immediate\write16{ Lines width (DDV) =
     \ifx\DDV@meshwidth\D@FTref general width\else\DDV@meshwidth\fi
     \space-> \BS@ figset mesh(width=real in PostScript units)}%
    \immediate\write16{Trimesh:}%
    \immediate\write16{ Lines color (DDV) =
     \ifx\DDV@tmeshcolor\D@FTref general color\else\DDV@tmeshcolor\fi
     \space-> \BS@ figset trimesh(color=ColorDefinition)}%
    \immediate\write16{ Lines style (DDV) =
     \ifx\DDV@tmeshdash\D@FTref general style\else\DDV@tmeshdash\fi
     \space-> \BS@ figset trimesh(dash=Index/Pattern)}%
    \immediate\write16{ Lines width (DDV) =
     \ifx\DDV@tmeshwidth\D@FTref general width\else\DDV@tmeshwidth\fi
     \space-> \BS@ figset trimesh(width=real in PostScript units)}%
    \ifTr@isDim%
    \immediate\write16{ --- 3D to 2D PROJECTION ---}%
    \immediate\write16{Projection : \typ@proj \space-> \BS@ figinit{ScaleFactorUnit, ProjType}}%
    \immediate\write16{Longitude (psi) = \v@lPsi \space-> \BS@ figset proj(psi=real in degrees)}%
    \ifcase\CUR@proj\immediate\write16{Depth coeff. (Lambda)
     \space = \v@lTheta \space-> \BS@ figset proj(lambda=real in [0,1])}%
    \else\immediate\write16{Latitude (theta)
     \space = \v@lTheta \space-> \BS@ figset proj(theta=real in degrees)}%
    \fi%
    \ifnum\CUR@proj=\tw@%
    \immediate\write16{Observation distance = \disob@unit
     \space-> \BS@ figset proj(dist=real in user coord.)}%
    \immediate\write16{Target point = \t@rgetpt \space-> \BS@ figset proj(targetpt=pt number)}%
     \v@lX=\ptT@unit@\wd\Bt@rget\v@lY=\ptT@unit@\ht\Bt@rget\v@lZ=\ptT@unit@\dp\Bt@rget%
    \immediate\write16{ Its coordinates are
     (\repdecn@mb{\v@lX}, \repdecn@mb{\v@lY}, \repdecn@mb{\v@lZ})}%
    \fi%
    \fi%
    \immediate\write16{====================================================================}%
    \ignorespaces}}
\ctr@ln@w{newif}\ifitis@vect@r
\ctr@ld@f\def\figvectC#1(#2,#3){{\itis@vect@rtrue\figpt#1:(#2,#3)}\ignorespaces}
\ctr@ld@f\def\Figv@ctCreg#1(#2,#3){{\itis@vect@rtrue\Figp@intreg#1:(#2,#3)}\ignorespaces}
\ctr@ln@m\figvectDBezier
\ctr@ld@f\def\figvectDBezierDD#1:#2,#3[#4,#5,#6,#7]{\ifGR@cri{\s@uvc@ntr@l\et@tfigvectDBezierDD%
    \FigvectDBezier@#2,#3[#4,#5,#6,#7]\v@lX=\c@ef\v@lX\v@lY=\c@ef\v@lY%
    \Figv@ctCreg#1(\v@lX,\v@lY)\resetc@ntr@l\et@tfigvectDBezierDD}\ignorespaces\fi}
\ctr@ld@f\def\figvectDBezierTD#1:#2,#3[#4,#5,#6,#7]{\ifGR@cri{\s@uvc@ntr@l\et@tfigvectDBezierTD%
    \FigvectDBezier@#2,#3[#4,#5,#6,#7]\v@lX=\c@ef\v@lX\v@lY=\c@ef\v@lY\v@lZ=\c@ef\v@lZ%
    \Figv@ctCreg#1(\v@lX,\v@lY,\v@lZ)\resetc@ntr@l\et@tfigvectDBezierTD}\ignorespaces\fi}
\ctr@ld@f\def\FigvectDBezier@#1,#2[#3,#4,#5,#6]{\setc@ntr@l{2}%
    \edef\T@{#2}\v@leur=\p@\advance\v@leur-#2pt\edef\UNmT@{\repdecn@mb{\v@leur}}%
    \ifnum#1=\tw@\def\c@ef{6}\else\def\c@ef{3}\fi%
    \figptcopy-4:/#3/\figptcopy-3:/#4/\figptcopy-2:/#5/\figptcopy-1:/#6/%
    \l@mbd@un=-4 \l@mbd@de=-\thr@@\p@rtent=\m@ne\c@lDecast%
    \ifnum#1=\tw@\c@lDCDeux{-4}{-3}\c@lDCDeux{-3}{-2}\c@lDCDeux{-4}{-3}\else%
    \l@mbd@un=-4 \l@mbd@de=-\thr@@\p@rtent=-\tw@\c@lDecast%
    \c@lDCDeux{-4}{-3}\fi\Figg@tXY{-4}}
\ctr@ln@m\c@lDCDeux
\ctr@ld@f\def\c@lDCDeuxDD#1#2{\Figg@tXY{#2}\Figg@tXYa{#1}%
    \advance\v@lX-\v@lXa\advance\v@lY-\v@lYa\Figp@intregDD#1:(\v@lX,\v@lY)}
\ctr@ld@f\def\c@lDCDeuxTD#1#2{\Figg@tXY{#2}\Figg@tXYa{#1}\advance\v@lX-\v@lXa%
    \advance\v@lY-\v@lYa\advance\v@lZ-\v@lZa\Figp@intregTD#1:(\v@lX,\v@lY,\v@lZ)}
\ctr@ln@m\figvectN
\ctr@ld@f\def\figvectNDD#1[#2,#3]{\ifGR@cri{\Figg@tXYa{#2}\Figg@tXY{#3}%
    \advance\v@lX-\v@lXa\advance\v@lY-\v@lYa%
    \Figv@ctCreg#1(-\v@lY,\v@lX)}\ignorespaces\fi}
\ctr@ld@f\def\figvectNTD#1[#2,#3,#4]{\ifGR@cri{\vecunitC@TD[#2,#4]\v@lmin=\v@lX\v@lmax=\v@lY%
    \v@leur=\v@lZ\vecunitC@TD[#2,#3]\c@lprovec{#1}}\ignorespaces\fi}
\ctr@ln@m\figvectNV
\ctr@ld@f\def\figvectNVDD#1[#2]{\ifGR@cri{\Figg@tXY{#2}\Figv@ctCreg#1(-\v@lY,\v@lX)}\ignorespaces\fi}
\ctr@ld@f\def\figvectNVTD#1[#2,#3]{\ifGR@cri{\vecunitCV@TD{#3}\v@lmin=\v@lX\v@lmax=\v@lY%
    \v@leur=\v@lZ\vecunitCV@TD{#2}\c@lprovec{#1}}\ignorespaces\fi}
\ctr@ln@m\figvectP
\ctr@ld@f\def\figvectPDD#1[#2,#3]{\ifGR@cri{\Figg@tXYa{#2}\Figg@tXY{#3}%
    \advance\v@lX-\v@lXa\advance\v@lY-\v@lYa%
    \Figv@ctCreg#1(\v@lX,\v@lY)}\ignorespaces\fi}
\ctr@ld@f\def\figvectPTD#1[#2,#3]{\ifGR@cri{\Figg@tXYa{#2}\Figg@tXY{#3}%
    \advance\v@lX-\v@lXa\advance\v@lY-\v@lYa\advance\v@lZ-\v@lZa%
    \Figv@ctCreg#1(\v@lX,\v@lY,\v@lZ)}\ignorespaces\fi}
\ctr@ln@m\figvectU
\ctr@ld@f\def\figvectUDD#1[#2]{\ifGR@cri{\n@rmeuc\v@leur{#2}\invers@\v@leur\v@leur%
    \delt@=\repdecn@mb{\v@leur}\unit@\edef\v@ldelt@{\repdecn@mb{\delt@}}%
    \Figg@tXY{#2}\v@lX=\v@ldelt@\v@lX\v@lY=\v@ldelt@\v@lY%
    \Figv@ctCreg#1(\v@lX,\v@lY)}\ignorespaces\fi}
\ctr@ld@f\def\figvectUTD#1[#2]{\ifGR@cri{\n@rmeuc\v@leur{#2}\invers@\v@leur\v@leur%
    \delt@=\repdecn@mb{\v@leur}\unit@\edef\v@ldelt@{\repdecn@mb{\delt@}}%
    \Figg@tXY{#2}\v@lX=\v@ldelt@\v@lX\v@lY=\v@ldelt@\v@lY\v@lZ=\v@ldelt@\v@lZ%
    \Figv@ctCreg#1(\v@lX,\v@lY,\v@lZ)}\ignorespaces\fi}
\ctr@ld@f\def\figvisu#1#2#3{\c@ldefproj\initb@undb@x\xdef\figforTeXFigno{\figforTeXnextFigno}%
    \s@mme=\figforTeXnextFigno\advance\s@mme\@ne\xdef\figforTeXnextFigno{\number\s@mme}%
    \setbox\b@xvisu=\hbox{\ifnum\@utoFN>\z@\figinsert{}\gdef\@utoFInDone{0}\fi\ignorespaces#3}%
    \gdef\@utoFInDone{1}\gdef\@utoFN{0}%
    \v@lXa=-\c@@rdYmin\v@lYa=\c@@rdYmax\advance\v@lYa-\c@@rdYmin%
    \v@lX=\c@@rdXmax\advance\v@lX-\c@@rdXmin%
    \setbox#1=\hbox{#2}\v@lY=-\v@lX\maxim@m{\v@lX}{\v@lX}{\wd#1}%
    \advance\v@lY\v@lX\divide\v@lY\tw@\advance\v@lY-\c@@rdXmin%
    \setbox#1=\vbox{\parindent\z@\hsize=\v@lX\vskip\v@lYa%
    \rlap{\hskip\v@lY\smash{\raise\v@lXa\box\b@xvisu}}%
    \def\t@xt@{#2}\ifx\t@xt@\empty\else\medskip\centerline{#2}\fi}\wd#1=\v@lX}
\ctr@ld@f\def\figDecrementFigno{{\xdef\figforTeXnextFigno{\figforTeXFigno}%
    \s@mme=\figforTeXFigno\advance\s@mme\m@ne\xdef\figforTeXFigno{\number\s@mme}}}
\ctr@ln@w{newbox}\Bt@rget\setbox\Bt@rget=\null
\ctr@ln@w{newbox}\BminTD@\setbox\BminTD@=\null
\ctr@ln@w{newbox}\BmaxTD@\setbox\BmaxTD@=\null
\ctr@ln@w{newif}\ifnewt@rgetpt\ctr@ln@w{newif}\ifnewdis@b
\ctr@ld@f\def\b@undb@xTD#1#2#3{%
    \relax\ifdim#1<\wd\BminTD@\global\wd\BminTD@=#1\fi%
    \relax\ifdim#2<\ht\BminTD@\global\ht\BminTD@=#2\fi%
    \relax\ifdim#3<\dp\BminTD@\global\dp\BminTD@=#3\fi%
    \relax\ifdim#1>\wd\BmaxTD@\global\wd\BmaxTD@=#1\fi%
    \relax\ifdim#2>\ht\BmaxTD@\global\ht\BmaxTD@=#2\fi%
    \relax\ifdim#3>\dp\BmaxTD@\global\dp\BmaxTD@=#3\fi}
\ctr@ld@f\def\c@ldefdisob{{\ifdim\wd\BminTD@<\maxdimen\v@leur=\wd\BmaxTD@\advance\v@leur-\wd\BminTD@%
    \delt@=\ht\BmaxTD@\advance\delt@-\ht\BminTD@\maxim@m{\v@leur}{\v@leur}{\delt@}%
    \delt@=\dp\BmaxTD@\advance\delt@-\dp\BminTD@\maxim@m{\v@leur}{\v@leur}{\delt@}%
    \v@leur=5\v@leur\else\v@leur=800pt\fi\c@ldefdisob@{\v@leur}}}
\ctr@ln@m\disob@intern
\ctr@ln@m\disob@
\ctr@ln@m\divf@ctproj
\ctr@ld@f\def\c@ldefdisob@#1{{\v@leur=#1\ifdim\v@leur<\p@\v@leur=800pt\fi%
    \xdef\disob@intern{\repdecn@mb{\v@leur}}%
    \delt@=\ptT@unit@\v@leur\xdef\disob@unit{\repdecn@mb{\delt@}}%
    \f@ctech=\@ne\loop\ifdim\v@leur>\t@n pt\divide\v@leur\t@n\multiply\f@ctech\t@n\repeat%
    \xdef\disob@{\repdecn@mb{\v@leur}}\xdef\divf@ctproj{\the\f@ctech}}%
    \global\newdis@btrue}
\ctr@ln@m\t@rgetpt
\ctr@ld@f\def\c@ldeft@rgetpt{\newt@rgetpttrue\def\t@rgetpt{CenterBoundBox}{%
    \delt@=\wd\BmaxTD@\advance\delt@-\wd\BminTD@\divide\delt@\tw@%
    \v@leur=\wd\BminTD@\advance\v@leur\delt@\global\wd\Bt@rget=\v@leur%
    \delt@=\ht\BmaxTD@\advance\delt@-\ht\BminTD@\divide\delt@\tw@%
    \v@leur=\ht\BminTD@\advance\v@leur\delt@\global\ht\Bt@rget=\v@leur%
    \delt@=\dp\BmaxTD@\advance\delt@-\dp\BminTD@\divide\delt@\tw@%
    \v@leur=\dp\BminTD@\advance\v@leur\delt@\global\dp\Bt@rget=\v@leur}}
\ctr@ln@m\c@ldefproj
\ctr@ld@f\def\c@ldefprojTD{\ifnewt@rgetpt\else\c@ldeft@rgetpt\fi\ifnewdis@b\else\c@ldefdisob\fi}
\ctr@ld@f\def\c@lprojcav{
    \v@lZa=\cxa@\v@lY\advance\v@lX\v@lZa%
    \v@lZa=\cxb@\v@lY\v@lY=\v@lZ\advance\v@lY\v@lZa\ignorespaces}
\ctr@ln@m\v@lcoef
\ctr@ld@f\def\c@lprojrea{
    \advance\v@lX-\wd\Bt@rget\advance\v@lY-\ht\Bt@rget\advance\v@lZ-\dp\Bt@rget%
    \v@lZa=\cza@\v@lX\advance\v@lZa\czb@\v@lY\advance\v@lZa\czc@\v@lZ%
    \divide\v@lZa\divf@ctproj\advance\v@lZa\disob@ pt\invers@{\v@lZa}{\v@lZa}%
    \v@lZa=\disob@\v@lZa\edef\v@lcoef{\repdecn@mb{\v@lZa}}%
    \v@lXa=\cxa@\v@lX\advance\v@lXa\cxb@\v@lY\v@lXa=\v@lcoef\v@lXa%
    \v@lY=\cyb@\v@lY\advance\v@lY\cya@\v@lX\advance\v@lY\cyc@\v@lZ%
    \v@lY=\v@lcoef\v@lY\v@lX=\v@lXa\ignorespaces}
\ctr@ld@f\def\c@lprojort{
    \v@lXa=\cxa@\v@lX\advance\v@lXa\cxb@\v@lY%
    \v@lY=\cyb@\v@lY\advance\v@lY\cya@\v@lX\advance\v@lY\cyc@\v@lZ%
    \v@lX=\v@lXa\ignorespaces}
\ctr@ld@f\def\Figptpr@j#1:#2/#3/{{\Figg@tXY{#3}\superc@lprojSP%
    \Figp@intregDD#1:{#2}(\v@lX,\v@lY)}\ignorespaces}
\ctr@ln@m\figsetobdist
\ctr@ld@f\def\figsetobdistDD{\un@v@ilable{figsetobdist}}
\ctr@ld@f\def\figsetobdistTD(#1){{\ifCUR@PS\W@rnmesIgn{figset proj(dist=...)}%
    \else\v@leur=#1\unit@\c@ldefdisob@{\v@leur}\fi}\ignorespaces}
\ctr@ln@m\c@lprojSP
\ctr@ln@m\CUR@proj
\ctr@ln@m\typ@proj
\ctr@ln@m\superc@lprojSP
\ctr@ld@f\def\Figs@tproj#1{%
    \if#13 \def@ultproj\else\if#1c\def@ultproj%
    \else\if#1o\xdef\CUR@proj{1}\xdef\typ@proj{orthogonal}%
         \figsetviewTD(\def@ultpsi,\def@ulttheta)%
         \global\let\c@lprojSP=\c@lprojort\global\let\superc@lprojSP=\c@lprojort%
    \else\if#1r\xdef\CUR@proj{2}\xdef\typ@proj{realistic}%
         \figsetviewTD(\def@ultpsi,\def@ulttheta)%
         \global\let\c@lprojSP=\c@lprojrea\global\let\superc@lprojSP=\c@lprojrea%
    \else\def@ultproj\message{*** Unknown projection. Cavalier projection assumed.}%
    \fi\fi\fi\fi}
\ctr@ld@f\def\def@ultproj{\xdef\CUR@proj{0}\xdef\typ@proj{cavalier}\figsetviewTD(\def@ultpsi,0.5)%
         \global\let\c@lprojSP=\c@lprojcav\global\let\superc@lprojSP=\c@lprojcav}
\ctr@ln@m\figsettarget
\ctr@ld@f\def\figsettargetDD{\un@v@ilable{figsettarget}}
\ctr@ld@f\def\figsettargetTD[#1]{{\ifCUR@PS\W@rnmesIgn{figset proj(targetpt=...)}%
    \else\global\newt@rgetpttrue\xdef\t@rgetpt{#1}\Figg@tXY{#1}\global\wd\Bt@rget=\v@lX%
    \global\ht\Bt@rget=\v@lY\global\dp\Bt@rget=\v@lZ\fi}\ignorespaces}
\ctr@ln@m\figsetview
\ctr@ld@f\def\figsetviewDD{\un@v@ilable{figsetview}}
\ctr@ld@f\def\figsetviewTD(#1){\ifCUR@PS\W@rnmesIgn{figset proj(Psi|Theta|Lambda=...)}%
     \else\Figsetview@#1,:\fi\ignorespaces}
\ctr@ld@f\def\Figsetview@#1,#2:{{\xdef\v@lPsi{#1}\def\t@xt@{#2}%
    \ifx\t@xt@\empty\def\@rgdeux{\v@lTheta}\else\X@rgdeux@#2\fi%
    \c@ssin{\costhet@}{\sinthet@}{#1}\v@lmin=\costhet@ pt\v@lmax=\sinthet@ pt%
    \ifcase\CUR@proj%
    \v@leur=\@rgdeux\v@lmin\xdef\cxa@{\repdecn@mb{\v@leur}}%
    \v@leur=\@rgdeux\v@lmax\xdef\cxb@{\repdecn@mb{\v@leur}}\v@leur=\@rgdeux pt%
    \relax\ifdim\v@leur>\p@\message{*** Lambda too large ! See \BS@ figset proj() !}\fi%
    \else%
    \v@lmax=-\v@lmax\xdef\cxa@{\repdecn@mb{\v@lmax}}\xdef\cxb@{\costhet@}%
    \ifx\t@xt@\empty\edef\@rgdeux{\def@ulttheta}\fi\c@ssin{\C@}{\S@}{\@rgdeux}%
    \v@lmax=-\S@ pt%
    \v@leur=\v@lmax\v@leur=\costhet@\v@leur\xdef\cya@{\repdecn@mb{\v@leur}}%
    \v@leur=\v@lmax\v@leur=\sinthet@\v@leur\xdef\cyb@{\repdecn@mb{\v@leur}}%
    \xdef\cyc@{\C@}\v@lmin=-\C@ pt%
    \v@leur=\v@lmin\v@leur=\costhet@\v@leur\xdef\cza@{\repdecn@mb{\v@leur}}%
    \v@leur=\v@lmin\v@leur=\sinthet@\v@leur\xdef\czb@{\repdecn@mb{\v@leur}}%
    \xdef\czc@{\repdecn@mb{\v@lmax}}\fi%
    \xdef\v@lTheta{\@rgdeux}}}
\ctr@ld@f\def\def@ultpsi{40}
\ctr@ld@f\def\def@ulttheta{25}
\ctr@ln@m\l@debut
\ctr@ln@m\n@mref
\ctr@ld@f\def\Figsetpr@j#1=#2|{\keln@mtr#1|%
    \def\n@mref{dep}\ifx\l@debut\n@mref\Figsetd@p{#2}\else
    \def\n@mref{dis}\ifx\l@debut\n@mref%
     \ifnum\CUR@proj=\tw@\figsetobdist(#2)\else\Figset@rr\fi\else
    \def\n@mref{lam}\ifx\l@debut\n@mref\Figsetd@p{#2}\else
    \def\n@mref{lat}\ifx\l@debut\n@mref\Figsetth@{#2}\else
    \def\n@mref{lon}\ifx\l@debut\n@mref\figsetview(#2)\else
    \def\n@mref{psi}\ifx\l@debut\n@mref\figsetview(#2)\else
    \def\n@mref{tar}\ifx\l@debut\n@mref%
     \ifnum\CUR@proj=\tw@\figsettarget[#2]\else\Figset@rr\fi\else
    \def\n@mref{the}\ifx\l@debut\n@mref\Figsetth@{#2}\else
    \W@rnmesAttr{figset proj}{#1}\fi\fi\fi\fi\fi\fi\fi\fi}
\ctr@ld@f\def\Figsetd@p#1{\ifnum\CUR@proj=\z@\figsetview(\v@lPsi,#1)\else\Figset@rr\fi}
\ctr@ld@f\def\Figsetth@#1{\ifnum\CUR@proj=\z@\Figset@rr\else\figsetview(\v@lPsi,#1)\fi}
\ctr@ld@f\def\Figset@rr{\message{*** \BS@ figset proj(): Attribute "\n@mref" ignored, incompatible
    with current projection}}
\ctr@ld@f\def\initb@undb@xTD{\wd\BminTD@=\maxdimen\ht\BminTD@=\maxdimen\dp\BminTD@=\maxdimen%
    \wd\BmaxTD@=-\maxdimen\ht\BmaxTD@=-\maxdimen\dp\BmaxTD@=-\maxdimen}
\ctr@ln@w{newbox}\Gb@x      
\ctr@ln@w{newbox}\Gb@xSC    
\ctr@ln@w{newtoks}\c@nsymb  
\ctr@ln@w{newif}\ifr@undcoord\ctr@ln@w{newif}\ifunitpr@sent
\ctr@ld@f\def\unssqrttw@{0.707106 }
\ctr@ld@f\def\figAst{\raise-1.15ex\hbox{$\ast$}}
\ctr@ld@f\def\figBullet{\raise-1.15ex\hbox{$\bullet$}}
\ctr@ld@f\def\figCirc{\raise-1.15ex\hbox{$\circ$}}
\ctr@ld@f\def\figDiamond{\raise-1.15ex\hbox{$\diamond$}}%
\ctr@ld@f\def\boxit#1#2{\leavevmode\hbox{\vrule\vbox{\hrule\vglue#1%
    \vtop{\hbox{\kern#1{#2}\kern#1}\vglue#1\hrule}}\vrule}}
\ctr@ld@f
\ctr@ld@f
\ctr@ld@f\def\c@nterpt{\ignorespaces%
    \kern-.5\wd\Gb@xSC%
    \raise-.5\ht\Gb@xSC\rlap{\hbox{\raise.5\dp\Gb@xSC\hbox{\copy\Gb@xSC}}}%
    \kern .5\wd\Gb@xSC\ignorespaces}
\ctr@ld@f\def\b@undb@xSC#1#2{{\v@lXa=#1\v@lYa=#2%
    \v@leur=\ht\Gb@xSC\advance\v@leur\dp\Gb@xSC%
    \advance\v@lXa-.5\wd\Gb@xSC\advance\v@lYa-.5\v@leur\b@undb@x{\v@lXa}{\v@lYa}%
    \advance\v@lXa\wd\Gb@xSC\advance\v@lYa\v@leur\b@undb@x{\v@lXa}{\v@lYa}}}
\ctr@ln@m\Dist@n
\ctr@ln@m\l@suite
\ctr@ld@f\def\@keldist#1#2{\edef\Dist@n{#2}\y@tiunit{\Dist@n}%
    \ifunitpr@sent#1=\Dist@n\else#1=\Dist@n\unit@\fi}
\ctr@ld@f\def\y@tiunit#1{\unitpr@sentfalse\expandafter\y@tiunit@#1:}
\ctr@ld@f\def\y@tiunit@#1#2:{\ifcat#1a\unitpr@senttrue\else\def\l@suite{#2}%
    \ifx\l@suite\empty\else\y@tiunit@#2:\fi\fi}
\ctr@ln@m\figcoord
\ctr@ld@f\def\figcoordDD#1{{\v@lX=\ptT@unit@\v@lX\v@lY=\ptT@unit@\v@lY%
    \ifr@undcoord\ifcase#1\v@leur=0.5pt\or\v@leur=0.05pt\or\v@leur=0.005pt%
    \or\v@leur=0.0005pt\else\v@leur=\z@\fi%
    \ifdim\v@lX<\z@\advance\v@lX-\v@leur\else\advance\v@lX\v@leur\fi%
    \ifdim\v@lY<\z@\advance\v@lY-\v@leur\else\advance\v@lY\v@leur\fi\fi%
    (\@ffichnb{#1}{\repdecn@mb{\v@lX}},\ifmmode\else\thinspace\fi%
    \@ffichnb{#1}{\repdecn@mb{\v@lY}})}}
\ctr@ld@f\def\@ffichnb#1#2{{\def\@@ffich{\@ffich#1(}\edef\n@mbre{#2}%
    \expandafter\@@ffich\n@mbre)}}
\ctr@ld@f\def\@ffich#1(#2.#3){{#2\ifnum#1>\z@.\fi\def\dig@ts{#3}\s@mme=\z@%
    \loop\ifnum\s@mme<#1\expandafter\@ffichdec\dig@ts:\advance\s@mme\@ne\repeat}}
\ctr@ld@f\def\@ffichdec#1#2:{\relax#1\def\dig@ts{#20}}
\ctr@ld@f\def\figcoordTD#1{{\v@lX=\ptT@unit@\v@lX\v@lY=\ptT@unit@\v@lY\v@lZ=\ptT@unit@\v@lZ%
    \ifr@undcoord\ifcase#1\v@leur=0.5pt\or\v@leur=0.05pt\or\v@leur=0.005pt%
    \or\v@leur=0.0005pt\else\v@leur=\z@\fi%
    \ifdim\v@lX<\z@\advance\v@lX-\v@leur\else\advance\v@lX\v@leur\fi%
    \ifdim\v@lY<\z@\advance\v@lY-\v@leur\else\advance\v@lY\v@leur\fi%
    \ifdim\v@lZ<\z@\advance\v@lZ-\v@leur\else\advance\v@lZ\v@leur\fi\fi%
    (\@ffichnb{#1}{\repdecn@mb{\v@lX}},\ifmmode\else\thinspace\fi%
     \@ffichnb{#1}{\repdecn@mb{\v@lY}},\ifmmode\else\thinspace\fi%
     \@ffichnb{#1}{\repdecn@mb{\v@lZ}})}}
\ctr@ld@f\def\figsetroundcoord#1{\expandafter\Figsetr@undcoord#1:\ignorespaces}
\ctr@ld@f\def\Figsetr@undcoord#1#2:{\if#1n\r@undcoordfalse\else\r@undcoordtrue\fi}
\ctr@ld@f\def\Figsetwr@te#1=#2|{\keln@mun#1|%
    \def\n@mref{m}\ifx\l@debut\n@mref\figsetmark{#2}\else
    \def\n@mref{p}\ifx\l@debut\n@mref\figsetptname{#2}\else
    \def\n@mref{r}\ifx\l@debut\n@mref\figsetroundcoord{#2}\else
    \W@rnmesAttr{figset write}{#1}\fi\fi\fi}
\ctr@ld@f\def\figsetmark#1{\c@nsymb={#1}\setbox\Gb@xSC=\hbox{\the\c@nsymb}\ignorespaces}
\ctr@ln@m\ptn@me
\ctr@ld@f\def\figsetptname#1{\def\ptn@me##1{#1}\ignorespaces}
\ctr@ld@f\def\FigWrit@L#1:#2(#3,#4){\ignorespaces\@keldist\v@leur{#3}\@keldist\delt@{#4}%
    \C@rp@r@m\def\list@num{#1}\@ecfor\p@int:=\list@num\do{\FigWrit@pt{\p@int}{#2}}}
\ctr@ld@f\def\FigWrit@pt#1#2{\FigWp@r@m{#1}{#2}\Vc@rrect\figWp@si%
    \ifdim\wd\Gb@xSC>\z@\b@undb@xSC{\v@lX}{\v@lY}\fi\figWBB@x}
\ctr@ld@f\def\FigWp@r@m#1#2{\Figg@tXY{#1}%
    \setbox\Gb@x=\hbox{\def\t@xt@{#2}\ifx\t@xt@\empty\Figg@tT{#1}\else#2\fi}\c@lprojSP}
\ctr@ld@f\let\Vc@rrect=\relax
\ctr@ld@f\let\C@rp@r@m=\relax
\ctr@ld@f\def\figwrite[#1]#2{{\ignorespaces\def\list@num{#1}\@ecfor\p@int:=\list@num\do{%
    \setbox\Gb@x=\hbox{\def\t@xt@{#2}\ifx\t@xt@\empty\Figg@tT{\p@int}\else#2\fi}%
    \Figwrit@{\p@int}}}\ignorespaces}
\ctr@ld@f\def\Figwrit@#1{\Figg@tXY{#1}\c@lprojSP%
    \rlap{\kern\v@lX\raise\v@lY\hbox{\unhcopy\Gb@x}}\v@leur=\v@lY%
    \advance\v@lY\ht\Gb@x\b@undb@x{\v@lX}{\v@lY}\advance\v@lX\wd\Gb@x%
    \v@lY=\v@leur\advance\v@lY-\dp\Gb@x\b@undb@x{\v@lX}{\v@lY}}
\ctr@ld@f\def\figwritec[#1]#2{{\ignorespaces\def\list@num{#1}%
    \@ecfor\p@int:=\list@num\do{\Figwrit@c{\p@int}{#2}}}\ignorespaces}
\ctr@ld@f\def\Figwrit@c#1#2{\FigWp@r@m{#1}{#2}%
    \rlap{\kern\v@lX\raise\v@lY\hbox{\rlap{\kern-.5\wd\Gb@x%
    \raise-.5\ht\Gb@x\hbox{\raise.5\dp\Gb@x\hbox{\unhcopy\Gb@x}}}}}%
    \v@leur=\ht\Gb@x\advance\v@leur\dp\Gb@x%
    \advance\v@lX-.5\wd\Gb@x\advance\v@lY-.5\v@leur\b@undb@x{\v@lX}{\v@lY}%
    \advance\v@lX\wd\Gb@x\advance\v@lY\v@leur\b@undb@x{\v@lX}{\v@lY}}
\ctr@ld@f\def\figwritep[#1]{{\ignorespaces\def\list@num{#1}\setbox\Gb@x=\hbox{\c@nterpt}%
    \@ecfor\p@int:=\list@num\do{\Figwrit@{\p@int}}}\ignorespaces}
\ctr@ld@f\def\figwritew#1:#2(#3){\figwritegcw#1:{#2}(#3,0pt)}
\ctr@ld@f\def\figwritee#1:#2(#3){\figwritegce#1:{#2}(#3,0pt)}
\ctr@ld@f\def\figwriten#1:#2(#3){{\def\Vc@rrect{\v@lZ=\v@leur\advance\v@lZ\dp\Gb@x}%
    \Figwrit@NS#1:{#2}(#3)}\ignorespaces}
\ctr@ld@f\def\figwrites#1:#2(#3){{\def\Vc@rrect{\v@lZ=-\v@leur\advance\v@lZ-\ht\Gb@x}%
    \Figwrit@NS#1:{#2}(#3)}\ignorespaces}
\ctr@ld@f\def\Figwrit@NS#1:#2(#3){\let\figWp@si=\FigWp@siNS\let\figWBB@x=\FigWBB@xNS%
    \FigWrit@L#1:{#2}(#3,0pt)}
\ctr@ld@f\def\FigWp@siNS{\rlap{\kern\v@lX\raise\v@lY\hbox{\rlap{\kern-.5\wd\Gb@x%
    \raise\v@lZ\hbox{\unhcopy\Gb@x}}\c@nterpt}}}
\ctr@ld@f\def\FigWBB@xNS{\advance\v@lY\v@lZ%
    \advance\v@lY-\dp\Gb@x\advance\v@lX-.5\wd\Gb@x\b@undb@x{\v@lX}{\v@lY}%
    \advance\v@lY\ht\Gb@x\advance\v@lY\dp\Gb@x%
    \advance\v@lX\wd\Gb@x\b@undb@x{\v@lX}{\v@lY}}
\ctr@ld@f\def\figwritenw#1:#2(#3){{\let\figWp@si=\FigWp@sigW\let\figWBB@x=\FigWBB@xgWE%
    \def\C@rp@r@m{\v@leur=\unssqrttw@\v@leur\delt@=\v@leur%
    \ifdim\delt@=\z@\delt@=\epsil@n\fi}\let@xte={-}\FigWrit@L#1:{#2}(#3,0pt)}\ignorespaces}
\ctr@ld@f\def\figwritesw#1:#2(#3){{\let\figWp@si=\FigWp@sigW\let\figWBB@x=\FigWBB@xgWE%
    \def\C@rp@r@m{\v@leur=\unssqrttw@\v@leur\delt@=-\v@leur%
    \ifdim\delt@=\z@\delt@=-\epsil@n\fi}\let@xte={-}\FigWrit@L#1:{#2}(#3,0pt)}\ignorespaces}
\ctr@ld@f\def\figwritene#1:#2(#3){{\let\figWp@si=\FigWp@sigE\let\figWBB@x=\FigWBB@xgWE%
    \def\C@rp@r@m{\v@leur=\unssqrttw@\v@leur\delt@=\v@leur%
    \ifdim\delt@=\z@\delt@=\epsil@n\fi}\let@xte={}\FigWrit@L#1:{#2}(#3,0pt)}\ignorespaces}
\ctr@ld@f\def\figwritese#1:#2(#3){{\let\figWp@si=\FigWp@sigE\let\figWBB@x=\FigWBB@xgWE%
    \def\C@rp@r@m{\v@leur=\unssqrttw@\v@leur\delt@=-\v@leur%
    \ifdim\delt@=\z@\delt@=-\epsil@n\fi}\let@xte={}\FigWrit@L#1:{#2}(#3,0pt)}\ignorespaces}
\ctr@ld@f\def\figwritegw#1:#2(#3,#4){{\let\figWp@si=\FigWp@sigW\let\figWBB@x=\FigWBB@xgWE%
    \let@xte={-}\FigWrit@L#1:{#2}(#3,#4)}\ignorespaces}
\ctr@ld@f\def\figwritege#1:#2(#3,#4){{\let\figWp@si=\FigWp@sigE\let\figWBB@x=\FigWBB@xgWE%
    \let@xte={}\FigWrit@L#1:{#2}(#3,#4)}\ignorespaces}
\ctr@ld@f\def\FigWp@sigW{\v@lXa=\z@\v@lYa=\ht\Gb@x\advance\v@lYa\dp\Gb@x%
    \ifdim\delt@>\z@\relax%
    \rlap{\kern\v@lX\raise\v@lY\hbox{\rlap{\kern-\wd\Gb@x\kern-\v@leur%
          \raise\delt@\hbox{\raise\dp\Gb@x\hbox{\unhcopy\Gb@x}}}\c@nterpt}}%
    \else\ifdim\delt@<\z@\relax\v@lYa=-\v@lYa%
    \rlap{\kern\v@lX\raise\v@lY\hbox{\rlap{\kern-\wd\Gb@x\kern-\v@leur%
          \raise\delt@\hbox{\raise-\ht\Gb@x\hbox{\unhcopy\Gb@x}}}\c@nterpt}}%
    \else\v@lXa=-.5\v@lYa%
    \rlap{\kern\v@lX\raise\v@lY\hbox{\rlap{\kern-\wd\Gb@x\kern-\v@leur%
          \raise-.5\ht\Gb@x\hbox{\raise.5\dp\Gb@x\hbox{\unhcopy\Gb@x}}}\c@nterpt}}%
    \fi\fi}
\ctr@ld@f\def\FigWp@sigE{\v@lXa=\z@\v@lYa=\ht\Gb@x\advance\v@lYa\dp\Gb@x%
    \ifdim\delt@>\z@\relax%
    \rlap{\kern\v@lX\raise\v@lY\hbox{\c@nterpt\kern\v@leur%
          \raise\delt@\hbox{\raise\dp\Gb@x\hbox{\unhcopy\Gb@x}}}}%
    \else\ifdim\delt@<\z@\relax\v@lYa=-\v@lYa%
    \rlap{\kern\v@lX\raise\v@lY\hbox{\c@nterpt\kern\v@leur%
          \raise\delt@\hbox{\raise-\ht\Gb@x\hbox{\unhcopy\Gb@x}}}}%
    \else\v@lXa=-.5\v@lYa%
    \rlap{\kern\v@lX\raise\v@lY\hbox{\c@nterpt\kern\v@leur%
          \raise-.5\ht\Gb@x\hbox{\raise.5\dp\Gb@x\hbox{\unhcopy\Gb@x}}}}%
    \fi\fi}
\ctr@ld@f\def\FigWBB@xgWE{\advance\v@lY\delt@%
    \advance\v@lX\the\let@xte\v@leur\advance\v@lY\v@lXa\b@undb@x{\v@lX}{\v@lY}%
    \advance\v@lX\the\let@xte\wd\Gb@x\advance\v@lY\v@lYa\b@undb@x{\v@lX}{\v@lY}}
\ctr@ld@f\def\figwritegcw#1:#2(#3,#4){{\let\figWp@si=\FigWp@sigcW\let\figWBB@x=\FigWBB@xgcWE%
    \let@xte={-}\FigWrit@L#1:{#2}(#3,#4)}\ignorespaces}
\ctr@ld@f\def\figwritegce#1:#2(#3,#4){{\let\figWp@si=\FigWp@sigcE\let\figWBB@x=\FigWBB@xgcWE%
    \let@xte={}\FigWrit@L#1:{#2}(#3,#4)}\ignorespaces}
\ctr@ld@f\def\FigWp@sigcW{\rlap{\kern\v@lX\raise\v@lY\hbox{\rlap{\kern-\wd\Gb@x\kern-\v@leur%
     \raise-.5\ht\Gb@x\hbox{\raise\delt@\hbox{\raise.5\dp\Gb@x\hbox{\unhcopy\Gb@x}}}}%
     \c@nterpt}}}
\ctr@ld@f\def\FigWp@sigcE{\rlap{\kern\v@lX\raise\v@lY\hbox{\c@nterpt\kern\v@leur%
    \raise-.5\ht\Gb@x\hbox{\raise\delt@\hbox{\raise.5\dp\Gb@x\hbox{\unhcopy\Gb@x}}}}}}
\ctr@ld@f\def\FigWBB@xgcWE{\v@lZ=\ht\Gb@x\advance\v@lZ\dp\Gb@x%
    \advance\v@lX\the\let@xte\v@leur\advance\v@lY\delt@\advance\v@lY.5\v@lZ%
    \b@undb@x{\v@lX}{\v@lY}%
    \advance\v@lX\the\let@xte\wd\Gb@x\advance\v@lY-\v@lZ\b@undb@x{\v@lX}{\v@lY}}
\ctr@ld@f\def\figwritebn#1:#2(#3){{\def\Vc@rrect{\v@lZ=\v@leur}\Figwrit@NS#1:{#2}(#3)}\ignorespaces}
\ctr@ld@f\def\figwritebs#1:#2(#3){{\def\Vc@rrect{\v@lZ=-\v@leur}\Figwrit@NS#1:{#2}(#3)}\ignorespaces}
\ctr@ld@f\def\figwritebw#1:#2(#3){{\let\figWp@si=\FigWp@sibW\let\figWBB@x=\FigWBB@xbWE%
    \let@xte={-}\FigWrit@L#1:{#2}(#3,0pt)}\ignorespaces}
\ctr@ld@f\def\figwritebe#1:#2(#3){{\let\figWp@si=\FigWp@sibE\let\figWBB@x=\FigWBB@xbWE%
    \let@xte={}\FigWrit@L#1:{#2}(#3,0pt)}\ignorespaces}
\ctr@ld@f\def\FigWp@sibW{\rlap{\kern\v@lX\raise\v@lY\hbox{\rlap{\kern-\wd\Gb@x\kern-\v@leur%
          \hbox{\unhcopy\Gb@x}}\c@nterpt}}}
\ctr@ld@f\def\FigWp@sibE{\rlap{\kern\v@lX\raise\v@lY\hbox{\c@nterpt\kern\v@leur%
          \hbox{\unhcopy\Gb@x}}}}
\ctr@ld@f\def\FigWBB@xbWE{\v@lZ=\ht\Gb@x\advance\v@lZ\dp\Gb@x%
    \advance\v@lX\the\let@xte\v@leur\advance\v@lY\ht\Gb@x\b@undb@x{\v@lX}{\v@lY}%
    \advance\v@lX\the\let@xte\wd\Gb@x\advance\v@lY-\v@lZ\b@undb@x{\v@lX}{\v@lY}}
\ctr@ln@w{newread}\frf@g  \ctr@ln@w{newwrite}\fwf@g
\ctr@ln@w{newif}\ifCUR@PS
\ctr@ln@w{newif}\ifGR@cri
\ctr@ln@w{newif}\ifUse@llipse
\ctr@ln@w{newif}\ifGRdebugm@de \GRdebugm@defalse 
\ctr@ln@w{newif}\ifPDFm@ke
\ifx\pdfliteral\undefined\else\ifnum\pdfoutput>\z@\PDFm@ketrue\fi\fi
\ctr@ld@f\def\initPDF@rDVI{%
\ifPDFm@ke
 \let\figscan=\figscan@E
 \let\newGr@FN=\newGr@FNPDF
 \ctr@ld@f\def\c@mcurveto{c}
 \ctr@ld@f\def\c@mfill{f}
 \ctr@ld@f\def\c@mgsave{q}
 \ctr@ld@f\def\c@mgrestore{Q}
 \ctr@ld@f\def\c@mlineto{l}
 \ctr@ld@f\def\c@mmoveto{m}
 \ctr@ld@f\def\c@msetgray{g}     \ctr@ld@f\def\c@msetgrayStroke{G}
 \ctr@ld@f\def\c@msetcmykcolor{k}\ctr@ld@f\def\c@msetcmykcolorStroke{K}
 \ctr@ld@f\def\c@msetrgbcolor{rg}\ctr@ld@f\def\c@msetrgbcolorStroke{RG}
 \ctr@ld@f\def\d@fprimarC@lor{\CUR@color\space\CUR@colorc@md%
               \space\CUR@color\space\CUR@colorc@mdStroke}
 \ctr@ld@f\def\c@msetdash{d}
 \ctr@ld@f\def\c@msetlinejoin{j}
 \ctr@ld@f\def\c@msetlinewidth{w}
 \ctr@ld@f\def\f@gclosestroke{\immediate\write\fwf@g{s}}
 \ctr@ld@f\def\f@gfill{\immediate\write\fwf@g{\fillc@md}}
 \ctr@ld@f\def\f@gnewpath{}
 \ctr@ld@f\def\f@gstroke{\immediate\write\fwf@g{S}}
\else
 \let\figinsertE=\figinsert
 \let\newGr@FN=\newGr@FNDVI
 \ctr@ld@f\def\c@mcurveto{curveto}
 \ctr@ld@f\def\c@mfill{fill}
 \ctr@ld@f\def\c@mgsave{gsave}
 \ctr@ld@f\def\c@mgrestore{grestore}
 \ctr@ld@f\def\c@mlineto{lineto}
 \ctr@ld@f\def\c@mmoveto{moveto}
 \ctr@ld@f\def\c@msetgray{setgray}          \ctr@ld@f\def\c@msetgrayStroke{}
 \ctr@ld@f\def\c@msetcmykcolor{setcmykcolor}\ctr@ld@f\def\c@msetcmykcolorStroke{}
 \ctr@ld@f\def\c@msetrgbcolor{setrgbcolor}  \ctr@ld@f\def\c@msetrgbcolorStroke{}
 \ctr@ld@f\def\d@fprimarC@lor{\CUR@color\space\CUR@colorc@md}
 \ctr@ld@f\def\c@msetdash{setdash}
 \ctr@ld@f\def\c@msetlinejoin{setlinejoin}
 \ctr@ld@f\def\c@msetlinewidth{setlinewidth}
 \ctr@ld@f\def\f@gclosestroke{\immediate\write\fwf@g{closepath\space stroke}}
 \ctr@ld@f\def\f@gfill{\immediate\write\fwf@g{\fillc@md}}
 \ctr@ld@f\def\f@gnewpath{\immediate\write\fwf@g{newpath}}
 \ctr@ld@f\def\f@gstroke{\immediate\write\fwf@g{stroke}}
\fi}
\ctr@ld@f\def\c@pypsfile#1#2{\c@pyfil@{\immediate\write#1}{#2}}
\ctr@ld@f\def\Figinclud@PDF#1#2{\openin\frf@g=#1\pdfliteral{q #2 0 0 #2 0 0 cm}%
    \c@pyfil@{\pdfliteral}{\frf@g}\pdfliteral{Q}\closein\frf@g}
\ctr@ln@w{newif}\ifmored@ta
\ctr@ln@m\bl@nkline
\ctr@ld@f\def\c@pyfil@#1#2{\def\bl@nkline{\par}{\catcode`\%=12
    \loop\ifeof#2\mored@tafalse\else\mored@tatrue\immediate\read#2 to\tr@c
    \ifx\tr@c\bl@nkline\else#1{\tr@c}\fi\fi\ifmored@ta\repeat}}
\ctr@ld@f\def\keln@mun#1#2|{\def\l@debut{#1}\def\l@suite{#2}}
\ctr@ld@f\def\keln@mde#1#2#3|{\def\l@debut{#1#2}\def\l@suite{#3}}
\ctr@ld@f\def\keln@mtr#1#2#3#4|{\def\l@debut{#1#2#3}\def\l@suite{#4}}
\ctr@ld@f\def\keln@mqu#1#2#3#4#5|{\def\l@debut{#1#2#3#4}\def\l@suite{#5}}
\ctr@ld@f\let\@psffilein=\frf@g 
\ctr@ln@w{newif}\if@psffileok    
\ctr@ln@w{newif}\if@psfbbfound   
\ctr@ln@w{newif}\if@psfverbose   
\@psfverbosetrue
\ctr@ln@m\@psfllx \ctr@ln@m\@psflly
\ctr@ln@m\@psfurx \ctr@ln@m\@psfury
\ctr@ln@m\resetcolonc@tcode
\ctr@ld@f\def\@psfgetbb#1{\global\@psfbbfoundfalse%
\global\def\@psfllx{0}\global\def\@psflly{0}%
\global\def\@psfurx{30}\global\def\@psfury{30}%
\openin\@psffilein=#1\relax
\ifeof\@psffilein\errmessage{I couldn't open #1, will ignore it}\else
   \edef\resetcolonc@tcode{\catcode`\noexpand\:\the\catcode`\:\relax}%
   {\@psffileoktrue \chardef\other=12
    \def\do##1{\catcode`##1=\other}\dospecials \catcode`\ =10 \resetcolonc@tcode
    \loop
       \read\@psffilein to \@psffileline
       \ifeof\@psffilein\@psffileokfalse\else
          \expandafter\@psfaux\@psffileline:. \\%
       \fi
   \if@psffileok\repeat
   \if@psfbbfound\else
    \if@psfverbose\message{No bounding box comment in #1; using defaults}\fi\fi
   }\closein\@psffilein\fi}%
\ctr@ln@m\@psfbblit
\ctr@ln@m\@psfpercent
{\catcode`\%=12 \global\let\@psfpercent=
\ctr@ln@m\@psfaux
\long\def\@psfaux#1#2:#3\\{\ifx#1\@psfpercent
   \def\testit{#2}\ifx\testit\@psfbblit
      \@psfgrab #3 . . . \\%
      \@psffileokfalse
      \global\@psfbbfoundtrue
   \fi\else\ifx#1\par\else\@psffileokfalse\fi\fi}%
\ctr@ld@f\def\@psfempty{}%
\ctr@ld@f\def\@psfgrab #1 #2 #3 #4 #5\\{%
\global\def\@psfllx{#1}\ifx\@psfllx\@psfempty
      \@psfgrab #2 #3 #4 #5 .\\\else
   \global\def\@psflly{#2}%
   \global\def\@psfurx{#3}\global\def\@psfury{#4}\fi}%
\ctr@ld@f\def\PSwrit@cmd#1#2#3{{\Figg@tXY{#1}\c@lprojSP\b@undb@x{\v@lX}{\v@lY}%
    \v@lX=\ptT@ptps\v@lX\v@lY=\ptT@ptps\v@lY%
    \immediate\write#3{\repdecn@mb{\v@lX}\space\repdecn@mb{\v@lY}\space#2}}}
\ctr@ld@f\def\PSwrit@cmdS#1#2#3#4#5{{\Figg@tXY{#1}\c@lprojSP\b@undb@x{\v@lX}{\v@lY}%
    \global\result@t=\v@lX\global\result@@t=\v@lY%
    \v@lX=\ptT@ptps\v@lX\v@lY=\ptT@ptps\v@lY%
    \immediate\write#3{\repdecn@mb{\v@lX}\space\repdecn@mb{\v@lY}\space#2}}%
    \edef#4{\the\result@t}\edef#5{\the\result@@t}}
\ctr@ld@f\def\update@ttr#1#2#3{\Figdisc@rdLTS{#3}{\n@mref}%
    \ifx\n@mref\D@FTref#2{#1}\else#2{#3}\fi}
\ctr@ld@f\def\D@FTref{default}
\ctr@ld@f\def\W@rnmesAttr#1#2{%
    \immediate\write16{*** Unknown attribute: \BS@ #1(..., #2=...)}}
\ctr@ld@f\def\W@rnmeskwd#1#2{%
    \immediate\write16{*** Unknown keyword #2 in \BS@ #1}}
\ctr@ld@f\def\W@rnmesIgn#1{\immediate\write16{*** \BS@ #1 is ignored inside a
     \BS@ figdrawbegin-\BS@ figdrawend block.}}
\ctr@ld@f\def\Psset@lti#1=#2|{\keln@mtr#1|%
    \def\n@mref{blc}\ifx\l@debut\n@mref\update@ttr\D@FTref\P@setblcolor{#2}\else
    \def\n@mref{bld}\ifx\l@debut\n@mref\update@ttr\D@FTref\P@setbldash{#2}\else
    \def\n@mref{blw}\ifx\l@debut\n@mref\update@ttr\D@FTref\P@setblwidth{#2}\else
    \def\n@mref{sqc}\ifx\l@debut\n@mref\update@ttr\D@FTref\P@setsqcolor{#2}\else
    \def\n@mref{sqd}\ifx\l@debut\n@mref\update@ttr\D@FTref\P@setsqdash{#2}\else
    \def\n@mref{sqw}\ifx\l@debut\n@mref\update@ttr\D@FTref\P@setsqwidth{#2}\else
    \W@rnmesAttr{figset altitude}{#1}\fi\fi\fi\fi\fi\fi}
\ctr@ln@m\DDV@blcolor
\ctr@ld@f\def\P@setblcolor#1{\edef\DDV@blcolor{#1}}
\ctr@ln@m\DDV@bldash
\ctr@ld@f\def\P@setbldash#1{\edef\DDV@bldash{#1}}
\ctr@ln@m\DDV@blwidth
\ctr@ld@f\def\P@setblwidth#1{\edef\DDV@blwidth{#1}}
\ctr@ln@m\DDV@sqcolor
\ctr@ld@f\def\P@setsqcolor#1{\edef\DDV@sqcolor{#1}}
\ctr@ln@m\DDV@sqdash
\ctr@ld@f\def\P@setsqdash#1{\edef\DDV@sqdash{#1}}
\ctr@ln@m\DDV@sqwidth
\ctr@ld@f\def\P@setsqwidth#1{\edef\DDV@sqwidth{#1}}
\ctr@ld@f\def\figdrawaltitude#1[#2,#3,#4]{{\ifCUR@PS\ifGR@cri%
    \PSc@mment{altitude Square Dim=#1, Triangle=[#2 / #3,#4]}%
    \s@uvc@ntr@l\et@tpsaltitude\resetc@ntr@l{2}\figptorthoprojline-5:=#2/#3,#4/%
    \figvectP -1[#3,#4]\n@rminf{\v@leur}{-1}\vecunit@{-3}{-1}%
    \figvectP -1[-5,#3]\n@rminf{\v@lmin}{-1}\figvectP -2[-5,#4]\n@rminf{\v@lmax}{-2}%
    \ifdim\v@lmin<\v@lmax\s@mme=#3\else\v@lmax=\v@lmin\s@mme=#4\fi%
    \figvectP -4[-5,#2]\vecunit@{-4}{-4}\delt@=#1\unit@%
    \edef\t@ille{\repdecn@mb{\delt@}}\figpttra-1:=-5/\t@ille,-3/%
    \figptstra-3=-5,-1/\t@ille,-4/\figdrawline[#2,-5]%
    \Pss@tspecifSt{color=\DDV@sqcolor,dash=\DDV@sqdash,width=\DDV@sqwidth}%
    \figdrawline[-1,-2,-3]%
    \Psrest@reSt{color=\DDV@sqcolor,dash=\DDV@sqdash,width=\DDV@sqwidth}%
    \ifdim\v@leur<\v@lmax%
    \Pss@tspecifSt{color=\DDV@blcolor,dash=\DDV@bldash,width=\DDV@blwidth}%
    \figdrawline[-5,\the\s@mme]%
    \Psrest@reSt{color=\DDV@blcolor,dash=\DDV@bldash,width=\DDV@blwidth}%
    \fi\PSc@mment{End altitude}\resetc@ntr@l\et@tpsaltitude\fi\fi}}
\ctr@ld@f\def\Ps@rcerc#1;#2(#3,#4){\ellBB@x#1;#2,#2(#3,#4,0)%
    \f@gnewpath{\delt@=#2\unit@\delt@=\ptT@ptps\delt@%
    \BdingB@xfalse%
    \PSwrit@cmd{#1}{\repdecn@mb{\delt@}\space #3\space #4\space arc}{\fwf@g}}}
\ctr@ln@m\figdrawarccirc
\ctr@ld@f\def\Q@arccircDD#1;#2(#3,#4){\ifCUR@PS\ifGR@cri%
    \PSc@mment{arccircDD Center=#1 ; Radius=#2 (Ang1=#3, Ang2=#4)}%
    \iffillm@de\Ps@rcerc#1;#2(#3,#4)%
    \f@gfill%
    \else\Ps@rcerc#1;#2(#3,#4)\f@gstroke\fi%
    \PSc@mment{End arccircDD}\fi\fi}
\ctr@ld@f\def\Q@arccircTD#1,#2,#3;#4(#5,#6){{\ifCUR@PS\ifGR@cri\s@uvc@ntr@l\et@tpsarccircTD%
    \PSc@mment{arccircTD Center=#1,P1=#2,P2=#3 ; Radius=#4 (Ang1=#5, Ang2=#6)}%
    \setc@ntr@l{2}\c@lExtAxes#1,#2,#3(#4)\Q@arcellPATD#1,-4,-5(#5,#6)%
    \PSc@mment{End arccircTD}\resetc@ntr@l\et@tpsarccircTD\fi\fi}}
\ctr@ld@f\def\c@lExtAxes#1,#2,#3(#4){%
    \figvectPTD-5[#1,#2]\vecunit@{-5}{-5}\figvectNTD-4[#1,#2,#3]\vecunit@{-4}{-4}%
    \figvectNVTD-3[-4,-5]\delt@=#4\unit@\edef\r@yon{\repdecn@mb{\delt@}}%
    \figpttra-4:=#1/\r@yon,-5/\figpttra-5:=#1/\r@yon,-3/}
\ctr@ln@m\figdrawarccircP
\ctr@ld@f\def\Q@arccircPDD#1;#2[#3,#4]{{\ifCUR@PS\ifGR@cri\s@uvc@ntr@l\et@tpsarccircPDD%
    \PSc@mment{arccircPDD Center=#1; Radius=#2, [P1=#3, P2=#4]}%
    \Ps@ngleparam#1;#2[#3,#4]\ifdim\v@lmin>\v@lmax\advance\v@lmax\DePI@deg\fi%
    \edef\@ngdeb{\repdecn@mb{\v@lmin}}\edef\@ngfin{\repdecn@mb{\v@lmax}}%
    \figdrawarccirc#1;\r@dius(\@ngdeb,\@ngfin)%
    \PSc@mment{End arccircPDD}\resetc@ntr@l\et@tpsarccircPDD\fi\fi}}
\ctr@ld@f\def\Q@arccircPTD#1;#2[#3,#4,#5]{{\ifCUR@PS\ifGR@cri\s@uvc@ntr@l\et@tpsarccircPTD%
    \PSc@mment{arccircPTD Center=#1; Radius=#2, [P1=#3, P2=#4, P3=#5]}%
    \setc@ntr@l{2}\c@lExtAxes#1,#3,#5(#2)\figdrawarcellPP#1,-4,-5[#3,#4]%
    \PSc@mment{End arccircPTD}\resetc@ntr@l\et@tpsarccircPTD\fi\fi}}
\ctr@ld@f\def\Ps@ngleparam#1;#2[#3,#4]{\setc@ntr@l{2}%
    \figvectPDD-1[#1,#3]\vecunit@{-1}{-1}\Figg@tXY{-1}\arct@n\v@lmin(\v@lX,\v@lY)%
    \figvectPDD-2[#1,#4]\vecunit@{-2}{-2}\Figg@tXY{-2}\arct@n\v@lmax(\v@lX,\v@lY)%
    \v@lmin=\rdT@deg\v@lmin\v@lmax=\rdT@deg\v@lmax%
    \v@leur=#2pt\maxim@m{\mili@u}{-\v@leur}{\v@leur}%
    \edef\r@dius{\repdecn@mb{\mili@u}}}
\ctr@ld@f\def\Ps@rcercBz#1;#2(#3,#4){\Ps@rellBz#1;#2,#2(#3,#4,0)}
\ctr@ld@f\def\Ps@rellBz#1;#2,#3(#4,#5,#6){%
    \ellBB@x#1;#2,#3(#4,#5,#6)\BdingB@xfalse%
    \c@lNbarcs{#4}{#5}\v@leur=#4pt\setc@ntr@l{2}\figptell-13::#1;#2,#3(#4,#6)%
    \f@gnewpath\PSwrit@cmd{-13}{\c@mmoveto}{\fwf@g}%
    \s@mme=\z@\bcl@rellBz#1;#2,#3(#6)\BdingB@xtrue}
\ctr@ld@f\def\bcl@rellBz#1;#2,#3(#4){\relax%
    \ifnum\s@mme<\p@rtent\advance\s@mme\@ne%
    \advance\v@leur\delt@\edef\@ngle{\repdecn@mb\v@leur}\figptell-14::#1;#2,#3(\@ngle,#4)%
    \advance\v@leur\delt@\edef\@ngle{\repdecn@mb\v@leur}\figptell-15::#1;#2,#3(\@ngle,#4)%
    \advance\v@leur\delt@\edef\@ngle{\repdecn@mb\v@leur}\figptell-16::#1;#2,#3(\@ngle,#4)%
    \figptscontrolDD-18[-13,-14,-15,-16]%
    \PSwrit@cmd{-18}{}{\fwf@g}\PSwrit@cmd{-17}{}{\fwf@g}%
    \PSwrit@cmd{-16}{\c@mcurveto}{\fwf@g}%
    \figptcopyDD-13:/-16/\bcl@rellBz#1;#2,#3(#4)\fi}
\ctr@ld@f\def\Ps@rell#1;#2,#3(#4,#5,#6){\ellBB@x#1;#2,#3(#4,#5,#6)%
    \f@gnewpath{\v@lmin=#2\unit@\v@lmin=\ptT@ptps\v@lmin%
    \v@lmax=#3\unit@\v@lmax=\ptT@ptps\v@lmax\BdingB@xfalse%
    \PSwrit@cmd{#1}%
    {#6\space\repdecn@mb{\v@lmin}\space\repdecn@mb{\v@lmax}\space #4\space #5\space ellipse}{\fwf@g}}%
    \global\Use@llipsetrue}
\ctr@ln@m\figdrawarcell
\ctr@ld@f\def\Q@arcellDD#1;#2,#3(#4,#5,#6){{\ifCUR@PS\ifGR@cri%
    \PSc@mment{arcellDD Center=#1 ; XRad=#2, YRad=#3 (Ang1=#4, Ang2=#5, Inclination=#6)}%
    \iffillm@de\Ps@rell#1;#2,#3(#4,#5,#6)%
    \f@gfill%
    \else\Ps@rell#1;#2,#3(#4,#5,#6)\f@gstroke\fi%
    \PSc@mment{End arcellDD}\fi\fi}}
\ctr@ld@f\def\Q@arcellTD#1;#2,#3(#4,#5,#6){{\ifCUR@PS\ifGR@cri\s@uvc@ntr@l\et@tpsarcellTD%
    \PSc@mment{arcellTD Center=#1 ; XRad=#2, YRad=#3 (Ang1=#4, Ang2=#5, Inclination=#6)}%
    \setc@ntr@l{2}\figpttraC -8:=#1/#2,0,0/\figpttraC -7:=#1/0,#3,0/%
    \figvectC -4(0,0,1)\figptsrot -8=-8,-7/#1,#6,-4/\Q@arcellPATD#1,-8,-7(#4,#5)%
    \PSc@mment{End arcellTD}\resetc@ntr@l\et@tpsarcellTD\fi\fi}}
\ctr@ln@m\figdrawarcellPA
\ctr@ld@f\def\Q@arcellPADD#1,#2,#3(#4,#5){{\ifCUR@PS\ifGR@cri\s@uvc@ntr@l\et@tpsarcellPADD%
    \PSc@mment{arcellPADD Center=#1,PtAxis1=#2,PtAxis2=#3 (Ang1=#4, Ang2=#5)}%
    \setc@ntr@l{2}\figvectPDD-1[#1,#2]\vecunit@DD{-1}{-1}\v@lX=\ptT@unit@\result@t%
    \edef\XR@d{\repdecn@mb{\v@lX}}\Figg@tXY{-1}\arct@n\v@lmin(\v@lX,\v@lY)%
    \v@lmin=\rdT@deg\v@lmin\edef\Inclin@{\repdecn@mb{\v@lmin}}%
    \figgetdist\YR@d[#1,#3]\Q@arcellDD#1;\XR@d,\YR@d(#4,#5,\Inclin@)%
    \PSc@mment{End arcellPADD}\resetc@ntr@l\et@tpsarcellPADD\fi\fi}}
\ctr@ld@f\def\Q@arcellPATD#1,#2,#3(#4,#5){{\ifCUR@PS\ifGR@cri\s@uvc@ntr@l\et@tpsarcellPATD%
    \PSc@mment{arcellPATD Center=#1,PtAxis1=#2,PtAxis2=#3 (Ang1=#4, Ang2=#5)}%
    \iffillm@de\Ps@rellPATD#1,#2,#3(#4,#5)%
    \f@gfill%
    \else\Ps@rellPATD#1,#2,#3(#4,#5)\f@gstroke\fi%
    \PSc@mment{End arcellPATD}\resetc@ntr@l\et@tpsarcellPATD\fi\fi}}
\ctr@ld@f\def\Ps@rellPATD#1,#2,#3(#4,#5){\let\c@lprojSP=\relax%
    \setc@ntr@l{2}\figvectPTD-1[#1,#2]\figvectPTD-2[#1,#3]\c@lNbarcs{#4}{#5}%
    \v@leur=#4pt\c@lptellP{#1}{-1}{-2}\Figptpr@j-5:/-3/%
    \f@gnewpath\PSwrit@cmdS{-5}{\c@mmoveto}{\fwf@g}{\X@un}{\Y@un}%
    \edef\C@nt@r{#1}\s@mme=\z@\bcl@rellPATD}
\ctr@ld@f\def\bcl@rellPATD{\relax%
    \ifnum\s@mme<\p@rtent\advance\s@mme\@ne%
    \advance\v@leur\delt@\c@lptellP{\C@nt@r}{-1}{-2}\Figptpr@j-4:/-3/%
    \advance\v@leur\delt@\c@lptellP{\C@nt@r}{-1}{-2}\Figptpr@j-6:/-3/%
    \advance\v@leur\delt@\c@lptellP{\C@nt@r}{-1}{-2}\Figptpr@j-3:/-3/%
    \v@lX=\z@\v@lY=\z@\Figtr@nptDD{-5}{-5}\Figtr@nptDD{2}{-3}%
    \divide\v@lX\@vi\divide\v@lY\@vi%
    \Figtr@nptDD{3}{-4}\Figtr@nptDD{-1.5}{-6}\v@lmin=\v@lX\v@lmax=\v@lY%
    \v@lX=\z@\v@lY=\z@\Figtr@nptDD{2}{-5}\Figtr@nptDD{-5}{-3}%
    \divide\v@lX\@vi\divide\v@lY\@vi\Figtr@nptDD{-1.5}{-4}\Figtr@nptDD{3}{-6}%
    \BdingB@xfalse%
    \Figp@intregDD-4:(\v@lmin,\v@lmax)\PSwrit@cmdS{-4}{}{\fwf@g}{\X@de}{\Y@de}%
    \Figp@intregDD-4:(\v@lX,\v@lY)\PSwrit@cmdS{-4}{}{\fwf@g}{\X@tr}{\Y@tr}%
    \BdingB@xtrue\PSwrit@cmdS{-3}{\c@mcurveto}{\fwf@g}{\X@qu}{\Y@qu}%
    \B@zierBB@x{1}{\Y@un}(\X@un,\X@de,\X@tr,\X@qu)%
    \B@zierBB@x{2}{\X@un}(\Y@un,\Y@de,\Y@tr,\Y@qu)%
    \edef\X@un{\X@qu}\edef\Y@un{\Y@qu}\figptcopyDD-5:/-3/\bcl@rellPATD\fi}
\ctr@ld@f\def\c@lNbarcs#1#2{%
    \delt@=#2pt\advance\delt@-#1pt\maxim@m{\v@lmax}{\delt@}{-\delt@}%
    \v@leur=\v@lmax\divide\v@leur45 \p@rtentiere{\p@rtent}{\v@leur}\advance\p@rtent\@ne%
    \s@mme=\p@rtent\multiply\s@mme\thr@@\divide\delt@\s@mme}
\ctr@ld@f\def\figdrawarcellPP#1,#2,#3[#4,#5]{{\ifCUR@PS\ifGR@cri\s@uvc@ntr@l\et@tpsarcellPP%
    \PSc@mment{arcellPP Center=#1,PtAxis1=#2,PtAxis2=#3 [Point1=#4, Point2=#5]}%
    \setc@ntr@l{2}\figvectP-2[#1,#3]\vecunit@{-2}{-2}\v@lmin=\result@t%
    \invers@{\v@lmax}{\v@lmin}%
    \figvectP-1[#1,#2]\vecunit@{-1}{-1}\v@leur=\result@t%
    \v@leur=\repdecn@mb{\v@lmax}\v@leur\edef\AsB@{\repdecn@mb{\v@leur}}
    \c@lAngle{#1}{#4}{\v@lmin}\edef\@ngdeb{\repdecn@mb{\v@lmin}}%
    \c@lAngle{#1}{#5}{\v@lmax}\ifdim\v@lmin>\v@lmax\advance\v@lmax\DePI@deg\fi%
    \edef\@ngfin{\repdecn@mb{\v@lmax}}\figdrawarcellPA#1,#2,#3(\@ngdeb,\@ngfin)%
    \PSc@mment{End arcellPP}\resetc@ntr@l\et@tpsarcellPP\fi\fi}}
\ctr@ld@f\def\c@lAngle#1#2#3{\figvectP-3[#1,#2]%
    \c@lproscal\delt@[-3,-1]\c@lproscal\v@leur[-3,-2]%
    \v@leur=\AsB@\v@leur\arct@n#3(\delt@,\v@leur)#3=\rdT@deg#3}
\ctr@ln@w{newif}\if@rrowratio\@rrowratiotrue
\ctr@ln@w{newif}\if@rrowhfill
\ctr@ln@w{newif}\if@rrowhout
\ctr@ld@f\def\Psset@rrowhe@d#1=#2|{\keln@mun#1|%
    \def\n@mref{a}\ifx\l@debut\n@mref\update@ttr\D@FTarrowheadangle\Q@s@tarrowheadangle{#2}\else
    \def\n@mref{f}\ifx\l@debut\n@mref\update@ttr\D@FTarrowheadfill\Q@s@tarrowheadfill{#2}\else
    \def\n@mref{l}\ifx\l@debut\n@mref\update@ttr\D@FTarrowheadlength\Q@s@tarrowheadlength{#2}\else
    \def\n@mref{o}\ifx\l@debut\n@mref\update@ttr\D@FTarrowheadout\Q@s@tarrowheadout{#2}\else
    \def\n@mref{r}\ifx\l@debut\n@mref\update@ttr\D@FTarrowheadratio\Q@s@tarrowheadratio{#2}\else
    \W@rnmesAttr{figset arrowhead}{#1}\fi\fi\fi\fi\fi}
\ctr@ln@m\@rrowheadangle
\ctr@ln@m\C@AHANG \ctr@ln@m\S@AHANG \ctr@ln@m\UNSS@N
\ctr@ld@f\def\Q@s@tarrowheadangle#1{\edef\@rrowheadangle{#1}{\c@ssin{\C@}{\S@}{#1}%
    \xdef\C@AHANG{\C@}\xdef\S@AHANG{\S@}\v@lmax=\S@ pt%
    \invers@{\v@leur}{\v@lmax}\maxim@m{\v@leur}{\v@leur}{-\v@leur}%
    \xdef\UNSS@N{\the\v@leur}}}
\ctr@ld@f\def\Q@s@tarrowheadfill#1{\expandafter\set@rrowhfill#1:}
\ctr@ld@f\def\set@rrowhfill#1#2:{\if#1n\@rrowhfillfalse\else\@rrowhfilltrue\fi}
\ctr@ld@f\def\Q@s@tarrowheadout#1{\expandafter\set@rrowhout#1:}
\ctr@ld@f\def\set@rrowhout#1#2:{\if#1n\@rrowhoutfalse\else\@rrowhouttrue\fi}
\ctr@ln@m\@rrowheadlength
\ctr@ld@f\def\Q@s@tarrowheadlength#1{\edef\@rrowheadlength{#1}\@rrowratiofalse}
\ctr@ln@m\@rrowheadratio
\ctr@ld@f\def\Q@s@tarrowheadratio#1{\edef\@rrowheadratio{#1}\@rrowratiotrue}
\ctr@ln@m\D@FTarrowheadlength
\ctr@ld@f\def\figresetarrowhead{%
    \Q@s@tarrowheadangle{\D@FTarrowheadangle}%
    \Q@s@tarrowheadfill{\D@FTarrowheadfill}%
    \Q@s@tarrowheadout{\D@FTarrowheadout}%
    \Q@s@tarrowheadratio{\D@FTarrowheadratio}%
    \d@fm@cdim\D@FTarrowheadlength{\D@FTh@rdahlength}
    \Q@s@tarrowheadlength{\D@FTarrowheadlength}}
\ctr@ld@f\def\D@FTarrowheadratio{0.1}
\ctr@ld@f\def\D@FTarrowheadangle{20}
\ctr@ld@f\def\D@FTarrowheadfill{no}
\ctr@ld@f\def\D@FTarrowheadout{no}
\ctr@ld@f\def\D@FTh@rdahlength{8pt}
\ctr@ln@m\figdrawarrow
\ctr@ld@f\def\Q@arrowDD[#1,#2]{{\ifCUR@PS\ifGR@cri\s@uvc@ntr@l\et@tpsarrow%
    \PSc@mment{arrowDD [Pt1,Pt2]=[#1,#2]}\Q@s@tfillmode{no}%
    \Q@arrowheadDD[#1,#2]\setc@ntr@l{2}\figdrawline[#1,-3]%
    \PSc@mment{End arrowDD}\resetc@ntr@l\et@tpsarrow\fi\fi}}
\ctr@ld@f\def\Q@arrowTD[#1,#2]{{\ifCUR@PS\ifGR@cri\s@uvc@ntr@l\et@tpsarrowTD%
    \PSc@mment{arrowTD [Pt1,Pt2]=[#1,#2]}\resetc@ntr@l{2}%
    \Figptpr@j-5:/#1/\Figptpr@j-6:/#2/\let\c@lprojSP=\relax\Q@arrowDD[-5,-6]%
    \PSc@mment{End arrowTD}\resetc@ntr@l\et@tpsarrowTD\fi\fi}}
\ctr@ln@m\figdrawarrowhead
\ctr@ld@f\def\Q@arrowheadDD[#1,#2]{{\ifCUR@PS\ifGR@cri\s@uvc@ntr@l\et@tpsarrowheadDD%
    \if@rrowhfill\def\@hangle{-\@rrowheadangle}\else\def\@hangle{\@rrowheadangle}\fi%
    \if@rrowratio%
    \if@rrowhout\def\@hratio{-\@rrowheadratio}\else\def\@hratio{\@rrowheadratio}\fi%
    \PSc@mment{arrowheadDD Ratio=\@hratio, Angle=\@hangle, [Pt1,Pt2]=[#1,#2]}%
    \Ps@rrowhead\@hratio,\@hangle[#1,#2]%
    \else%
    \if@rrowhout\def\@hlength{-\@rrowheadlength}\else\def\@hlength{\@rrowheadlength}\fi%
    \PSc@mment{arrowheadDD Length=\@hlength, Angle=\@hangle, [Pt1,Pt2]=[#1,#2]}%
    \Ps@rrowheadfd\@hlength,\@hangle[#1,#2]%
    \fi%
    \PSc@mment{End arrowheadDD}\resetc@ntr@l\et@tpsarrowheadDD\fi\fi}}
\ctr@ld@f\def\Q@arrowheadTD[#1,#2]{{\ifCUR@PS\ifGR@cri\s@uvc@ntr@l\et@tpsarrowheadTD%
    \PSc@mment{arrowheadTD [Pt1,Pt2]=[#1,#2]}\resetc@ntr@l{2}%
    \Figptpr@j-5:/#1/\Figptpr@j-6:/#2/\let\c@lprojSP=\relax\Q@arrowheadDD[-5,-6]%
    \PSc@mment{End arrowheadTD}\resetc@ntr@l\et@tpsarrowheadTD\fi\fi}}
\ctr@ld@f\def\Ps@rrowhead#1,#2[#3,#4]{\v@leur=#1\p@\maxim@m{\v@leur}{\v@leur}{-\v@leur}%
    \ifdim\v@leur>\Cepsil@n{
    \PSc@mment{@rrowhead Ratio=#1, Angle=#2, [Pt1,Pt2]=[#3,#4]}\v@leur=\UNSS@N%
    \v@leur=\CUR@width\v@leur\v@leur=\ptpsT@pt\v@leur\delt@=.5\v@leur
    \setc@ntr@l{2}\figvectPDD-3[#4,#3]%
    \Figg@tXY{-3}\v@lX=#1\v@lX\v@lY=#1\v@lY\Figv@ctCreg-3(\v@lX,\v@lY)%
    \vecunit@{-4}{-3}\mili@u=\result@t%
    \ifdim#2pt>\z@\v@lXa=-\C@AHANG\delt@%
     \edef\c@ef{\repdecn@mb{\v@lXa}}\figpttraDD-3:=-3/\c@ef,-4/\fi%
    \edef\c@ef{\repdecn@mb{\delt@}}%
    \v@lXa=\mili@u\v@lXa=\C@AHANG\v@lXa%
    \v@lYa=\ptpsT@pt\p@\v@lYa=\CUR@width\v@lYa\v@lYa=\sDcc@ngle\v@lYa%
    \advance\v@lXa-\v@lYa\gdef\sDcc@ngle{0}%
    \ifdim\v@lXa>\v@leur\edef\c@efendpt{\repdecn@mb{\v@leur}}%
    \else\edef\c@efendpt{\repdecn@mb{\v@lXa}}\fi%
    \Figg@tXY{-3}\v@lmin=\v@lX\v@lmax=\v@lY%
    \v@lXa=\C@AHANG\v@lmin\v@lYa=\S@AHANG\v@lmax\advance\v@lXa\v@lYa%
    \v@lYa=-\S@AHANG\v@lmin\v@lX=\C@AHANG\v@lmax\advance\v@lYa\v@lX%
    \setc@ntr@l{1}\Figg@tXY{#4}\advance\v@lX\v@lXa\advance\v@lY\v@lYa%
    \setc@ntr@l{2}\Figp@intregDD-2:(\v@lX,\v@lY)%
    \v@lXa=\C@AHANG\v@lmin\v@lYa=-\S@AHANG\v@lmax\advance\v@lXa\v@lYa%
    \v@lYa=\S@AHANG\v@lmin\v@lX=\C@AHANG\v@lmax\advance\v@lYa\v@lX%
    \setc@ntr@l{1}\Figg@tXY{#4}\advance\v@lX\v@lXa\advance\v@lY\v@lYa%
    \setc@ntr@l{2}\Figp@intregDD-1:(\v@lX,\v@lY)%
    \ifdim#2pt<\z@\fillm@detrue\figdrawline[-2,#4,-1]
    \else\figptstraDD-3=#4,-2,-1/\c@ef,-4/\s@uvdash{\typ@dash}\Q@s@tdash{\D@FTdash}%
    \figdrawline[-2,-3,-1]\Q@s@tdash{\typ@dash}\fi
    \ifdim#1pt>\z@\figpttraDD-3:=#4/\c@efendpt,-4/\else\figptcopyDD-3:/#4/\fi%
    \PSc@mment{End @rrowhead}}\fi}
\ctr@ld@f\def\sDcc@ngle{0}
\ctr@ld@f\def\Ps@rrowheadfd#1,#2[#3,#4]{{%
    \PSc@mment{@rrowheadfd Length=#1, Angle=#2, [Pt1,Pt2]=[#3,#4]}%
    \setc@ntr@l{2}\figvectPDD-1[#3,#4]\n@rmeucDD{\v@leur}{-1}\v@leur=\ptT@unit@\v@leur%
    \invers@{\v@leur}{\v@leur}\v@leur=#1\v@leur\edef\R@tio{\repdecn@mb{\v@leur}}%
    \Ps@rrowhead\R@tio,#2[#3,#4]\PSc@mment{End @rrowheadfd}}}
\ctr@ln@m\figdrawarrowBezier
\ctr@ld@f\def\Q@arrowBezierDD[#1,#2,#3,#4]{{\ifCUR@PS\ifGR@cri\s@uvc@ntr@l\et@tpsarrowBezierDD%
    \PSc@mment{arrowBezierDD Control points=#1,#2,#3,#4}\setc@ntr@l{2}%
    \if@rrowratio\c@larclengthDD\v@leur,10[#1,#2,#3,#4]\else\v@leur=\z@\fi%
    \Ps@rrowB@zDD\v@leur[#1,#2,#3,#4]%
    \PSc@mment{End arrowBezierDD}\resetc@ntr@l\et@tpsarrowBezierDD\fi\fi}}
\ctr@ld@f\def\Q@arrowBezierTD[#1,#2,#3,#4]{{\ifCUR@PS\ifGR@cri\s@uvc@ntr@l\et@tpsarrowBezierTD%
    \PSc@mment{arrowBezierTD Control points=#1,#2,#3,#4}\resetc@ntr@l{2}%
    \Figptpr@j-7:/#1/\Figptpr@j-8:/#2/\Figptpr@j-9:/#3/\Figptpr@j-10:/#4/%
    \let\c@lprojSP=\relax\ifnum\CUR@proj<\tw@\Q@arrowBezierDD[-7,-8,-9,-10]%
    \else\f@gnewpath\PSwrit@cmd{-7}{\c@mmoveto}{\fwf@g}%
    \if@rrowratio\c@larclengthDD\mili@u,10[-7,-8,-9,-10]\else\mili@u=\z@\fi%
    \p@rtent=\NBz@rcs\advance\p@rtent\m@ne\subB@zierTD\p@rtent[#1,#2,#3,#4]%
    \f@gstroke%
    \advance\v@lmin\p@rtent\delt@
    \v@leur=\v@lmin\advance\v@leur0.33333 \delt@\edef\unti@rs{\repdecn@mb{\v@leur}}%
    \v@leur=\v@lmin\advance\v@leur0.66666 \delt@\edef\deti@rs{\repdecn@mb{\v@leur}}%
    \figptcopyDD-8:/-10/\c@lsubBzarc\unti@rs,\deti@rs[#1,#2,#3,#4]%
    \figptcopyDD-8:/-4/\figptcopyDD-9:/-3/\Ps@rrowB@zDD\mili@u[-7,-8,-9,-10]\fi%
    \PSc@mment{End arrowBezierTD}\resetc@ntr@l\et@tpsarrowBezierTD\fi\fi}}
\ctr@ld@f\def\c@larclengthDD#1,#2[#3,#4,#5,#6]{{\p@rtent=#2\figptcopyDD-5:/#3/%
    \delt@=\p@\divide\delt@\p@rtent\c@rre=\z@\v@leur=\z@\s@mme=\z@%
    \loop\ifnum\s@mme<\p@rtent\advance\s@mme\@ne\advance\v@leur\delt@%
    \edef\T@{\repdecn@mb{\v@leur}}\figptBezierDD-6::\T@[#3,#4,#5,#6]%
    \figvectPDD-1[-5,-6]\n@rmeucDD{\mili@u}{-1}\advance\c@rre\mili@u%
    \figptcopyDD-5:/-6/\repeat\global\result@t=\ptT@unit@\c@rre}#1=\result@t}
\ctr@ld@f\def\Ps@rrowB@zDD#1[#2,#3,#4,#5]{{\Q@s@tfillmode{no}%
    \if@rrowratio\delt@=\@rrowheadratio#1\else\delt@=\@rrowheadlength pt\fi%
    \v@leur=\C@AHANG\delt@\edef\R@dius{\repdecn@mb{\v@leur}}%
    \FigptintercircB@zDD-5::0,\R@dius[#5,#4,#3,#2]%
    \Q@s@tarrowheadlength{\repdecn@mb{\delt@}}\Q@arrowheadDD[-5,#5]%
    \let\n@rmeuc=\n@rmeucDD\figgetdist\R@dius[#5,-3]%
    \FigptintercircB@zDD-6::0,\R@dius[#5,#4,#3,#2]%
    \figptBezierDD-5::0.33333[#5,#4,#3,#2]\figptBezierDD-3::0.66666[#5,#4,#3,#2]%
    \figptscontrolDD-5[-6,-5,-3,#2]\Q@BezierDD1[-6,-5,-4,#2]}}
\ctr@ln@m\figdrawarrowcirc
\ctr@ld@f\def\Q@arrowcircDD#1;#2(#3,#4){{\ifCUR@PS\ifGR@cri\s@uvc@ntr@l\et@tpsarrowcircDD%
    \PSc@mment{arrowcircDD Center=#1 ; Radius=#2 (Ang1=#3,Ang2=#4)}%
    \Q@s@tfillmode{no}\Pscirc@rrowhead#1;#2(#3,#4)%
    \setc@ntr@l{2}\figvectPDD -4[#1,-3]\vecunit@{-4}{-4}%
    \Figg@tXY{-4}\arct@n\v@lmin(\v@lX,\v@lY)%
    \v@lmin=\rdT@deg\v@lmin\v@leur=#4pt\advance\v@leur-\v@lmin%
    \maxim@m{\v@leur}{\v@leur}{-\v@leur}%
    \ifdim\v@leur>\DemiPI@deg\relax\ifdim\v@lmin<#4pt\advance\v@lmin\DePI@deg%
    \else\advance\v@lmin-\DePI@deg\fi\fi\edef\ar@ngle{\repdecn@mb{\v@lmin}}%
    \ifdim#3pt<#4pt\figdrawarccirc#1;#2(#3,\ar@ngle)\else\figdrawarccirc#1;#2(\ar@ngle,#3)\fi%
    \PSc@mment{End arrowcircDD}\resetc@ntr@l\et@tpsarrowcircDD\fi\fi}}
\ctr@ld@f\def\Q@arrowcircTD#1,#2,#3;#4(#5,#6){{\ifCUR@PS\ifGR@cri\s@uvc@ntr@l\et@tpsarrowcircTD%
    \PSc@mment{arrowcircTD Center=#1,P1=#2,P2=#3 ; Radius=#4 (Ang1=#5, Ang2=#6)}%
    \resetc@ntr@l{2}\c@lExtAxes#1,#2,#3(#4)\let\c@lprojSP=\relax%
    \figvectPTD-11[#1,-4]\figvectPTD-12[#1,-5]\c@lNbarcs{#5}{#6}%
    \if@rrowratio\v@lmax=\degT@rd\v@lmax\edef\D@lpha{\repdecn@mb{\v@lmax}}\fi%
    \advance\p@rtent\m@ne\mili@u=\z@%
    \v@leur=#5pt\c@lptellP{#1}{-11}{-12}\Figptpr@j-9:/-3/%
    \f@gnewpath\PSwrit@cmdS{-9}{\c@mmoveto}{\fwf@g}{\X@un}{\Y@un}%
    \edef\C@nt@r{#1}\s@mme=\z@\bcl@rcircTD\f@gstroke%
    \advance\v@leur\delt@\c@lptellP{#1}{-11}{-12}\Figptpr@j-5:/-3/%
    \advance\v@leur\delt@\c@lptellP{#1}{-11}{-12}\Figptpr@j-6:/-3/%
    \advance\v@leur\delt@\c@lptellP{#1}{-11}{-12}\Figptpr@j-10:/-3/%
    \figptscontrolDD-8[-9,-5,-6,-10]%
    \if@rrowratio\c@lcurvradDD0.5[-9,-8,-7,-10]\advance\mili@u\result@t%
    \maxim@m{\mili@u}{\mili@u}{-\mili@u}\mili@u=\ptT@unit@\mili@u%
    \mili@u=\D@lpha\mili@u\advance\p@rtent\@ne\divide\mili@u\p@rtent\fi%
    \Ps@rrowB@zDD\mili@u[-9,-8,-7,-10]%
    \PSc@mment{End arrowcircTD}\resetc@ntr@l\et@tpsarrowcircTD\fi\fi}}
\ctr@ld@f\def\bcl@rcircTD{\relax%
    \ifnum\s@mme<\p@rtent\advance\s@mme\@ne%
    \advance\v@leur\delt@\c@lptellP{\C@nt@r}{-11}{-12}\Figptpr@j-5:/-3/%
    \advance\v@leur\delt@\c@lptellP{\C@nt@r}{-11}{-12}\Figptpr@j-6:/-3/%
    \advance\v@leur\delt@\c@lptellP{\C@nt@r}{-11}{-12}\Figptpr@j-10:/-3/%
    \figptscontrolDD-8[-9,-5,-6,-10]\BdingB@xfalse%
    \PSwrit@cmdS{-8}{}{\fwf@g}{\X@de}{\Y@de}\PSwrit@cmdS{-7}{}{\fwf@g}{\X@tr}{\Y@tr}%
    \BdingB@xtrue\PSwrit@cmdS{-10}{\c@mcurveto}{\fwf@g}{\X@qu}{\Y@qu}%
    \if@rrowratio\c@lcurvradDD0.5[-9,-8,-7,-10]\advance\mili@u\result@t\fi%
    \B@zierBB@x{1}{\Y@un}(\X@un,\X@de,\X@tr,\X@qu)%
    \B@zierBB@x{2}{\X@un}(\Y@un,\Y@de,\Y@tr,\Y@qu)%
    \edef\X@un{\X@qu}\edef\Y@un{\Y@qu}\figptcopyDD-9:/-10/\bcl@rcircTD\fi}
\ctr@ld@f\def\Pscirc@rrowhead#1;#2(#3,#4){{%
    \PSc@mment{circ@rrowhead Center=#1 ; Radius=#2 (Ang1=#3,Ang2=#4)}%
    \v@leur=#2\unit@\edef\s@glen{\repdecn@mb{\v@leur}}\v@lY=\z@\v@lX=\v@leur%
    \resetc@ntr@l{2}\Figv@ctCreg-3(\v@lX,\v@lY)\figpttraDD-5:=#1/1,-3/%
    \figptrotDD-5:=-5/#1,#4/%
    \figvectPDD-3[#1,-5]\Figg@tXY{-3}\v@leur=\v@lX%
    \ifdim#3pt<#4pt\v@lX=\v@lY\v@lY=-\v@leur\else\v@lX=-\v@lY\v@lY=\v@leur\fi%
    \Figv@ctCreg-3(\v@lX,\v@lY)\vecunit@{-3}{-3}%
    \if@rrowratio\v@leur=#4pt\advance\v@leur-#3pt\maxim@m{\mili@u}{-\v@leur}{\v@leur}%
    \mili@u=\degT@rd\mili@u\v@leur=\s@glen\mili@u\edef\s@glen{\repdecn@mb{\v@leur}}%
    \mili@u=#2\mili@u\mili@u=\@rrowheadratio\mili@u\else\mili@u=\@rrowheadlength pt\fi%
    \figpttraDD-6:=-5/\s@glen,-3/\v@leur=#2pt\v@leur=2\v@leur%
    \invers@{\v@leur}{\v@leur}\c@rre=\repdecn@mb{\v@leur}\mili@u
    \mili@u=\c@rre\mili@u=\repdecn@mb{\c@rre}\mili@u%
    \v@leur=\p@\advance\v@leur-\mili@u
    \invers@{\mili@u}{2\v@leur}\delt@=\c@rre\delt@=\repdecn@mb{\mili@u}\delt@%
    \xdef\sDcc@ngle{\repdecn@mb{\delt@}}
    \sqrt@{\mili@u}{\v@leur}\arct@n\v@leur(\mili@u,\c@rre)%
    \v@leur=\rdT@deg\v@leur
    \ifdim#3pt<#4pt\v@leur=-\v@leur\fi%
    \if@rrowhout\v@leur=-\v@leur\fi\edef\cor@ngle{\repdecn@mb{\v@leur}}%
    \figptrotDD-6:=-6/-5,\cor@ngle/\Q@arrowheadDD[-6,-5]%
    \PSc@mment{End circ@rrowhead}}}
\ctr@ln@m\figdrawarrowcircP
\ctr@ld@f\def\Q@arrowcircPDD#1;#2[#3,#4]{{\ifCUR@PS\ifGR@cri%
    \PSc@mment{arrowcircPDD Center=#1; Radius=#2, [P1=#3,P2=#4]}%
    \s@uvc@ntr@l\et@tpsarrowcircPDD\Ps@ngleparam#1;#2[#3,#4]%
    \ifdim\v@leur>\z@\ifdim\v@lmin>\v@lmax\advance\v@lmax\DePI@deg\fi%
    \else\ifdim\v@lmin<\v@lmax\advance\v@lmin\DePI@deg\fi\fi%
    \edef\@ngdeb{\repdecn@mb{\v@lmin}}\edef\@ngfin{\repdecn@mb{\v@lmax}}%
    \figdrawarrowcirc#1;\r@dius(\@ngdeb,\@ngfin)%
    \PSc@mment{End arrowcircPDD}\resetc@ntr@l\et@tpsarrowcircPDD\fi\fi}}
\ctr@ld@f\def\Q@arrowcircPTD#1;#2[#3,#4,#5]{{\ifCUR@PS\ifGR@cri\s@uvc@ntr@l\et@tpsarrowcircPTD%
    \PSc@mment{arrowcircPTD Center=#1; Radius=#2, [P1=#3,P2=#4,P3=#5]}%
    \figgetangleTD\@ngfin[#1,#3,#4,#5]\v@leur=#2pt%
    \maxim@m{\mili@u}{-\v@leur}{\v@leur}\edef\r@dius{\repdecn@mb{\mili@u}}%
    \ifdim\v@leur<\z@\v@lmax=\@ngfin pt\advance\v@lmax-\DePI@deg%
    \edef\@ngfin{\repdecn@mb{\v@lmax}}\fi\Q@arrowcircTD#1,#3,#5;\r@dius(0,\@ngfin)%
    \PSc@mment{End arrowcircPTD}\resetc@ntr@l\et@tpsarrowcircPTD\fi\fi}}
\ctr@ld@f\def\figdrawaxes#1(#2){{\ifCUR@PS\ifGR@cri\s@uvc@ntr@l\et@tpsaxes%
    \PSc@mment{axes Origin=#1 Range=(#2)}\an@lys@xes#2,:\resetc@ntr@l{2}%
    \ifx\t@xt@\empty\ifTr@isDim\Q@@xes#1(0,#2,0,#2,0,#2)\else\Q@@xes#1(0,#2,0,#2)\fi%
    \else\Q@@xes#1(#2)\fi\PSc@mment{End axes}\resetc@ntr@l\et@tpsaxes\fi\fi}}
\ctr@ld@f\def\an@lys@xes#1,#2:{\def\t@xt@{#2}}
\ctr@ln@m\Q@@xes
\ctr@ld@f\def\Q@@xesDD#1(#2,#3,#4,#5){%
    \figpttraC-5:=#1/#2,0/\figpttraC-6:=#1/#3,0/\Q@arrowDD[-5,-6]%
    \figpttraC-5:=#1/0,#4/\figpttraC-6:=#1/0,#5/\Q@arrowDD[-5,-6]}
\ctr@ld@f\def\Q@@xesTD#1(#2,#3,#4,#5,#6,#7){%
    \figpttraC-7:=#1/#2,0,0/\figpttraC-8:=#1/#3,0,0/\Q@arrowTD[-7,-8]%
    \figpttraC-7:=#1/0,#4,0/\figpttraC-8:=#1/0,#5,0/\Q@arrowTD[-7,-8]%
    \figpttraC-7:=#1/0,0,#6/\figpttraC-8:=#1/0,0,#7/\Q@arrowTD[-7,-8]}
\ctr@ln@m\newGr@FN
\ctr@ld@f\def\newGr@FNPDF#1{\s@mme=\Gr@FNb\advance\s@mme\@ne\xdef\Gr@FNb{\number\s@mme}}
\ctr@ld@f\def\newGr@FNDVI#1{\newGr@FNPDF{}\xdef#1{\jobname GI\Gr@FNb.anx}}
\ctr@ld@f\def\figdrawbegin#1{\newGr@FN\DefGIfilen@me\gdef\@utoFN{0}%
    \def\t@xt@{#1}\relax\ifx\t@xt@\empty\GRupdatem@detrue%
    \gdef\@utoFN{1}\Psb@ginfig\DefGIfilen@me\else\expandafter\Psb@ginfigNu@#1 :\fi}
\ctr@ld@f\def\Psb@ginfigNu@#1 #2:{\def\t@xt@{#1}\relax\ifx\t@xt@\empty\def\t@xt@{#2}%
    \ifx\t@xt@\empty\GRupdatem@detrue\gdef\@utoFN{1}\Psb@ginfig\DefGIfilen@me%
    \else\Psb@ginfigNu@#2:\fi\else\Psb@ginfig{#1}\fi}
\ctr@ln@m\PSfilen@me \ctr@ln@m\auxfilen@me
\ctr@ld@f\def\Psb@ginfig#1{\ifCUR@PS\else%
    \edef\PSfilen@me{#1}\edef\auxfilen@me{\jobname.anx}%
    \ifGRupdatem@de\GR@critrue\else\openin\frf@g=\PSfilen@me\relax%
    \ifeof\frf@g\GR@critrue\else\GR@crifalse\fi\closein\frf@g\fi%
    \CUR@PStrue\c@ldefproj\expandafter\setupd@te\D@FTupdate:%
    \ifGR@cri\initb@undb@x%
    \immediate\openout\fwf@g=\auxfilen@me\initpss@ttings\fi%
    \fi}
\ctr@ld@f\def\Gr@FNb{0}
\ctr@ld@f\def\figforTeXFileno{\Gr@FNb}
\ctr@ld@f\def\figforTeXFigno{0 }
\ctr@ld@f\def\figforTeXnextFigno{1 }
\ctr@ld@f\edef\DefGIfilen@me{\jobname GI.anx}
\ctr@ld@f\def\initpss@ttings{\figreset{altitude,arrowhead,curve,general,flowchart,mesh,trimesh}%
    \Use@llipsefalse}
\ctr@ld@f\def\B@zierBB@x#1#2(#3,#4,#5,#6){{\c@rre=\t@n\epsil@n
    \v@lmax=#4\advance\v@lmax-#5\v@lmax=\thr@@\v@lmax\advance\v@lmax#6\advance\v@lmax-#3%
    \mili@u=#4\mili@u=-\tw@\mili@u\advance\mili@u#3\advance\mili@u#5%
    \v@lmin=#4\advance\v@lmin-#3\maxim@m{\v@leur}{-\v@lmax}{\v@lmax}%
    \maxim@m{\delt@}{-\mili@u}{\mili@u}\maxim@m{\v@leur}{\v@leur}{\delt@}%
    \maxim@m{\delt@}{-\v@lmin}{\v@lmin}\maxim@m{\v@leur}{\v@leur}{\delt@}%
    \ifdim\v@leur>\c@rre\invers@{\v@leur}{\v@leur}\edef\Uns@rM@x{\repdecn@mb{\v@leur}}%
    \v@lmax=\Uns@rM@x\v@lmax\mili@u=\Uns@rM@x\mili@u\v@lmin=\Uns@rM@x\v@lmin%
    \maxim@m{\v@leur}{-\v@lmax}{\v@lmax}\ifdim\v@leur<\c@rre%
    \maxim@m{\v@leur}{-\mili@u}{\mili@u}\ifdim\v@leur<\c@rre\else%
    \invers@{\mili@u}{\mili@u}\v@leur=-0.5\v@lmin%
    \v@leur=\repdecn@mb{\mili@u}\v@leur\m@jBBB@x{\v@leur}{#1}{#2}(#3,#4,#5,#6)\fi%
    \else\delt@=\repdecn@mb{\mili@u}\mili@u\v@leur=\repdecn@mb{\v@lmax}\v@lmin%
    \advance\delt@-\v@leur\ifdim\delt@<\z@\else\invers@{\v@lmax}{\v@lmax}%
    \edef\Uns@rAp{\repdecn@mb{\v@lmax}}\sqrt@{\delt@}{\delt@}%
    \v@leur=-\mili@u\advance\v@leur\delt@\v@leur=\Uns@rAp\v@leur%
    \m@jBBB@x{\v@leur}{#1}{#2}(#3,#4,#5,#6)%
    \v@leur=-\mili@u\advance\v@leur-\delt@\v@leur=\Uns@rAp\v@leur%
    \m@jBBB@x{\v@leur}{#1}{#2}(#3,#4,#5,#6)\fi\fi\fi}}
\ctr@ld@f\def\m@jBBB@x#1#2#3(#4,#5,#6,#7){{\relax\ifdim#1>\z@\ifdim#1<\p@%
    \edef\T@{\repdecn@mb{#1}}\v@lX=\p@\advance\v@lX-#1\edef\UNmT@{\repdecn@mb{\v@lX}}%
    \v@lX=#4\v@lY=#5\v@lZ=#6\v@lXa=#7\v@lX=\UNmT@\v@lX\advance\v@lX\T@\v@lY%
    \v@lY=\UNmT@\v@lY\advance\v@lY\T@\v@lZ\v@lZ=\UNmT@\v@lZ\advance\v@lZ\T@\v@lXa%
    \v@lX=\UNmT@\v@lX\advance\v@lX\T@\v@lY\v@lY=\UNmT@\v@lY\advance\v@lY\T@\v@lZ%
    \v@lX=\UNmT@\v@lX\advance\v@lX\T@\v@lY%
    \ifcase#2\or\v@lY=#3\or\v@lY=\v@lX\v@lX=#3\fi\b@undb@x{\v@lX}{\v@lY}\fi\fi}}
\ctr@ld@f\def\PsB@zier#1[#2]{{\f@gnewpath%
    \s@mme=\z@\def\list@num{#2,0}\extrairelepremi@r\p@int\de\list@num%
    \PSwrit@cmdS{\p@int}{\c@mmoveto}{\fwf@g}{\X@un}{\Y@un}\p@rtent=#1\bclB@zier}}
\ctr@ld@f\def\bclB@zier{\relax%
    \ifnum\s@mme<\p@rtent\advance\s@mme\@ne\BdingB@xfalse%
    \extrairelepremi@r\p@int\de\list@num\PSwrit@cmdS{\p@int}{}{\fwf@g}{\X@de}{\Y@de}%
    \extrairelepremi@r\p@int\de\list@num\PSwrit@cmdS{\p@int}{}{\fwf@g}{\X@tr}{\Y@tr}%
    \BdingB@xtrue%
    \extrairelepremi@r\p@int\de\list@num\PSwrit@cmdS{\p@int}{\c@mcurveto}{\fwf@g}{\X@qu}{\Y@qu}%
    \B@zierBB@x{1}{\Y@un}(\X@un,\X@de,\X@tr,\X@qu)%
    \B@zierBB@x{2}{\X@un}(\Y@un,\Y@de,\Y@tr,\Y@qu)%
    \edef\X@un{\X@qu}\edef\Y@un{\Y@qu}\bclB@zier\fi}
\ctr@ln@m\figdrawBezier
\ctr@ld@f\def\Q@BezierDD#1[#2]{\ifCUR@PS\ifGR@cri%
    \PSc@mment{BezierDD N arcs=#1, Control points=#2}%
    \iffillm@de\PsB@zier#1[#2]%
    \f@gfill%
    \else\PsB@zier#1[#2]\f@gstroke\fi%
    \PSc@mment{End BezierDD}\fi\fi}
\ctr@ln@m\et@tpsBezierTD
\ctr@ld@f\def\Q@BezierTD#1[#2]{\ifCUR@PS\ifGR@cri\s@uvc@ntr@l\et@tpsBezierTD%
    \PSc@mment{BezierTD N arcs=#1, Control points=#2}%
    \iffillm@de\PsB@zierTD#1[#2]%
    \f@gfill%
    \else\PsB@zierTD#1[#2]\f@gstroke\fi%
    \PSc@mment{End BezierTD}\resetc@ntr@l\et@tpsBezierTD\fi\fi}
\ctr@ld@f\def\PsB@zierTD#1[#2]{\ifnum\CUR@proj<\tw@\PsB@zier#1[#2]\else\PsB@zier@TD#1[#2]\fi}
\ctr@ld@f\def\PsB@zier@TD#1[#2]{{\f@gnewpath%
    \s@mme=\z@\def\list@num{#2,0}\extrairelepremi@r\p@int\de\list@num%
    \let\c@lprojSP=\relax\setc@ntr@l{2}\Figptpr@j-7:/\p@int/%
    \PSwrit@cmd{-7}{\c@mmoveto}{\fwf@g}%
    \loop\ifnum\s@mme<#1\advance\s@mme\@ne\extrairelepremi@r\p@intun\de\list@num%
    \extrairelepremi@r\p@intde\de\list@num\extrairelepremi@r\p@inttr\de\list@num%
    \subB@zierTD\NBz@rcs[\p@int,\p@intun,\p@intde,\p@inttr]\edef\p@int{\p@inttr}\repeat}}
\ctr@ld@f\def\subB@zierTD#1[#2,#3,#4,#5]{\delt@=\p@\divide\delt@\NBz@rcs\v@lmin=\z@%
    {\Figg@tXY{-7}\edef\X@un{\the\v@lX}\edef\Y@un{\the\v@lY}%
    \s@mme=\z@\loop\ifnum\s@mme<#1\advance\s@mme\@ne%
    \v@leur=\v@lmin\advance\v@leur0.33333 \delt@\edef\unti@rs{\repdecn@mb{\v@leur}}%
    \v@leur=\v@lmin\advance\v@leur0.66666 \delt@\edef\deti@rs{\repdecn@mb{\v@leur}}%
    \advance\v@lmin\delt@\edef\trti@rs{\repdecn@mb{\v@lmin}}%
    \figptBezierTD-8::\trti@rs[#2,#3,#4,#5]\Figptpr@j-8:/-8/%
    \c@lsubBzarc\unti@rs,\deti@rs[#2,#3,#4,#5]\BdingB@xfalse%
    \PSwrit@cmdS{-4}{}{\fwf@g}{\X@de}{\Y@de}\PSwrit@cmdS{-3}{}{\fwf@g}{\X@tr}{\Y@tr}%
    \BdingB@xtrue\PSwrit@cmdS{-8}{\c@mcurveto}{\fwf@g}{\X@qu}{\Y@qu}%
    \B@zierBB@x{1}{\Y@un}(\X@un,\X@de,\X@tr,\X@qu)%
    \B@zierBB@x{2}{\X@un}(\Y@un,\Y@de,\Y@tr,\Y@qu)%
    \edef\X@un{\X@qu}\edef\Y@un{\Y@qu}\figptcopyDD-7:/-8/\repeat}}
\ctr@ld@f\def\NBz@rcs{2}
\ctr@ld@f\def\c@lsubBzarc#1,#2[#3,#4,#5,#6]{\figptBezierTD-5::#1[#3,#4,#5,#6]%
    \figptBezierTD-6::#2[#3,#4,#5,#6]\Figptpr@j-4:/-5/\Figptpr@j-5:/-6/%
    \figptscontrolDD-4[-7,-4,-5,-8]}
\ctr@ln@m\figdrawcirc
\ctr@ld@f\def\Q@circDD#1(#2){\ifCUR@PS\ifGR@cri\PSc@mment{circDD Center=#1 (Radius=#2)}%
    \Q@arccircDD#1;#2(0,360)\PSc@mment{End circDD}\fi\fi}
\ctr@ld@f\def\Q@circTD#1,#2,#3(#4){\ifCUR@PS\ifGR@cri%
    \PSc@mment{circTD Center=#1,P1=#2,P2=#3 (Radius=#4)}%
    \Q@arccircTD#1,#2,#3;#4(0,360)\PSc@mment{End circTD}\fi\fi}
\ctr@ln@m\p@urcent
{\catcode`\%=12\gdef\p@urcent{
\ctr@ld@f\def\PSc@mment#1{\ifGRdebugm@de\immediate\write\fwf@g{\p@urcent\space#1}\fi}
\ctr@ln@m\acc@louv \ctr@ln@m\acc@lfer
{\catcode`\[=1\catcode`\{=12\gdef\acc@louv[{}}
{\catcode`\]=2\catcode`\}=12\gdef\acc@lfer{}]]
\ctr@ld@f\def\PSdict@{\ifUse@llipse%
    \immediate\write\fwf@g{/ellipsedict 9 dict def ellipsedict /mtrx matrix put}%
    \immediate\write\fwf@g{/ellipse \acc@louv ellipsedict begin}%
    \immediate\write\fwf@g{ /endangle exch def /startangle exch def}%
    \immediate\write\fwf@g{ /yrad exch def /xrad exch def}%
    \immediate\write\fwf@g{ /rotangle exch def /y exch def /x exch def}%
    \immediate\write\fwf@g{ /savematrix mtrx currentmatrix def}%
    \immediate\write\fwf@g{ x y translate rotangle rotate xrad yrad scale}%
    \immediate\write\fwf@g{ 0 0 1 startangle endangle arc}%
    \immediate\write\fwf@g{ savematrix setmatrix end\acc@lfer def}%
    \fi\PShe@der{EndProlog}}
\ctr@ld@f\def\Pssetc@rve#1=#2|{\keln@mun#1|%
    \def\n@mref{r}\ifx\l@debut\n@mref\update@ttr\D@FTroundness\Q@s@troundness{#2}\else
    \W@rnmesAttr{figset curve}{#1}\fi}
\ctr@ln@m\curv@roundness
\ctr@ld@f\def\Q@s@troundness#1{\edef\curv@roundness{#1}}
\ctr@ld@f\def\D@FTroundness{0.2} 
\ctr@ln@m\figdrawcurve
\ctr@ld@f\def\Q@curveDD[#1]{{\ifCUR@PS\ifGR@cri\PSc@mment{curveDD Points=#1}%
    \s@uvc@ntr@l\et@tpscurveDD%
    \iffillm@de\Psc@rveDD\curv@roundness[#1]%
    \f@gfill%
    \else\Psc@rveDD\curv@roundness[#1]\f@gstroke\fi%
    \PSc@mment{End curveDD}\resetc@ntr@l\et@tpscurveDD\fi\fi}}
\ctr@ld@f\def\Q@curveTD[#1]{{\ifCUR@PS\ifGR@cri%
    \PSc@mment{curveTD Points=#1}\s@uvc@ntr@l\et@tpscurveTD\let\c@lprojSP=\relax%
    \iffillm@de\Psc@rveTD\curv@roundness[#1]%
    \f@gfill%
    \else\Psc@rveTD\curv@roundness[#1]\f@gstroke\fi%
    \PSc@mment{End curveTD}\resetc@ntr@l\et@tpscurveTD\fi\fi}}
\ctr@ld@f\def\Psc@rveDD#1[#2]{%
    \def\list@num{#2}\extrairelepremi@r\Ak@\de\list@num%
    \extrairelepremi@r\Ai@\de\list@num\extrairelepremi@r\Aj@\de\list@num%
    \f@gnewpath\PSwrit@cmdS{\Ai@}{\c@mmoveto}{\fwf@g}{\X@un}{\Y@un}%
    \setc@ntr@l{2}\figvectPDD -1[\Ak@,\Aj@]%
    \@ecfor\Ak@:=\list@num\do{\figpttraDD-2:=\Ai@/#1,-1/\BdingB@xfalse%
       \PSwrit@cmdS{-2}{}{\fwf@g}{\X@de}{\Y@de}%
       \figvectPDD -1[\Ai@,\Ak@]\figpttraDD-2:=\Aj@/-#1,-1/%
       \PSwrit@cmdS{-2}{}{\fwf@g}{\X@tr}{\Y@tr}\BdingB@xtrue%
       \PSwrit@cmdS{\Aj@}{\c@mcurveto}{\fwf@g}{\X@qu}{\Y@qu}%
       \B@zierBB@x{1}{\Y@un}(\X@un,\X@de,\X@tr,\X@qu)%
       \B@zierBB@x{2}{\X@un}(\Y@un,\Y@de,\Y@tr,\Y@qu)%
       \edef\X@un{\X@qu}\edef\Y@un{\Y@qu}\edef\Ai@{\Aj@}\edef\Aj@{\Ak@}}}
\ctr@ld@f\def\Psc@rveTD#1[#2]{\ifnum\CUR@proj<\tw@\Psc@rvePPTD#1[#2]\else\Psc@rveCPTD#1[#2]\fi}
\ctr@ld@f\def\Psc@rvePPTD#1[#2]{\setc@ntr@l{2}%
    \def\list@num{#2}\extrairelepremi@r\Ak@\de\list@num\Figptpr@j-5:/\Ak@/%
    \extrairelepremi@r\Ai@\de\list@num\Figptpr@j-3:/\Ai@/%
    \extrairelepremi@r\Aj@\de\list@num\Figptpr@j-4:/\Aj@/%
    \f@gnewpath\PSwrit@cmdS{-3}{\c@mmoveto}{\fwf@g}{\X@un}{\Y@un}%
    \figvectPDD -1[-5,-4]%
    \@ecfor\Ak@:=\list@num\do{\Figptpr@j-5:/\Ak@/\figpttraDD-2:=-3/#1,-1/%
       \BdingB@xfalse\PSwrit@cmdS{-2}{}{\fwf@g}{\X@de}{\Y@de}%
       \figvectPDD -1[-3,-5]\figpttraDD-2:=-4/-#1,-1/%
       \PSwrit@cmdS{-2}{}{\fwf@g}{\X@tr}{\Y@tr}\BdingB@xtrue%
       \PSwrit@cmdS{-4}{\c@mcurveto}{\fwf@g}{\X@qu}{\Y@qu}%
       \B@zierBB@x{1}{\Y@un}(\X@un,\X@de,\X@tr,\X@qu)%
       \B@zierBB@x{2}{\X@un}(\Y@un,\Y@de,\Y@tr,\Y@qu)%
       \edef\X@un{\X@qu}\edef\Y@un{\Y@qu}\figptcopyDD-3:/-4/\figptcopyDD-4:/-5/}}
\ctr@ld@f\def\Psc@rveCPTD#1[#2]{\setc@ntr@l{2}%
    \def\list@num{#2}\extrairelepremi@r\Ak@\de\list@num%
    \extrairelepremi@r\Ai@\de\list@num\extrairelepremi@r\Aj@\de\list@num%
    \Figptpr@j-7:/\Ai@/%
    \f@gnewpath\PSwrit@cmd{-7}{\c@mmoveto}{\fwf@g}%
    \figvectPTD -9[\Ak@,\Aj@]%
    \@ecfor\Ak@:=\list@num\do{\figpttraTD-10:=\Ai@/#1,-9/%
       \figvectPTD -9[\Ai@,\Ak@]\figpttraTD-11:=\Aj@/-#1,-9/%
       \subB@zierTD\NBz@rcs[\Ai@,-10,-11,\Aj@]\edef\Ai@{\Aj@}\edef\Aj@{\Ak@}}}
\ctr@ld@f\def\figdrawend{\ifCUR@PS\ifGR@cri\immediate\closeout\fwf@g%
    \immediate\openout\fwf@g=\PSfilen@me\relax%
    \ifPDFm@ke\PSBdingB@x\else%
    \immediate\write\fwf@g{\p@urcent\string!PS-Adobe-2.0 EPSF-2.0}%
    \PShe@der{Creator\string: TeX (fig4tex.tex)}%
    \PShe@der{Title\string: \PSfilen@me}%
    \PShe@der{CreationDate\string: \the\day/\the\month/\the\year}%
    \PSBdingB@x%
    \PShe@der{EndComments}\PSdict@\fi%
    \immediate\write\fwf@g{\c@mgsave}%
    \openin\frf@g=\auxfilen@me\c@pypsfile\fwf@g\frf@g\closein\frf@g%
    \immediate\write\fwf@g{\c@mgrestore}%
    \PSc@mment{End of file.}\immediate\closeout\fwf@g%
    \immediate\openout\fwf@g=\auxfilen@me\immediate\closeout\fwf@g%
    \immediate\write16{File \PSfilen@me\space created.}\fi\fi\CUR@PSfalse\GR@critrue}
\ctr@ld@f\def\PShe@der#1{\immediate\write\fwf@g{\p@urcent\p@urcent#1}}
\ctr@ld@f\def\PSBdingB@x{{\v@lX=\ptT@ptps\c@@rdXmin\v@lY=\ptT@ptps\c@@rdYmin%
     \v@lXa=\ptT@ptps\c@@rdXmax\v@lYa=\ptT@ptps\c@@rdYmax%
     \PShe@der{BoundingBox\string: \repdecn@mb{\v@lX}\space\repdecn@mb{\v@lY}%
     \space\repdecn@mb{\v@lXa}\space\repdecn@mb{\v@lYa}}}}
\ctr@ld@f\def\figdrawfcconnect[#1]{{\ifCUR@PS\ifGR@cri\PSc@mment{fcconnect Points=#1}%
    \Q@s@tfillmode{no}\s@uvc@ntr@l\et@tpsfcconnect\resetc@ntr@l{2}%
    \fcc@nnect@[#1]\resetc@ntr@l\et@tpsfcconnect\PSc@mment{End fcconnect}\fi\fi}}
\ctr@ld@f\def\fcc@nnect@[#1]{\let\N@rm=\n@rmeucDD\def\list@num{#1}%
    \extrairelepremi@r\Ai@\de\list@num\edef\pr@m{\Ai@}\v@leur=\z@\p@rtent=\@ne\c@llgtot%
    \ifcase\fclin@typ@\edef\list@num{[\pr@m,#1,\Ai@}\expandafter\figdrawcurve\list@num]%
    \else\ifdim\fclin@r@d\p@>\z@\Pslin@conge[#1]\else\figdrawline[#1]\fi\fi%
    \v@leur=\@rrowp@s\v@leur\edef\list@num{#1,\Ai@,0}%
    \extrairelepremi@r\Ai@\de\list@num\mili@u=\epsil@n\c@llgpart%
    \advance\mili@u-\epsil@n\advance\mili@u-\delt@\advance\v@leur-\mili@u%
    \ifcase\fclin@typ@\invers@\mili@u\delt@%
    \ifnum\@rrowr@fpt>\z@\advance\delt@-\v@leur\v@leur=\delt@\fi%
    \v@leur=\repdecn@mb\v@leur\mili@u\edef\v@lt{\repdecn@mb\v@leur}%
    \extrairelepremi@r\Ak@\de\list@num%
    \figvectPDD-1[\pr@m,\Aj@]\figpttraDD-6:=\Ai@/\curv@roundness,-1/%
    \figvectPDD-1[\Ak@,\Ai@]\figpttraDD-7:=\Aj@/\curv@roundness,-1/%
    \delt@=\@rrowheadlength\p@\delt@=\C@AHANG\delt@\edef\R@dius{\repdecn@mb{\delt@}}%
    \ifcase\@rrowr@fpt%
    \FigptintercircB@zDD-8::\v@lt,\R@dius[\Ai@,-6,-7,\Aj@]\Q@arrowheadDD[-5,-8]\else%
    \FigptintercircB@zDD-8::\v@lt,\R@dius[\Aj@,-7,-6,\Ai@]\Q@arrowheadDD[-8,-5]\fi%
    \else\advance\delt@-\v@leur%
    \p@rtentiere{\p@rtent}{\delt@}\edef\C@efun{\the\p@rtent}%
    \p@rtentiere{\p@rtent}{\v@leur}\edef\C@efde{\the\p@rtent}%
    \figptbaryDD-5:[\Ai@,\Aj@;\C@efun,\C@efde]\ifcase\@rrowr@fpt%
    \delt@=\@rrowheadlength\unit@\delt@=\C@AHANG\delt@\edef\t@ille{\repdecn@mb{\delt@}}%
    \figvectPDD-2[\Ai@,\Aj@]\vecunit@{-2}{-2}\figpttraDD-5:=-5/\t@ille,-2/\fi%
    \Q@arrowheadDD[\Ai@,-5]\fi}
\ctr@ld@f\def\c@llgtot{\@ecfor\Aj@:=\list@num\do{\figvectP-1[\Ai@,\Aj@]\N@rm\delt@{-1}%
    \advance\v@leur\delt@\advance\p@rtent\@ne\edef\Ai@{\Aj@}}}
\ctr@ld@f\def\c@llgpart{\extrairelepremi@r\Aj@\de\list@num\figvectP-1[\Ai@,\Aj@]\N@rm\delt@{-1}%
    \advance\mili@u\delt@\ifdim\mili@u<\v@leur\edef\pr@m{\Ai@}\edef\Ai@{\Aj@}\c@llgpart\fi}
\ctr@ld@f\def\Pslin@conge[#1]{\ifnum\p@rtent>\tw@{\def\list@num{#1}%
    \extrairelepremi@r\Ai@\de\list@num\extrairelepremi@r\Aj@\de\list@num%
    \figptcopy-6:/\Ai@/\figvectP-3[\Ai@,\Aj@]\vecunit@{-3}{-3}\v@lmax=\result@t%
    \@ecfor\Ak@:=\list@num\do{\figvectP-4[\Aj@,\Ak@]\vecunit@{-4}{-4}%
    \minim@m\v@lmin\v@lmax\result@t\v@lmax=\result@t%
    \det@rm\delt@[-3,-4]\maxim@m\mili@u{\delt@}{-\delt@}\ifdim\mili@u>\Cepsil@n%
    \ifdim\delt@>\z@\figgetangleDD\Angl@[\Aj@,\Ak@,\Ai@]\else%
    \figgetangleDD\Angl@[\Aj@,\Ai@,\Ak@]\fi%
    \v@leur=\PI@deg\advance\v@leur-\Angl@\p@\divide\v@leur\tw@%
    \edef\Angl@{\repdecn@mb\v@leur}\c@ssin{\C@}{\S@}{\Angl@}\v@leur=\fclin@r@d\unit@%
    \v@leur=\S@\v@leur\mili@u=\C@\p@\invers@\mili@u\mili@u%
    \v@leur=\repdecn@mb{\mili@u}\v@leur%
    \minim@m\v@leur\v@leur\v@lmin\edef\t@ille{\repdecn@mb{\v@leur}}%
    \figpttra-5:=\Aj@/-\t@ille,-3/\figdrawline[-6,-5]\figpttra-6:=\Aj@/\t@ille,-4/%
    \figvectNVDD-3[-3]\figvectNVDD-8[-4]\inters@cDD-7:[-5,-3;-6,-8]%
    \ifdim\delt@>\z@\figdrawarccircP-7;\fclin@r@d[-5,-6]\else\figdrawarccircP-7;\fclin@r@d[-6,-5]\fi%
    \else\figdrawline[-6,\Aj@]\figptcopy-6:/\Aj@/\fi
    \edef\Ai@{\Aj@}\edef\Aj@{\Ak@}\figptcopy-3:/-4/}\figdrawline[-6,\Aj@]}\else\figdrawline[#1]\fi}
\ctr@ld@f\def\figdrawfcnode[#1]#2{{\ifCUR@PS\ifGR@cri\PSc@mment{fcnode Points=#1}%
    \s@uvc@ntr@l\et@tpsfcnode\resetc@ntr@l{2}%
    \def\t@xt@{#2}\ifx\t@xt@\empty\def\g@tt@xt{\setbox\Gb@x=\hbox{\Figg@tT{\p@int}}}%
    \else\def\g@tt@xt{\setbox\Gb@x=\hbox{#2}}\fi%
    \v@lmin=\h@rdfcXp@dd\advance\v@lmin\Xp@dd\unit@\multiply\v@lmin\tw@%
    \v@lmax=\h@rdfcYp@dd\advance\v@lmax\Yp@dd\unit@\multiply\v@lmax\tw@%
    \Figv@ctCreg-8(\unit@,-\unit@)\def\list@num{#1}%
    \delt@=\CUR@width bp\divide\delt@\tw@%
    \fcn@de\PSc@mment{End fcnode}\resetc@ntr@l\et@tpsfcnode\fi\fi}}
\ctr@ld@f\def\d@butn@de{\g@tt@xt\v@lX=\wd\Gb@x%
    \v@lY=\ht\Gb@x\advance\v@lY\dp\Gb@x\advance\v@lX\v@lmin\advance\v@lY\v@lmax}
\ctr@ld@f\def\fcn@deE{%
    \@ecfor\p@int:=\list@num\do{\d@butn@de\v@lX=\unssqrttw@\v@lX\v@lY=\unssqrttw@\v@lY%
    \ifdim\thickn@ss\p@>\z@
    \v@lXa=\v@lX\advance\v@lXa\delt@\v@lXa=\ptT@unit@\v@lXa\edef\XR@d{\repdecn@mb\v@lXa}%
    \v@lYa=\v@lY\advance\v@lYa\delt@\v@lYa=\ptT@unit@\v@lYa\edef\YR@d{\repdecn@mb\v@lYa}%
    \arct@n\v@leur(\v@lXa,\v@lYa)\v@leur=\rdT@deg\v@leur\edef\@nglde{\repdecn@mb\v@leur}%
    {\c@lptellDD-2::\p@int;\XR@d,\YR@d(\@nglde)}
    \advance\v@leur-\PI@deg\edef\@nglun{\repdecn@mb\v@leur}%
    {\c@lptellDD-3::\p@int;\XR@d,\YR@d(\@nglun)}%
    \figptstra-6=-3,-2,\p@int/\thickn@ss,-8/\Q@s@tfillmode{yes}%
    \Pss@tspecifSt{color=\DDV@thickcolor}%
    \figdrawline[-2,-3,-6,-5]\figdrawarcell-4;\XR@d,\YR@d(\@nglun,\@nglde,0)%
    \Psrest@reSt{color=\DDV@thickcolor}\fi
    \v@lX=\ptT@unit@\v@lX\v@lY=\ptT@unit@\v@lY%
    \edef\XR@d{\repdecn@mb\v@lX}\edef\YR@d{\repdecn@mb\v@lY}%
    \Q@s@tfillmode{yes}\Pss@tspecifSt{color=\fcbgc@lor}%
    \figdrawarcell\p@int;\XR@d,\YR@d(0,360,0)%
    \Q@s@tfillmode{no}\Psrest@reSt{color=\fcbgc@lor}\figdrawarcell\p@int;\XR@d,\YR@d(0,360,0)}}
\ctr@ld@f\def\fcn@deL{\delt@=\ptT@unit@\delt@\edef\t@ille{\repdecn@mb\delt@}%
    \@ecfor\p@int:=\list@num\do{\Figg@tXYa{\p@int}\d@butn@de%
    \ifdim\v@lX>\v@lY\itis@Ktrue\else\itis@Kfalse\fi%
    \advance\v@lXa-\v@lX\Figp@intreg-1:(\v@lXa,\v@lYa)%
    \advance\v@lXa\v@lX\advance\v@lYa-\v@lY\Figp@intreg-2:(\v@lXa,\v@lYa)%
    \advance\v@lXa\v@lX\advance\v@lYa\v@lY\Figp@intreg-3:(\v@lXa,\v@lYa)%
    \advance\v@lXa-\v@lX\advance\v@lYa\v@lY\Figp@intreg-4:(\v@lXa,\v@lYa)%
    \ifdim\thickn@ss\p@>\z@
    \Figg@tXYa{\p@int}\Q@s@tfillmode{yes}\Pss@tspecifSt{color=\DDV@thickcolor}%
    \c@lpt@xt{-1}{-4}\c@lpt@xt@\v@lXa\v@lYa\v@lX\v@lY\c@rre\delt@%
    \Figp@intregDD-9:(\v@lZ,\v@lYa)\Figp@intregDD-11:(\v@lZa,\v@lYa)%
    \c@lpt@xt{-4}{-3}\c@lpt@xt@\v@lYa\v@lXa\v@lY\v@lX\delt@\c@rre%
    \Figp@intregDD-12:(\v@lXa,\v@lZ)\Figp@intregDD-10:(\v@lXa,\v@lZa)%
    \ifitis@K\figptstra-7=-9,-10,-11/\thickn@ss,-8/\figdrawline[-9,-11,-5,-6,-7]\else%
    \figptstra-7=-10,-11,-12/\thickn@ss,-8/\figdrawline[-10,-12,-5,-6,-7]\fi%
    \Psrest@reSt{color=\DDV@thickcolor}\fi
    \Q@s@tfillmode{yes}\Pss@tspecifSt{color=\fcbgc@lor}\figdrawline[-1,-2,-3,-4]%
    \Q@s@tfillmode{no}\Psrest@reSt{color=\fcbgc@lor}\figdrawline[-1,-2,-3,-4,-1]}}
\ctr@ld@f\def\c@lpt@xt#1#2{\figvectN-7[#1,#2]\vecunit@{-7}{-7}\figpttra-5:=#1/\t@ille,-7/%
    \figvectP-7[#1,#2]\Figg@tXY{-7}\c@rre=\v@lX\delt@=\v@lY\Figg@tXY{-5}}
\ctr@ld@f\def\c@lpt@xt@#1#2#3#4#5#6{\v@lZ=#6\invers@{\v@lZ}{\v@lZ}\v@leur=\repdecn@mb{#5}\v@lZ%
    \v@lZ=#2\advance\v@lZ-#4\mili@u=\repdecn@mb{\v@leur}\v@lZ%
    \v@lZ=#3\advance\v@lZ\mili@u\v@lZa=-\v@lZ\advance\v@lZa\tw@#1}
\ctr@ld@f\def\fcn@deR{\@ecfor\p@int:=\list@num\do{\Figg@tXYa{\p@int}\d@butn@de%
    \advance\v@lXa-0.5\v@lX\advance\v@lYa-0.5\v@lY\Figp@intreg-1:(\v@lXa,\v@lYa)%
    \advance\v@lXa\v@lX\Figp@intreg-2:(\v@lXa,\v@lYa)%
    \advance\v@lYa\v@lY\Figp@intreg-3:(\v@lXa,\v@lYa)%
    \advance\v@lXa-\v@lX\Figp@intreg-4:(\v@lXa,\v@lYa)%
    \ifdim\thickn@ss\p@>\z@
    \Q@s@tfillmode{yes}\Pss@tspecifSt{color=\DDV@thickcolor}%
    \Figv@ctCreg-5(-\delt@,-\delt@)\figpttra-9:=-1/1,-5/%
    \Figv@ctCreg-5(\delt@,-\delt@)\figpttra-10:=-2/1,-5/%
    \Figv@ctCreg-5(\delt@,\delt@)\figpttra-11:=-3/1,-5/%
    \figptstra-7=-9,-10,-11/\thickn@ss,-8/\figdrawline[-9,-11,-5,-6,-7]%
    \Psrest@reSt{color=\DDV@thickcolor}\fi
    \Q@s@tfillmode{yes}\Pss@tspecifSt{color=\fcbgc@lor}\figdrawline[-1,-2,-3,-4]%
    \Q@s@tfillmode{no}\Psrest@reSt{color=\fcbgc@lor}\figdrawline[-1,-2,-3,-4,-1]}}
\ctr@ld@f\def\Pssetfl@wchart#1=#2|{\keln@mtr#1|%
    \def\n@mref{arr}\ifx\l@debut\n@mref\expandafter\keln@mtr\l@suite|%
     \def\n@mref{owp}\ifx\l@debut\n@mref\update@ttr\D@FTfcarrowposition\P@setfcarrowposition{#2}\else
     \def\n@mref{owr}\ifx\l@debut\n@mref\update@ttr\D@FTfcarrowrefpt\P@setfcarrowrefpt{#2}\else
     \W@rnmesAttr{figset flowchart}{#1}\fi\fi\else%
    \def\n@mref{bgc}\ifx\l@debut\n@mref\update@ttr\D@FTfcbgcolor\P@setfcbgcolor{#2}\else
    \def\n@mref{lin}\ifx\l@debut\n@mref\update@ttr\D@FTfcline\P@setfcline{#2}\else
    \def\n@mref{pad}\ifx\l@debut\n@mref\update@ttr\D@FTfcxpadding\P@setfcxpadding{#2}%
                                       \update@ttr\D@FTfcypadding\P@setfcypadding{#2}\else
    \def\n@mref{rad}\ifx\l@debut\n@mref\update@ttr\D@FTfcradius\P@setfcradius{#2}\else
    \def\n@mref{sha}\ifx\l@debut\n@mref\update@ttr\D@FTfcshape\P@setfcshape{#2}\else
    \def\n@mref{thi}\ifx\l@debut\n@mref\expandafter\keln@mtr\l@suite|%
     \def\n@mref{ckc}\ifx\l@debut\n@mref\update@ttr\D@FTref\P@setfcthickcolor{#2}\else
     \def\n@mref{ckn}\ifx\l@debut\n@mref\update@ttr\D@FTfcthickness\P@setfcthickness{#2}\else
     \W@rnmesAttr{figset flowchart}{#1}\fi\fi\else%
    \def\n@mref{xpa}\ifx\l@debut\n@mref\update@ttr\D@FTfcxpadding\P@setfcxpadding{#2}\else
    \def\n@mref{ypa}\ifx\l@debut\n@mref\update@ttr\D@FTfcypadding\P@setfcypadding{#2}\else
    \W@rnmesAttr{figset flowchart}{#1}\fi\fi\fi\fi\fi\fi\fi\fi\fi}
\ctr@ln@m\@rrowp@s
\ctr@ld@f\def\P@setfcarrowposition#1{\edef\@rrowp@s{#1}}
\ctr@ln@m\@rrowr@fpt
\ctr@ld@f\def\P@setfcarrowrefpt#1{\setfcr@fpt#1|}
\ctr@ld@f\def\setfcr@fpt#1#2|{\if#1e\def\@rrowr@fpt{1}\else\def\@rrowr@fpt{0}\fi}
\ctr@ln@m\fcbgc@lor
\ctr@ld@f\def\P@setfcbgcolor#1{\edef\fcbgc@lor{#1}}
\ctr@ln@m\fclin@typ@
\ctr@ld@f\def\P@setfcline#1{\setfccurv@#1|}
\ctr@ld@f\def\setfccurv@#1#2|{\if#1c\def\fclin@typ@{0}\else\def\fclin@typ@{1}\fi}
\ctr@ln@m\fclin@r@d
\ctr@ld@f\def\P@setfcradius#1{\edef\fclin@r@d{#1}}
\ctr@ln@m\fcn@de \ctr@ln@m\fcsh@pe
\ctr@ln@m\h@rdfcXp@dd \ctr@ln@m\h@rdfcYp@dd
\ctr@ld@f\def\P@setfcshape#1{\setfcshap@#1|}
\ctr@ld@f\def\setfcshap@#1#2|{%
    \if#1e\let\fcn@de=\fcn@deE\def\h@rdfcXp@dd{4pt}\def\h@rdfcYp@dd{4pt}%
     \edef\fcsh@pe{ellipse}\else%
    \if#1l\let\fcn@de=\fcn@deL\def\h@rdfcXp@dd{4pt}\def\h@rdfcYp@dd{4pt}%
     \edef\fcsh@pe{lozenge}\else%
          \let\fcn@de=\fcn@deR\def\h@rdfcXp@dd{6pt}\def\h@rdfcYp@dd{6pt}%
     \edef\fcsh@pe{rectangle}\fi\fi}
\ctr@ln@m\DDV@thickcolor
\ctr@ld@f\def\P@setfcthickcolor#1{\edef\DDV@thickcolor{#1}}
\ctr@ln@m\thickn@ss
\ctr@ld@f\def\P@setfcthickness#1{\edef\thickn@ss{#1}}
\ctr@ln@m\Xp@dd
\ctr@ld@f\def\P@setfcxpadding#1{\edef\Xp@dd{#1}}
\ctr@ln@m\Yp@dd
\ctr@ld@f\def\P@setfcypadding#1{\edef\Yp@dd{#1}}
\ctr@ld@f\def\figdrawline[#1]{{\ifCUR@PS\ifGR@cri\PSc@mment{line Points=#1}%
    \let\figdrawlign@=\Pslign@P\Pslin@{#1}\PSc@mment{End line}\fi\fi}}
\ctr@ld@f\def\figdrawlineF#1{{\ifCUR@PS\ifGR@cri\PSc@mment{lineF Filename=#1}%
    \let\figdrawlign@=\Pslign@F\Pslin@{#1}\PSc@mment{End lineF}\fi\fi}}
\ctr@ld@f\def\figdrawlineC(#1){{\ifCUR@PS\ifGR@cri\PSc@mment{lineC}%
    \let\figdrawlign@=\Pslign@C\Pslin@{#1}\PSc@mment{End lineC}\fi\fi}}
\ctr@ld@f\def\Pslin@#1{\iffillm@de\figdrawlign@{#1}%
    \f@gfill%
    \else\figdrawlign@{#1}\ifx\derp@int\premp@int%
    \f@gclosestroke%
    \else\f@gstroke\fi\fi}
\ctr@ld@f\def\Pslign@P#1{\def\list@num{#1}\extrairelepremi@r\p@int\de\list@num%
    \edef\premp@int{\p@int}\f@gnewpath%
    \PSwrit@cmd{\p@int}{\c@mmoveto}{\fwf@g}%
    \@ecfor\p@int:=\list@num\do{\PSwrit@cmd{\p@int}{\c@mlineto}{\fwf@g}%
    \edef\derp@int{\p@int}}}
\ctr@ld@f\def\Pslign@F#1{\s@uvc@ntr@l\et@tPslign@F\setc@ntr@l{2}\openin\frf@g=#1\relax%
    \ifeof\frf@g\message{*** File #1 not found !}\end\else%
    \read\frf@g to\tr@c\edef\premp@int{\tr@c}\expandafter\extr@ctCF\tr@c:%
    \f@gnewpath\PSwrit@cmd{-1}{\c@mmoveto}{\fwf@g}%
    \loop\read\frf@g to\tr@c\ifeof\frf@g\mored@tafalse\else\mored@tatrue\fi%
    \ifmored@ta\expandafter\extr@ctCF\tr@c:\PSwrit@cmd{-1}{\c@mlineto}{\fwf@g}%
    \edef\derp@int{\tr@c}\repeat\fi\closein\frf@g\resetc@ntr@l\et@tPslign@F}
\ctr@ln@m\extr@ctCF
\ctr@ld@f\def\extr@ctCFDD#1 #2:{\v@lX=#1\unit@\v@lY=#2\unit@\Figp@intregDD-1:(\v@lX,\v@lY)}
\ctr@ld@f\def\extr@ctCFTD#1 #2 #3:{\v@lX=#1\unit@\v@lY=#2\unit@\v@lZ=#3\unit@%
    \Figp@intregTD-1:(\v@lX,\v@lY,\v@lZ)}
\ctr@ld@f\def\Pslign@C#1{\s@uvc@ntr@l\et@tPslign@C\setc@ntr@l{2}%
    \def\list@num{#1}\extrairelepremi@r\p@int\de\list@num%
    \edef\premp@int{\p@int}\f@gnewpath%
    \expandafter\Pslign@C@\p@int:\PSwrit@cmd{-1}{\c@mmoveto}{\fwf@g}%
    \@ecfor\p@int:=\list@num\do{\expandafter\Pslign@C@\p@int:%
    \PSwrit@cmd{-1}{\c@mlineto}{\fwf@g}\edef\derp@int{\p@int}}%
    \resetc@ntr@l\et@tPslign@C}
\ctr@ld@f\def\Pslign@C@#1 #2:{{\def\t@xt@{#1}\ifx\t@xt@\empty\Pslign@C@#2:
    \else\extr@ctCF#1 #2:\fi}}
\ctr@ld@f\def\Pssetm@sh#1=#2|{\keln@mde#1|%
    \def\n@mref{co}\ifx\l@debut\n@mref\update@ttr\D@FTref\P@setmeshcolor{#2}\else
    \def\n@mref{da}\ifx\l@debut\n@mref\update@ttr\D@FTref\P@setmeshdash{#2}\else
    \def\n@mref{di}\ifx\l@debut\n@mref\update@ttr\D@FTmeshdiag\Q@s@tmeshdiag{#2}\else
    \def\n@mref{wi}\ifx\l@debut\n@mref\update@ttr\D@FTref\P@setmeshwidth{#2}\else
    \W@rnmesAttr{figset mesh}{#1}\fi\fi\fi\fi}
\ctr@ln@m\c@ntrolmesh
\ctr@ld@f\def\Q@s@tmeshdiag#1{\edef\c@ntrolmesh{#1}}
\ctr@ld@f\def\D@FTmeshdiag{0}    
\ctr@ln@m\DDV@meshcolor
\ctr@ld@f\def\P@setmeshcolor#1{\edef\DDV@meshcolor{#1}}
\ctr@ln@m\DDV@meshdash
\ctr@ld@f\def\P@setmeshdash#1{\edef\DDV@meshdash{#1}}
\ctr@ln@m\DDV@meshwidth
\ctr@ld@f\def\P@setmeshwidth#1{\edef\DDV@meshwidth{#1}}
\ctr@ld@f\def\figdrawmesh#1,#2[#3,#4,#5,#6]{{\ifCUR@PS\ifGR@cri%
    \PSc@mment{mesh N1=#1, N2=#2, Quadrangle=[#3,#4,#5,#6]}\s@uvc@ntr@l\et@tpsmesh%
    \Pss@tspecifSt{color=\DDV@meshcolor,dash=\DDV@meshdash,width=\DDV@meshwidth}%
    \setc@ntr@l{2}%
    \ifnum#1>\@ne\Psmeshp@rt#1[#3,#4,#5,#6]\fi%
    \ifnum#2>\@ne\Psmeshp@rt#2[#4,#5,#6,#3]\fi%
    \ifnum\c@ntrolmesh>\z@\Psmeshdi@g#1,#2[#3,#4,#5,#6]\fi%
    \ifnum\c@ntrolmesh<\z@\Psmeshdi@g#2,#1[#4,#5,#6,#3]\fi%
    \Psrest@reSt{color=\DDV@meshcolor,dash=\DDV@meshdash,width=\DDV@meshwidth}%
    \figdrawline[#3,#4,#5,#6,#3]\PSc@mment{End mesh}\resetc@ntr@l\et@tpsmesh\fi\fi}}
\ctr@ld@f\def\Psmeshp@rt#1[#2,#3,#4,#5]{{\l@mbd@un=\@ne\l@mbd@de=#1\loop%
    \ifnum\l@mbd@un<#1\advance\l@mbd@de\m@ne\figptbary-1:[#2,#3;\l@mbd@de,\l@mbd@un]%
    \figptbary-2:[#5,#4;\l@mbd@de,\l@mbd@un]\figdrawline[-1,-2]\advance\l@mbd@un\@ne\repeat}}
\ctr@ld@f\def\Psmeshdi@g#1,#2[#3,#4,#5,#6]{\figptcopy-2:/#3/\figptcopy-3:/#6/%
    \l@mbd@un=\z@\l@mbd@de=#1\loop\ifnum\l@mbd@un<#1%
    \advance\l@mbd@un\@ne\advance\l@mbd@de\m@ne\figptcopy-1:/-2/\figptcopy-4:/-3/%
    \figptbary-2:[#3,#4;\l@mbd@de,\l@mbd@un]%
    \figptbary-3:[#6,#5;\l@mbd@de,\l@mbd@un]\Psmeshdi@gp@rt#2[-1,-2,-3,-4]\repeat}
\ctr@ld@f\def\Psmeshdi@gp@rt#1[#2,#3,#4,#5]{{\l@mbd@un=\z@\l@mbd@de=#1\loop%
    \ifnum\l@mbd@un<#1\figptbary-5:[#2,#5;\l@mbd@de,\l@mbd@un]%
    \advance\l@mbd@de\m@ne\advance\l@mbd@un\@ne%
    \figptbary-6:[#3,#4;\l@mbd@de,\l@mbd@un]\figdrawline[-5,-6]\repeat}}
\ctr@ln@m\figdrawnormal
\ctr@ld@f\def\Q@normalDD#1,#2[#3,#4]{{\ifCUR@PS\ifGR@cri%
    \PSc@mment{normal Length=#1, Lambda=#2 [Pt1,Pt2]=[#3,#4]}%
    \s@uvc@ntr@l\et@tpsnormal\resetc@ntr@l{2}\figptendnormal-6::#1,#2[#3,#4]%
    \figptcopyDD-5:/-1/\figdrawarrow[-5,-6]%
    \PSc@mment{End normal}\resetc@ntr@l\et@tpsnormal\fi\fi}}
\ctr@ld@f\def\figreset#1{\trtlis@rg{#1}{\Psreset@}}
\ctr@ld@f\def\Psreset@#1|{\def\t@xt@{#1}\ifx\t@xt@\empty\P@resetg@n
    \else\keln@mde#1|%
    \def\n@mref{al}\ifx\l@debut\n@mref%
        \figset altitude(blcolor=default,bldash=default,blwidth=default,%
        sqcolor=default,sqdash=default,sqwidth=default)\else
    \def\n@mref{ar}\ifx\l@debut\n@mref\figresetarrowhead\else
    \def\n@mref{cu}\ifx\l@debut\n@mref\figset curve(roundness=\D@FTroundness)\else
    \def\n@mref{ge}\ifx\l@debut\n@mref\P@resetg@n\else
    \def\n@mref{fl}\ifx\l@debut\n@mref%
        \figset flowchart(arrowp=\D@FTfcarrowposition,arrowr=\D@FTfcarrowrefpt,%
	bgcolor=\D@FTfcbgcolor,line=\D@FTfcline,radius=\D@FTfcradius,%
	shape=\D@FTfcshape,thickcolor=default,thickness=\D@FTfcthickness,%
	xpadd=\D@FTfcxpadding,ypadd=\D@FTfcypadding)\else
    \def\n@mref{me}\ifx\l@debut\n@mref\figset mesh(diag=\D@FTmeshdiag,%
        color=default,dash=default,width=default)\else
    \def\n@mref{tr}\ifx\l@debut\n@mref%
        \figset trimesh(color=default,dash=default,width=default)\else
    \W@rnmeskwd{figreset}{#1}\fi\fi\fi\fi\fi\fi\fi\fi}
\ctr@ld@f\def\P@resetg@n{\figset (color=\D@FTcolor,dash=\D@FTdash,fill=\D@FTfill,%
    join=\D@FTjoin,width=\D@FTwidth)}
\ctr@ld@f\def\figset#1(#2){\def\t@xt@{#1}\ifx\t@xt@\empty\trtlis@rg{#2}{\Pssetg@n}
    \else\keln@mde#1|%
    \def\n@mref{al}\ifx\l@debut\n@mref\trtlis@rg{#2}{\Psset@lti}\else
    \def\n@mref{ar}\ifx\l@debut\n@mref\trtlis@rg{#2}{\Psset@rrowhe@d}\else
    \def\n@mref{cu}\ifx\l@debut\n@mref\trtlis@rg{#2}{\Pssetc@rve}\else
    \def\n@mref{fl}\ifx\l@debut\n@mref\trtlis@rg{#2}{\Pssetfl@wchart}\else
    \def\n@mref{ge}\ifx\l@debut\n@mref\trtlis@rg{#2}{\Pssetg@n}\else
    \def\n@mref{me}\ifx\l@debut\n@mref\trtlis@rg{#2}{\Pssetm@sh}\else
    \def\n@mref{pr}\ifx\l@debut\n@mref\ifCUR@PS\W@rnmesIgn{figset proj(...)}%
     \else\trtlis@rg{#2}{\Figsetpr@j}\fi\else
    \def\n@mref{tr}\ifx\l@debut\n@mref\trtlis@rg{#2}{\Pssettrim@sh}\else
    \def\n@mref{wr}\ifx\l@debut\n@mref\let\M@cro=\Figsetwr@te\trtlis@rgtok{#2,|}\else
    \W@rnmeskwd{figset}{#1}\fi\fi\fi\fi\fi\fi\fi\fi\fi\fi\ignorespaces}
\ctr@ld@f\def\figsetdefault#1(#2){\ifCUR@PS\W@rnmesIgn{figsetdefault}\else%
    \def\t@xt@{#1}\ifx\t@xt@\empty\trtlis@rg{#2}{\Pssd@g@n}\else\keln@mun#1|
    \def\n@mref{a}\ifx\l@debut\n@mref\trtlis@rg{#2}{\Pssd@@rrowhe@d}\else
    \def\n@mref{c}\ifx\l@debut\n@mref\trtlis@rg{#2}{\Pssd@c@rve}\else
    \def\n@mref{g}\ifx\l@debut\n@mref\trtlis@rg{#2}{\Pssd@g@n}\else
    \def\n@mref{f}\ifx\l@debut\n@mref\trtlis@rg{#2}{\Pssd@fl@wchart}\else
    \def\n@mref{m}\ifx\l@debut\n@mref\trtlis@rg{#2}{\Pssd@m@sh}\else
    \W@rnmeskwd{figsetdefault}{#1}\fi\fi\fi\fi\fi\fi\initpss@ttings\fi}
\ctr@ld@f\def\Pssd@g@n#1=#2|{\keln@mun#1|%
    \def\n@mref{c}\ifx\l@debut\n@mref\edef\D@FTcolor{#2}\else
    \def\n@mref{d}\ifx\l@debut\n@mref\edef\D@FTdash{#2}\else
    \def\n@mref{f}\ifx\l@debut\n@mref\edef\D@FTfill{#2}\else
    \def\n@mref{j}\ifx\l@debut\n@mref\edef\D@FTjoin{#2}\else
    \def\n@mref{u}\ifx\l@debut\n@mref\edef\D@FTupdate{#2}\Q@s@tupdate{#2}\else
    \def\n@mref{w}\ifx\l@debut\n@mref\edef\D@FTwidth{#2}\else
    \W@rnmesAttr{figsetdefault}{#1}\fi\fi\fi\fi\fi\fi}
\ctr@ld@f\def\Pssd@@rrowhe@d#1=#2|{\keln@mun#1|%
    \def\n@mref{a}\ifx\l@debut\n@mref\edef\D@FTarrowheadangle{#2}\else
    \def\n@mref{f}\ifx\l@debut\n@mref\edef\D@FTarrowheadfill{#2}\else
    \def\n@mref{l}\ifx\l@debut\n@mref\y@tiunit{#2}\ifunitpr@sent%
     \edef\D@FTh@rdahlength{#2}\else\edef\D@FTh@rdahlength{#2pt}%
     \message{*** \BS@ figsetdefault (..., #1=#2, ...) : unit is missing, pt is assumed.}%
     \fi\else
    \def\n@mref{o}\ifx\l@debut\n@mref\edef\D@FTarrowheadout{#2}\else
    \def\n@mref{r}\ifx\l@debut\n@mref\edef\D@FTarrowheadratio{#2}\else
    \W@rnmesAttr{figsetdefault arrowhead}{#1}\fi\fi\fi\fi\fi}
\ctr@ld@f\def\Pssd@c@rve#1=#2|{\keln@mun#1|%
    \def\n@mref{r}\ifx\l@debut\n@mref\edef\D@FTroundness{#2}\else%
    \W@rnmesAttr{figsetdefault curve}{#1}\fi}
\ctr@ld@f\def\Pssd@fl@wchart#1=#2|{\keln@mtr#1|%
    \def\n@mref{arr}\ifx\l@debut\n@mref\expandafter\keln@mtr\l@suite|%
     \def\n@mref{owp}\ifx\l@debut\n@mref\edef\D@FTfcarrowposition{#2}\else
     \def\n@mref{owr}\ifx\l@debut\n@mref\edef\D@FTfcarrowrefpt{#2}\else
                     \W@rnmesAttr{figsetdefault flowchart}{#1}\fi\fi\else%
    \def\n@mref{bgc}\ifx\l@debut\n@mref\edef\D@FTfcbgcolor{#2}\else
    \def\n@mref{lin}\ifx\l@debut\n@mref\edef\D@FTfcline{#2}\else
    \def\n@mref{pad}\ifx\l@debut\n@mref\edef\D@FTfcxpadding{#2}%
                    \edef\D@FTfcypadding{#2}\else
    \def\n@mref{rad}\ifx\l@debut\n@mref\edef\D@FTfcradius{#2}\else
    \def\n@mref{sha}\ifx\l@debut\n@mref\edef\D@FTfcshape{#2}\else
    \def\n@mref{thi}\ifx\l@debut\n@mref\expandafter\keln@mtr\l@suite|%
     \def\n@mref{ckn}\ifx\l@debut\n@mref\edef\D@FTfcthickness{#2}\else
                     \W@rnmesAttr{figsetdefault flowchart}{#1}\fi\else%
    \def\n@mref{xpa}\ifx\l@debut\n@mref\edef\D@FTfcxpadding{#2}\else
    \def\n@mref{ypa}\ifx\l@debut\n@mref\edef\D@FTfcypadding{#2}\else
    \W@rnmesAttr{figsetdefault flowchart}{#1}\fi\fi\fi\fi\fi\fi\fi\fi\fi}
\ctr@ld@f\def\D@FTfcarrowposition{0.5}
\ctr@ld@f\def\D@FTfcarrowrefpt{start}
\ctr@ld@f\def\D@FTfcbgcolor{1}
\ctr@ld@f\def\D@FTfcline{polygon}
\ctr@ld@f\def\D@FTfcradius{0}
\ctr@ld@f\def\D@FTfcshape{rectangle}
\ctr@ld@f\def\D@FTfcthickness{0}
\ctr@ld@f\def\D@FTfcxpadding{0}
\ctr@ld@f\def\D@FTfcypadding{0}
\ctr@ld@f\def\Pssd@m@sh#1=#2|{\keln@mun#1|%
    \def\n@mref{d}\ifx\l@debut\n@mref\edef\D@FTmeshdiag{#2}\else%
    \W@rnmesAttr{figsetdefault mesh}{#1}\fi}
\ctr@ln@w{newif}\iffillm@de
\ctr@ld@f\def\Q@s@tfillmode#1{\expandafter\setfillm@de#1:}
\ctr@ld@f\def\setfillm@de#1#2:{\if#1n\fillm@defalse\else\fillm@detrue\fi}
\ctr@ld@f\def\D@FTfill{no}     
\ctr@ln@w{newif}\ifGRupdatem@de
\ctr@ld@f\def\Q@s@tupdate#1{\ifCUR@PS\W@rnmesIgn{figset (update=...)}%
    \else\expandafter\setupd@te#1:\fi}
\ctr@ld@f\def\setupd@te#1#2:{\if#1n\GRupdatem@defalse\else\GRupdatem@detrue\fi}
\ctr@ld@f\def\D@FTupdate{no}     
\ctr@ln@m\CUR@color \ctr@ln@m\CUR@colorc@md
\ctr@ld@f\def\s@uvcolor#1{\edef#1{\CUR@color}}
\ctr@ld@f\def\D@FTcolor{0}       
\ctr@ld@f\def\Pssetc@lor#1{\ifGR@cri\result@tent=\@ne\expandafter\c@lnbV@l#1 :%
    \def\CUR@color{}\def\CUR@colorc@md{}%
    \ifcase\result@tent\or\Q@s@tgray{#1}\or\or\Q@s@trgb{#1}\or\Q@s@tcmyk{#1}\fi\fi}
\ctr@ln@m\CUR@colorc@mdStroke
\ctr@ld@f\def\Q@s@tcmyk#1{\ifGR@cri\def\CUR@color{#1}\def\CUR@colorc@md{\c@msetcmykcolor}%
    \def\CUR@colorc@mdStroke{\c@msetcmykcolorStroke}%
    \ifCUR@PS\PSc@mment{setcmyk Color=#1}\us@primarC@lor\fi\fi}
\ctr@ld@f\def\Q@s@trgb#1{\ifGR@cri\def\CUR@color{#1}\def\CUR@colorc@md{\c@msetrgbcolor}%
    \def\CUR@colorc@mdStroke{\c@msetrgbcolorStroke}%
    \ifCUR@PS\PSc@mment{setrgb Color=#1}\us@primarC@lor\fi\fi}
\ctr@ld@f\def\Q@s@tgray#1{\ifGR@cri\def\CUR@color{#1}\def\CUR@colorc@md{\c@msetgray}%
    \def\CUR@colorc@mdStroke{\c@msetgrayStroke}%
    \ifCUR@PS\PSc@mment{setgray Gray level=#1}\us@primarC@lor\fi\fi}
\ctr@ln@m\fillc@md
\ctr@ld@f\def\us@primarC@lor{\immediate\write\fwf@g{\d@fprimarC@lor}%
    \let\fillc@md=\prfillc@md}
\ctr@ld@f\def\prfillc@md{\d@fprimarC@lor\space\c@mfill}
\ctr@ld@f\def\c@lnbV@l#1 #2:{\def\t@xt@{#1}\relax\ifx\t@xt@\empty\c@lnbV@l#2:
    \else\c@lnbV@l@#1 #2:\fi}
\ctr@ld@f\def\c@lnbV@l@#1 #2:{\def\t@xt@{#2}\ifx\t@xt@\empty%
    \def\t@xt@{#1}\ifx\t@xt@\empty\advance\result@tent\m@ne\fi
    \else\advance\result@tent\@ne\c@lnbV@l@#2:\fi}
\ctr@ld@f\def\Blackcmyk{0 0 0 1}
\ctr@ld@f\def\Whitecmyk{0 0 0 0}
\ctr@ld@f\def\Cyancmyk{1 0 0 0}
\ctr@ld@f\def\Magentacmyk{0 1 0 0}
\ctr@ld@f\def\Yellowcmyk{0 0 1 0}
\ctr@ld@f\def\Redcmyk{0 1 1 0}
\ctr@ld@f\def\Greencmyk{1 0 1 0}
\ctr@ld@f\def\Bluecmyk{1 1 0 0}
\ctr@ld@f\def\Graycmyk{0 0 0 0.50}
\ctr@ld@f\def\BrickRedcmyk{0 0.89 0.94 0.28} 
\ctr@ld@f\def\Browncmyk{0 0.81 1 0.60} 
\ctr@ld@f\def\ForestGreencmyk{0.91 0 0.88 0.12} 
\ctr@ld@f\def\Goldenrodcmyk{ 0 0.10 0.84 0} 
\ctr@ld@f\def\Marooncmyk{0 0.87 0.68 0.32} 
\ctr@ld@f\def\Orangecmyk{0 0.61 0.87 0} 
\ctr@ld@f\def\Purplecmyk{0.45 0.86 0 0} 
\ctr@ld@f\def\RoyalBluecmyk{1. 0.50 0 0} 
\ctr@ld@f\def\Violetcmyk{0.79 0.88 0 0} 
\ctr@ld@f\def\Blackrgb{0 0 0}
\ctr@ld@f\def\Whitergb{1 1 1}
\ctr@ld@f\def\Redrgb{1 0 0}
\ctr@ld@f\def\Greenrgb{0 1 0}
\ctr@ld@f\def\Bluergb{0 0 1}
\ctr@ld@f\def\Cyanrgb{0 1 1}
\ctr@ld@f\def\Magentargb{1 0 1}
\ctr@ld@f\def\Yellowrgb{1 1 0}
\ctr@ld@f\def\Grayrgb{0.5 0.5 0.5}
\ctr@ld@f\def\Chocolatergb{0.824 0.412 0.118}
\ctr@ld@f\def\DarkGoldenrodrgb{0.722 0.525 0.043}
\ctr@ld@f\def\DarkOrangergb{1 0.549 0}
\ctr@ld@f\def\Firebrickrgb{0.698 0.133 0.133}
\ctr@ld@f\def\ForestGreenrgb{0.133 0.545 0.133}
\ctr@ld@f\def\Goldrgb{1 0.843 0}
\ctr@ld@f\def\HotPinkrgb{1 0.412 0.706}
\ctr@ld@f\def\Maroonrgb{0.690 0.188 0.376}
\ctr@ld@f\def\Pinkrgb{1 0.753 0.796}
\ctr@ld@f\def\RoyalBluergb{0.255 0.412 0.882}
\ctr@ld@f\def\Pssetg@n#1=#2|{\keln@mun#1|%
    \def\n@mref{c}\ifx\l@debut\n@mref\update@ttr\D@FTcolor\Pssetc@lor{#2}\else
    \def\n@mref{d}\ifx\l@debut\n@mref\update@ttr\D@FTdash\Q@s@tdash{#2}\else
    \def\n@mref{f}\ifx\l@debut\n@mref\update@ttr\D@FTfill\Q@s@tfillmode{#2}\else
    \def\n@mref{j}\ifx\l@debut\n@mref\update@ttr\D@FTjoin\Q@s@tjoin{#2}\else
    \def\n@mref{u}\ifx\l@debut\n@mref\update@ttr\D@FTupdate\Q@s@tupdate{#2}\else
    \def\n@mref{w}\ifx\l@debut\n@mref\update@ttr\D@FTwidth\Q@s@twidth{#2}\else
    \W@rnmesAttr{figset}{#1}\fi\fi\fi\fi\fi\fi}
\ctr@ln@m\CUR@dash
\ctr@ld@f\def\s@uvdash#1{\edef#1{\CUR@dash}}
\ctr@ld@f\def\D@FTdash{1}        
\ctr@ld@f\def\Q@s@tdash#1{\ifGR@cri\edef\CUR@dash{#1}\ifCUR@PS\expandafter\Pssetd@sh#1 :\fi\fi}
\ctr@ld@f\def\Pssetd@shI#1{\PSc@mment{setdash Index=#1}\ifcase#1%
    \or\immediate\write\fwf@g{[] 0 \c@msetdash}
    \or\immediate\write\fwf@g{[6 2] 0 \c@msetdash}
    \or\immediate\write\fwf@g{[4 2] 0 \c@msetdash}
    \or\immediate\write\fwf@g{[2 2] 0 \c@msetdash}
    \or\immediate\write\fwf@g{[1 2] 0 \c@msetdash}
    \or\immediate\write\fwf@g{[2 4] 0 \c@msetdash}
    \or\immediate\write\fwf@g{[3 5] 0 \c@msetdash}
    \or\immediate\write\fwf@g{[3 3] 0 \c@msetdash}
    \or\immediate\write\fwf@g{[3 5 1 5] 0 \c@msetdash}
    \or\immediate\write\fwf@g{[6 4 2 4] 0 \c@msetdash}
    \fi}
\ctr@ld@f\def\Pssetd@sh#1 #2:{{\def\t@xt@{#1}\ifx\t@xt@\empty\Pssetd@sh#2:
    \else\def\t@xt@{#2}\ifx\t@xt@\empty\Pssetd@shI{#1}\else\s@mme=\@ne\def\debutp@t{#1}%
    \an@lysd@sh#2:\ifodd\s@mme\edef\debutp@t{\debutp@t\space\finp@t}\def\finp@t{0}\fi%
    \PSc@mment{setdash Pattern=#1 #2}%
    \immediate\write\fwf@g{[\debutp@t] \finp@t\space\c@msetdash}\fi\fi}}
\ctr@ld@f\def\an@lysd@sh#1 #2:{\def\t@xt@{#2}\ifx\t@xt@\empty\def\finp@t{#1}\else%
    \edef\debutp@t{\debutp@t\space#1}\advance\s@mme\@ne\an@lysd@sh#2:\fi}
\ctr@ln@m\CUR@width
\ctr@ld@f\def\s@uvwidth#1{\edef#1{\CUR@width}}
\ctr@ld@f\def\D@FTwidth{0.4}     
\ctr@ld@f\def\Q@s@twidth#1{\ifGR@cri\edef\CUR@width{#1}\ifCUR@PS%
    \PSc@mment{setwidth Width=#1}\immediate\write\fwf@g{#1 \c@msetlinewidth}\fi\fi}
\ctr@ln@m\CUR@join
\ctr@ld@f\def\s@uvjoin#1{\edef#1{\CUR@join}}
\ctr@ld@f\def\D@FTjoin{miter}   
\ctr@ld@f\def\Q@s@tjoin#1{\ifGR@cri\edef\CUR@join{#1}\ifCUR@PS\expandafter\Pssetj@in#1:\fi\fi}
\ctr@ld@f\def\Pssetj@in#1#2:{\PSc@mment{setjoin join=#1}%
    \if#1r\def\t@xt@{1}\else\if#1b\def\t@xt@{2}\else\def\t@xt@{0}\fi\fi%
    \immediate\write\fwf@g{\t@xt@\space\c@msetlinejoin}}
\ctr@ld@f\def\Pss@tspecifSt#1{\trtlis@rg{#1}{\Pss@tspecifSt@}}
\ctr@ld@f\def\Pss@tspecifSt@#1=#2|{\keln@mun#1|%
    \def\n@mref{c}\ifx\l@debut\n@mref\def\n@mref{#2}\ifx\n@mref\D@FTref\else%
     \s@uvcolor{\typ@color}\Pssetc@lor{#2}\fi\else
    \def\n@mref{d}\ifx\l@debut\n@mref\def\n@mref{#2}\ifx\n@mref\D@FTref\else%
     \s@uvdash{\typ@dash}\Q@s@tdash{#2}\fi\else
    \def\n@mref{j}\ifx\l@debut\n@mref\def\n@mref{#2}\ifx\n@mref\D@FTref\else%
     \s@uvjoin{\typ@join}\Q@s@tjoin{#2}\fi\else
    \def\n@mref{w}\ifx\l@debut\n@mref\def\n@mref{#2}\ifx\n@mref\D@FTref\else%
     \s@uvwidth{\typ@width}\Q@s@twidth{#2}\fi\else
    \W@rnmeskwd{Pss@tspecifSt}{#1}\fi\fi\fi\fi}
\ctr@ld@f\def\Psrest@reSt#1{\trtlis@rg{#1}{\Psrest@reSt@}}
\ctr@ld@f\def\Psrest@reSt@#1=#2|{\keln@mun#1|%
    \def\n@mref{c}\ifx\l@debut\n@mref\def\n@mref{#2}\ifx\n@mref\D@FTref\else%
     \Pssetc@lor{\typ@color}\fi\else
    \def\n@mref{d}\ifx\l@debut\n@mref\def\n@mref{#2}\ifx\n@mref\D@FTref\else%
     \Q@s@tdash{\typ@dash}\fi\else
    \def\n@mref{j}\ifx\l@debut\n@mref\def\n@mref{#2}\ifx\n@mref\D@FTref\else%
     \Q@s@tjoin{\typ@join}\fi\else
    \def\n@mref{w}\ifx\l@debut\n@mref\def\n@mref{#2}\ifx\n@mref\D@FTref\else%
     \Q@s@twidth{\typ@width}\fi\else
    \W@rnmeskwd{Psrest@reSt}{#1}\fi\fi\fi\fi}
\ctr@ld@f\def\Pssettrim@sh#1=#2|{\keln@mde#1|%
    \def\n@mref{co}\ifx\l@debut\n@mref\update@ttr\D@FTref\P@settmeshcolor{#2}\else
    \def\n@mref{da}\ifx\l@debut\n@mref\update@ttr\D@FTref\P@settmeshdash{#2}\else
    \def\n@mref{wi}\ifx\l@debut\n@mref\update@ttr\D@FTref\P@settmeshwidth{#2}\else
    \W@rnmesAttr{figset trimesh}{#1}\fi\fi\fi}
\ctr@ln@m\DDV@tmeshcolor
\ctr@ld@f\def\P@settmeshcolor#1{\edef\DDV@tmeshcolor{#1}}
\ctr@ln@m\DDV@tmeshdash
\ctr@ld@f\def\P@settmeshdash#1{\edef\DDV@tmeshdash{#1}}
\ctr@ln@m\DDV@tmeshwidth
\ctr@ld@f\def\P@settmeshwidth#1{\edef\DDV@tmeshwidth{#1}}
\ctr@ld@f\def\figdrawtrimesh#1[#2,#3,#4]{{\ifCUR@PS\ifGR@cri%
    \PSc@mment{trimesh Type=#1, Triangle=[#2,#3,#4]}%
    \s@uvc@ntr@l\et@tpstrimesh\ifnum#1>\@ne%
    \Pss@tspecifSt{color=\DDV@tmeshcolor,dash=\DDV@tmeshdash,width=\DDV@tmeshwidth}%
    \setc@ntr@l{2}%
    \Pstrimeshp@rt#1[#2,#3,#4]\Pstrimeshp@rt#1[#3,#4,#2]\Pstrimeshp@rt#1[#4,#2,#3]%
    \Psrest@reSt{color=\DDV@tmeshcolor,dash=\DDV@tmeshdash,width=\DDV@tmeshwidth}%
    \fi\figdrawline[#2,#3,#4,#2]%
    \PSc@mment{End trimesh}\resetc@ntr@l\et@tpstrimesh\fi\fi}}
\ctr@ld@f\def\Pstrimeshp@rt#1[#2,#3,#4]{{\l@mbd@un=\@ne\l@mbd@de=#1\loop\ifnum\l@mbd@de>\@ne%
    \advance\l@mbd@de\m@ne\figptbary-1:[#2,#3;\l@mbd@de,\l@mbd@un]%
    \figptbary-2:[#2,#4;\l@mbd@de,\l@mbd@un]\figdrawline[-1,-2]%
    \advance\l@mbd@un\@ne\repeat}}
\initpr@lim\initpss@ttings\initPDF@rDVI
\ctr@ln@w{newbox}\figBoxA
\ctr@ln@w{newbox}\figBoxB
\ctr@ln@w{newbox}\figBoxC
\catcode`\@=12


\newcommand{\beq}{\begin{equation}}
\newcommand{\eeq}{\end{equation}}

\newcommand{\sfI}{\mathsf{I}}
\newcommand{\bfx}{{\bf x}}
\newcommand{\bfy}{{\bf y}}

\newcommand{\comm}[1]{}
\newcommand\lbra{\left}
\newcommand\rbra{\right}

\newcommand\fqf{\mathfrak{h}_{\omega,\tt}}

\newcommand\argmin{{\rm argmin}\,}

\makeatletter
\renewcommand\theequation{\thesection.\arabic{equation}}
\@addtoreset{equation}{section}

\def\@listI{
    \leftmargin\leftmargini
    \parsep 1.5pt plus 1pt minus 1pt
    \topsep -1.5pt plus 1pt minus 1pt
    \itemsep \parsep}
\let\@listi\@listI
\makeatother

\usepackage{color}
\newcommand{\Bk}{\color{black}}
\newcommand{\Rd}{\color{red}}
\newcommand{\Gn}{\color{green}}
\newcommand{\Bl}{\color{blue}}
\def\to#1{\Bl {#1} \Bk}

\newcommand{\R}{\mathbb{R}}

\newcommand{\spn}{\mathrm{span}\,}

\newcommand{\fq}{\mathfrak{q}}
\newcommand{\frQ}{\mathfrak{Q}}

\newcommand{\Gui}{\mathsf{Gui}}
\newcommand{\Lay}{\mathsf{Lay}}
\newcommand{\Str}{\mathsf{Str}}
\newcommand{\Dom}{\mathsf{Dom}}
\newcommand{\Tri}{\mathsf{Tri}}
\newcommand{\Hst}{\mathsf{Hst}}

\newcommand{\far}{\mathsf{far}}
\newcommand{\near}{\mathsf{near}}
\newcommand{\trans}{\mathsf{trans}}
\newcommand{\cyl}{\mathsf{cyl}}

\newcommand{\Fqft}{\mathfrak{Q}_{\omega,\Omega_\tt}}

\newcommand{\ps}[2]{\left\langle#1,#2\right\rangle}
\newcommand{\vabs}[1]{\left|#1\right|}

\def\myotimes{\otimes}


\newcommand\sfT{\mathsf{T}}
\newcommand\sfR{\mathsf{R}}
\newcommand\frH{\mathfrak{H}}
\newcommand\frc{\mathfrak{c}}
\newcommand\frd{\mathfrak{d}}
\newcommand\frT{\mathfrak{T}}
\newcommand\frl{\mathfrak{l}}

\def\FrmV{\frH_{\alpha,\tt, V}}
\def\FrmV{\frH_{\alpha,\tt, V}}

\def\OpV{\sfH_{\alpha, \tt, V}}

\def\Frmsc{Q_{\omega}}
\def\Op{\sfH_{\omega,\tt}}
\def\Opsc{\sfH_{\omega}}

\def\OpL{\mathsf{L}_{\alpha,\cC}}
\def\OpLf{\mathsf{L}_{\alpha,\Gamma_\tt}}
\def\frmLf{Q_{\alpha,\Gamma_\tt}}
\def\frmLft{\wt{Q}_{\alpha,\Gamma_\tt}}
\def\frmLfr{{Q}_{\alpha,\Gamma}}
\def\frmLfrt{\wt{Q}_{\alpha,\Gamma}}

\def\frmLWz{Q_0}
\def\frmLWo{Q_1}
\def\frmLWj{Q_j}

\def\frmLz{Q_0'}

\def\frmLzz{Q_{00}'}
\def\frmLzo{Q_{01}'}
\def\frmLzj{Q_{0j}'}

\def\frmLzpr{Q_0'}

\def\frmLWot{Q_{\alpha, \Omega^{K}_\tt}}
\def\frmLWone{\wt{Q}_{\alpha,\Omega^{K}_\tt}}

\def\frmVg{\frT_{\alpha, \tt,\gamma, V}}
\def\opVg{\sfT_{\alpha,\tt, \gamma, V}}

\def\frmg{\frT_{\alpha, \tt, \gamma}}
\newcommand\sfG{\mathsf{G}}

\def\opg{\sfT_{\alpha, \tt,\gamma}}
\newcommand\bfe{{\bf e}}
\newcommand\bfA{{\bf A}}
\newcommand\bfB{{\bf B}}
\newcommand\Frm{Q_{\omega,\tt}}
\def\op{\sfT_{\alpha,\tt}}

\def\opID{\sfh_{\alpha,L}^{\rm D}}
\def\opIN{\sfh_{\alpha,L}^{\rm N}}
\def\frmID{\frq_{\alpha,L}^{\rm D}}
\def\frmIN{\frq_{\alpha,L}^{\rm N}}

\def\frmKSN{\frq_c^{\rm N}}
\def\frmKSD{\frq_c^{\rm D}}

\newcommand\N{\cN}

\def\frmR{\frT_{\tt,R}}
\def\opR{\sfT_{\tt,R}}

\def\frmRN{\frS_{\tt, R}}

\def\frmRI{\fra_{\tt, R}}
\def\frmRII{\frb_{\tt, R}}
\def\frmRIII{\frc_{\tt, R}}
\def\frmRIV{\frd_{\tt, R}}

\def\frmRk{\frS_{\aa,\tt,R}^{(k)}}

\def\frmtrans{\frt_{\aa,\tt, R}}
\def\frmlong{\frl_{\aa,\tt,R}}

\newcommand\medsum{{\textstyle\sum}}

\newcommand\Cd{{\cC_{d,\theta}}}

\newcommand\m{\mathfrak{m}}
\def\tt{\theta}
\def\aa{\alpha}
\def\lm{\lambda}
\def\sfh{\mathsf{h}}
\def\Gt{{\Gamma_\tt}}
\def\St{{\Sigma_\tt}}
\newcommand\G{\Gamma}
\def\arr{\rightarrow}
\def\frg{\mathfrak{g}}
\def\fri{\mathfrak{i}}


\def\restric#1#2{{#1} |_{#2}}

\newcommand\myemph[1]{\textbf{\emph{#1}}} 

\definecolor{darkred}{rgb}{0.5,0.1,0.1}
\newcommand{\vl}[1]{{\color{red} #1}}
\newcommand{\vlc}[1]{{\color{red} VL: #1}}
\newcommand{\tob}[1]{{\color{blue} #1}}

\newcommand\s{\sigma}
\newcommand\sd{\sigma_{\rm disc}}
\newcommand\sess{\sigma_{\rm ess}}

\newcommand\ii{{\mathsf{i}}}
\newcommand\p{\partial}

\newcommand\pp{\prime}
\newcommand\dpp{{\prime\prime}}
\newcommand\tpp{{\prime\prime\prime}}

\newcommand\sfH{\mathsf{H}}
\newcommand\sfF{\mathsf{F}}
\renewcommand\sfh{\mathsf{h}}
\newcommand\one{\mathbbm{1}}
\newcommand\Z{\mathsf{z}}
\newcommand\sfP{\mathsf{P}}
\newcommand\dd{{\mathsf{d}}}
\newcommand\uhr{\upharpoonright}

\newcommand\frf{{\mathfrak f}}
\newcommand\frj{{\mathfrak j}}
\newcommand\omg{\omega}

\newcommand\sfw{\mathsf{w}}
\newcommand\sfz{\mathsf{z}}

\newcommand\sfS{\mathsf{S}}
\newcommand\sfC{\mathsf{C}}
\newcommand\sfW{\mathsf{W}}
\newcommand\sfZ{\mathsf{Z}}
\newcommand\sfU{\mathsf{U}}
\newcommand\sfV{\mathsf{V}}

\newcommand*{\medcap}{\mathbin{\scalebox{1.5}{\ensuremath{\cap}}}}%
\newcommand*{\medcup}{\mathbin{\scalebox{1.5}{\ensuremath{\cup}}}}%
\newcommand*{\medoplus}{\mathbin{\scalebox{1.5}{\ensuremath{\oplus}}}}%

\def\radius{3cm}
\def\softness{0.4}

\definecolor{softred}{rgb}{1,\softness,\softness}
\definecolor{softgreen}{rgb}{\softness,1,\softness}
\definecolor{softblue}{rgb}{\softness,\softness,1}
\definecolor{softrg}{rgb}{1,1,\softness}
\definecolor{softrb}{rgb}{1,\softness,1}
\definecolor{softgb}{rgb}{\softness,1,1}

\newcounter{counter_a}
\newenvironment{myenum}{\begin{list}{{\rm(\roman{counter_a})}}%
{\usecounter{counter_a}
\setlength{\itemsep}{1.ex}\setlength{\topsep}{1.5ex}
\setlength{\leftmargin}{5ex}\setlength{\labelwidth}{5ex}}}{\end{list}}
\newcommand\ds{\displaystyle}
\usepackage[latin1]{inputenc}
\usepackage[T1]{fontenc}
\newcommand{\red}{\color{red}}

\newcommand{\eg}{{\it e.g.}\,}
\newcommand{\ie}{{\it i.e.}\,}
\newcommand{\cf}{{\it cf.}\,}

\numberwithin{figure}{section}
\numberwithin{equation}{section}
\theoremstyle{plain}
\newtheorem*{thm*}{Theorem}
\newtheorem{thm}{Theorem}[section]

\newtheorem{lem}[thm]{Lemma}
\newtheorem{prop}[thm]{Proposition}

\newtheorem{cor}[thm]{Corollary}

\newtheorem{dfn}[thm]{Definition}
\theoremstyle{remark}

\theoremstyle{plain}


%
\newcommand{\dsp}{\displaystyle}
\newcommand{\spec}{{\mathrm{spec}}}
\newcommand{\rmd}{\mathrm{d}}
\newcommand{\rmi}{\mathrm{i}}
\newcommand{\supp}{\mathrm{supp}\,}
\newcommand{\beu}{\begin{equation*}}
\newcommand{\eeu}{\end{equation*}}
\newcommand{\besu}{\begin{equation*}
\begin{aligned}}
\newcommand{\eesu}{\end{aligned}
\end{equation*}}
\newcommand{\bes}{\begin{equation}
\begin{aligned}}
\newcommand{\ees}{\end{aligned}
\end{equation}}

\newcommand\cA{\mathcal A}
\newcommand\cB{\mathcal B}
\newcommand\cD{\mathcal D}
\newcommand\cF{\mathcal F}
\newcommand\cG{\mathcal G}
\newcommand\cH{\mathcal H}
\newcommand\cK{\mathcal K}
\newcommand\cJ{\mathcal J}
\newcommand\cL{\mathcal L}
\newcommand\cM{\mathcal M}
\newcommand\cN{\mathcal N}
\newcommand\cP{\mathcal P}
\newcommand\cR{\mathcal R}
\newcommand\cS{\mathcal S}
\newcommand\CC{\mathbb C}
\newcommand\NN{\mathbb N}
\newcommand\RR{\mathbb R}
\newcommand\ZZ{\mathbb Z}
\newcommand\frA{\mathfrak A}
\newcommand\frB{\mathfrak B}
\newcommand\frS{\mathfrak S}
\newcommand\fra{\mathfrak a}
\newcommand\frq{\mathfrak q}
\newcommand\frs{\mathfrak s}
\newcommand\frp{\mathfrak p}
\newcommand\frh{\mathfrak h}
\newcommand\fraD{\mathfrak{a}_{\rm D}}
\newcommand\fraN{\mathfrak{a}}
\newcommand\dis{\displaystyle}
\newcommand\ov{\overline}
\newcommand\wt{\widetilde}
\newcommand\wh{\widehat}
\newcommand{\defeq}{\mathrel{\mathop:}=}
\newcommand{\eqdef}{=\mathrel{\mathop:}}
\newcommand{\defequ}{\mathrel{\mathop:}\hspace*{-0.72ex}&=}
\newcommand\vectn[2]{\begin{pmatrix} #1 \\ #2 \end{pmatrix}}
\newcommand\vect[2]{\begin{pmatrix} #1 \\[1ex] #2 \end{pmatrix}}
\newcommand\restr[1]{\!\bigm|_{#1}}
\newcommand\RE{\text{\rm Re}}

\newcommand\sign{{\rm sign\,}}

\newcommand\void[1]{}

\newcommand\eps{\varepsilon}
\newcommand\ran{{\rm ran\,}}
\newcommand\sigp{\sigma_{\rm p}}
\DeclareMathOperator\tr{tr}


\def\sA{{\mathfrak A}}   \def\sB{{\mathfrak B}}   \def\sC{{\mathfrak C}}
\def\sD{{\mathfrak D}}   \def\sE{{\mathfrak E}}   \def\sF{{\mathfrak F}}
\def\sG{{\mathfrak G}}   \def\sH{{\mathfrak H}}   \def\sI{{\mathfrak I}}
\def\sJ{{\mathfrak J}}   \def\sK{{\mathfrak K}}   \def\sL{{\mathfrak L}}
\def\sM{{\mathfrak M}}   \def\sN{{\mathfrak N}}   \def\sO{{\mathfrak O}}
\def\sP{{\mathfrak P}}   \def\sQ{{\mathfrak Q}}   \def\sR{{\mathfrak R}}
\def\sS{{\mathfrak S}}   \def\sT{{\mathfrak T}}   \def\sU{{\mathfrak U}}
\def\sV{{\mathfrak V}}   \def\sW{{\mathfrak W}}   \def\sX{{\mathfrak X}}
\def\sY{{\mathfrak Y}}   \def\sZ{{\mathfrak Z}}

\def\frb{{\mathfrak b}}
\def\frl{{\mathfrak l}}
\def\frs{{\mathfrak s}}
\def\frt{{\mathfrak t}}

\def\dA{{\mathbb A}}   \def\dB{{\mathbb B}}   \def\dC{{\mathbb C}}
\def\dD{{\mathbb D}}   \def\dE{{\mathbb E}}   \def\dF{{\mathbb F}}
\def\dG{{\mathbb G}}   \def\dH{{\mathbb H}}   \def\dI{{\mathbb I}}
\def\dJ{{\mathbb J}}   \def\dK{{\mathbb K}}   \def\dL{{\mathbb L}}
\def\dM{{\mathbb M}}   \def\dN{{\mathbb N}}   \def\dO{{\mathbb O}}
\def\dP{{\mathbb P}}   \def\dQ{{\mathbb Q}}   \def\dR{{\mathbb R}}
\def\dS{{\mathbb S}}   \def\dT{{\mathbb T}}   \def\dU{{\mathbb U}}
\def\dV{{\mathbb V}}   \def\dW{{\mathbb W}}   \def\dX{{\mathbb X}}
\def\dY{{\mathbb Y}}   \def\dZ{{\mathbb Z}}

\def\cA{{\mathcal A}}   \def\cB{{\mathcal B}}   \def\cC{{\mathcal C}}
\def\cD{{\mathcal D}}   \def\cE{{\mathcal E}}   \def\cF{{\mathcal F}}
\def\cG{{\mathcal G}}   \def\cH{{\mathcal H}}   \def\cI{{\mathcal I}}
\def\cJ{{\mathcal J}}   \def\cK{{\mathcal K}}   \def\cL{{\mathcal L}}
\def\cM{{\mathcal M}}   \def\cN{{\mathcal N}}   \def\cO{{\mathcal O}}
\def\cP{{\mathcal P}}   \def\cQ{{\mathcal Q}}   \def\cR{{\mathcal R}}
\def\cS{{\mathcal S}}   \def\cT{{\mathcal T}}   \def\cU{{\mathcal U}}
\def\cV{{\mathcal V}}   \def\cW{{\mathcal W}}   \def\cX{{\mathcal X}}
\def\cY{{\mathcal Y}}   \def\cZ{{\mathcal Z}}

\renewcommand{\div}{\mathrm{div}\,}
\newcommand{\grad}{\mathrm{grad}\,}
\newcommand{\Tr}{\mathrm{Tr}\,}

\newcommand{\dom}{\mathrm{dom}\,}
\newcommand{\mes}{\mathrm{mes}\,}

\newcommand\Lcyl{{L^2_{\mathsf{cyl}}(\Lay(\tt))}}

\def\fOp{\sfF_{\omega, \tt}}
\def\fFrm{\frf_{\omega, \tt}}
\newcommand{\Fqf}{\mathfrak{q}_{\omega, \tt}}
\newcommand{\Fhf}{\mathfrak{h}_{\omega, \tt}}

\newcommand{\frmq}{\mathfrak{q}_{\omega, \tt}}
\newcommand{\frmh}{\mathfrak{h}_{\omega, \tt}}
\newcommand{\frmf}{\mathfrak{f}_{\omega, \tt}}
\newcommand{\wtfrmf}{\wt{\mathfrak{f}}_{\omega, \tt}}
\newcommand{\whfrmf}{\wh{\mathfrak{f}}_{\omega, \tt}}

%
\usepackage[normalem]{ulem}
\definecolor{DarkGreen}{rgb}{0,0.5,0.1}
\newcommand{\txtD}{\textcolor{DarkGreen}}
\newcommand{\discuss}{\textcolor{red}}
\newcommand\soutD{\bgroup\markoverwith
{\textcolor{DarkGreen}{\rule[.5ex]{2pt}{1pt}}}\ULon}
\newcommand{\Hm}[1]{\leavevmode{\marginpar{\tiny%
$\hbox to 0mm{\hspace*{-0.5mm}$\leftarrow$\hss}%
\vcenter{\vrule depth 0.1mm height 0.1mm width \the\marginparwidth}%
\hbox to 0mm{\hss$\rightarrow$\hspace*{-0.5mm}}$\\\relax\raggedright
#1}}}
\newcommand{\noteD}[1]{\Hm{\textcolor{DarkGreen}{#1}}}
%

  
\title[Spectral transitions for Aharonov-Bohm Laplacians on conical layers]
{Spectral transitions for Aharonov-Bohm Laplacians\\ on conical layers}

\author{D.~Krej\v{c}i\v{r}\'ik} 
\address{Department of Theoretical Physics,
Nuclear Physics Institute, Czech Academy of Sciences, 250 68, \v{R}e\v{z} near Prague, Czech Republic}
\email{krejcirik@ujf.cas.cz}

\author{V.~Lotoreichik} 
\address{Department of Theoretical Physics,
Nuclear Physics Institute, Czech Academy of Sciences, 250 68, \v{R}e\v{z} near Prague, Czech Republic}
\email{lotoreichik@ujf.cas.cz}

\author{T.~Ourmi\`eres-Bonafos}
\address{BCAM - Basque Center for Applied Mathematics, Alameda de Mazarredo, 14 E48009 Bilbao, Basque Country -  Spain}
\email{tourmieres@bcamath.org}

\begin{document}

\subjclass[2010]{Primary 35P20; Secondary 35P15, 35Q40, 35Q60, 35J10}

\keywords{Schr\"odinger operator, quantum layers,  existence of bound states, spectral asymptotics, conical geometries}

\begin{abstract}
We consider the Laplace operator in a tubular neighbourhood of a conical surface of revolution,
subject to an Aharonov-Bohm magnetic field supported on the axis of symmetry
and Dirichlet boundary conditions on the boundary of the domain. 
We show that there exists a critical total magnetic flux 
depending on the aperture of the conical surface
for which the system undergoes an abrupt spectral transition 
from infinitely many eigenvalues below the essential spectrum
to an empty discrete spectrum.
For the critical flux we establish a Hardy-type inequality.  
In the regime with infinite discrete spectrum 
we obtain sharp spectral asymptotics with refined estimate of 
the remainder and investigate the dependence of the eigenvalues	
on the aperture of the surface and the flux of the magnetic field.
\end{abstract}                                                            

\maketitle

\section{Introduction}

\subsection{Motivation and state of the art}
Various physical properties of quantum systems 
can be explained through a careful spectral analysis of 
the underlying Hamiltonian. 
In this paper we consider the Hamiltonian of a quantum particle
constrained to a tubular neighbourhood of a conical surface
by hard-wall boundary conditions
and subjected to an external Aharonov-Bohm magnetic field 
supported on the axis of symmetry.
It turns out that the system exhibits a \emph{spectral transition}:
depending on the geometric aperture of the conical surface,
there exists a critical total magnetic flux which suddenly switches
from infinitely many bound states to an empty discrete spectrum.

The choice of such a system requires some comments.
First, the existence of infinitely many bound states below the threshold of the essential spectrum is a common property shared by Laplacians on various conical structures. 
This was first found in~\cite{DEK01, CEK04}, 
revisited in~\cite{ET10}, and further analysed in~\cite{DOR15} 
for the Dirichlet Laplacian 
in the tubular neighbourhood of the conical surface.
In agreement with these pioneering works, 
in this paper we use the term \emph{layer}
to denote the tubular neighbourhood.
Later, the same effect was observed for other realisations of Laplacians on conical structures~\cite{BEL14, BDPR15, BR15, BPP16, LO16, P15}. 
Second, the motivation for combining Dirichlet Laplacians on conical layers
with magnetic fields has a clear physical importance in quantum mechanics~\cite{SST69}.
Informally speaking, magnetic fields act as ``repulsive'' interactions
whereas the specific geometry of the layer acts as an ``attractive'' interaction.
Therefore, one expects that if a magnetic field is not too strong
to change the essential spectrum but
strong enough to compensate the binding effect of the geometry, 
the number of eigenvalues can become finite or the discrete can even fully
disappear.  

Our main goal is to demonstrate this effect 
for an idealised situation of an infinitely thin and long solenoid
put along the axis of symmetry of the conical layer, 
which is conventionally realised by a singular Aharonov-Bohm-type magnetic potential. 
First of all, 
we prove that the essential spectrum is stable under the geometric 
and magnetic perturbations considered in this paper.
As the main result, we establish the occurrence of an abrupt 
spectral transition regarding the existence and number
of discrete eigenvalues. 
In the \emph{sub-critical} regime, 
when the magnetic field is weak,
we prove the existence of infinitely many bound states
below the essential spectrum and obtain a precise accumulation rate
of the eigenvalues with refined estimate of the remainder. 
The method of this proof is inspired by~\cite{DOR15}, see also \cite{LO16}.
In the case of the critical magnetic flux 
we obtain a global Hardy inequality which,
in particular, implies that there are no bound states in the sup-critical regime.

A similar phenomenon is observed in~\cite{NR16} where it is shown that a sufficiently strong Aharonov-Bohm point interaction can remove finitely many bound states in the model of a quantum waveguide laterally coupled through a window~\cite{ESTV96, P99}. There are also many other models where a sort of competition between binding
and repulsion caused by different mechanism occurs. For example, 
bending of a quantum waveguide acts as an attractive interaction 
\cite{DE95,CDFK05}
whereas twisting of it acts as a repulsive interaction 
\cite{EKK08,K08}.
Thus, bound states in such a  waveguide exist 
only if the bending is in a certain sense stronger than twisting.
It is also conjectured in~\cite[Sec. IX]{S00} (but not proven so far) 
that a similar effect can arise for atomic many-body Hamiltonians
at specific critical values of the nucleus charge. Here, both binding
and repulsive forces are played by Coulombic interactions.

\subsection{Aharonov-Bohm magnetic Dirichlet Laplacian on a conical layer}
\label{subsec:AB_field}
Given an angle $\theta\in(0,\pi/2)$, 
our configuration space is a $\pi/2$-tubular neighbourhood  
of a conical surface of opening angle~$2\theta$.
Such a domain will be denoted here by $\Lay(\tt)$
and called a \emph{conical layer}. 
Because of the rotational symmetry,
it is best described in cylindrical coordinates.

To this purpose, let $(x_1, x_2, x_3)$ be the Cartesian coordinates 
on the Euclidean space $\dR^3$ 
and $\dR^2_+$ be the positive half-plane $(0,+\infty)\times\dR$. 
We consider cylindrical coordinates 
$(r, z, \phi) \in \R_+^2 \times \dS^1$ defined {\it via} the following standard relations
\begin{equation}
	x_1 = r\cos\phi,\qquad x_2=r\sin\phi,\qquad x_3 =z.
	\label{eqn:co_cyl}
\end{equation}
For further use, we also introduce the axis of symmetry
$\Gamma :=\{(r,z,\phi)\in \R_+^2 \times \dS^1 \colon r = 0\}$.
We abbreviate by $(\bfe_r, \bfe_\phi, \bfe_z)$ the \emph{moving frame}
\[
	\bfe_r := (\cos\phi,\sin\phi,0),
	\qquad 
	\bfe_\phi := (-\sin\phi,\cos\phi,0),
	\qquad
	\bfe_z := (0,0,1),
\]
associated with the cylindrical coordinates $(r,z,\phi)$. 

To introduce the conical layer $\Lay(\theta)$ 
with half-opening angle $\theta\in(0,\pi/2)$, 
we first define its \emph{meridian domain} $\Gui(\tt)\subset\dR^2_+$
(see Figure~\ref{F:1}) by
\begin{equation}
	\Gui(\tt) = 
	\Big\{(r,z)\in\R_+^2\colon 
	- \frac\pi{\sin\tt} < z,\quad 
	\max(0,z\tan\tt) < r < z\tan\tt + \frac\pi{\cos\tt}\Big\}.
\end{equation}
Then the conical layer $\Lay(\tt)$
associated with $\Gui(\tt)$ is defined in 
cylindrical coordinates~\eqref{eqn:co_cyl} by
\begin{equation}
	\Lay(\tt) := \Gui(\tt)\times \dS^1.
\end{equation}
The layer $\Lay(\tt)$ can be seen as a sub-domain
of $\dR^3$ constructed {\it via} rotation
of the meridian domain $\Gui(\tt)$ around the axis $\Gamma$.

\begin{figure}[h!]
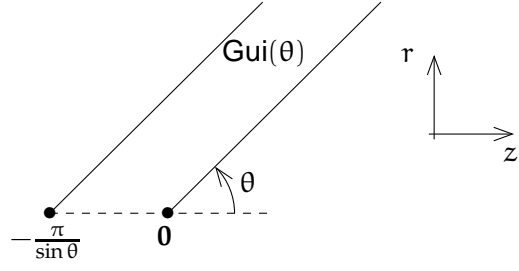

\figinit{0.35cm}
\figpt 0:(0,0)
\figpt 1:(-4.44288294,0)

\figpt 2:(-5,0)
\figpt 3:(4,0) 

\figpt 4:(8,8)
\figpt 5:(3.55711706,8)

\figpt 6:(2,0)
\figpt 7:(2,2)

\figpt 8:(2.2,1.2)
\figpt 9:(3.75,5)

\figpt 10:(10,2.8)
\figpt 11:(10,6)

\figpt 12:(9.8,3)
\figpt 13:(13,3)
\figdrawbegin{}
\figdrawline [0,4]
\figdrawline [1,5]
\figdrawarrow[10,11]
\figdrawarrow[12,13]
\figdrawarrowcircP 0;2.5[6,7]
\figset (dash=8)
\figdrawline [1,3]
\figdrawend

\figvisu{\figBoxA}{}{
\figwrites 13:{$z$} (0.5)
\figwritew 11:{$r$} (0.5)
\figwritee 8:{$\theta$} (0.5)
\figwriten 9:{$\Gui(\theta)$} (0.5)
\figset write(mark=$\bullet$)
\figwrites 1:{$-\frac{\pi}{\sin\theta}$} (0.5)
\figwrites 0:{$\mathbf{0}$} (0.5)
}

\centerline{\box\figBoxA}
\caption{The meridian domain $\Gui(\theta)$.}
\label{F:1}
\end{figure}

For later purposes we split the boundary $\p\Gui(\tt)$ of $\Gui(\tt)$ into
two parts defined as
\[
	\p_0 \Gui(\tt) := \lbra\{(0,z)\colon -\pi < z\sin\tt < 0\rbra\},
	\qquad
	\p_1 \Gui(\tt) := \p \Gui(\tt)\setminus\ov{\p_0 \Gui(\tt)}.
\]
The distance between the two connected components of $\p_1\Gui(\tt)$
is said to be the \emph{width} of the layer $\Lay(\tt)$.
We point out that the meridian domain is normalised so that the
width of $\Lay(\tt)$ equals $\pi$ for any value of $\tt$. This normalization
simplifies notations significantly 
and it also preserves all possible spectral features 
without loss of generality, because the problem with an arbitrary width 
is related to the present setting by a simple scaling.

In order to define the \emph{Aharonov-Bohm magnetic field (AB-field)} we are interested in,
we introduce a real-valued function $\omega\in L^2(\dS^1)$ and
the vector potential $\bfA_\omega\colon\R_+^2\times \dS^1 \arr  \R^3$ by
\begin{equation}\label{eq:A}
	{\bf A}_{\omega}(r,z,\phi) :=
	\frac{\omega(\phi)}{r}{\bfe_{\phi}}.
\end{equation}
This vector potential is naturally associated with the singular AB-field 
\begin{equation}\label{eq:field}
	\bfB_\omega 
	= 
	\nabla \times \bfA_\omega 
	=  
	2\pi\Phi_{\omega}\delta_\Gamma{\bf e}_z,
\end{equation} 
where $\delta_\Gamma$ is the $\delta$-distribution supported on $\Gamma$
and $\Phi_\omega$ is the \emph{magnetic flux}
\[
	\Phi_\omega 
	:=
	\frac{1}{2\pi}\int_{0}^{2\pi}\omega(\phi)\dd \phi.
\]
Note that to check identity~\eqref{eq:field} it suffices 
to compute $\nabla \times \bfA_\omega$ in the distributional 
sense~\cite[Chap. 3]{McL}.

We introduce the usual cylindrical $L^2$-spaces on
$\dR^3$ and on $\Lay(\tt)$
\[
	L_\cyl^2(\dR^3):= 
	L^2(\dR^2_+\times\dS^1;r\dd r \dd z \dd \phi),\qquad
	L_\cyl^2(\Lay(\tt)):= L^2(\Gui(\tt)\times\dS^1;r\dd r \dd z \dd \phi).
\]
For further use, we also introduce the cylindrical Sobolev space $H^1_\cyl(\Lay(\tt))$ defined as
\[
	H^1_\cyl\big(\Lay(\tt)\big) := 
	\bigg\{ u \in L_\cyl^2(\Lay(\tt)) : 
	\int_{\Lay(\tt)}
	\bigg(|\partial_ru|^2 + |\partial_zu|^2 + 
	\frac{|\partial_\phi u|^2}{r^2}\bigg) r\dd r \dd z \dd\phi 
	< +\infty\bigg\}.
\]
The space $H^1_\cyl(\Lay(\tt))$ is endowed with the norm $\|\cdot\|_{H^1_\cyl(\Lay(\tt))}$ defined, for all $u \in H^1_\cyl(\Lay(\tt))$, by
\[
	\|u\|_{H^1_\cyl(\Lay(\tt))}^2 = \|u\|_{L^2_\cyl(\Lay(\tt))}^2 + \int_{\Lay(\tt)}\bigg(|\partial_ru|^2 + |\partial_zu|^2 + \frac{|\partial_\phi u|^2}{r^2}\bigg) r\dd r \dd z \dd\phi.
\]
Now, we define the non-negative symmetric densely defined quadratic form
on the Hilbert space $L_\cyl^2(\Lay(\tt))$ by
\begin{equation}\label{eq:Q_omega_0}
	Q_{\omega,\tt,0}[u] := 
	\|(\ii\nabla - \bfA_\omega)u\|_\Lcyl^2,\qquad 
	\dom Q_{\omega,\tt,0} := 
	\cC_0^\infty(\Lay(\tt)).
\end{equation}
The quadratic form $Q_{\omega,\tt,0}$ is closable by~\cite[Thm. VI.1.27]{Kato}, because it can be written {\it via} integration by parts as 
\[
	Q_{\omega,\tt,0}[u] = \ps{\sfH_{\omega,\tt,0} u}{u}_\Lcyl
\]
where the operator $\sfH_{\omega,\tt,0} u := (\ii \nabla  - \bfA_\omega)^2u$ with $\dom\sfH_{\omega,\tt,0}:= \cC_0^\infty(\Lay(\tt))$ is non-negative, symmetric, and densely defined in $L^2(\Lay(\tt))$.
In the sequel, it is convenient to have a special notation for the closure of 
$Q_{\omega,\tt,0}$
\begin{equation}\label{eq:Q_omega}
	Q_{\omega,\tt} := \ov{Q_{\omega,\tt,0}}.
\end{equation}

Now we are in a position to introduce the main object of this paper.
\begin{dfn}
	The self-adjoint operator $\Op$ in $L_\cyl^2(\Lay(\tt))$ 
	associated with	the form $Q_{\omega,\tt}$ 
	\emph{via} the first representation theorem~\cite[Thm. VI.2.1]{Kato}
	is regarded as the \emph{Aharonov-Bohm magnetic Dirichlet Laplacian} on 
	the conical layer $\Lay(\tt)$.
\end{dfn}
The Hamiltonian $\Op$ can be seen as an idealization for a more
physically realistic self-adjoint Hamiltonian $\sfH_{\omega,\tt, W}$
associated with the closure of the quadratic form
\[
	u\in \cC^\infty_0\big(\dR^2_+\times\dS^1\big) 
	\mapsto 
	\|(\ii\nabla - {\bf A}_\omega)u\|^2_{L^2_{\mathsf{cyl}}(\dR^3)} 
	+ (W u,u)_{L^2_{\mathsf{cyl}}(\dR^3)}
\]
where the potential $W\colon\dR^2_+\times\dS^1\arr\dR$ is a piecewise constant function given by
\[
	W(r,z,\phi) = 
	\begin{cases} 
		0,& (r,z,\phi)\in \Lay(\tt),\\
		W_0,& (r,z,\phi)\notin \Lay(\tt).
	\end{cases}
\]
The strong resolvent convergence of $\sfH_{\omega,\tt,W}$ to $\Op$ in the limit
$W_0\arr +\infty$ follows from the monotone convergence for quadratic forms~\cite[\S VIII.7]{RS-I}.

Before going any further, we remark that 
$\Phi_{\omega} + k\in\dR$ with $k\in\dZ$ can
alternatively be seen as a constant real-function in $L^2(\dS^1)$ and that
\begin{equation}
	\bfA_{\Phi_\omega + k} - \bfA_{\omega} = \nabla V
	\quad
	\text{with}
	\quad
	V(\phi) := (\Phi_\omega + k)\phi - \int_0^{\phi}\omega(\xi)\dd\xi.
	\label{eqn:def_V_gauge}
\end{equation}
\emph{The gauge transform} is defined as
\beq
	\sfG_V\colon \Lcyl\arr \Lcyl,\qquad \sfG_V u := e^{\ii V} u.
\eeq
Clearly, the operator $\sfG_V$ is unitary. 
By Proposition~\ref{prop:unit_fq} proven in Appendix~\ref{app:A}
the operators $\Op$ and $\sfH_{\Phi_\omega+k,\tt}$ are unitarily equivalent 
{\it via} the transform $\sfG_V$.
Therefore, taking $k= -\argmin_{k\in\dZ}\{|k-\Phi_\omega|\}$ we can reduce the case of general $\omega\in L^2(\dS^1 ; \dR)$ to constant $\omega\in[-1/2,1/2]$. 
For symmetry reasons $\sfH_{\omega,\tt}$ is unitary equivalent to 
$\sfH_{-\omega,\tt}$ for any $\omega\in\dR$. Thus, the case of constant 
$\omega\in[-1/2,1/2]$ is further reduced to $\omega\in[0,1/2]$.

When $\omega=0$, we remark that the quadratic form $Q_{0,\tt,0}$ coincides with the quadratic form of a Dirichlet Laplacian in cylindrical coordinates. Moreover, we have
\[
	\overline{\cC_0^\infty(\Lay(\tt))}^{\|\cdot\|_{H^1_\cyl(\Lay(\tt))}} = \overline{\cC_0^\infty(\Lay_0(\tt))}^{\|\cdot\|_{H^1_\cyl(\Lay(\tt))}},
\]
where $\Lay_0(\tt) = \big(\Gui(\tt)\cup\partial_0\Gui(\tt)\big)\times \dS^1$. Consequently, the case $\omega = 0$ reduces to the one analysed in~\cite{DEK01, DOR15, ET10} and we exclude it from our considerations.
From now on, we assume that $\omega\in (0,1/2]$ is a constant,
without loss of generality.

For $\omega\in (0,1/2]$ 
the quadratic form $\Frm$ associated with $\Op$ simply reads
\[
	\Frm[u] = 
	\int_{\Lay(\tt)} 
	\bigg(|\p_r u|^2 + |\p_z u|^2 + \frac{|\ii\p_\phi u - \omega u|^2}{r^2}\bigg)
	r \dd r\dd z\dd \phi.
\]
Following the strategy of~\cite[\S 3.4.1]{K13}, we consider on the Hilbert space $L^2(\dS^1)$ the ordinary differential self-adjoint operator $\sfh_\omega$
\begin{equation}
	\sfh_\omega v := \ii v' - \omega v,\qquad 
	\dom\sfh_\omega := \big\{v\in H^1(\dS^1) \colon v(0) = v(2\pi)\big\}.
\end{equation}
The eigenvalues $\{m-\omega\}_{m\in\dZ}$ of $\sfh_\omega$ are associated with the orthonormal basis of $L^2(\dS^1)$ given by
\begin{equation}\label{eq:vm}
	v_m(\phi) = (2\pi)^{-1/2} e^{\ii m\phi},\qquad m\in\dZ.
\end{equation}
For any $m\in\dZ$ and $u\in \Lcyl$, we introduce the
projector 
%
\begin{equation}\label{eq:projector}
	(\pi^{[m]}u)(r,z) = \ps{u(r,z,\phi)}{v_m(\phi)}_{L^2(\dS^1)}.
\end{equation}
According to the approach of~\cite[\S XIII.16]{RS78}, 
see also \cite{DOR15, LO16}
for related considerations, we can decompose $\Op$, with respect to this basis, as
\begin{equation}\label{eq:orthogonal_decomposition}
	\Op \cong \bigoplus_{m\in\dZ}\fOp^{[m]},
\end{equation}
where the symbol $\cong$ stands for the unitary equivalence relation
and, for all $m\in\dZ$, the operators $\fOp^{[m]}$ 
acting on $L^2(\Gui(\tt);r\dd r \dd z)$ are the \emph{fibers} of~$\Op$.
They are associated through the first
representation theorem with the closed, densely defined, symmetric non-negative
quadratic forms
\begin{equation}\label{eq:forms}
	\frmf^{[m]}[u] 
	:= 
	\int_{\Gui(\tt)} 
	\bigg(
	|\p_r u|^2 + |\p_z u|^2 + \frac{(m-\omega)^2}{r^2}|u|^2\bigg)r\dd r\dd z,
	\quad
	\dom \fFrm^{[m]} :=
	\pi^{[m]}\big(\dom \Frm\big).
\end{equation}
The domain of the operator $\fOp^{[m]}$ can
be deduced from the form $\fFrm^{[m]}$
in the standard way {\it via} the first representation theorem.

Finally, we introduce the unitary operator 
$\sfU \colon  L^2(\Gui(\tt);r\dd r\dd z) \arr L^2(\Gui(\tt))$,
$\sfU u := \sqrt{r}u$. This unitary operator allows to 
transform the quadratic forms $\fFrm^{[m]}$
into other ones expressed in a flat metric. Indeed, the quadratic form 
$\fFrm^{[m]}$ is
unitarily equivalent {\it via} $\sfU$ to the form on the Hilbert space
$L^2(\Gui(\tt))$ defined as
\begin{equation}
	\frmq^{[m]}[u]  := 
	\int_{\Gui(\tt)} 
	\Big(|\p_r u|^2 + |\p_z u|^2 + 
	\frac{(m-\omega)^2-1/4}{r^2}|u|^2\Big) \dd r\dd z,
	\quad
	\dom\frmq^{[m]} := \sfU(\dom \fFrm^{[m]}).
	\label{eqn:fq_flat}
\end{equation}
In fact, one can prove that $\cC_0^\infty(\Gui(\tt))$ is a form core for $\frmq^{[m]}$ and that its form domain satisfies
\begin{equation}\label{eq:domain_incl}
	\dom\frmq^{[m]} =  H_0^1(\Gui(\tt)).
\end{equation}
We refer to Appendix~\ref{app:B} for a justification of~\eqref{eq:domain_incl} and we would like to emphasise that \eqref{eq:domain_incl} does not hold for $\omega=0$ 
but we excluded this case from our considerations.

It will be handy in what follows to drop the superscript $[0]$ for $m = 0$
and to set
\begin{equation}\label{eq:no-supscript}
	\fOp := \fOp^{[0]},\qquad \frmf := \frmf^{[0]},\qquad 
	\frmq := \frmq^{[0]}. 
\end{equation}


\subsection{Main results}
We introduce a few notation before stating the main results of this paper.
The set of positive integers is denoted by $\mathbb{N} := \{1,2,\dots\}$ and the set of natural integers is denoted by 
$\mathbb{N}_0 := \mathbb{N}\cup\{0\}$. Let $\sfT$ be a semi-bounded self-adjoint operator 
associated with the quadratic form $\frt$. 
We denote by $\sess(\sfT)$ and $\sd(\sfT)$
the essential and the discrete spectrum of $\sfT$, respectively.
By $\s(\sfT)$, we denote the spectrum of $\sfT$ 
(\ie $\s(\sfT) = \sess(\sfT)\cup\sd(\sfT)$).

 Let $\frt_1$ and $\frt_1$ be two quadratic forms of domains $\dom(\frt_1)$ and $\dom(\frt_2)$, respectively. We say that we have the form ordering $\frt_1 \prec \frt_2$ if
\[
	\dom(\frt_2)\subset\dom(\frt_1)\quad\text{and}\quad \frt_1[u]\leq\frt_2[u],\ \text{for all } u\in \dom(\frt_2).
\]

We set $E_{\rm ess}(\sfT) := \inf\sess(\sfT)$ and, for $k\in\mathbb{N}$, $E_k(\sfT)$ denotes the $k$-th Rayleigh quotient of $\sfT$, defined as
\[
	E_k(\sfT) = \sup_{u_1,\dots,u_{k-1}\in\dom\frt}\inf_{
	\begin{smallmatrix}
	u\in\mathsf{span}(u_1,\dots,u_{k-1})^\perp\\
	u\in\dom\frt\setminus\{0\}
	\end{smallmatrix}}\frac{\frt[u]}{\|u\|^2}.
\]
From the min-max principle (see \eg \cite[Chap. XIII]{RS78}), we know that if $E_k(\sfT)\in(-\infty, E_{\rm ess}(\sfT))$, the $k$-th Rayleigh quotient is a discrete eigenvalue of finite multiplicity. Especially, we have the following description of the discrete spectrum
below $E_{\rm ess}(\sfT)$
\[
	\sd(\sfT)\cap(-\infty,E_{\rm ess}(\sfT))
	 = 
	 \big\{ E_k(\sfT) : k\in\dN, E_k(\sfT) < E_{\rm ess}(\sfT)\big\}.
\]
Consequently, if $E_k(\sfT)\in\sd(\sfT)$, it is the $k$-th eigenvalue with multiplicity taken into account. We define the counting function of $\sfT$ as
\[
	\N_E(\sfT) := \#\big\{k\in\dN\colon E_k(\sfT) < E\big\}, 
	\qquad E \le E_{\rm ess}(\sfT).
\]	
When working with the quadratic form $\frt$, 
we use the notations $\sess(\frt)$, $\sd(\frt)$, $\s(\frt)$,
$E_{\rm ess}(\frt)$, $E_k(\frt)$ and $\N_E(\frt)$  instead.

Our first result gives the description of the essential spectrum of $\Op$.
%
\begin{thm} 
	Let $\tt\in(0,\pi/2)$ and $\omega\in (0,1/2]$. There holds,
	\[
		\sess(\Op) = [1,+\infty).
	\]
\label{th:esspec}
\end{thm}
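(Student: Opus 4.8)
The plan is to prove the two inclusions $\sess(\Op) \subset [1,+\infty)$ and $[1,+\infty) \subset \sess(\Op)$ separately, working throughout with the orthogonal decomposition~\eqref{eq:orthogonal_decomposition} into fibers $\fOp^{[m]}$ and, after the unitary transform $\sfU$, with the flat forms $\frmq^{[m]}$ on $H^1_0(\Gui(\tt))$ given by~\eqref{eqn:fq_flat}. Since $\sess(\Op) = \overline{\bigcup_{m\in\dZ}\sess(\fOp^{[m]})}$, it suffices to control the essential spectrum of each fiber. The key structural observation is that for $\omega\in(0,1/2]$ one has $(m-\omega)^2 \ge (1-\omega)^2 \ge 1/4$ for all $m\in\dZ$, with the minimal value $\omega^2$ attained at $m=0$; in particular the effective potential $\big((m-\omega)^2 - 1/4\big)/r^2$ in $\frmq^{[m]}$ is non-negative for every $m$, and it is strictly positive (decaying like $r^{-2}$) for $m\ne 0$ and for $m=0$ when $\omega\ne 1/2$. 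This repulsivity is what pins the bottom of the essential spectrum at exactly~$1$.

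For the lower bound $\sess(\Op)\subset[1,+\infty)$, I would first show $\Op \ge 1$ is false in general (there is discrete spectrum below $1$), so instead I argue directly on the essential spectrum via a Persson-type / Neumann bracketing argument. Concretely, truncate $\Gui(\tt)$ at large $r$: for $R>0$ let $\Gui_R := \Gui(\tt)\cap\{r<R\}$ and $\Gui^R := \Gui(\tt)\cap\{r>R\}$. Imposing an additional Neumann condition on the segment $\{r=R\}$ decomposes each $\frmq^{[m]}$ as a direct sum; the part on the bounded piece $\Gui_R$ has compact resolvent and contributes nothing to the essential spectrum, while on the unbounded piece $\Gui^R$ the domain is, for $R$ large, a half-strip-like region of width $\pi$ (measured transversally to the cone), and dropping the non-negative potential term one is left with (a lower bound given by) the Neumann Laplacian on a straight strip of width $\pi$, whose spectrum starts at the transverse ground-state energy $(\pi/\pi)^2 = 1$. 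Letting $R\to\infty$ gives $\inf\sess(\fOp^{[m]}) \ge 1$ for every $m$, hence $\sess(\Op)\subset[1,+\infty)$. The geometric input here — that far from the vertex the meridian domain straightens out to a strip of width exactly $\pi$ — is the normalisation emphasised in the text, and the curvature of the generating curve produces only lower-order corrections that vanish as $R\to\infty$.

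For the reverse inclusion $[1,+\infty)\subset\sess(\Op)$, I would construct, for each $\lambda\ge 1$, a singular Weyl sequence. Fix the fiber $m=0$ (any fixed $m$ works). Write $\lambda = 1 + k^2$ with $k\ge 0$ and work in coordinates $(s,t)$ adapted to the cone, where $t\in(0,\pi)$ is the transverse variable and $s$ the arclength along the generating line, so that far from the vertex the operator $\fOp^{[0]}$ looks like $-\partial_s^2 - \partial_t^2 + (\text{potential }O(s^{-2}))$ acting on $L^2((s_0,\infty)\times(0,\pi))$ with Dirichlet conditions in $t$. Take $\psi_n(s,t) := \chi_n(s)\, e^{\ii k s}\,\sin t$, where $\chi_n$ is a plateau cutoff supported on an interval $[a_n, b_n]$ with $a_n\to\infty$, $b_n - a_n\to\infty$, and $\|\chi_n'\|, \|\chi_n''\|$ controlled so that $\|\chi_n''\|_{L^2}/\|\chi_n\|_{L^2}\to 0$. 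Then $(\fOp^{[0]} - \lambda)\psi_n$ has $L^2$-norm dominated by $\|\chi_n''\|_{L^2} + k\|\chi_n'\|_{L^2} + \|s^{-1}\chi_n\|_{L^2}$, each term being $o(\|\psi_n\|)$, while $\psi_n$ can be arranged to be orthonormal and weakly null (moving supports). This exhibits $\lambda\in\sess(\fOp^{[0]})\subset\sess(\Op)$.

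The main obstacle — and the step that needs genuine care rather than routine estimation — is making the change of coordinates to the $(s,t)$ frame rigorous and controlling the error terms it introduces: the true metric on $\Gui(\tt)$ in tubular coordinates about the generating line is not flat (the Jacobian involves the signed distance times a curvature-type factor), and near the vertex the coordinates degenerate, so one must quarantine a bounded neighbourhood of the vertex (harmless for $\sess$, by Neumann bracketing as above) and then verify that on the complement all the metric corrections, together with the $((m-\omega)^2-1/4)/r^2$ potential, are relatively form-bounded perturbations that are small at infinity. Once that is in place, both the Persson lower bound and the Weyl-sequence upper bound follow from standard one-dimensional considerations on the half-line times the interval $(0,\pi)$. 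I would also remark that uniformity in $m$ for the lower bound is immediate since the neglected potential is non-negative for all $m$, so no $m$-dependent subtlety arises there.
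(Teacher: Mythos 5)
Your overall architecture --- fibre decomposition, Neumann bracketing at a far cut for the inclusion $\sess(\fOp^{[m]})\subset[1,+\infty)$, and Weyl sequences of the form $\chi_n(s)e^{\ii ks}\sin t$ for the reverse inclusion --- is essentially the paper's. But there is a genuine error at the heart of your lower-bound argument: the effective potential $\big((m-\omega)^2-\tfrac14\big)r^{-2}$ in $\frmq^{[m]}$ is \emph{not} non-negative for every $m$. For the axisymmetric fibre $m=0$ and $\omega\in(0,1/2]$ one has $\omega^2-\tfrac14\le 0$, so the potential is attractive precisely on the fibre that carries all the discrete spectrum and where the location of $\inf\sess$ is actually at stake. (Your own observation that $\Op\ge 1$ fails in general already contradicts your sign claim: were the potential non-negative for all $m$, extension by zero to the straight strip would give $\frmq^{[m]}[u]\ge\|\nabla u\|^2\ge\|u\|^2$ for every fibre, hence $\Op\ge 1$ and no bound states at all.) Consequently the step ``dropping the non-negative potential term one is left with a lower bound'' fails for $m=0$: dropping a non-positive potential \emph{increases} the form and yields an upper bound. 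The same error infects your closing remark that uniformity in $m$ is immediate ``since the neglected potential is non-negative for all $m$''. The repair is exactly what the paper does: after rotating $\Gui(\tt)$ onto the half-strip $\Omega_\tt$, on the exterior piece $\{s>n\}$ the attractive potential is bounded in modulus by $\gamma/n^2$ with $\gamma=(1/4-\omega^2)/\sin^2\tt$, so the exterior form is bounded below by $1-\gamma/n^2$, and letting $n\to\infty$ gives $\inf\sess\ge 1$.

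Two smaller points. Your concern about ``metric corrections from the curvature of the generating curve'' is moot: the meridian boundary consists of straight segments, so the passage to $(s,t)$ is a rigid rotation of $\dR^2$ and is exact, and the only non-flatness (the weight $r\,\dd r\,\dd z$) is removed exactly by $u\mapsto\sqrt r\,u$ at the price of the $-1/(4r^2)$ term already built into $\frmq^{[m]}$; there is nothing left to quarantine beyond the bounded corner region. Also, the transverse ground-state energy $1$ comes from the \emph{Dirichlet} condition on the two long sides of the strip, the Neumann condition living only on the artificial cut; calling this ``the Neumann Laplacian on a strip'' (whose spectrum starts at $0$) is misleading, though your intent is clear.
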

%
The minimum at $1$ of the essential spectrum is a consequence of the normalisation of the width of $\Lay(\tt)$ to $\pi$. 
The method of the proof of Theorem~\ref{th:esspec} relies on a construction of singular sequences as well as on form decomposition techniques. A similar approach is used \eg in~\cite{CEK04, DEK01, ET10} for Dirichlet conical layers without magnetic fields 
and in~\cite{BEL14} for Schr\"odinger operators with $\delta$-interactions supported on conical surfaces. In this paper we simplify the argument by constructing singular sequences in the generalized sense~\cite{KL14} on the level of quadratic forms.

Now we state a proposition that gives
a lower bound on the spectra of the fibers $\fOp^{[m]}$ with $m\ne 0$.
%
\begin{prop} 
	Let $\tt\in(0,\pi/2)$ and $\omega\in (0,1/2]$. There holds
	\[
		\inf \sigma(\fOp^{[m]})\geq 1,\quad \forall m \neq 0.
	\]
\label{prop:redaxy}
\end{prop}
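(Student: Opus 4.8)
The plan is to reduce the claim to a one-dimensional (in the $r$ variable) spectral inequality, exploiting that for $m\neq 0$ the centrifugal term $(m-\omega)^2 r^{-2}$ in $\fFrm^{[m]}$ is strictly larger than the borderline term $\tfrac14 r^{-2}$ appearing after the flattening transform. More precisely, since $\omega\in(0,1/2]$, for every $m\in\dZ\setminus\{0\}$ we have $(m-\omega)^2\geq (1-\omega)^2\geq 1/4$, with the bound $(m-\omega)^2\ge 1/4$ always strict unless $m=1$ and $\omega=1/2$; in all cases $(m-\omega)^2-1/4\ge 0$. After conjugating $\fOp^{[m]}$ by the unitary $\sfU$ as in~\eqref{eqn:fq_flat}, it therefore suffices to show that the form
\[
	u\longmapsto
	\int_{\Gui(\tt)}\Big(|\p_r u|^2 + |\p_z u|^2\Big)\dd r\dd z
\]
on $H_0^1(\Gui(\tt))$ — i.e. the Dirichlet Laplacian on the meridian domain $\Gui(\tt)$ — has bottom of spectrum at least~$1$, because then for $m\neq 0$,
\[
	\frmq^{[m]}[u]\ \ge\ \int_{\Gui(\tt)}\Big(|\p_r u|^2 + |\p_z u|^2\Big)\dd r\dd z\ \ge\ \|u\|_{L^2(\Gui(\tt))}^2 ,
\]
and the claim follows by the min-max principle together with the unitary equivalence~\eqref{eq:orthogonal_decomposition}.

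To prove that the Dirichlet Laplacian on $\Gui(\tt)$ has spectral bottom $\ge 1$, I would use a \emph{sliced} (Fubini-type) lower bound in the direction transverse to the conical strip. Because $\Gui(\tt)$ was normalised to have width exactly $\pi$, one can write $\Gui(\tt)$ as a union of segments of length $\le\pi$ obtained by intersecting it with lines orthogonal to the generating line of the cone (in the $(r,z)$-plane these are straight segments of length exactly $\pi$ for the ``bulk'' of the strip, and shorter near the tip where $\p_0\Gui(\tt)$ cuts them off). Introducing rotated coordinates $(s,t)$ adapted to the cone axis, with $t\in(0,\pi)$ (or a subinterval) the transverse variable and Dirichlet conditions at $t=0,\pi$, one has for every fixed $s$ the one-dimensional Dirichlet inequality $\int_0^{\pi}|\p_t u|^2\,\dd t\ge \int_0^{\pi}|u|^2\,\dd t$, and integrating in $s$ gives the bound $1$ for the full form. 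Since the change of variables $(r,z)\mapsto(s,t)$ is an isometry of $\dR^2$, it leaves the Dirichlet form invariant, so no Jacobian factors appear.

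The main subtlety — and the step I would treat most carefully — is the geometry near the tip of the cone, where the transverse slices are truncated by the boundary piece $\p_0\Gui(\tt)$ rather than extending the full width $\pi$. Shorter slices only \emph{improve} the one-dimensional Poincaré constant, so the inequality $\ge 1$ still holds; one just has to be sure the slicing foliates $\Gui(\tt)$ correctly there and that functions in $H_0^1(\Gui(\tt))$ restrict to $H_0^1$ functions on a.e.\ slice with the correct boundary behaviour (this is a standard consequence of Fubini for Sobolev functions vanishing on the boundary). An alternative, cleaner route that avoids discussing the tip at all is to note that $\Gui(\tt)$ is contained — after the rotation to $(s,t)$ coordinates — in an infinite open strip of width $\pi$; by domain monotonicity of Dirichlet eigenvalues, $-\Delta_{\Gui(\tt)}^{\mathrm D}\ge -\Delta_{\mathrm{strip}}^{\mathrm D}$, and the Dirichlet Laplacian on a strip of width $\pi$ has spectrum $[1,+\infty)$. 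I would likely present this monotonicity argument as the main line and relegate the slicing picture to a remark.
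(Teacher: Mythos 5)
Your proposal is correct and its main line (drop the nonnegative centrifugal term $[(m-\omega)^2-\tfrac14]r^{-2}$ for $m\neq0$, then compare the Dirichlet Laplacian on $\Gui(\tt)$ with that on the infinite strip of width $\pi$ containing it) is exactly the paper's argument, which implements the domain monotonicity by extending $u\in H_0^1(\Gui(\tt))$ by zero to the strip $\Str(\tt)$. The slicing variant you sketch as an alternative is not needed.
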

%
Relying on this proposition and on Theorem~\ref{th:esspec}, we see that the investigation of the discrete spectrum of $\Op$ reduces to
the axisymmetric fiber $\fOp$ 
of decomposition~\eqref{eq:orthogonal_decomposition}.
When there is no magnetic field ($\omega = 0$) 
this result can be found in~\cite[Prop. 3.1]{ET10}. An analogous 
statement holds also for $\delta$-interactions supported on conical
surfaces~\cite[Prop. 2.5]{LO16}.

Now, we formulate a result on the ordering between Rayleigh quotients.
\begin{prop}
	Let $0 < \tt_1 \le \tt_2 < \pi/2$, $\omega_1 \in (0,1/2]$, 
	and $\omega_2\in[\cos\tt_2(\cos\tt_1)^{-1} \omega_1,1/2]$. 
	Then
	\[
		E_k(\sfF_{\omega_1,\tt_1}) \leq E_k(\sfF_{\omega_2,\tt_2}).
	\]
	holds for all $k\in\dN$.
\label{prop:ord_rayl}
\end{prop}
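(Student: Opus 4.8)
The plan is to prove the Rayleigh quotient ordering by combining the orthogonal decomposition~\eqref{eq:orthogonal_decomposition} with Proposition~\ref{prop:redaxy} to reduce everything to the axisymmetric fibre, and then to construct an explicit linear test-function map between the flat forms $\frmq_{\omega_1,\tt_1}$ and $\frmq_{\omega_2,\tt_2}$ on $L^2(\Gui(\tt))$ that decreases the energy. First I would observe that since $\omega_1,\omega_2\in(0,1/2]$, Proposition~\ref{prop:redaxy} gives $\inf\sigma(\fOp_{\omega_i,\tt_i}^{[m]})\ge 1 = E_{\rm ess}(\sfF_{\omega_i,\tt_i})$ for all $m\neq 0$, so the $m\neq 0$ fibres contribute nothing below the essential spectrum; hence $E_k(\sfF_{\omega_i,\tt_i}) = \min\{E_k(\fOp_{\omega_i,\tt_i}), 1\}$ (more precisely, the ordered sequence of $E_k$ of the full operator below $1$ coincides with that of the axisymmetric fibre, and all other $E_k$ equal $1$). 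Consequently it suffices to show $E_k(\fOp_{\omega_1,\tt_1})\le E_k(\fOp_{\omega_2,\tt_2})$ for every $k$, and by the unitary equivalences $\fOp_{\omega,\tt}\cong\frmq_{\omega,\tt}$ (via $\sfU$ and the passage to the flat form) together with~\eqref{eq:domain_incl}, this reduces to proving the min-max inequality for the forms $\frmq_{\omega,\tt}$ on $H_0^1(\Gui(\tt))$.

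Next I would build the comparison map. The natural idea is to move from the wider, more slanted meridian domain $\Gui(\tt_2)$ to $\Gui(\tt_1)$ by an explicit diffeomorphism respecting the normalisation (width~$\pi$), e.g.\ a shear/dilation in the $(r,z)$ variables of the form $(r,z)\mapsto(\lambda r + \mu z + \nu, z)$ chosen so that $\Gui(\tt_2)$ maps into $\Gui(\tt_1)$; composing with the pullback gives a bounded linear injection $J\colon H_0^1(\Gui(\tt_2))\to H_0^1(\Gui(\tt_1))$. The point of the hypothesis $\omega_2\ge \cos\tt_2(\cos\tt_1)^{-1}\omega_1$ is precisely to control the Hardy-type potential term $\bigl((m-\omega)^2-1/4\bigr)/r^2$ with $m=0$, i.e.\ $(\omega^2-1/4)/r^2$: under the change of variables $r$ transforms with a factor involving $\cos\tt_i$ (the width normalisation forces $r$-scales proportional to $1/\cos\tt$), and one needs $\omega_2^2/r_2^2 \le \omega_1^2/r_1^2$ pointwise after transport, which is exactly the stated flux condition. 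I would verify that $J$ maps the form domain into the form domain and that $\frmq_{\omega_1,\tt_1}[Ju]\le \frmq_{\omega_2,\tt_2}[u]$ and $\|Ju\|_{L^2(\Gui(\tt_1))}^2 \ge \|u\|_{L^2(\Gui(\tt_2))}^2$ (or, more robustly, that the ratio $\frmq_{\omega_1,\tt_1}[Ju]/\|Ju\|^2 \le \frmq_{\omega_2,\tt_2}[u]/\|u\|^2$) for all $u$. Feeding a $(k-1)$-dimensional subspace through $J$ and using injectivity, the min-max characterisation $E_k(\frmq_{\omega_1,\tt_1}) = \min_{\dim V = k}\max_{u\in V\setminus\{0\}} \frmq_{\omega_1,\tt_1}[u]/\|u\|^2$ then yields the claimed inequality.

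The main obstacle will be choosing the diffeomorphism so that \emph{all three} terms of $\frmq$ — the two gradient terms and the inverse-square potential term — transform favourably \emph{simultaneously}, since a shear that helps the potential term generically introduces cross terms $\partial_r u\,\partial_z u$ and Jacobian factors in the Dirichlet part. A clean way around this is to treat the two cases $\tt_1=\tt_2$ (varying only the flux) and $\omega_1=\omega_2$ (varying only the aperture) separately and then compose: for fixed $\tt$, the monotonicity in $\omega$ of $(\omega^2-1/4)/r^2$ on $(0,1/2]$ is immediate since $\omega\mapsto\omega^2-1/4$ is increasing there, giving $\frmq_{\omega_1,\tt}\prec\frmq_{\omega_2,\tt}$ directly with $J=\mathrm{id}$; for the aperture, one uses the geometric scaling relating $\Gui(\tt_1)$ and $\Gui(\tt_2)$ together with the flux threshold to absorb the change in the $1/r^2$ weight. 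I would also need to double-check the degenerate boundary behaviour near $\p_0\Gui(\tt)$ (where $r\to 0$) to ensure $J$ preserves the Hardy-class domain $H_0^1(\Gui(\tt))$; the hypothesis $\omega\in(0,1/2]$ keeps $(m-\omega)^2-1/4<0$ only mildly, and~\eqref{eq:domain_incl} guarantees the form domain is the full $H_0^1$, so no subtle capacity issues arise. Finally I would note that the borderline equalities in the ordering ($\tt_1=\tt_2$ or $\omega$ at the lower endpoint of its interval) are handled uniformly since all inequalities above are non-strict.
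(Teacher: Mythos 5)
Your overall strategy---compare Rayleigh quotients through an explicit linear map between form domains and invoke the min--max principle---is the same as the paper's, and your observation that the pure flux step ($\tt_1=\tt_2$, $\omega_1\le\omega_2$) is immediate is correct. However, there is a genuine gap exactly at the point you yourself flag as ``the main obstacle'': you never produce a change of variables under which the gradient terms, the inverse-square potential and the $L^2$ norm all transform favourably at once. The concrete map you propose, a shear $(r,z)\mapsto(\lambda r+\mu z+\nu,z)$ acting on the \emph{flat} form $\frq_{\omega,\tt}$, does not work as stated: it generates cross terms $\p_r u\,\p_z u$ in the Dirichlet part, and it rescales the $\tt$-independent $-\tfrac14 r^{-2}$ portion of the flat potential, which no hypothesis controls. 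Your fallback---composing a ``flux-only'' step with an ``aperture'' step---does not remove the difficulty either: the only factorisation compatible with the hypothesis $\omega_2\ge\omega_1\cos\tt_2/\cos\tt_1$ is $(\omega_1,\tt_1)\to(\omega_1\cos\tt_2(\cos\tt_1)^{-1},\tt_2)\to(\omega_2,\tt_2)$, whose first leg increases $\tt$ \emph{and} decreases $\omega$ simultaneously; this borderline two-parameter step carries the full content of the proposition and is left at the level of ``one uses the geometric scaling \dots\ to absorb the change''. (A genuinely aperture-only step $(\omega_1,\tt_1)\to(\omega_1,\tt_2)$ followed by a flux-only step would instead require $\omega_2\ge\omega_1$, which the hypothesis does not give.)

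The missing idea is the choice of coordinates. The paper works with the \emph{non-flat} fibre form $\frf_{\omega,\tt}$ on $L^2(\Gui(\tt);r\,\dd r\,\dd z)$, whose potential is $\omega^2/r^2$ with the weight $r$ kept in the measure, first rotates $\Gui(\tt)$ onto the half-strip $\Omega_\tt$ (an isometry, hence no cross terms), and only then normalises the domain by the axis-aligned dilation $(s,t)\mapsto(s\tan\tt,t)$. On the resulting fixed domain $\Omega_{\pi/4}$ both forms share the same domain and the Rayleigh quotient has coefficients $\tan^2\tt$ on $|\p_{\hat s}\wh u|^2$, $1$ on $|\p_{\hat t}\wh u|^2$ and $\omega^2/\cos^2\tt$ on the potential, with the weight $(\hat s+\hat t)$ appearing identically in numerator and denominator; monotonicity is then read off term by term, the hypothesis $\omega_2/\cos\tt_2\ge\omega_1/\cos\tt_1$ being exactly what makes the potential coefficient non-decreasing. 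Without this (or an equivalent) computation your argument does not close. Finally, your preliminary reduction via Proposition~\ref{prop:redaxy} is harmless but unnecessary: $\sfF_{\omega,\tt}$ in the statement already denotes the axisymmetric fibre.
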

If the Rayleigh quotients in Proposition~\ref{prop:ord_rayl} are indeed eigenvalues, we get immediately an ordering of the eigenvalues for different apertures $\theta$ and values of $\omega$. In particular, if $\omega_1 = \omega_2$, we obtain that the Rayleigh quotients are non-decreasing functions of the aperture $\theta$. 
The latter property is reminiscent of analogous results for broken
waveguides~\cite[Prop. 3.1]{DLR12} and for Dirichlet
conical layers without magnetic fields~\cite[Prop.~1.2]{DOR15}. 
A similar claim also holds for $\delta$-interactions supported on broken lines~\cite[Prop.~5.12]{EN03} and on conical surfaces~\cite[Prop.~1.3]{LO16}. 
The new aspect of Proposition \ref{prop:ord_rayl}
is that we obtain a monotonicity result with respect to two parameters. 
Proposition~\ref{prop:ord_rayl}
implies that the eigenvalues are non-decreasing if we
weaken the magnetic field and compensate by making the aperture of the conical layer
smaller and {\it vice versa}.

The next theorem is the first main result of this paper.
\begin{thm} Let $\tt\in(0,\pi/2)$ and $\omega\in (0,1/2]$. 
	The following statements hold.
	\begin{myenum}
	\item
	For $\cos\tt \leq 2\omega$, $\# \sd (\fOp) = 0$.
	\item 
	For $\cos\tt > 2\omega$, $\# \sd (\fOp) 
	= \infty$ and
	\[
		\N_{1-E}(\fOp) 
		= 
		\frac{\sqrt{\cos^2\tt - 4\omega^2}}{4\pi\sin\tt}|\ln E| + \cO(1),
		\qquad E\arr 0+.
	\]
	\end{myenum}
	\label{thm:struc_sdisc}
\end{thm}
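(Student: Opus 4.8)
The plan is to analyze the single fiber operator $\fOp$ acting on $L^2(\Gui(\tt);r\dd r\dd z)$, or equivalently its flat unitary image $\frmq = \frmq^{[0]}$ on $L^2(\Gui(\tt))$, whose effective radial potential is $(\omega^2 - 1/4)/r^2$. The whole dichotomy hinges on the sign of $\omega^2 - 1/4 + c$ for the effective centrifugal contribution once one unfolds the cone: after straightening the conical strip $\Gui(\tt)$ via rotated-and-sheared coordinates adapted to the cone axis (the meridian line $r = z\tan\tt$), the transverse Dirichlet Laplacian on an interval of width $\pi$ contributes the constant $1$ (matching $E_{\rm ess} = 1$ from Theorem~\ref{th:esspec}), and the longitudinal variable $s$ (arclength along the cone) together with the curvature-type Jacobian produces, to leading order, a one-dimensional Schr\"odinger operator on a half-line of the form $-\partial_s^2 + \tfrac{\omega^2 - \cos^2\tt/4}{s^2} + (\text{lower order})$ near $s \to \infty$. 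The coefficient in front of $1/s^2$ is $\omega^2/\cos^2\tt - 1/4$ once one accounts for the $\cos\tt$ factor relating arclength along the cone to the radial coordinate $r = s\cos\tt$; this is exactly the Hardy borderline. So the critical condition $\cos\tt \le 2\omega$ is precisely $\omega^2/\cos^2\tt - 1/4 \ge 0$, the sub-critical Hardy coefficient.

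For part (i), when $\cos\tt \le 2\omega$, I would establish the form inequality $\frmq[u] \ge \|u\|^2$ for all $u$ in the form domain, i.e. $\fOp \ge 1 = E_{\rm ess}$, which forces $\#\sd(\fOp) = 0$. Concretely I would combine the transverse Dirichlet bound $\int |\partial_\perp u|^2 \ge \int |u|^2$ (width $\pi$) with a classical Hardy inequality in the longitudinal variable: since the effective $1/r^2$ coefficient $(m-\omega)^2 - 1/4 = \omega^2 - 1/4$ is nonnegative when $\omega \ge 1/2$, and more generally the combination with the geometry of the opening cone yields a nonnegative remainder exactly when $\cos\tt \le 2\omega$. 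The cleanest route is to pick an explicit trial ground-state-type weight (a power $r^{\omega}$ times the first transverse mode, or its analogue in straightened coordinates) and run the ground-state substitution / Agmon-type argument: writing $u = r^\omega \sin(\cdot) \, v$ and integrating by parts produces $\frmq[u] - \|u\|^2 = \int |\nabla v|^2 (\text{positive weight}) + (\text{boundary terms that vanish or have the right sign})$ under the critical-flux hypothesis. This simultaneously furnishes the Hardy-type inequality announced in the abstract for the equality case $\cos\tt = 2\omega$.

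For part (ii), when $\cos\tt > 2\omega$, the effective longitudinal operator near infinity behaves like $-\partial_s^2 - \frac{\gamma^2 + 1/4}{s^2}$ plus controlled errors, with $\gamma := \tfrac{1}{2}\sqrt{\cos^2\tt - 4\omega^2}/\cos\tt > 0$ — a strongly attractive (limit-circle, oscillatory) inverse-square potential, which is exactly the mechanism producing infinitely many eigenvalues accumulating at the threshold $1$ with a logarithmic counting law. I would follow the Dirichlet–Neumann bracketing strategy of~\cite{DOR15} (and~\cite{LO16}): decompose $\Gui(\tt)$ into a bounded part near the apex and a conical end $s > s_0$, impose Neumann conditions on the cut for a lower bound and Dirichlet for an upper bound, separate variables in the conical end, and reduce to counting eigenvalues below $1$ of the scalar operators $-\partial_s^2 - \frac{\gamma^2+1/4}{s^2} + R_\pm(s)$ on $(s_0,\infty)$, where $R_\pm(s) = \cO(s^{-3})$ or better. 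The classical asymptotics for the number of negative eigenvalues (equivalently, eigenvalues below the threshold) of $-\partial_s^2 - \frac{\nu}{s^2}$ with $\nu > 1/4$ gives $\frac{\sqrt{\nu - 1/4}}{2\pi}|\ln E| + \cO(1)$ as the spectral parameter $E \to 0$; with $\nu - 1/4 = \gamma^2 = (\cos^2\tt - 4\omega^2)/(4\cos^2\tt)$ and the change of variables Jacobian contributing the $1/\sin\tt$ (since $z = s\cos\tt$, $r = s\cos\tt$ near the apex-opposite end, and the measure/length scaling along the cone brings in $\sin\tt$), one lands on the stated constant $\frac{\sqrt{\cos^2\tt - 4\omega^2}}{4\pi\sin\tt}$; the perturbations $R_\pm$ contribute only to the $\cO(1)$ remainder, which is where the refinement over a bare leading term is extracted.

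The main obstacle is the careful bookkeeping in the conical end: straightening the strip introduces a non-flat metric whose curvature terms must be shown to decay fast enough (like $s^{-2}$ with the \emph{right} borderline coefficient, and the genuine remainder like $s^{-3}$) so that they neither shift the $1/s^2$ coefficient nor spoil the $\cO(1)$ remainder, and one must control the transverse modes (showing the higher transverse modes contribute only finitely many eigenvalues below $1$, uniformly, which is essentially Proposition~\ref{prop:redaxy} localized to the end). Getting the constant exactly right — in particular tracking how the factor $\cos\tt$ enters the Hardy coefficient and how $\sin\tt$ enters through the arclength-vs-radius Jacobian — is the delicate point; a clean way to avoid sign errors is to perform the substitution $u(r,z) = \sqrt{r}\,\tilde u$ already built into $\frmq^{[m]}$ and then a further logarithmic change of variable $t = \ln s$ in the end, turning $-\partial_s^2 - \frac{\gamma^2+1/4}{s^2}$ into (a perturbation of) $-\partial_t^2 + \tfrac14 - \gamma^2$ acting like a constant-coefficient problem on a half-line of length $\sim |\ln E|$, from which the counting asymptotics with sharp remainder is immediate.
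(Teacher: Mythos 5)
Your overall architecture coincides with the paper's: straighten the meridian domain into a half-strip of width $\pi$, let the transverse Dirichlet mode account for the threshold $1$, control the longitudinal direction by the classical one-dimensional Hardy inequality in case (i), and use Dirichlet--Neumann bracketing together with the Kirsch--Simon/Hassell--Marshall asymptotics for $-\partial_s^2-\nu s^{-2}$ in case (ii). One simplification you overlook: since the layer is conical, the straightening is an exact Euclidean rotation, so there are no curvature remainders to estimate; the only error terms come from replacing $(s+t\cot\theta)^{-2}$ by $s^{-2}$ (for the upper bound) or by $(s+\pi\cot\theta)^{-2}$ (for the lower bound), which is handled by monotonicity and an affine change of variable rather than by $O(s^{-3})$ perturbation theory.

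There is, however, a genuine error in your computation of the effective inverse-square coefficient, and it propagates into the constant in part (ii). A point at arclength $s$ along a generator of a cone of half-opening angle $\theta$ measured from the axis has distance $r=s\sin\theta$ to the axis, not $s\cos\theta$; in the rotated coordinates one has $r=(s+t\cot\theta)\sin\theta$. Hence the potential $(\omega^2-\tfrac14)r^{-2}$ becomes $-\gamma\rho^{-2}$ with $\rho=s+t\cot\theta$ and $\gamma=(\tfrac14-\omega^2)/\sin^2\theta$, so the relevant quantity is $\sqrt{\gamma-\tfrac14}=\sqrt{\cos^2\theta-4\omega^2}\,/(2\sin\theta)$, which produces the stated constant directly. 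Your coefficient ``$\omega^2/\cos^2\theta-1/4$'' happens to reproduce the correct criticality condition $2\omega\ge\cos\theta$ (two bookkeeping slips cancel there), but it leads to $\sqrt{\nu-\tfrac14}=\sqrt{\cos^2\theta-4\omega^2}\,/(2\cos\theta)$ and hence to the constant $\sqrt{\cos^2\theta-4\omega^2}\,/(4\pi\cos\theta)$, off by a factor $\cot\theta$. The proposed repair by a ``change of variables Jacobian'' cannot work: the asymptotics $\mathcal N_{-E}=\tfrac{1}{2\pi}\sqrt{\nu-\tfrac14}\,|\ln E|+\mathcal O(1)$ is invariant under affine rescalings of the longitudinal variable, since such a rescaling only multiplies $E$ by a constant and therefore changes $|\ln E|$ by $\mathcal O(1)$; no Jacobian can convert $\cos\theta$ into $\sin\theta$ in the leading coefficient. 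The same bookkeeping is what makes part (i) work: the needed inequality is $(\tfrac14-\omega^2)r^{-2}\le\tfrac14\rho^{-2}$ with $\rho=r/\sin\theta$ the distance to the corner line of the straightened strip, which is exactly $(\tfrac14-\omega^2)/\sin^2\theta\le\tfrac14$, i.e.\ $\cos\theta\le2\omega$; combined with the one-dimensional Hardy inequality in $s$ for fixed $t$ (where $u$ vanishes at $s=-t\cot\theta$) and the transverse Dirichlet bound, this yields $\mathfrak{h}_{\omega,\theta}[u]\ge\|u\|^2$ and hence the absence of discrete spectrum, without any ground-state substitution.
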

For a fixed $\tt \in (0,\pi/2)$, Theorem~\ref{thm:struc_sdisc} yields the
existence of a critical flux
\begin{equation}\label{eq:omega_cr}
	\omega_{\rm cr} = \omega_{\rm cr}(\tt) := \frac{\cos\tt}{2}
\end{equation}
at which the number of eigenvalues undergoes an abrupt transition from infinity to zero. This is, to our knowledge, the first example of a geometrically non-trivial model that exhibits such a behaviour. In comparison, in the special case $\omega = 0$, this phenomenon arises at $\tt = \pi/2$ which is geometrically simple because the domain $\Lay(\pi/2)$ can be seen in the Cartesian coordinates as the layer between two parallel planes at distance 
$\pi$.

The spectral asymptotics proven in Theorem~\ref{thm:struc_sdisc}\,(ii) 
is reminiscent of~\cite[Thm.~1.4]{DOR15}. However, it can be seen
that the magnetic field enters the coefficient in front of the main term. 
As a slight improvement upon \cite[Thm.~1.4]{DOR15},
in Theorem~\ref{thm:struc_sdisc} we explicitly state
that the remainder in this asymptotics is just $\cO(1)$.
The main new feature in Theorem~\ref{thm:struc_sdisc},
compared to the previous publications on the subject, is
the absence of discrete spectrum $\fOp$ for strong magnetic fields
stated in Theorem~\ref{thm:struc_sdisc}\,(i).
This result is achieved by proving 
a Hardy-type inequality for the quadratic form 
$\frq_\tt := \frq_{\omega_{\rm cr},\tt}$.
This inequality is the second main result of this paper. It is also of independent interest in view of potential applications 
in the context of the associated heat semigroup, 
\emph{cf.}~\cite{K13,CK14}.
\begin{thm}[Hardy-type inequality]\label{thm:Hardy}
	Let $\tt\in(0,\pi/2)$.
	There exists $c > 0$ such that
	\begin{equation}\label{eq:Hardy}
		\frq_{\tt}[u] - \|u\|^2_{L^2(\Gui(\tt))}
		\ge 
		c
		\int_{\Gui(\tt)} 
		\frac{(r\cos\tt - z\sin\tt)^3}
		{1 + \frac{r^2}{\sin^2\tt} 
		\ln^2\big(\frac{r}{\cos\tt}\frac{2}{r\cos\tt-z\sin\tt}\big)}
		|u|^2 \dd r \dd z
	\end{equation}
	holds for any $u\in \cC^\infty_0(\Gui(\tt))$.
\end{thm}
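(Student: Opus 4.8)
The plan is to work with the one-dimensional fiber at $\omega = \omega_{\rm cr}$, where the effective radial potential coefficient $(m-\omega)^2 - 1/4$ at $m=0$ becomes exactly $\omega_{\rm cr}^2 - 1/4 = (\cos^2\tt)/4 - 1/4 = -(\sin^2\tt)/4$, and to exploit the fact that the criticality is precisely the borderline case of Hardy's inequality on a half-line. More concretely, I would first pass to coordinates adapted to the cone: introduce the \emph{signed distance} $t := r\cos\tt - z\sin\tt \in (0,\pi)$ to one of the boundary components of $\partial_1\Gui(\tt)$, together with a longitudinal variable along the cone's generatrix, say $s$, so that $\Gui(\tt)$ becomes (up to the vertex region) a half-strip $\{(s,t) : s > s_0(t),\ 0 < t < \pi\}$ with a Jacobian that degenerates near the tip; the point $r = 0$ on $\partial_0\Gui(\tt)$ corresponds to the degeneration. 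In these coordinates the quadratic form $\frq_\tt[u] - \|u\|^2$ should split, after discarding a manifestly non-negative transverse term $\int(|\p_t u|^2 - (\pi^2)^{-1}\dots)$ — actually using that $-\p_t^2$ with Dirichlet conditions on $(0,\pi)$ has bottom eigenvalue $1$ with eigenfunction $\sin t$ — into a longitudinal piece plus a genuinely positive remainder. The subtraction of $\|u\|^2$ is exactly what removes this ground transverse mode, which is why the inequality has a chance of holding.

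The heart of the matter is then a one-dimensional weighted Hardy inequality. After the Dirichlet-mode reduction one is left, heuristically, with controlling an expression of the form $\int (|\p_s w|^2 + V_{\rm eff}(s,t)|w|^2)\,ds\,dt$ from below, where the dangerous term is the one coming from $\frac{-(\sin^2\tt)/4}{r^2}$; written in the generatrix variable this is a potential behaving like $-\frac{1}{4\rho^2}$ near the logarithmic scale $\rho \sim r/\sin\tt$, i.e. precisely critical. I would invoke the classical one-dimensional criticality estimate: for functions vanishing at $0$,
\[
	\int_0^\infty |f'(\rho)|^2\,d\rho - \frac14\int_0^\infty \frac{|f(\rho)|^2}{\rho^2}\,d\rho
	\ \ge\
	c\int_0^\infty \frac{|f(\rho)|^2}{\rho^2\,(1+\ln^2\rho)^2}\,d\rho,
\]
(a standard refinement, e.g. via the substitution $f(\rho) = \sqrt{\rho}\,g(\ln\rho)$ reducing to the non-critical Hardy inequality on the line, then a further logarithmic iteration), transplant it back through the changes of variables, and track how the weights $\sqrt{\rho}$, the Jacobian $r$, and the factor $\sin t \sim$ (the transverse profile) conspire to produce exactly the weight
\[
	\frac{t^3}{1 + \frac{r^2}{\sin^2\tt}\ln^2\!\big(\frac{r}{\cos\tt}\frac{2}{t}\big)}
\]
on the right-hand side of \eqref{eq:Hardy} — the cubic power $t^3$ being the combined effect of one power of $t$ from the residual transverse Hardy gain ($\int|\p_t v|^2 - |v|^2 \gtrsim \int (\dist(t,\{0,\pi\}))^{-?}$, more precisely the ground-state-substitution $u = \sin(t)\,\psi$ produces $\int \sin^2 t\,|\p_s\psi|^2$, and $\sin^2 t \gtrsim t^2$ near $t=0$) times one more power of $t$ from the measure/argument bookkeeping near the vertex, with the logarithm's argument $\frac{2r}{t\cos\tt}$ being exactly the natural scale-invariant combination $\rho = \frac{r}{\sin\tt} / (\text{local length})$ up to constants.

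I expect the main obstacle to be the \textbf{vertex region}, i.e. the behaviour near $r = 0$ (the segment $\partial_0\Gui(\tt)$) and near the cone tip $z\sin\tt = -\pi$, where the change of variables degenerates and where the naive separation of the transverse mode is not uniform. Away from the vertex the domain is a clean half-strip and the argument above is essentially a bounded perturbation of the flat two-plane case plus the logarithmic Hardy estimate; the delicate point is to show the error terms generated by the non-product Jacobian and by the $s$-dependence of the transverse interval are controlled by a small multiple of the right-hand-side weight (which itself vanishes cubically at $t = 0$, giving room). Concretely I would (i) fix the transverse ground-state substitution $u(r,z) = \sin(t)\,\psi(s,t)$ globally, (ii) integrate by parts to get $\frq_\tt[u] - \|u\|^2 = \int \sin^2 t\,(|\p_s\psi|^2 + |\p_t\psi|^2)\,(\text{Jacobian}) + \int (\text{curvature/Jacobian error})|\psi|^2 + \int(\text{critical radial term})|\psi|^2$, (iii) on the half-strip part apply the logarithmic Hardy inequality in $s$ for each fixed $t$, and (iv) handle a fixed compact neighbourhood of the vertex separately, where $\frq_\tt - \|u\|^2$ is strictly positive on $\cC_0^\infty$ minus the transverse ground mode by a compactness/Poincaré argument, and where the claimed weight is bounded, so any positive lower bound there suffices. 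Finally I would glue the two regions with a partition of unity, absorbing the cross terms using that $\p_r,\p_z$ of the cutoff are bounded and supported in an overlap annulus where the weight is comparable to a constant. The constant $c$ is not tracked.
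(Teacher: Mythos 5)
Your coordinates and overall strategy do match the paper's: rotate to $(s,t)$ with $t=r\cos\tt-z\sin\tt$ and $\rho=s+t\cot\tt=r/\sin\tt$, note that at the critical flux the form becomes $\int_{\Omega_\tt}|\p_s u|^2+|\p_t u|^2-\tfrac14\rho^{-2}|u|^2$, and combine a logarithmic refinement of the critical one-dimensional Hardy inequality in $s$ with a gain coming from the shortened transverse interval near the edge $\rho=0$. The genuine gap is in the central one-dimensional lemma you propose to ``invoke'': as stated, on $(0,+\infty)$ for functions vanishing only at $0$, it is \emph{false}. The operator $-\frac{d^2}{d\rho^2}-\frac{1}{4\rho^2}$ on the full half-line is null-critical and admits no nonnegative nontrivial Hardy weight whatsoever; your own substitution $f=\sqrt{\rho}\,g(\ln\rho)$ shows why, since it turns the left-hand side into $\int_{\dR}|g'|^2\dd x$ and there is no Hardy inequality for $-d^2/dx^2$ on the whole line (plateau functions make $\int|g'|^2$ arbitrarily small while $\int(1+x^2)^{-1}|g|^2$ stays bounded below). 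The refinement that does hold is Lemma~\ref{lem:Hardy_CK14}: it lives on a half-line $(\rho_0,+\infty)$ with $\rho_0>0$, for functions vanishing at $\rho_0$, with weight $\rho^{-2}\ln^{-2}(\rho/\rho_0)$ anchored at $\rho_0$. This is not a technicality — it dictates the whole quantitative structure of the proof. One must first cut $u$ off in $s$ at the $t$-\emph{dependent} scale $\rho\sim\rho_0(t)=\tfrac12 t\cot\tt$; the cutoff produces errors of size $\rho_0(t)^{-2}|u|^2\sim t^{-2}|u|^2$ supported in the triangle $\cT_\tt$ along the edge; the factor $t^3$ is then inserted \emph{by hand} (multiplying the fixed-$t$ inequality by $\eps t^3\le 1$) precisely so that these errors become $\sim\eps\, t\,|u|^2$ and can be absorbed by the corner gain of Proposition~\ref{prop:local_Hardy}, namely $\int|\p_t u|^2-\|u\|^2\ge\int_{\cT_\tt}f(t)|u|^2$ with $f(t)\sim t/(2\pi)$, coming from the transverse interval having length at most $\pi-t/4$ on $\cT_\tt$. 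Your account of the cubic weight ($t^2$ from $\sin^2 t$ via a ground-state substitution plus one power from ``the measure'') is therefore wrong: the rotation has unit Jacobian, no ground-state substitution is performed, and $t^3$ has nothing to do with the transverse profile.

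The second weak point is the gluing. A partition of unity separating a \emph{fixed} compact vertex neighbourhood from the half-strip produces an IMS error $-\int|\nabla\chi|^2|u|^2$ supported at $s$ of order one, where the transverse Dirichlet eigenvalue equals exactly $1$ (there is no transverse gain for $s\ge 0$) and the only remaining gain is logarithmic, of size $\sim(\rho\ln\rho)^{-2}$ per unit $L^2$-mass; this does not dominate the localization constant, and widening the overlap does not help since the error scales like $L^{-2}$ while the logarithmic gain scales like $(L\ln L)^{-2}$. Your compactness argument on the vertex region does yield some positive gap $\delta_0$ (equality in the critical Hardy inequality is not attained in $H^1$), but you would still have to beat the cutoff constants with $\min(\delta_0,\text{log gain})$, which the sketch leaves open. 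The paper avoids all of this: there is no partition of unity and no compactness step; the only cutoff is the one-dimensional, $t$-dependent one inside Proposition~\ref{prop:Hardy_1D}, whose errors land in $\cT_\tt$ where the explicit gain $f(t)$ in~\eqref{eq:f} is waiting for them.
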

Finally, we point out that Theorem~\ref{thm:Hardy}
implies that for any $V \in \cC^\infty_0(\Lay(\tt))$ 
\begin{equation}\label{eq:non-crit}
	\#\sd({\sfH_{\omega_{\rm cr},\tt}} - \mu V ) = 0
\end{equation}
holds for all sufficiently small $\mu > 0$.
This observation can be extended to some potentials 
$V\in C^\infty_0(\ov{\Lay(\tt)})$,
but we can not derive~\eqref{eq:non-crit} for any 
$V\in C^\infty_0(\ov{\Lay(\tt)})$ from Theorem~\ref{thm:Hardy},
because the weight on the right-hand side of~\eqref{eq:Hardy}
vanishes on the part of $\p\Gui(\tt)$ satisfying $r = z\tan\tt$.  
It is an open question whether a global Hardy inequality with weight
non-vanishing on the whole $\p\Gui(\tt)$ can be proven.

%
%
%
%
\subsection{Structure of the paper} 
In Section~\ref{sec:esspec} we prove Theorem~\ref{th:esspec} about the structure of the essential spectrum.
In Section~\ref{sec:discspec} we
reduce the analysis of the discrete spectrum of $\Op$ to
the discrete spectrum of its axisymmetric fiber, 
prove Proposition~\ref{prop:ord_rayl} 
about inequalities between the Rayleigh quotients,
and Theorem~\ref{thm:struc_sdisc}\,(ii) 
on infiniteness of the discrete spectrum and its spectral asymptotics.
Theorem~\ref{thm:struc_sdisc}\,(i) on absence of discrete spectrum and 
Theorem~\ref{thm:Hardy} on a Hardy-type inequality are proven in 
Section~\ref{sec:Hardy}.  Some technical arguments are gathered into 
Appendices~\ref{app:A} and~\ref{app:B}.



%
\section{Essential spectrum}\label{sec:esspec}
In this section we prove Theorem~\ref{th:esspec} on the structure of the
essential spectrum of $\Op$. Observe that for any $m \ne 0$ the form ordering 
$\frmf \prec \frmf^{[m]}$ follows directly from~\eqref{eq:forms}. 
Hence, according to decomposition~\eqref{eq:orthogonal_decomposition}, to prove
Theorem~\ref{th:esspec} it suffices only to verify $\sess(\frmf) = [1,+\infty)$
which is equivalent to checking that $\sess(\frmq) = [1,+\infty)$. 

To simplify the argument we reformulate the problem in another set of coordinates performing the rotation
\begin{equation}\label{eq:rotation}
	s = z\cos\tt + r\sin\tt,\qquad t=-z\sin\tt+r\cos\tt,
\end{equation}
that transforms the meridian domain $\Gui(\tt)$ 
into the half-strip with corner $\Omega_\tt$ 
(see Figure~\ref{fig:figureomega})  defined by
\begin{equation}
	\Omega_\tt = 
	\big\{(s,t)\in\R \times (0,\pi) \colon 
	s > -t\cot\tt\big\}.
	\label{eqn:defome}
\end{equation}
In the sequel of this subsection, $\ps{\cdot}{\cdot}$ and $\|\cdot\|$ denote the inner product and the norm on $L^2(\Omega_\tt)$, respectively.

\begin{figure}[b]
\figinit{0.5cm}
\figpt 0:(0,0)
\figpt 1:(-5,0)
\figpt 2:(10,0)

\figpt 3:(-5,3.14)
\figpt 4:(10,3.14)

\figpt 5:(4,1.5)

\figpt 10:(15,0)
\figpt 11:(15,3.2)

\figpt 12:(14.8,0)
\figpt 13:(18,0)
\figdrawbegin{}
\figdrawline [0,2]
\figdrawline [3,4]
\figdrawline [0,3]
\figdrawarrow[10,11]
\figdrawarrow[12,13]
\figset (dash=8)
\figdrawline [0,1]
\figdrawline [1,3]
\figdrawend

\figvisu{\figBoxA}{}{
\figwrites 13:{$s$} (0.5)
\figwritew 11:{$t$} (0.5)
\figwritee 5:{$\Omega_\theta$}(0.5)
\figset write(mark=$\bullet$)
\figwrites 1:{$-\pi\cot\theta$} (0.5)
\figwrites 0:{$\mathbf{0}$} (0.5)
\figwriten 3:{$(-\pi\cot\theta,\pi)$} (0.5)
}

\centerline{\box\figBoxA}
\caption{The domain $\Omega_\tt$.}
\label{fig:figureomega}
\end{figure}\noindent

Rotation~\eqref{eq:rotation} naturally defines
a unitary operator 
\beq\label{eq:Utt}
	\sfU_\tt \colon  L^2(\Omega_\tt)\arr L^2(\Gui(\tt)),
	\qquad
	(\sfU_\tt u)(r,z) := u(z\cos\tt + r\sin\tt,-z\sin\tt+r\cos\tt),
\eeq
and induces a new quadratic form
\begin{subequations}
\begin{align}
	& \frmh[u] 
	\!:= \!
	\frmq[\sfU_\tt u] 
	=
	\int_{\Omega_\tt}
	\Big(|\p_s u|^2 + |\p_t u|^2 -
	\frac{\gamma |u|^2}{(s + t\cot\tt)^2}\Big)
	\dd s\dd t,\quad \dom \frmh  := H_0^1(\Omega_\tt),\label{eqn:frQ_flat}\\
	&\quad\text{where}\quad
	\gamma  = \gamma(\omega,\tt) := \frac{1/4 - \omega^2}{\sin^2\tt}.
	\label{eq:gamma}
\end{align}
\end{subequations}
Since the form $\frmh$  is unitarily equivalent to $\frmq$, 
proving Theorem~\ref{th:esspec} is equivalent to showing that
$\sess(\frmh) = [1,+\infty)$. We split this verification
into checking the two inclusions.

\subsection{The inclusion \texorpdfstring{\boldmath{$\sess(\Fhf) \supset [1,+\infty)$}}{inclusion0}}
We verify this inclusion by constructing singular sequences for 
$\frmh$ in the generalized sense~\cite[App. A]{KL14} for every point of the interval $[1,+\infty)$. 
Let us start by fixing a function 
$\chi\in C^\infty_0(1,2)$ such that $\|\chi\|_{L^2(1,2)} = 1$.
For all $p\in\dR_+$, we define the functions $u_{n,p}\colon \Omega_\tt\arr\dC$, 
$n\in\dN$, as
\begin{equation}\label{eq:unp}
	u_{n,p}(s,t) 
	:= 
	\lbra(\frac{1}{\sqrt{n}}\chi\lbra(\frac{s}{n}\rbra)\exp(\ii p s)\rbra)\lbra(\sqrt{\frac{2}{\pi}}\sin(t)\rbra).
\end{equation}
According to~\eqref{eq:domain_incl} it is not difficult to check that 
$u_{n,p} \in  \dom \Fhf$.
It is also convenient to introduce the associated functions $v_{n,p}, w_{n,p}\colon \Omega_\tt\arr\dC$, $n\in\dN$, as
\[
\begin{split}
	v_{n,p}(s,t) 
	& := 
	\lbra( \frac{1}{n^{3/2}}\chi^\pp\lbra(\frac{s}{n}\rbra)\exp(\ii p s)\rbra)
	\lbra(\sqrt{\frac{2}{\pi}}\sin(t)\rbra),\\
	w_{n,p}(s,t) 
	& := 
	\lbra(\frac{1}{\sqrt{n}}\chi\lbra(\frac{s}{n}\rbra)\exp(\ii p s)\rbra)\lbra(\sqrt{\frac{2}{\pi}}\cos(t)\rbra).
\end{split}
\]
First, we get
\begin{align}
	\|u_{n,p}\|^2
	& =
	\frac{2}{\pi}\int_0^\pi
	\int_n^{2n}\frac{1}{n}\lbra|\chi\lbra(\frac{s}{n}\rbra)\rbra|^2\sin^2(t) \dd s \dd t \label{eq:unp_norm}
	= 1.\\
	\|v_{n,p}\|^2 & = 
	\frac{2}{\pi}\frac{1}{n^2}
	\int_0^\pi
	\int_n^{2n}\frac{1}{n}\lbra|\chi^\pp\lbra(\frac{s}{n}\rbra)\rbra|^2\sin^2(t) \dd s \dd t
	= \frac{1}{n^2}
	\|\chi^\pp\|_{L^2(1,2)}^2 \arr 0,\qquad n\arr \infty. 
	\label{eq:vnp_norm}
\end{align}
Further, we compute the partial derivatives $\p_s u_{n,p}$ and $\p_t u_{n,p}$
\begin{equation}\label{eq:deriv}
	(\p_s u_{n,p})(s,t) 
	= \ii p u_{n,p}(s,t) + v_{n,p}(s,t),\qquad
	(\p_t u_{n,p})(s,t) = w_{n,p}(s,t),
\end{equation}
and we define an auxiliary potential by
\begin{equation}\label{eq:V_aux}
	V_{\omega,\tt}(s,t) := \frac{\gamma(\omega,\tt)}{(s + t\cot\tt)^2}.
\end{equation}
For any $\phi \in \dom\Fhf$ we have
\[
\begin{split}
	I_{n,p}(\phi) 
	:= \ & 
	\frh_{\omega,\tt}[\phi,u_{n,p}] - (1+p^2) \langle \phi,  u_{n,p}\rangle\\
	= \ &
	\langle\nabla \phi,  \nabla u_{n,p}\rangle - 
	\langle V_{\omega,\tt} \phi, u_{n,p}\rangle - (1+p^2) \langle \phi,  u_{n,p}\rangle\\
	= \ &
	\underbrace{\bigg(\lbra\langle\nabla \phi, 
	\begin{pmatrix}
		\ii p u_{n,p}\\
		w_{n,p}
	\end{pmatrix}\rbra\rangle - (1+p^2) \langle \phi,  u_{n,p}\rangle\bigg)}_{=: J_{n,p}(\phi)}
	+
	\underbrace{\bigg(\lbra\langle\nabla \phi,  			
	\begin{pmatrix}
		v_{n,p}\\
		0
	\end{pmatrix} \rbra\rangle - \langle V_{\omega,\tt} \phi, u_{n,p}\rangle
	\bigg)}_{=:K_{n,p}(\phi)}.
\end{split}	
\]
Integrating by parts and applying the Cauchy-Schwarz inequality we obtain
\[
\begin{split}
	|J_{n,p}(\phi)| & 
	= 
	\lbra|-\langle\phi, \ii p \p_s u_{n,p} + \p_t w_{n,p}\rangle - (1+p^2) \langle \phi,  u_{n,p}\rangle\rbra|\\
	& = \lbra| 
	\langle\phi,  p^2 u_{n,p} +  u_{n,p}\rangle- (1+p^2) \langle \phi,  u_{n,p}\rangle -  \langle\phi, \ii p v_{n,p}\rangle\rbra|
	= |\langle \phi, \ii p v_{n,p}\rangle|
  	 \le  p\|\phi\|\|v_{n,p}\|.\\
\end{split}
\]
Applying the Cauchy-Schwarz inequality once again 
and using~\eqref{eq:unp_norm}
and~\eqref{eq:deriv} we get
\[
	|K_{n,p}(\phi)| 
	\le  \|\phi\|\sup_{(s,t)\in (n,2n)\times (0,\pi)} |V_{\omega,\tt}(s,t)|
	+ \|\nabla \phi\|\lbra\| v_{n,p}\rbra\| 
	= \frac{\gamma}{n^2}\|\phi\| + \|\nabla \phi\|\|v_{n,p}\|.
\]
Let us define the norm $\|\cdot\|_{+1}$ as
\[	
	\|\phi\|_{+1}^2 := \frmh[\phi] + \|\phi\|^2,
	\qquad \phi \in \dom\frmh.
\]
Clearly, $\|\phi\|_{+1} \ge \|\phi\|$ and, moreover, for sufficiently small $\eps >0$, it holds 
\[
	\omega(\eps) := \sqrt{1/4 + (1-\eps)^{-1}(\omega^2-1/4)}\in (0,1/2]
\]
and
\[
	\|\phi\|_{+1}^2 
	\ge 
	\frmh[\phi] 
	=
	\eps\|\nabla\phi\|^2 + 
	(1-\eps)\frh_{\omega(\eps),\tt}[\phi] \ge \eps\|\nabla\phi\|^2,
\]
where we used $\frh_{\omega(\eps),\tt}[\phi] \ge 0$ in the last step.
Therefore, for any $\phi\in \dom\frh_{\omega,\tt}$, $\phi \ne 0$, we have
by~\eqref{eq:vnp_norm}
\begin{equation}\label{eq:Inp_frac}
	\frac{|I_{n,p}(\phi)|}{\|\phi\|_{+1}}
	\le
	\frac{|J_{n,p}(\phi)|}{\|\phi\|_{+1}} + \frac{|K_{n,p}(\phi)|}{\|\phi\|_{+1}}
	\le 
	p \|v_{n,p}\| + \frac{\gamma}{n^2} + \eps^{-1/2}\|v_{n,p}\|\arr 0,\qquad n\arr \infty.
\end{equation}
Here, the upper bound on $\frac{|I_{n,p}(\phi)|}{\|\phi\|_{+1}}$
is given by a vanishing sequence which is independent of $\phi$.

Since the supports of $u_{2^k,p}$ and $u_{2^l,p}$ with $k\ne l$
are disjoint, the sequence $\{u_{2^k,p}\}$ converges weakly to zero. 
Hence,~\eqref{eq:unp_norm} and~\eqref{eq:Inp_frac} imply that
$\{u_{2^k,p}\}$ is a singular sequence in the generalized sense~\cite[App. A]{KL14} 
for $\Fhf$ corresponding to the point $1+p^2$. 
Therefore, by~\cite[Thm. 5]{KL14}, $1+p^2\in \sess(\Fhf)$ for all $p\in\dR_+$ 
and it follows that $[1,+\infty)\subset \sess(\Fhf)$.

\subsection{The inclusion \texorpdfstring{\boldmath{$\sess(\frmh) \subset [1,+\infty)$}}{inclusion}}
We check this inclusion using the form decomposition method. For $n\in\dN$ we define two subsets of $\Omega_\tt$
\begin{equation}\label{eq:subdomains}
	\Omega_{n}^+  := \{(s,t)\in\Omega_\tt\colon s < n\}, 
	\qquad
	\Omega_{n}^- := \{(s,t)\in\Omega_\tt\colon s > n\},
\end{equation}
as shown in Figure~\ref{fig:figureformdec}.
For the sake of simplicity we do not indicate dependence of 
$\Omega_n^+$ on $\tt$. We also introduce
\[
	\Lambda_n := \{(s,t)\in\Omega_\tt\colon s = n\}.
\]
For $u \in L^2(\Omega_\tt)$
we set $u^\pm := u|_{\Omega_n^\pm}$. Further, we introduce the
Sobolev-type spaces 
\begin{equation}\label{eq:Sobolev}
	H_{0,\rm N}^1(\Omega_{n}^{\pm}) :=
	\big\{u\in H^1(\Omega_{n}^{\pm})\colon 
	u|_{\p\Omega_{n}^\pm \setminus \Lambda_n} = 0\big\}
\end{equation}
and consider the following quadratic forms
\begin{equation}\label{eq:forms_pm}
	\frh_{\omega,\tt,n}^\pm[u]  := 
	\int_{\Omega_{n}^\pm}\Big( |\p_s u^\pm|^2 + |\p_t u^\pm|^2 - 
	V_{\omega,\tt}|u^\pm|^2\Big)
	\dd s \dd t,\qquad
	\dom \frh_{\omega,\tt, n}^\pm 
	 := H^1_{0,\rm N}(\Omega_{n}^\pm),
\end{equation}
where $V_{\omega,\tt}$ is as in~\eqref{eq:V_aux}.
\begin{figure}[b]
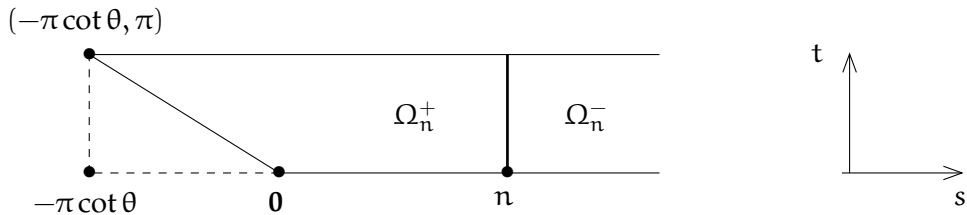

\figinit{0.5cm}
\figpt 0:(0,0)
\figpt 1:(-5,0)
\figpt 2:(10,0)

\figpt 3:(-5,3.14)
\figpt 4:(10,3.14)

\figpt 5:(2.5,1.5)

\figpt 10:(15,0)
\figpt 11:(15,3.2)

\figpt 12:(14.8,0)
\figpt 13:(18,0)

\figpt 14:(6,0)
\figpt 15:(6,3.14)
\figpt 16:(7,1.5)
\figdrawbegin{}
\figdrawline [0,2]
\figdrawline [3,4]
\figdrawline [0,3]
\figdrawarrow[10,11]
\figdrawarrow[12,13]
\figset (dash=8)
\figdrawline [0,1]
\figdrawline [1,3]
\figset(width=1)
\figset (dash=1)
\figdrawline [14,15]
\figdrawend

\figvisu{\figBoxA}{}{
\figwrites 13:{$s$} (0.5)
\figwritew 11:{$t$} (0.5)
\figwritee 5:{$\Omega_n^{+}$}(0.5)
\figwritee 16:{$\Omega_n^{-}$}(0.5)
\figset write(mark=$\bullet$)
\figwrites 1:{$-\pi\cot\theta$} (0.5)
\figwrites 0:{$\mathbf{0}$} (0.5)
\figwriten 3:{$(-\pi\cot\theta,\pi)$} (0.5)
\figwrites 14:{$n$} (0.5)
}

\centerline{\box\figBoxA}
\caption{The domain $\Omega_\tt$ and the subdomains $\Omega_n^\pm$.}
\label{fig:figureformdec}
\end{figure}\noindent
One can verify that  the form $\frh_{\omega,\tt, n}^\pm$
is closed, densely defined, symmetric and semibounded from below in $L^2(\Omega_n^\pm)$.


Due to the compact embedding of $H_{0,\rm N}^1(\Omega_{n}^+)$
into $L^2(\Omega_{n}^+)$ the spectrum
of $\frh^+_{\omega,\tt, n}$ is purely discrete. 
The spectrum of $\frh^-_{\omega,\tt, n}$ can be estimated from below as follows
\begin{equation}\label{eq:bnd}
	\inf\s(\frh^-_{\omega,\tt,n}) 
	\ge 1 - \sup_{(s,t)\in \Omega_{n}^-} V_{\omega,\tt}(s,t) 
	= 1 -  \frac{\gamma}{n^2}.
\end{equation}
The discreteness of the spectrum for $\frh_{\omega,\tt,n}^+$ and the estimate~\eqref{eq:bnd} imply that
\[
	\inf\sess(\frh_{\omega,\tt,n}^+\oplus \frh_{\omega,\tt,n}^-) 
	\ge 1 - \frac{\gamma}{n^2}.
\]
Notice that the ordering 
$\frh_{\omega,\tt,n}^+ \oplus \frh_{\omega,\tt,n}^- \prec \frmh$ holds. 
Hence, by the min-max principle we have
\[
	\inf\sess(\frmh) 
	\ge \inf\sess(\frh_{\omega,\tt,n}^+\oplus \frh_{\omega,\tt,n}^-) \ge 
	1 - \frac{\gamma}{n^2},
\]
and passing to the limit $n\arr\infty$ we get $\inf\sess(\frmh) \ge 1$.

\section{Discrete spectrum}
\label{sec:discspec}
The aim of this section is to discuss properties of the discrete spectrum of~$\Op$,
which has the physical meaning of quantum bound states.
In subsection \ref{subsec:red_axi} we reduce the study of the discrete spectrum of $\Op$ to its axisymmetric fiber $\fOp$ introduced in \eqref{eq:no-supscript}. Then, in subsection \ref{subsec:rayl_ine}, we prove Proposition \ref{prop:ord_rayl} about the ordering of the Rayleigh quotients. Finally, in subsection \ref{sec:countfunc}, we are interested in the asymptotics of the counting function in the regime $\omega \in (0,\omega_{\rm cr}(\tt))$ and we give a proof of Theorem~\ref{thm:struc_sdisc}\,(ii).

\subsection{Reduction to the axisymmetric operator}
\label{subsec:red_axi}

The goal of this subsection is to prove Proposition~\ref{prop:redaxy}.
In the proof we use the strategy developed in~\cite{DOR15, ET10} for Dirichlet conical layers without magnetic fields.

Consider the quadratic forms in the flat metric $\frmq^{[m]}$ 
given in~\eqref{eqn:fq_flat}. For all $m \neq 0$ and $\omega\in (0,1/2]$,
we have $(m-\omega)^2 \geq 1/4$. Consequently, for any $u\in H_0^1(\Gui(\tt))$, 
we get
\begin{equation}
	\frmq^{[m]}[u] \geq \|\nabla u\|_{L^2(\Gui(\tt))}^2.
\label{eqn:redHst}
\end{equation}
Any function $u\in H_0^1(\Gui(\tt))$ can be extended by zero to the strip 
\[
	\Str(\tt) := 
	\Big\{(r,z)\in\R^2 \colon z \tan \tt < r < z\tan\tt + \frac{\pi}{\cos \tt}\Big\},
\]
defining a function $u_0 \in H^1_0(\Str(\tt))$.
Hence, inequality~\eqref{eqn:redHst} can be re-written as
\[
	\frmq^{[m]}[u] \geq \|\nabla u_0\|_{L^2(\Str(\tt))}^2.
\]
The right-hand side of the last inequality is the quadratic form of
the two-dimensional Dirichlet Laplacian 
in a strip of width $\pi$. The spectrum of this operator is only essential and equals $[1,+\infty)$. 
Hence, by the min-max principle we get
\[
	\frmq^{[m]}[u] \geq \|u_0\|_{L^2(\Str(\tt))}^2 = \|u\|_{L^2(\Gui(\tt))}^2.
\]
Finally, applying the min-max principle to the quadratic form $\frmq^{[m]}$ we 
obtain
\[
	\inf \s(\frmq^{[m]}) \geq 1.
\]
This achieves the proof of Proposition~\ref{prop:redaxy}.

\subsection{Rayleigh quotients inequalities} 
\label{subsec:rayl_ine}

The aim of this subsection is to prove Proposition~\ref{prop:ord_rayl}.
This proof follows the same strategy as the proof 
of a related statement about broken waveguides developed in~\cite[\S 3]{DLR12}.

It will be more convenient to work with the quadratic form $\frmf$
in the non-flat metric.
Let the domain $\Omega_\tt$ be defined as in~\eqref{eqn:defome} through rotation~\eqref{eq:rotation}. 
This rotation induces a unitary operator $\sfR_\tt \colon L^2(\Gui(\tt); r\dd r\dd z) \arr L^2(\Omega_\tt; (s\sin\tt + t\cos\tt)\dd s\dd t)$. For $u\in\dom \frmf$, we set $\wt u(s,t) = u (r,z)$ and 
obtain the identity $\frmf[u] = \wtfrmf[\wt{u}]$ with the new quadratic form
\[
\begin{split}
	\wtfrmf[\wt{u}] 
	& := 
	\int_{\Omega_\tt}
	\Big(|\p_s \wt{u}|^2 + |\p_t \wt{u}|^2 + 
	\frac{\omega^2|\wt{u}|^2}{(s\sin\tt + t\cos\tt)^2}\Big)
	(s\sin\tt + t\cos\tt)\dd s\dd t,\\
	\dom  \wtfrmf  & := 
	\sfR_\tt(\dom\frmf),
\end{split}	
\]
which is unitarily equivalent to $\frmf$.
Now, in order to get rid of the dependence on $\tt$ of the integration domain $\Omega_\tt$, 
we perform the change of variables $(s,t) \mapsto (\hat{s},\hat{t}) = (s\tan\tt,t)$ 
that transforms the domain $\Omega_\tt$ into $\Omega := \Omega_{\pi/4}$. 
Setting $\wh{u}(\wh{s},\wh{t}) = \wt{u}(s,t)$ we get for the Rayleigh quotients
\[
\begin{split}
	\frac{\frmf[u]}{\|u\|^2_{L^2(\Gui(\tt);r\dd r \dd z)}} 
	= \ & 
	\frac{\int_{\Omega}
	\big(\tan^2\tt |\p_{\hat{s}} \wh{u}|^2 + |\p_{\hat{t}}\wh{u}|^2 
	+ \omega^2\cos^{-2}\tt(\hat{s} + \hat{t})^{-2}|\wh{u}|^2\big)(\hat{s}+\hat{t})
	\cos\tt\cot\tt\dd\hat{s}\dd\hat{t}}{
	\int_{\Omega}
	|\wh{u}|^2(\hat{s} + \hat{t})\cos\tt\cot\tt\dd\hat{s}\dd\hat{t}}\\
	= \ &
	\frac{\int_{\Omega}
	\big(\tan^2\tt|\p_{\hat{s}} \wh{u}|^2 + |\p_{\hat{t}}\wh{u}|^2 
	+ \omega^2\cos^{-2}\tt(\hat{s} + \hat{t})^{-2}|\wh{u}|^2\big)
	(\hat{s}+\hat{t})
	\dd\hat{s}\dd\hat{t}}{\int_{\Omega}|\wh{u}|^2(\wh{s} + \wh{t})
	\dd\hat{s}\dd\hat{t}} \\
	:= \ &
	\frac{\whfrmf[\wh{u}]}{\int_{\Omega}|\wh{u}|^2(\hat{s} + \hat{t})
	\dd\hat{s}\dd\hat{t}}
\end{split}
\]
The domain of the quadratic form $\whfrmf$ does not depend on $\tt$. 
However, we transferred the dependence on $\tt$ into the expression of 
$\whfrmf[\wh u]$. Now, let $0< \tt_1\le \tt_2<\pi/2$, $\omega_1\in (0,1/2]$ and 
$\omega_2\in[\cos\tt_2(\cos\tt_1)^{-1}\omega_1,1/2]$. Then we get
\begin{equation}
	\wh{\frf}_{\omega_2,\tt_2}[\wh{u}] - \wh{\frf}_{\omega_1,\tt_1}[\wh{u}] 
	= 
	\int_{\Omega} 
	\bigg[
	(\tan^2\tt_2 - \tan^2\tt_1)|\p_{\hat{s}}\wh{u}|^2
	+ 	
	\Big(\frac{\omega_2^2}{\cos^2\tt_2} - \frac{\omega_1^2}{\cos^2\tt_1}\Big)
	\frac{|\wh{u}|^2}{(\hat{s} + \hat{t})^2}\bigg](\wh{s} + \wh{t})
	\dd\hat{s}\dd\hat{t}.	\label{eqn:raylquo}
\end{equation}
Since the tangent is an increasing function, the first term on the right hand side is 
non-negative. As $\omega_2$ is chosen, the second term is also non-negative. 
Therefore, for any $k\in\dN$, the min-max principle
and~\eqref{eqn:raylquo} yield
$E_k(\wh{\frf}_{\omega_1,\tt_1}) \leq E_k(\wh{\frf}_{\omega_2,\tt_2})$
which is equivalent to 
\[
	E_k(\sfF_{\omega_1,\tt_1}) \leq E_k(\sfF_{\omega_2,\tt_2}).
\]
This achieves the proof of Proposition~\ref{prop:ord_rayl}.

%
\subsection{Asymptotics of the counting function}
\label{sec:countfunc}
This subsection is devoted to the proof of Theorem~\ref{thm:struc_sdisc}\,(ii). 
All along this subsection, $\tt\in(0,\pi/2)$ and 
$\omega\in (0,\omega_{\rm cr}(\tt))$ with $\omega_{\rm cr}(\tt)
= (1/2)\cos\tt$ as in~\eqref{eq:omega_cr}.
The proof follows the same steps as in~\cite[\S3]{DOR15}. 
However, in presence of a magnetic field the proof simplifies because
instead of working with the form $\frmf$ introduced in~\eqref{eq:forms}
we can work with the unitarily equivalent
quadratic form $\frmh$ defined in~\eqref{eqn:frQ_flat}. 
In particular, we avoid using IMS localization formula. 

The main idea is to reduce the problem to the known spectral asymptotics of 
one-dimensional operators. To this aim, first, we recall the result of~\cite{KS88}, later extended in~\cite{HM08}. 
Further, let $\gamma > 0$ be fixed. We are interested in the spectral properties of the self-adjoint 
operators acting on $L^2(1,+\infty)$ associated
with the closed, densely defined symmetric and semi-bounded quadratic form,
\[
	\frq_\gamma^{\rm N}[f] 
	:= 
	\int_1^{\infty}|f^\pp(x)|^2 - \frac{\gamma|f(x)|^2}{x^2}\dd x,\qquad 
	\dom \frq_\gamma^{\rm N} := H^1(1,+\infty),
\]
and with its restriction
\[
	\frq_\gamma^{\rm D}[f] 
	:= 
	\frq_\gamma^{\rm N}[f],
	\qquad \dom \frq_\gamma^{\rm D} := H_0^1(1,+\infty).
\]
It is well known that 
$\sess(\frq_\gamma^{\rm D}) = \sess(\frq_\gamma^{\rm N}) = [0,+\infty)$
and it can be shown by a proper choice of test functions that 
$\#\sd(\frq_\gamma^{\rm D}) = \#\sd(\frq_\gamma^{\rm N}) = \infty$
for all $\gamma > 1/4$.

\begin{thm}[{\cite[Thm. 1]{KS88}, \cite[Thm. 1]{HM08}}]
	As $E\arr 0+$ the counting functions 
	of $\frq_\gamma^{\rm D}$ and $\frq_\gamma^{\rm N}$
	with $\gamma > 1/4$	satisfy	
	\[
		\cN_{-E}(\frq_\gamma^{\rm D})  = 
		\frac{1}{2\pi}\sqrt{\gamma-\frac14}|\ln E| + \cO(1),
		\qquad
		\cN_{-E}(\frq_\gamma^{\rm N}) = 
		\frac{1}{2\pi}\sqrt{\gamma-\frac14}|\ln E| + \cO(1).
	\]
\label{thm:KS}
\end{thm}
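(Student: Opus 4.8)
The plan is to reduce the computation of $\cN_{-E}(\frq_\gamma^{\rm D})$ and $\cN_{-E}(\frq_\gamma^{\rm N})$ to a problem on a half-line of the form $(0,+\infty)$ (or to a Sturm--Liouville operator with a potential that behaves like $-\gamma/x^2$ at infinity), where the accumulation of eigenvalues at the bottom of the essential spectrum is governed by a known Bohr--Sommerfeld-type asymptotics. First I would recall that the essential spectrum of both forms is $[0,+\infty)$ (this follows from a Weyl-sequence argument, since the potential $-\gamma/x^2$ decays at infinity and is bounded near $x=1$), so that $\cN_{-E}$ counts genuine negative eigenvalues for every $E>0$ small. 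The key point is that the logarithmic divergence of the counting function comes entirely from the region $x\to+\infty$, where the potential is exactly $-\gamma/x^2$ up to integrable corrections; the behaviour near $x=1$ and the choice of boundary condition there (Dirichlet for $\frq_\gamma^{\rm D}$, Neumann for $\frq_\gamma^{\rm N}$) affect only a finite number of eigenvalues and hence contribute only to the $\cO(1)$ remainder.

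The second step is the explicit half-line model. After the substitution $x=e^y$, the operator $-\frac{d^2}{dx^2}-\frac{\gamma}{x^2}$ on $(1,+\infty)$ is unitarily equivalent (via $f(x)\mapsto e^{y/2}f(e^y)$ on $L^2(0,+\infty)$, $dy$) to $-\frac{d^2}{dy^2}-(\gamma-\tfrac14)$, i.e. a free half-line Laplacian shifted by the constant $\gamma-\tfrac14>0$. This makes transparent why the condition $\gamma>\tfrac14$ is exactly the threshold for infinitely many eigenvalues, and why the subcritical coupling simply produces a spectral band starting at $-(\gamma-\tfrac14)$. However, the constant-coefficient picture is only the leading behaviour: the precise counting of eigenvalues in $(-\,\gamma+\tfrac14,\,0)$ below a small threshold $-E$ requires the WKB/Bohr--Sommerfeld count for the potential well $-\gamma/x^2$. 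The number of eigenvalues below $-E$ is, to leading order,
\[
	\cN_{-E} \sim \frac{1}{2\pi}\int_{\{x\,:\,-\gamma/x^2<-E\}} \sqrt{-E+\frac{\gamma}{x^2}}\,dx
	= \frac{1}{2\pi}\int_1^{\gamma^{1/2}E^{-1/2}}\sqrt{\frac{\gamma}{x^2}-E}\,dx,
\]
and a direct evaluation of this integral (substitute $x=\gamma^{1/2}E^{-1/2}\sin\psi$ or just expand near the turning point) gives $\frac{1}{2\pi}\sqrt{\gamma-\tfrac14}\,|\ln E| + \cO(1)$ as $E\to0+$; the $-\tfrac14$ correction to $\gamma$ is precisely the Langer/Maslov correction that the semiclassical count must incorporate for an inverse-square potential, and it is cleanest to obtain it from the $x=e^y$ reduction together with the known oscillation theory (Sturm count) for the transformed equation.

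The main obstacle, and the place where one must be careful rather than wave hands, is making the heuristic Bohr--Sommerfeld count rigorous with a genuine $\cO(1)$ — not merely $o(|\ln E|)$ — remainder, and simultaneously showing that the Dirichlet and Neumann endpoint conditions at $x=1$ change the count by only $\cO(1)$. For this I would invoke the cited results directly: Theorem~\ref{thm:KS} is precisely \cite[Thm.~1]{KS88} (with the $\cO(1)$ sharpening from \cite[Thm.~1]{HM08}), so the honest proof is to verify that our $\frq_\gamma^{\rm D}$ and $\frq_\gamma^{\rm N}$ fall under the hypotheses of those theorems after the logarithmic change of variable — the potential tail $\gamma/x^2$ satisfies the required regularity and the perturbation away from the pure inverse-square tail is short-range in the appropriate sense. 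The separation-of-the-two-boundary-conditions statement then follows from the general fact that a rank-one change of boundary condition shifts each eigenvalue by at most one position (interlacing), hence changes $\cN_{-E}$ by at most $1$, which is absorbed in $\cO(1)$. In short: reduce to the half-line via $x=e^y$, cite \cite{KS88,HM08} for the asymptotics with $\cO(1)$ control, and use interlacing to dispose of the boundary-condition dependence.
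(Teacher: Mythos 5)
The paper gives no proof of this statement---it is imported verbatim from \cite[Thm.~1]{KS88} and \cite[Thm.~1]{HM08}---and your proposal ultimately does the same thing: verify that the two forms fall under the hypotheses of those theorems (here trivially, since the potential \emph{is} exactly $-\gamma/x^2$ on $(1,+\infty)$) and use min-max interlacing for the codimension-one inclusion $H^1_0(1,+\infty)\subset H^1(1,+\infty)$ to see that the Dirichlet and Neumann counting functions differ by at most $1$, which is correct and is all that is needed. Your heuristic justifications are not load-bearing, which is just as well: under $x=e^y$ the operator is \emph{not} unitarily equivalent to $-\dd^2/\dd y^2-(\gamma-\tfrac14)$ (the spectral parameter acquires the weight $e^{2y}$, and indeed the essential spectrum stays $[0,+\infty)$ rather than $[-(\gamma-\tfrac14),+\infty)$), and the phase-space integral as you wrote it has the wrong prefactor and in any case evaluates to a constant times $\sqrt{\gamma}\,|\ln E|$, not $\sqrt{\gamma-\tfrac14}\,|\ln E|$ --- the $-\tfrac14$ genuinely does not come out of a ``direct evaluation'' of that integral, only out of the logarithmic change of variables combined with oscillation theory, as in the cited references.
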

In Proposition~\ref{prop:lbcount} we establish a lower bound for 
$\N_{1-E}(\frmh)$ while an upper bound is obtained in 
Proposition~\ref{prop:uppbound}. Together with Theorem~\ref{thm:KS} 
these bounds yield Theorem~\ref{thm:struc_sdisc}~(ii).

Let the sub-domains $\Omega^\pm := \Omega_1^\pm$ (for $n= 1$) of $\Omega_\tt$ 
be as in~\eqref{eq:subdomains} and the Sobolev-type spaces
$H_{0,{\rm N}}^1(\Omega^\pm)$ be as in~\eqref{eq:Sobolev}.
Let also the quadratic forms 
$\frh_{\omega,\tt}^\pm := \frh_{\omega,\tt,1}^\pm$ be as 
in~\eqref{eq:forms_pm}. Define the restriction $\frh_{\omega,\tt,\rm D}^-$
of $\frh_{\omega,\tt}^-$ by
\[
	\frh_{\omega,\tt,\rm D}^-[u] := \frh_{\omega,\tt}^-[u],  \qquad
	\dom \frh_{\omega,\tt,\rm D}^- := H^1_0(\Omega^-).  
\]
%
%

To obtain a lower bound, we use a Dirichlet bracketing technique. 
\begin{prop} 
	Let $\tt\in(0,\pi/2)$, $\omega \in (0,\omega_{\rm cr}(\tt))$ be fixed
	and let $\gamma = \gamma(\omega,\tt)$ be as in~\eqref{eq:gamma}.
	For any $E > 0$ set $\wh{E}=(1+\pi\cot\tt)^2E$. Then
	the bound
	\[
		\N_{-\wh{E}}(\frq_\gamma^{\rm D})\leq \N_{1-E}(\frmh),
	\]
	holds for all $E > 0$.
\label{prop:lbcount}
\end{prop}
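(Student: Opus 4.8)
The plan is to realize $\frq_\gamma^{\rm D}$ as (unitarily equivalent to) a form sitting \emph{below} $\frmh$ in the sense of form ordering $\prec$, after restricting $\frmh$ to the unbounded piece $\Omega^-$ with an additional Dirichlet condition on $\Lambda_1$, and then to separate variables. Concretely, I would first invoke Dirichlet bracketing: since imposing a Dirichlet condition along $\Lambda_1$ only raises eigenvalues, we have $\frh_{\omega,\tt,\rm D}^- \prec \frmh$ (the part $\Omega^+$ is irrelevant for a \emph{lower} bound on the counting function because adding $\frh^+_{\omega,\tt}\oplus$ only helps), hence $\N_{1-E}(\frmh)\ge \N_{1-E}(\frh_{\omega,\tt,\rm D}^-)$ for every $E>0$.

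Next I would bound $\frh_{\omega,\tt,\rm D}^-$ from above by a separable form. On $\Omega^-=\{s>1,\ 0<t<\pi\}$ (note that for $s>1>0$ the corner constraint $s>-t\cot\tt$ is automatically satisfied, so $\Omega^-$ is a genuine half-strip), the potential is $V_{\omega,\tt}(s,t)=\gamma/(s+t\cot\tt)^2 \le \gamma/s^2$ since $t>0$ and $\cot\tt>0$; dropping the $t$-derivative term down to its lowest Dirichlet mode $\sqrt{2/\pi}\sin t$ with eigenvalue $1$, and using $|\p_s u|^2+|\p_t u|^2 \ge |\p_s u|^2 + \|u(s,\cdot)\|^2$-type estimates after projecting onto $\sin t$, one gets that the form $\int_{\Omega^-}(|\p_s u|^2 + |\p_t u|^2 - \gamma s^{-2}|u|^2)$ dominates the $1$-shifted one-dimensional form $\int_1^\infty (|f'(s)|^2 + |f(s)|^2 - \gamma s^{-2}|f(s)|^2)\,ds$ acting on the $\sin t$-channel. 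Equivalently, the operator on $\Omega^-$ with this simplified potential decomposes as a direct sum over the transverse Dirichlet modes, of which the lowest one is $1 + (\text{operator of }\frq_\gamma^{\rm D})$. This yields $\N_{1-E}(\frh_{\omega,\tt,\rm D}^-)\ge \N_{-E}(\frq_\gamma^{\rm D})$, which is already slightly stronger than claimed; the factor $(1+\pi\cot\tt)^2$ in $\wh E$ is the safety margin needed once one instead keeps the honest potential $\gamma/(s+t\cot\tt)^2$ and rescales $s$ to turn the half-strip domain into the half-line $(1,\infty)$ cleanly — on $\Omega^-$ one has $s+t\cot\tt \le s(1+\pi\cot\tt)$, so $V_{\omega,\tt}\ge \gamma (1+\pi\cot\tt)^{-2} s^{-2}$ is the wrong direction; the right move is $s+t\cot\tt \le s + \pi\cot\tt$ and a shift/scaling of the variable $x = (s+\pi\cot\tt)/(1+\pi\cot\tt)$ (or similar) mapping $\{s>1\}$ into $\{x>1\}$ while comparing $(s+t\cot\tt)^{-2}$ with $x^{-2}$, which is exactly where the $(1+\pi\cot\tt)^2$ enters the energy $\wh E$.

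I would then quote Theorem~\ref{thm:KS}: since $\omega<\omega_{\rm cr}(\tt)$ forces $\gamma=\gamma(\omega,\tt)=(1/4-\omega^2)/\sin^2\tt$, and the critical condition $\cos\tt=2\omega$ is precisely $\gamma=1/4$, the subcritical regime $\cos\tt>2\omega$ gives $\gamma>1/4$, so $\frq_\gamma^{\rm D}$ has infinitely many eigenvalues and $\N_{-\wh E}(\frq_\gamma^{\rm D}) = \tfrac{1}{2\pi}\sqrt{\gamma-\tfrac14}\,|\ln\wh E| + \cO(1)$ as $E\to 0+$. Plugging $\wh E = (1+\pi\cot\tt)^2 E$ only changes $|\ln\wh E|$ by the additive constant $2\ln(1+\pi\cot\tt)$, which is absorbed into the $\cO(1)$; and $\sqrt{\gamma - 1/4} = \sqrt{(1/4-\omega^2)/\sin^2\tt - 1/4} = \sqrt{\cos^2\tt - 4\omega^2}/(2\sin\tt)$, reproducing the constant $\sqrt{\cos^2\tt-4\omega^2}/(4\pi\sin\tt)$ in Theorem~\ref{thm:struc_sdisc}\,(ii) once the companion upper bound (Proposition~\ref{prop:uppbound}) is in hand.

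The main obstacle is the bookkeeping in the second step: correctly handling the triangular-ish geometry of $\Omega^-$ (the $t\cot\tt$ shift inside the Hardy weight) so that the transverse separation of variables produces an \emph{honest} lower bound with a one-dimensional Hardy operator on a half-line with the standard weight $\gamma/x^2$, rather than something with a $t$-dependent coefficient one cannot diagonalize. The clean fix is to sacrifice a constant: bound $V_{\omega,\tt}(s,t) \ge \gamma/(s+\pi\cot\tt)^2$ (using $t\le\pi$) \emph{does} go the right way after all — wait, no: we need a lower bound on $-V$, i.e. an upper bound on $V$, for $\frh^-_{\omega,\tt,\rm D}$ to dominate the model form; so we use $V_{\omega,\tt}(s,t)\le \gamma/s^2$ directly (from $t>0$), separate variables keeping the lowest transverse mode, rescale to land on $(1,\infty)$, and the net distortion of both the domain endpoint and the weight under that rescaling is what forces the replacement of $E$ by $\wh E$. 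I expect the verification that this rescaling is a legitimate form comparison (unitary up to bounded multiplication, domains matching $H^1_0$) to be the only genuinely technical point; everything else is min-max plus Theorem~\ref{thm:KS}.
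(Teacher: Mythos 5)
Your skeleton matches the paper's proof --- Dirichlet restriction to $\Omega^-$ by zero extension, comparison with a separable form, and the affine change of variable sending $(1,+\infty)$ onto itself that produces the factor $(1+\pi\cot\tt)^2$ in $\wh{E}$ --- but the step on which everything hinges, the choice of the comparison potential, is resolved in the wrong direction, and that is a genuine gap. To bound $\N_{1-E}(\frh^-_{\omega,\tt,\rm D})$ from \emph{below} you must bound the \emph{form} from \emph{above}: you need a separable form $\mathfrak{c}$ with $\frh^-_{\omega,\tt,\rm D}[u]\le \mathfrak{c}[u]$ on $H^1_0(\Omega^-)$, for then $E_k(\frh^-_{\omega,\tt,\rm D})\le E_k(\mathfrak{c})$ and hence $\N_{1-E}(\mathfrak{c})\le \N_{1-E}(\frh^-_{\omega,\tt,\rm D})$. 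Since the potential enters with a minus sign, this forces a \emph{lower} bound on $V_{\omega,\tt}$, i.e.\ the estimate $(s+t\cot\tt)^2\le(s+\pi\cot\tt)^2$ coming from $t\le\pi$ --- precisely the bound you write down and then talk yourself out of. Your final choice $V_{\omega,\tt}\le \gamma/s^2$ (from $t>0$) makes the comparison form \emph{smaller} than $\frh^-_{\omega,\tt,\rm D}$, and the min--max principle then gives $\N_{1-E}(\frh^-_{\omega,\tt,\rm D})\le \N_{-E}(\frq_\gamma^{\rm D})$: an \emph{upper} bound, which is exactly the inequality of Lemma~\ref{lem:upboundstrip} (there with a Neumann condition at $s=1$) used for Proposition~\ref{prop:uppbound}, and which cannot yield the present statement. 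For the same reason your intermediate claim that the $\gamma/s^2$ route gives the ``slightly stronger'' inequality $\N_{1-E}(\frh^-_{\omega,\tt,\rm D})\ge \N_{-E}(\frq_\gamma^{\rm D})$ is the reverse of what your chain of estimates actually produces, and there is no reason to believe it holds.

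Once the inequality $V_{\omega,\tt}(s,t)\ge \gamma(s+\pi\cot\tt)^{-2}$ is in place, no ``projection onto the lowest transverse mode'' inequality is needed: the majorizing form is exactly $\wh\frq_\gamma^{\rm D}\otimes\fri_2+\fri_1\otimes\frq^{\rm D}_{(0,\pi)}$ with respect to $L^2(1,+\infty)\otimes L^2(0,\pi)$, its spectrum consists of the sums $\mu_j+k^2$, and counting those below $1-E$ already yields at least $\N_{-E}(\wh\frq_\gamma^{\rm D})$ from the $k=1$ channel alone. The substitution $y=(1+\pi\cot\tt)^{-1}(x+\pi\cot\tt)$, which maps $(1,+\infty)$ onto itself and scales the Rayleigh quotient by $(1+\pi\cot\tt)^{-2}$, then converts $\N_{-E}(\wh\frq_\gamma^{\rm D})$ into $\N_{-\wh{E}}(\frq_\gamma^{\rm D})$; this is where $\wh{E}$ comes from, not from any distortion caused by the bound $\gamma/s^2$. (A minor notational remark: with the paper's convention for $\prec$, the zero-extension step reads $\frmh\prec\frh^-_{\omega,\tt,\rm D}$, not the reverse; the conclusion you draw from it is nevertheless correct.)
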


\begin{proof}
	Any $u\in H_0^1(\Omega^-)$ can be extended 
	by zero in $\Omega_\tt$, defining $u_0 \in H_0^1(\Omega_\tt)$
	such that $\frh^-_{\omega,\tt,{\rm D}}[u] = \frmh[u_0]$. 
	Then, the min-max principle yields
	\begin{equation}\label{eqn:lbcount}
		\N_{1-E}(\frh^-_{\omega,\tt, \rm D})\leq\N_{1-E}(\frmh).
	\end{equation}
	Now, we bound $(s+t\cot\tt)^2$ 
	from above by $(s + \pi\cot\tt)^2$ and for any $u\in H_0^1(\Omega^-)$, 
	we get
	\begin{equation}
		\frh^-_{\omega,\tt,\rm D}[u] 
		\leq 
		\int_{\Omega^-}|\p_s u|^2 + |\p_t u|^2 - 
		\frac{\gamma|u|^2}{(s+\pi\cot\tt)^2}\dd s\dd t.
		\label{eqn:lbbound}
	\end{equation}
	Further, we introduce the quadratic forms 
	for one-dimensional operators
	\begin{align*}
			\wh\frq_\gamma^{\rm D}[f] & := \displaystyle
			\int_1^{+\infty}|f^\pp(x)|^2 - 
			\frac{\gamma |f(x)|^2}{(x+\pi\cot\tt)^2}\dd x,
			&\dom\wh\frq_\gamma^{\rm D}  & :=  H^1_0(1,+\infty),\\
			\frq^{\rm D}_{(0,\pi)}[f] & :=  \displaystyle\int_0^\pi |f'(x)|^2 \dd x, 
			&\dom\frq^{\rm D}_{(0,\pi)} & := H^1_0(0,\pi).
	\end{align*}
	

	%
	The right hand side of~\eqref{eqn:lbbound} 
	can be represented as 
	$\wh\frq^{\rm D}_\gamma \otimes \fri_2 + \fri_1 \otimes 
	\frq^{\rm D}_{(0,\pi)}$ with respect to the tensor product
	decomposition $L^2(\Omega^-) = L^2(1,+\infty) \otimes L^2(0,\pi)$
	where $\fri_1$, $\fri_2$ are the quadratic forms of the identity operators
	on $ L^2(1,+\infty)$ and on $L^2(0,\pi)$, respectively.
	The eigenvalues of $\frq^{\rm D}_{(0,\pi)}$ are given
	by $\{k^2\}_{k\in\dN}$ and hence
	\begin{equation}
		\N_{-E}(\wh\frq_\gamma^{\rm D})
		\leq\N_{1-E}(\frh^-_{\omega,\tt, \rm D}).
		\label{eqn:lb2}
	\end{equation}
	Finally, we perform the change of variables 
	$y = (1+\pi\cot\tt)^{-1}(x + \pi\cot\tt)$. For all functions 
	$f\in\dom \wh\frq_\gamma^{\rm D}$, 
	we denote $g(y) = f(x)$. We get
	\[
	\frac{\wh\frq_\gamma^{\rm D}[f]}{\int_1^{+\infty}|f(x)|^2\dd x} 
	= 
	(1+\pi\cot\tt)^{-2}
	\frac{\frq_\gamma^{\rm D}[g]}
	{\int_1^{+\infty}|g(y)|^2\dd y}.
	\]
	Finally, using~\eqref{eqn:lbcount},~\eqref{eqn:lb2} 
	and the min-max principle, 
	we get the desired bound on $\N_{1-E}(\frmh)$.
\end{proof}

To obtain an upper bound, we use a Neumann bracketing technique. 
\begin{prop} 
	Let $\tt\in(0,\pi/2)$ and $\omega\in(0,\omega_{\rm cr}(\tt))$ be fixed
	and let $\gamma = \gamma(\omega,\tt)$ be as in~\eqref{eq:gamma}. 
	Then there exists a constant $C = C(\omega,\tt) > 0$
	such that 
	\[
		\N_{1-E}(\frmh) 
		\leq C + \N_{-E}(\frq_\gamma^{\rm N}) 
	\]
	holds for all $E > 0$.
	\label{prop:uppbound}
\end{prop}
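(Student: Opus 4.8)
The plan is to run a Neumann bracketing argument dual to the Dirichlet one used in Proposition~\ref{prop:lbcount}: cut $\Omega_\tt$ along $\Lambda_1=\{s=1\}$ and estimate the two resulting pieces separately. Dropping the matching condition across $\Lambda_1$ can only lower eigenvalues, so $\frh^+_{\omega,\tt}\oplus\frh^-_{\omega,\tt}\prec\frmh$ (both sides coincide as integrals on $H^1_0(\Omega_\tt)$), and the min-max principle gives, for all $E>0$,
\[
	\N_{1-E}(\frmh) \;\le\; \N_{1-E}(\frh^+_{\omega,\tt}) + \N_{1-E}(\frh^-_{\omega,\tt}) .
\]
For the bounded piece $\Omega^+$ I would use the compact embedding $H^1_{0,\rm N}(\Omega^+)\hookrightarrow L^2(\Omega^+)$ recalled above: then $\frh^+_{\omega,\tt}$ has purely discrete spectrum and is bounded from below, hence only finitely many of its eigenvalues lie below $1$, and $\N_{1-E}(\frh^+_{\omega,\tt})\le\N_1(\frh^+_{\omega,\tt})=:C_1<\infty$, uniformly in $E$. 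This is where the singularity of $V_{\omega,\tt}$ at the corner $\{s+t\cot\tt=0\}$ is dealt with once and for all.

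On the half-strip $\Omega^-=(1,+\infty)\times(0,\pi)$ the potential is harmless. Since $t\cot\tt\ge0$ there, $V_{\omega,\tt}(s,t)=\gamma/(s+t\cot\tt)^2\le\gamma/s^2$, and since both potentials are bounded on $\Omega^-$ the two forms $\frh^-_{\omega,\tt}$ and $\wt{\frh}^-[u]:=\int_{\Omega^-}\bigl(|\p_s u|^2+|\p_t u|^2-\gamma|u|^2/s^2\bigr)\,\dd s\,\dd t$ share the form domain $H^1_{0,\rm N}(\Omega^-)$; hence $\wt{\frh}^-\prec\frh^-_{\omega,\tt}$ and $\N_{1-E}(\frh^-_{\omega,\tt})\le\N_{1-E}(\wt{\frh}^-)$. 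The potential in $\wt{\frh}^-$ depends on $s$ only, the boundary condition being Neumann on $\{s=1\}$ and Dirichlet on $\{t=0\}\cup\{t=\pi\}$, so $\wt{\frh}^-$ is the tensor sum $\frq_\gamma^{\rm N}\otimes\fri_2+\fri_1\otimes\frq^{\rm D}_{(0,\pi)}$ with respect to $L^2(\Omega^-)=L^2(1,+\infty)\otimes L^2(0,\pi)$, where $\frq_\gamma^{\rm N}$ is exactly the one-dimensional form of Theorem~\ref{thm:KS} and $\frq^{\rm D}_{(0,\pi)}$ has eigenvalues $\{k^2\}_{k\in\dN}$. Consequently $\N_{1-E}(\wt{\frh}^-)=\sum_{k\in\dN}\N_{1-k^2-E}(\frq_\gamma^{\rm N})$, and the $k=1$ summand is precisely $\N_{-E}(\frq_\gamma^{\rm N})$, the quantity in the statement.

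It remains to absorb the transverse modes $k\ge2$ into an $E$-independent constant. Since $s^{-2}\le1$ on $(1,+\infty)$ we have $\frq_\gamma^{\rm N}\ge-\gamma$, so $\N_{1-k^2-E}(\frq_\gamma^{\rm N})=0$ as soon as $1-k^2-E\le-\gamma$, i.e.\ for all but finitely many $k$; for the remaining finitely many $k$, $\N_{1-k^2-E}(\frq_\gamma^{\rm N})\le\N_{1-k^2}(\frq_\gamma^{\rm N})<\infty$ because $\sess(\frq_\gamma^{\rm N})=[0,+\infty)$ forces the spectrum below any fixed negative level to be finite. Summing these finitely many bounds yields $C_2=C_2(\omega,\tt)<\infty$ with $\N_{1-E}(\wt{\frh}^-)\le\N_{-E}(\frq_\gamma^{\rm N})+C_2$, and combining with the first display proves the proposition with $C:=C_1+C_2$. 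The argument is essentially routine once the decomposition is in place; the two points I would verify carefully are that $\wt{\frh}^-$ and $\frh^-_{\omega,\tt}$ really do share the form domain $H^1_{0,\rm N}(\Omega^-)$ (so the form ordering, hence the min-max comparison, is valid) and that the $k\ge2$ bookkeeping is made explicit rather than hidden in $\cO(1)$ — taking the cut at $n=1>0$ is what makes $\Omega^-$ a genuine rectangle and confines the singular potential entirely to the $\Omega^+$ side.
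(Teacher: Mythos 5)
Your proof is correct and follows essentially the same route as the paper: the Neumann bracketing along $\Lambda_1$, the compact-embedding argument confining the corner singularity to $\Omega^+$, and the comparison with the tensor sum $\frq_\gamma^{\rm N}\otimes\fri_2+\fri_1\otimes\frq^{\rm D}_{(0,\pi)}$ on the half-strip. Your explicit bookkeeping of the transverse modes $k\ge 2$ (absorbing them into an $E$-independent constant via the lower bound $\frq_\gamma^{\rm N}\ge-\gamma$ and the finiteness of the spectrum below any negative level) is in fact slightly more careful than the paper's Lemma~\ref{lem:upboundstrip}, which asserts the bound without the additive constant and thus implicitly assumes these modes contribute nothing.
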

To prove Proposition~\ref{prop:uppbound} we will need the following two lemmas whose proofs are postponed until the end of the subsection.
\begin{lem} 
	Let $\tt\in(0,\pi/2)$ and $\omega\in (0,\omega_{\rm cr}(\tt))$
	be fixed. Then there exists a constant $C = C(\omega,\tt)>0$ such that
	\[
		\N_{1-E}(\frmh^+) \leq C
	\]
	holds for all $E>0$.
	\label{lem:upbound}
\end{lem}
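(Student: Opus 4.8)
\textbf{Proof proposal for Lemma~\ref{lem:upbound}.}
The plan is to show that the form $\frmh^+$ on the bounded-in-$s$ region $\Omega^+ = \Omega_1^+$ is bounded from below by a constant strictly greater than $1$, which immediately forces $\N_{1-E}(\frmh^+) = 0$ for all sufficiently small $E$, and hence $\N_{1-E}(\frmh^+) \le C$ for a suitable $C$ covering the (bounded) range of large $E$. Recall from the discussion after~\eqref{eq:forms_pm} that $\frmh^+ = \frh_{\omega,\tt,1}^+$ is closed, densely defined and semibounded, and that the embedding $H^1_{0,\rm N}(\Omega^+) \hookrightarrow L^2(\Omega^+)$ is compact, so $\s(\frmh^+)$ is purely discrete; thus it suffices to bound $E_1(\frmh^+)$ from below.

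First I would discard the (non-positive, since $\gamma = (1/4-\omega^2)/\sin^2\tt > 0$ for $\omega \in (0,\omega_{\rm cr}(\tt))$, and note $\omega < 1/2$) potential term in a controlled way: for $(s,t)\in\Omega^+$ one has $s < 1$ and $0 < t < \pi$, so $s + t\cot\tt > -t\cot\tt + t\cot\tt$ is not directly useful, but on $\Omega^+$ the quantity $s+t\cot\tt$ is bounded; more precisely, since $\Omega_\tt = \{s > -t\cot\tt\}$, on $\Omega^+$ we have $0 < s + t\cot\tt < 1 + \pi\cot\tt$, whence $V_{\omega,\tt}(s,t) = \gamma/(s+t\cot\tt)^2$ is \emph{not} bounded above on $\Omega^+$ (it blows up near the corner). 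This singularity at the corner $s+t\cot\tt \to 0$ is the main obstacle. To handle it I would use the one-dimensional Hardy inequality in the $t$-variable (or, after the rotation, recognise that $\gamma V_{\omega,\tt}$ is precisely the potential that came from the sub-critical radial weight $((m-\omega)^2-1/4)/r^2$ with $m=0$): because $\omega \in (0,1/2]$, the effective coupling satisfies $\gamma \sin^2\tt = 1/4 - \omega^2 < 1/4$, which is exactly the sub-critical Hardy constant, so the operator $-\Delta - V_{\omega,\tt}$ on $\Omega^+$ with the indicated boundary conditions is bounded below. Concretely, transporting back through $\sfU_\tt$ and $\sfU$, the form $\frmh^+$ is unitarily equivalent to $\frmq$ restricted to the bounded piece of $\Gui(\tt)$ cut off at the corresponding line, and on this bounded region $\frmf$ (the non-flat version) has the manifestly non-negative form $\int(|\p_r u|^2 + |\p_z u|^2 + \omega^2 r^{-2}|u|^2)\,r\,dr\,dz$; since $r$ is bounded below on the bounded piece away from $r=0$... — here one must be careful, as $r\to 0$ is attained.

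The cleanest route, which I would follow, is: transform $\frmh^+$ back via $\sfU^{-1}\sfU_\tt^{-1}$ to the form $\frmf$ acting on functions supported in the bounded sub-domain $\Gui^+(\tt)$ of $\Gui(\tt)$ obtained by cutting along the image of $\Lambda_1$, with the natural (Dirichlet on $\p_1$, Neumann on the cut) boundary conditions; there $\frmf[u] \ge \|\nabla u\|^2_{L^2(\Gui^+(\tt))} \cdot 0 + \int \omega^2 r^{-2}|u|^2 r\,dr\,dz \ge 0$ trivially but we need a \emph{lower bound bigger than} $1$. For that, observe that $\Gui^+(\tt)$ has width $\pi$ in the direction transverse to $\p_1\Gui(\tt)$ but is bounded, so the transverse Dirichlet Laplacian on the corresponding varying cross-sections has first eigenvalue $\ge 1$, and the boundedness in the longitudinal direction plus the strict positivity of the $\omega^2 r^{-2}$ term (which is bounded below by a positive constant on the part where $r$ is bounded, and diverges where $r\to 0$) gives a strict gap: $E_1(\frmh^+) > 1$. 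Then $\N_{1-E}(\frmh^+) = 0$ for $E < E_1(\frmh^+) - 1$, and for $E \ge E_1(\frmh^+) - 1$ we take $C := \N_0(\frmh^+) = \#\{k : E_k(\frmh^+) < 1\}$, which is finite by discreteness of the spectrum. The hard part will be making the ``strict gap above $1$'' rigorous uniformly: I expect to invoke a separation-of-variables comparison in the rotated coordinates, bounding $\frmh^+[u]$ below by $\int_{\Omega^+}(|\p_t u|^2 - V_{\omega,\tt}|u|^2)\,ds\,dt$ and then using, for each fixed $s$, the one-dimensional inequality $\int_0^{T(s)} |f'(t)|^2\,dt - \gamma\int_0^{T(s)} \frac{|f(t)|^2}{(s+t\cot\tt)^2}\,dt \ge \big(1 + \delta(s)\big)\int_0^{T(s)}|f(t)|^2\,dt$ with $\delta(s) > 0$, where $T(s)$ is the length of the $t$-section; the sub-criticality $\gamma\sin^2\tt < 1/4$ is what guarantees $\delta(s) \ge \delta_0 > 0$ uniformly on the bounded range of $s$, and this is where the hypothesis $\omega < \omega_{\rm cr}(\tt)$ — equivalently $\gamma > 0$ together with $\omega \le 1/2$ — enters decisively.
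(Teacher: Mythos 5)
Your concluding fallback is precisely the paper's entire proof: since $H_{0,\rm N}^1(\Omega^+)$ embeds compactly into $L^2(\Omega^+)$ and $\frmh^+$ is closed and semibounded from below, its spectrum is purely discrete with eigenvalues accumulating only at $+\infty$, so $\N_1(\frmh^+)$ is finite and $\N_{1-E}(\frmh^+)\le \N_1(\frmh^+)=:C$ for every $E>0$. Nothing more is needed, and you should lead with this two-line argument rather than bury it at the end.

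The bulk of your proposal, devoted to establishing a strict gap $E_1(\frmh^+)>1$, is both unnecessary and unsubstantiated. The separation-of-variables estimate you sketch fails: for $s\in(0,1)$ the $t$-section of $\Omega^+$ is the full interval $(0,\pi)$, so the transverse Dirichlet eigenvalue equals $1$ exactly, leaving no margin $\delta(s)>0$ from the geometry; meanwhile the attractive potential $V_{\omega,\tt}(s,t)=\gamma\,(s+t\cot\tt)^{-2}$ is unbounded near the corner, so a fixed-$s$ transverse bound yields at best $1-\sup_{t}V_{\omega,\tt}(s,\cdot)$, which tends to $-\infty$ as the section approaches the corner. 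Taming that $-\gamma\rho^{-2}$ singularity requires a Hardy inequality in the longitudinal (or radial) variable, as the paper does in Section~\ref{sec:Hardy} for the critical case, not a one-dimensional inequality in $t$; sub-criticality of $\gamma$ does not rescue the argument as written. Moreover, there is no reason to expect $E_1(\frmh^+)>1$ at all: the corner region combined with the Neumann condition on $\Lambda_1$ and the attractive potential may well produce eigenvalues below $1$, which is exactly why the lemma asserts a bound by a constant $C$ rather than by $0$. Fortunately none of this affects the validity of your proof, since the compactness argument alone suffices.
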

\begin{lem} Let $\tt\in(0,\pi/2)$ and $\omega\in (0,\omega_{\rm cr}(\tt))$
	be fixed and let $\gamma =\gamma(\omega,\tt)$ be as in~\eqref{eq:gamma}. 
	Then
	\[
		\N_{1-E}(\frmh^-) 
		\leq \N_{-E}(\frq_\gamma^{\rm N})
	\]
	holds for all $E > 0$.
	\label{lem:upboundstrip}
\end{lem}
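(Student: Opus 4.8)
The plan is to prove Lemma~\ref{lem:upboundstrip} by the same tensor-product reduction used in the lower bound (Proposition~\ref{prop:lbcount}), but now estimating the weight $(s+t\cot\tt)^2$ from \emph{below} rather than from above. Since $t\in(0,\pi)$ and $s>1$ on $\Omega^-$, we have $s+t\cot\tt \ge s$, and hence $V_{\omega,\tt}(s,t) = \gamma(s+t\cot\tt)^{-2}\le \gamma s^{-2}$ on $\Omega^-$. Therefore, for any $u\in H^1_{0,\rm N}(\Omega^-)$,
\[
	\frmh^-[u] = \int_{\Omega^-}\Big(|\p_s u|^2 + |\p_t u|^2 - V_{\omega,\tt}|u|^2\Big)\dd s\dd t
	\ge \int_{\Omega^-}\Big(|\p_s u|^2 + |\p_t u|^2 - \frac{\gamma|u|^2}{s^2}\Big)\dd s\dd t =: \wt\frmh^-[u].
\]
By the min-max principle this form ordering gives $\N_{1-E}(\frmh^-)\le \N_{1-E}(\wt\frmh^-)$, so it suffices to count eigenvalues of $\wt\frmh^-$ below $1-E$.

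Next I would exploit the fact that $\wt\frmh^-$ separates variables on $\Omega^- = (1,+\infty)\times(0,\pi)$: with respect to the decomposition $L^2(\Omega^-) = L^2(1,+\infty)\otimes L^2(0,\pi)$ it is represented as $\frq_\gamma^{\rm N}\otimes\fri_2 + \fri_1\otimes\frq^{\rm N}_{(0,\pi)}$, where $\frq^{\rm N}_{(0,\pi)}[f] = \int_0^\pi|f'|^2\dd x$ on $H^1(0,\pi)$, and the Neumann boundary condition on $\Lambda_1$ is precisely what is encoded in $H^1_{0,\rm N}(\Omega^-)$ (Dirichlet on the parts of $\p\Omega^-$ coming from $\p\Omega_\tt$, and this is what yields $\frq_\gamma^{\rm N}$ on the $s$-variable factor — note the half-line factor carries the \emph{Neumann} condition at $x=1$). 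The spectrum of $\frq^{\rm N}_{(0,\pi)}$ is $\{k^2\}_{k\in\dN_0}$, so the smallest transverse eigenvalue is $0$. Consequently an eigenvalue of the tensor sum lies below $1-E$ only through the lowest transverse mode contributing eigenvalues of $\frq_\gamma^{\rm N}$ below $-E$ (the higher modes $k\ge 1$ shift the $s$-spectrum up by $k^2\ge 1$ and cannot reach below $1-E$ since $\inf\s(\frq_\gamma^{\rm N})\ge$ some finite value but more simply because $\frq_\gamma^{\rm N}$ is bounded below and for small $E$ only the $k=0$ sector matters — one checks this by noting $\sess(\frq_\gamma^{\rm N})=[0,\infty)$ so eigenvalues below $-E<0$ come only from the discrete spectrum, and adding $k^2\ge1$ pushes everything to $\ge 1-E$). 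This yields exactly $\N_{1-E}(\wt\frmh^-) = \N_{-E}(\frq_\gamma^{\rm N})$, and combining with the form ordering above gives $\N_{1-E}(\frmh^-)\le\N_{-E}(\frq_\gamma^{\rm N})$.

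The main technical point requiring care is the bookkeeping in the tensor-product eigenvalue count: one must argue cleanly that only the ground transverse mode $k=0$ can produce eigenvalues of $\frq_\gamma^{\rm N}\otimes\fri_2+\fri_1\otimes\frq^{\rm N}_{(0,\pi)}$ below the threshold $1-E$, which follows because for $k\ge 1$ the $k$-th transverse sector is unitarily equivalent to $\frq_\gamma^{\rm N}$ shifted up by $k^2\ge 1$, whose spectrum then lies in $[k^2 + \inf\s(\frq_\gamma^{\rm N}),\infty)$; since $\frq_\gamma^{\rm N}\ge 0$ on the essential spectrum and its finitely many negative eigenvalues are all $> -1$ for the relevant range (or, uniformly, since we only need $E$ small and the negative eigenvalues of $\frq_\gamma^{\rm N}$ accumulate at $0$), the contribution from $k\ge1$ lies above $1-E$. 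This is the standard decoupling argument and presents no real obstacle; the only subtlety is making sure the boundary condition on $\Lambda_1$ carried by $H^1_{0,\rm N}(\Omega^-)$ corresponds to Neumann in the half-line factor, which is immediate from the definition~\eqref{eq:Sobolev}. Everything else is a direct application of the min-max principle.
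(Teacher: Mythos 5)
Your overall strategy is the right one and, in fact, is the same as the paper's: bound $(s+t\cot\tt)^2$ from below by $s^2$ on $\Omega^-$, so that $\frmh^-$ dominates a separable form on $(1,+\infty)\times(0,\pi)$, and then reduce the count to the one-dimensional half-line problem. The half-line factor is indeed $\frq_\gamma^{\rm N}$, i.e.\ Neumann at $s=1$, precisely because $\Lambda_1$ is the only part of $\p\Omega^-$ on which $H^1_{0,\rm N}(\Omega^-)$ imposes no condition.

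However, you have the \emph{transverse} boundary condition wrong, and this breaks the bookkeeping. By definition~\eqref{eq:Sobolev}, functions in $H^1_{0,\rm N}(\Omega^-)$ vanish on $\p\Omega^-\setminus\Lambda_1$, which consists exactly of the two horizontal sides $\{t=0\}$ and $\{t=\pi\}$; hence the cross-sectional factor is $H^1_0(0,\pi)$ and the transverse form is the Dirichlet form $\frq^{\rm D}_{(0,\pi)}$ with spectrum $\{k^2\}_{k\in\dN}$ and lowest eigenvalue $1$ --- not the Neumann form with lowest eigenvalue $0$. This is not cosmetic: with your Neumann transverse factor the $k=0$ sector contributes the Rayleigh quotients of $\frq_\gamma^{\rm N}$ below $1-E-0=1-E$, and since $1-E$ lies inside $\sess(\frq_\gamma^{\rm N})=[0,+\infty)$ for $E<1$, that count is infinite; your assertion that the lowest transverse mode contributes only eigenvalues of $\frq_\gamma^{\rm N}$ below $-E$ is inconsistent with a zero transverse ground eigenvalue. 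The shift by $1$ that converts the threshold $1-E$ into $-E$ is exactly the first Dirichlet eigenvalue of the cross-section. Once you replace $\frq^{\rm N}_{(0,\pi)}$ by $\frq^{\rm D}_{(0,\pi)}$, the argument closes and coincides with the paper's proof. (Your residual worry about the sectors $k\ge 2$ is legitimate, but it is equally present in the paper's own one-line deduction; since $\frq_\gamma^{\rm N}\ge-\gamma$, only finitely many sectors can contribute and each contributes at most $\N_{-3}(\frq_\gamma^{\rm N})<\infty$ states, so the discrepancy is an $E$-independent constant, harmless for the $\cO(1)$ remainder in Theorem~\ref{thm:struc_sdisc}\,(ii).)
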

\begin{proof}[Proof of Proposition \ref{prop:uppbound}] 
Note that we have the following form ordering
\[
	\frh_{\omega,\tt}^+\oplus\frh_{\omega,\tt}^-\prec \frmh
\]
and the min-max principle gives
\begin{equation}
	\N_{1-E}(\frmh) \leq \N_{1-E}(\frmh^+) + \N_{1-E}(\frmh^-).
	\label{eqn:neumbrack}
\end{equation}
The statement follows directly combining~\eqref{eqn:neumbrack}, 
Lemma~\ref{lem:upbound} and Lemma~\ref{lem:upboundstrip}.
\end{proof}

We conclude this part by the proofs of Lemmas~\ref{lem:upbound} 
and~\ref{lem:upboundstrip}.

\begin{proof}[Proof of Lemma~\ref{lem:upbound}] 
Recall that the space $H_{0,{\rm N}}^1(\Omega^+)$ is compactly embedded into $L^2(\Omega^+)$. Consequently, $\s(\frmh^+)$ is purely discrete and consists 
of a non-decreasing sequence of eigenvalues of finite multiplicity 
that goes to $+\infty$. In particular, 
there exists a constant $C = C(\omega,\tt)>0$ such that
\[
	\N_{1-E}(\frmh^+) 
	\leq \N_1(\frmh^+) \leq C.\qedhere
\]
\end{proof}

\begin{proof}[Proof of Lemma~\ref{lem:upboundstrip}]
	In $\Omega^-$, we can bound $(s+t\cot\tt)^2$ from below by $s^2$. 
	For any $u \in \dom\frmh^-$, 
	we get
	\[
		\int_{\Omega^-} |\p_s u|^2 + |\p_t u|^2 -
		\frac{\gamma|u|^2}{s^2}\dd s\dd t 
		\leq \frmh^-[u].
	\]
	The left-hand side can be seen as the tensor product
	$\frq_\gamma^{\rm N} \otimes \fri_2 + \fri_1 \otimes \frq^{\rm D}_{(0,\pi)}$
	with respect to the decomposition $L^2(\Omega^-) = L^2(1,+\infty)\otimes L^2(0,\pi)$
	where the form $\frq^{\rm D}_{(0,\pi)}$ is defined in the proof of 
	Proposition~\ref{prop:lbcount}.
	Since the eigenvalues of $\frq^{\rm D}_{(0,\pi)}$ 
	are given by $\{k^2\}_{k\in\dN}$, we deduce that	
	\[
		\N_{1-E}(\frmh^-) \leq \N_{-E}(\frq_\gamma^{\rm N}).\qedhere
	\]
	%
\end{proof}

\begin{proof}[Proof of Theorem \ref{thm:struc_sdisc}~(ii)]
	Combining Proposition \ref{prop:lbcount} and Proposition \ref{prop:uppbound}, 
	for any $E>0$ we get 
	\begin{equation}
		\N_{-(1+\pi\cot\tt)^2 E}(\frq_{\gamma}^{\rm D}) 
		\leq \N_{1-E}(\frmh)
		\leq C + \N_{-E}(\frq_\gamma^{\rm N}).
		\label{eqn:enccount}
	\end{equation}
	For the lower and upper bounds
	on $\N_{1-E}(\frmh)$ given in~\eqref{eqn:enccount}, 
	Theorem~\ref{thm:KS} implies that as $E\arr 0+$ holds
	\[
	\begin{split}
		C + \N_{-E}(\frq_\gamma^{\rm N}) 
		& = \frac{1}{2\pi}\sqrt{\gamma - \frac{1}{4}}|\ln E|
		+ \cO(1),\\
		\N_{-(1+\pi\cot\tt)^2 E}(\frq_\gamma^{\rm D}) 
		& = \frac{1}{2\pi}\sqrt{\gamma - \frac{1}{4}}|\ln((1+\pi\cot\tt)^2 E)|
		+ \cO(1)  
		= \frac{1}{2\pi}\sqrt{\gamma - \frac{1}{4}}|\ln E|
		+ \cO(1).  
	\end{split}	
	\]
	Hence, Theorem~\ref{thm:struc_sdisc}\,(ii) follows 
	from the identity
	\[
		\sqrt{\gamma - \frac14} = 
		\frac{\sqrt{\cos^2\tt - 4\omega^2}}{2\sin\tt}.
		\qedhere
	\]
\end{proof}

\section{A Hardy-type inequality}
\label{sec:Hardy}
The aim of this section is to prove Theorem~\ref{thm:Hardy}. 
Instead of working with the quadratic form $\frmq$
which is used in the formulation of Theorem~\ref{thm:Hardy}
it is more convenient to work with $\frmh$ defined in~\eqref{eqn:frQ_flat}. 
We go back to the form $\frmq$ only in the end of this section.
Recall that we denote by $\langle\cdot,\cdot\rangle$ and $\|\cdot\|$,
respectively, the inner product and the norm in $L^2(\Omega_\tt)$.

In this section we are only interested in the critical case 
$\omega = \omega_{\rm cr}(\tt) = (1/2)\cos\tt$
for which $\gamma(\omega_{\rm cr}(\tt),\tt) = 1/4$ holds
where $\gamma(\omega,\tt)$ is defined in~\eqref{eq:gamma}.
To make the notations more handy
we define $\frh_\tt := \frh_{\omega_{\rm cr},\tt}$. 
For further use, for any $(s,t)\in\Omega_\tt$, we introduce
\[
	\rho := \rho(s,t) = 
	s+t\cot\tt,\qquad \rho_0:=\rho_0(t) = \frac{1}2t\cot\tt.
\]
With this notation the domain $\Omega_\tt$ can be represented as
\[
	\Omega_\tt = \big\{(s,t)\in\dR\times (0,\pi)\colon s > -2\rho_0(t)\big\}
\]
and the quadratic form $\frh_\tt$ can be written as
\[
	\frh_{\tt}[u] = 
	\int_{\Omega_\tt}|\p_s u|^2 + |\p_t u|^2 - \frac{|u|^2}{4\rho^2} 
	\dd s\dd t,\qquad \dom \frh_\tt = H^1_0(\Omega_\tt).
\]
%
%
The emptiness of the discrete spectrum stated in 
Theorem~\ref{thm:struc_sdisc}\,(i)
is an immediate consequence of Theorem~\ref{thm:Hardy} 
and of the min-max principle because for any $\omega\geq \omega_{\rm cr}$ 
the form ordering $\frh_{\tt} \prec \frmh$ holds.
%
Another consequence of Theorem~\ref{thm:Hardy} 
is the non-criticality of $\Op$ as stated in~\eqref{eq:non-crit}.
%

To prove Theorem~\ref{thm:Hardy}, we adapt the strategy developed 
in~\cite[\S 3]{CK14}. First, in subsection~\ref{subsec:local_Hardy} 
we prove a local Hardy-type inequality for the quadratic form 
$\frh_\tt$ taking advantage of the usual one-dimensional Hardy inequality. Second, in subsection~\ref{subsec:Hardy_lb}, we obtain a refined lower bound that allows us, in subsection~\ref{subsec:Hardy}, to prove 
Theorem~\ref{thm:Hardy}.
%
\subsection{A local Hardy inequality}\label{subsec:local_Hardy} 
Let us introduce the triangle $\cT_\tt$ (see Figure~\ref{fig:sub_Ttheta}),
which is a sub-domain of $\Omega_\tt$ defined as
\[
\begin{split}
	\cT_\tt := \ & \big\{(s,t)\in\Omega_\tt \colon s < -\rho_0(t)/2\big\}\\
= \ &\big\{(s,t)\in\dR \times (0,\pi) \colon -2\rho_0(t) < s <-\rho_0(t)/2\big\}.
\end{split}		
\]

\begin{figure}[h!!!]
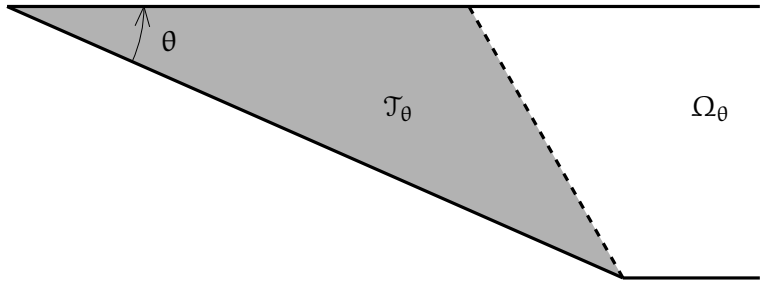

\figinit{0.9cm}
\figpt 0:(0,0)
\figpt 1:(2,0)
\figpt 2:(-9,4)
\figpt 4:(2,4)
\figpt 5:(-2.25,4)
\figpt 6:(-9,0)
\figpt 7:(-2,0)
\figpt 8:(-3.5,2.5)
\figpt 9:(1,2.5)
\figpt 10:(-6.75,3.5)

\figdrawbegin{}
\figset (fillmode=yes, color=0.7)
\figdrawline [0,5,2,0]
\figset (fillmode=no, color=0)
\figset (width=1.2)
\figdrawline [0,1]
\figdrawline [2,4]
\figdrawline [0,2]
\figset (width=1.2,dash=8)
\figdrawline [0,5]
\figset (width=0,dash=0)
\figdrawarrowcircP 2;2[0,4]
\figdrawend

\figvisu{\figBoxA}{}{
\figwritee 8:{$\mathcal{T}_\tt$} (0)
\figwritee 9:{$\Omega_\tt$} (0)
\figwritee 10:{$\tt$} (0)
}
\centerline{\box\figBoxA}
\caption{The domain $\Omega_\tt$ and the subdomain $\mathcal{T}_\tt$ (in grey).}
\label{fig:sub_Ttheta}
\end{figure}%

\noindent 
We also need to define the auxiliary function
\begin{equation}\label{eq:f}
	f(t) := \frac{\pi^2}{(\pi-t/4)^{2}}-1.
\end{equation}
Note that $f(t) \ge 0$ in $\cT_\tt$.
\begin{prop}\label{prop:local_Hardy}
	For any $u\in\cC_0^\infty(\Omega_\tt)$ the inequality
	\[
		\int_{\Omega_\tt}|\p_t u|^2\dd s\dd t  - \|u\|^2 
		\geq 	
		\int_{\cT_\tt} f(t)|u|^2 \dd s \dd t,
	\]
	holds with $f(\cdot)$ as in~\eqref{eq:f}.
\end{prop}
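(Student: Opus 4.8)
The plan is to reduce the two-dimensional inequality to a one-dimensional Hardy inequality in the variable $t$ on each horizontal slice of $\Omega_\tt$, and to extract the extra positive term only on the sub-triangle $\cT_\tt$ where the slice is genuinely shorter than the full interval $(0,\pi)$. First I would fix $u\in\cC_0^\infty(\Omega_\tt)$ and, for each $s\in\dR$, consider the section $\Omega_\tt^{(s)}:=\{t\in(0,\pi)\colon (s,t)\in\Omega_\tt\}$. For $s\ge 0$ this section is the whole interval $(0,\pi)$, whereas for $s<0$ it is an interval of the form $(t_*(s),\pi)$ with $t_*(s)>0$ determined by the constraint $s=-2\rho_0(t)=-t\cot\tt$, i.e. $t_*(s)=-s\tan\tt$. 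On each such section the function $t\mapsto u(s,t)$ lies in $H_0^1$ of the section, so the sharp one-dimensional Hardy/Poincaré inequality applies: on an interval of length $L$ one has $\int |g'|^2 \ge (\pi/L)^2\int|g|^2$, and more precisely I would use the inequality on $(t_*,\pi)$ comparing against the constant needed to produce exactly the weight $f(t)$.

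The key point is the choice of the comparison interval and of $f$. For $s\ge 0$ the section has length $\pi$, giving $\int_0^\pi|\p_t u|^2\,dt \ge \int_0^\pi|u|^2\,dt$, i.e. exactly $\|u(s,\cdot)\|^2$, so there is no surplus there. For $s$ with $-2\rho_0(t)<s<-\rho_0(t)/2$, which is precisely the region carving out $\cT_\tt$, the relevant sections are strictly shorter, so the one-dimensional inequality gives a strictly larger constant than $1$. Integrating the one-dimensional inequality in $s$ and subtracting $\|u\|^2$ will leave a remainder supported where the section length is bounded away from $\pi$. The function $f(t)=\pi^2/(\pi-t/4)^2-1$ is engineered so that $\pi/(\pi - t/4)$ is a lower bound for $\pi/(\text{length of the truncated section used})$ on $\cT_\tt$: one checks that on $\cT_\tt$ the section through a point $(s,t)$, truncated appropriately, has length at most $\pi - t/4$, so the gain in the Hardy constant is at least $(\pi/(\pi-t/4))^2 - 1 = f(t)$. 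Thus the concrete task is to verify this length bound, which is an elementary computation with the linear boundary $s=-t\cot\tt$ of $\Omega_\tt$ and the dividing line $s=-\rho_0(t)/2$; I would carry it out slice by slice in $t$.

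I would organize the argument as follows: (1) for each fixed $t\in(0,\pi)$, split the $s$-integral of $|\p_t u|^2 + $ (nothing) — actually better to fix $s$ and integrate in $t$ — so: (1) for each fixed $s$, apply the one-dimensional Poincaré inequality on the section $\Omega_\tt^{(s)}$ with its exact length $\ell(s)$, obtaining $\int_{\Omega_\tt^{(s)}}|\p_t u|^2\,dt \ge (\pi/\ell(s))^2 \int_{\Omega_\tt^{(s)}}|u|^2\,dt$; (2) write $(\pi/\ell(s))^2 = 1 + [(\pi/\ell(s))^2 - 1]$ and integrate in $s$, so that $\int_{\Omega_\tt}|\p_t u|^2 - \|u\|^2 \ge \int_{\Omega_\tt}[(\pi/\ell(s))^2 - 1]|u|^2\,ds\,dt$; (3) restrict the last integral to $\cT_\tt$ (the integrand is non-negative everywhere, so this only decreases the bound) and bound $\ell(s)$ from above, on $\cT_\tt$, by $\pi - t/4$, which yields $(\pi/\ell(s))^2 - 1 \ge f(t)$ there. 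The main obstacle, and the only step requiring care rather than routine computation, is step (3): one must confirm that on $\cT_\tt$, which in $t$-slices is described by $-2\rho_0(t)<s<-\rho_0(t)/2$, the corresponding $t$-section length is controlled by $\pi - t/4$ — equivalently, that the lower endpoint $t_*(s)$ of the section satisfies $\pi - t_*(s) \le \pi - t/4$ for the relevant $(s,t)$, i.e. $t_*(s)\ge t/4$. Since $t_*(s) = -s\tan\tt$ and on $\cT_\tt$ one has $s< -\rho_0(t)/2 = -(t\cot\tt)/4$, we get $t_*(s) = -s\tan\tt > (t\cot\tt)\tan\tt/4 = t/4$, which is exactly what is needed; so the estimate closes, though one should double-check the direction of all inequalities and the $\tt$-independence of the final constant $t/4$. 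The one-dimensional Poincaré inequality on an interval with one Dirichlet and one free endpoint (here the free endpoint is at $t=\pi$ for the truncated section, but note $u$ need not vanish there — however $u\in\cC_0^\infty(\Omega_\tt)$ does vanish near the lower endpoint $t_*(s)$, and since $t=\pi$ is not part of $\partial\Omega_\tt$ interior to the support, one actually has $u(s,\cdot)$ compactly supported inside $(t_*(s),\pi]$, so Dirichlet at the left and free at the right is the correct setting, with optimal constant $(\pi/(2\ell))^2$) — I would be careful here and, if the free-endpoint constant is the relevant one, adjust $f$ accordingly; comparing with the stated $f$ suggests the Dirichlet-Dirichlet constant on a symmetrized interval is what the authors use, so I would reexamine whether the section should be reflected to double its length. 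In any case the structure of the proof is the slicing-plus-one-dimensional-Hardy scheme above.
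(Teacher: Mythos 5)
Your proposal is correct and follows essentially the same route as the paper: fix $s$, apply the one-dimensional Dirichlet Poincar\'e inequality on the $t$-section (whose first eigenvalue is $\pi^2/(\pi-|s|\tan\tt)^{2}$), keep only the surplus over $1$ on $\cT_\tt$, and use that $s<-\rho_0(t)/2$ there forces the lower endpoint $-s\tan\tt$ of the section to exceed $t/4$. The one point you left open --- whether $t=\pi$ carries a Dirichlet or a free condition --- is settled by the definition $\Omega_\tt\subset\dR\times(0,\pi)$: the top edge $t=\pi$ belongs to $\p\Omega_\tt$, so $u\in\cC_0^\infty(\Omega_\tt)$ vanishes near it, and the Dirichlet--Dirichlet constant $(\pi/\ell(s))^{2}$ is indeed the right one, matching the stated $f$ with no reflection needed.
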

Before going through the proof of Proposition~\ref{prop:local_Hardy}, 
we notice that
\[
	\frh_\tt[u] - \|u\|^2 
	= 
	\int_{\Omega_\tt}|\p_t u|^2\dd s \dd t -\|u\|^2 + 
	\int_{t=0}^{\pi}\int_{s>-t\cot\tt} |\p_s u|^2 -
	\frac{|u|^2}{4\rho^2}\dd s\dd t.
\]
In fact, the last term on the right-hand side is positive. 
It can be seen by performing, in the $s$-integral, the change of variable $\sigma = \rho(s,t)$ for any fixed $t\in(0,\pi)$ and using the classical one-dimensional Hardy inequality (see \eg \cite[\S VI.4., eq. (4.6)]{Kato}). Together with 
Proposition~\ref{prop:local_Hardy}, it gives the following corollary.
\begin{cor}[Local Hardy inequality]\label{cor:local_Hardy}
	For any $u\in\cC_0^\infty(\Omega_\tt)$
	the inequality
	\[
		\frh_\tt[u]  - \|u\|^2 
		\geq \int_{\cT_\tt} f(t)|u|^2 \dd s \dd t,
	\]
	holds with $f(\cdot)$ as in~\eqref{eq:f}. 
\end{cor}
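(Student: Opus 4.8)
The plan is to prove Proposition~\ref{prop:local_Hardy} by reducing the two-dimensional inequality to a family of one-dimensional Poincaré-type (i.e.\ Dirichlet eigenvalue) inequalities, one for each fixed value of the variable $s$. First I would observe that for a fixed $s$, the cross-section of $\Omega_\tt$ in the $t$-variable is an interval $(t_*(s),\pi)$, where the left endpoint $t_*(s)$ is determined by the constraint $s>-t\cot\tt$, i.e.\ $t_*(s)=\max(0,-s\tan\tt)$. On that interval the function $u(s,\cdot)$ vanishes at both endpoints (it vanishes at $t=\pi$ because $u\in\cC_0^\infty(\Omega_\tt)$ and $\Omega_\tt$ has $t<\pi$, and it vanishes at $t=t_*(s)$ because that is the slanted boundary when $t_*(s)>0$, or again by compact support when $t_*(s)=0$). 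Hence the one-dimensional Dirichlet inequality on $(t_*(s),\pi)$ gives
\[
	\int_{t_*(s)}^{\pi}|\p_t u(s,t)|^2\,\dd t
	\ge
	\lambda_1\big((t_*(s),\pi)\big)\int_{t_*(s)}^{\pi}|u(s,t)|^2\,\dd t,
\]
where $\lambda_1((a,\pi))=\pi^2/(\pi-a)^2$ is the first Dirichlet eigenvalue of the interval $(a,\pi)$ of length $\pi-a$.

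Next I would split the analysis according to whether $t_*(s)=0$ or $t_*(s)>0$. On the region where $t_*(s)=0$ — i.e.\ $s\ge 0$, and more generally wherever the relevant part of the fiber is $(0,\pi)$ — the eigenvalue is exactly $1$, so the one-dimensional inequality yields $\int|\p_t u|^2\,\dd t\ge\int|u|^2\,\dd t$, contributing nothing beyond the $-\|u\|^2$ term on the left. On the region $\cT_\tt$, where $-2\rho_0(t)<s<-\rho_0(t)/2$, the fiber is strictly shorter than $\pi$ and the eigenvalue is strictly larger than $1$; this is where the surplus term $\int_{\cT_\tt}f(t)|u|^2$ comes from. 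The technical point is to choose, for each $(s,t)\in\cT_\tt$, a convenient sub-interval of the fiber on which to run the one-dimensional inequality so that the resulting $t$-dependent (rather than $s$-dependent) weight is exactly $f(t)=\pi^2/(\pi-t/4)^2-1$; the bound $s<-\rho_0(t)/2=-\tfrac14 t\cot\tt$ together with $s>-t\cot\tt$ suggests working on the fiber over an $s$-slice and comparing with the interval $(-\tfrac14 t\cot\tt\cdot\text{something},\pi)$ of length $\pi-t/4$, so that $\lambda_1=\pi^2/(\pi-t/4)^2$. Concretely, I would integrate the fiberwise inequality in $s$ over the full range, carefully bookkeeping which interval of $t$-values is controlled by which $s$-range, and isolate the extra positive contribution supported in $\cT_\tt$; monotonicity of $a\mapsto\lambda_1((a,\pi))$ makes all the intermediate comparisons go in the right direction.

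The main obstacle I anticipate is purely organisational rather than conceptual: matching the geometry of the slanted corner so that the gain, which \emph{a priori} depends on $s$ through $t_*(s)$, can be re-expressed as a clean lower bound with a weight depending only on $t$ and supported precisely on $\cT_\tt$. One must be careful that the one-dimensional inequality is applied on an interval where $u(s,\cdot)$ genuinely has Dirichlet conditions at both ends (this uses the explicit shape of $\partial\Omega_\tt$ near the corner and the compact support of $u$), and that the bookkeeping of the Fubini/Tonelli interchange respects the region $-2\rho_0(t)<s$. Once the fiberwise estimate and the region decomposition are set up correctly, integrating in $s$ and invoking the explicit value $\lambda_1((t/4\text{-shifted interval}))=\pi^2/(\pi-t/4)^2$ delivers the claimed inequality, and Corollary~\ref{cor:local_Hardy} then follows immediately by adding back the manifestly nonnegative $s$-direction term, which is nonnegative by the classical one-dimensional Hardy inequality applied after the substitution $\sigma=\rho(s,t)$ in each $t$-fiber, exactly as indicated in the remark preceding the corollary.
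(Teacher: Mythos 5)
Your proposal is correct and follows essentially the same route as the paper: fiberwise Dirichlet eigenvalue bounds in $t$ with $\lambda_1(s)=\pi^2/(\pi-|s|\tan\tt)^2$, monotonicity of $s\mapsto\lambda_1(s)$ combined with the constraint $s<-\rho_0(t)/2$ on $\cT_\tt$ to replace the $s$-dependent gain by the $t$-dependent weight $f(t)$, and then discarding the manifestly nonnegative $\int|\p_s u|^2-\tfrac{1}{4\rho^2}|u|^2$ term via the classical one-dimensional Hardy inequality after the substitution $\sigma=\rho(s,t)$. The only simplification worth noting is that no choice of sub-interval is needed: one applies the eigenvalue inequality on the full fiber and then uses monotonicity, exactly as you anticipate.
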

\begin{proof}[Proof of Proposition~\ref{prop:local_Hardy}] 
	Let $u\in\cC_0^\infty(\Omega_\tt)$.
	For fixed $s \in (-\pi\cot\tt,0)$ 
	the function 
	\[
		(-s\tan\tt,\pi)\ni t \mapsto u(s,t)
	\] 
	satisfies Dirichlet boundary conditions
	at $t = -s\tan\tt$ and $t = \pi$. 
	Let
	\[
		\lm_1(s) := \frac{\pi^2}{(\pi - |s|\tan\tt)^{2}}
	\]	 
	be the first eigenvalue of the Dirichlet Laplacian 
	on the interval $(-s\tan\tt,\pi)$.
	Hence, we get
	\[
		\int_{\Omega_\tt}|\p_t u|^2 \dd t\dd s - \|u\|^2 
		\geq \int_{\Omega_\tt} \big( h(s) - 1\big)|u|^2 \dd s\dd t,
	\]
	with
	\[
		h(s) 
		:= 
		\left
		\{
		\begin{array}{lll}
			\lm_1(s),&&  s\in (-\pi\cot\tt,0),\\
			1, &&  s\in [0,+\infty).
		\end{array}
		\right.
	\]
	Particularly, we remark that for any $s>-\pi\cot\tt$ 
	we have $h(s)-1\geq0$. It yields
	\[
		\int_{\Omega_\tt}|\p_t u|^2 \dd s\dd t- \|u\|^2
		\geq 
		\int_{\cT_\tt}(h(s)-1)|u|^2\dd s\dd t.
	\]
	Finally, as $h(\cdot)$ is non-increasing we obtain
	\[
	\begin{split}
		\int_{\Omega_\tt}|\p_t u|^2 \dd s\dd t - \|u\|^2
		&\geq
		\int_{\cT_\tt}\big( h(s) - 1 \big) |u|^2\dd s\dd t\\
		&=
		\int_{t=0}^\pi
		\int_{s=-2\rho_0}^{-\rho_0/2}
		\big( h(s) - 1 \big) |u|^2\dd s\dd t\\
		&\geq
		\int_{t=0}^\pi\int_{s=-2\rho_0}^{-\rho_0/2} 
		\Big(\lm_1\big(-\rho_0/2\big)-1 \Big)|u|^2\dd s\dd t\\
		&=
		\int_{\cT_\tt} 
		\Big(\lm_1\big(-\rho_0/2\big)-1 \Big)|u|^2\dd s\dd t
		 =
		\int_{\cT_\tt} f(t)|u|^2\dd s\dd t.\qedhere
	\end{split}
	\]
	%
\end{proof}

\subsection{A refined lower-bound}\label{subsec:Hardy_lb}
In this subsection we prove the following statement.
\begin{prop}
	For any $\eps \in (0,\pi^{-3})$
	\[
		\int_{\Omega_\tt}|\p_s u|^2
		 - \frac{1}{4\rho^2}|u|^2
		\dd s\dd t 
		\geq 
		\frac{\eps}{16}
		\int_{\Omega_\tt}\frac{t^3}{1+\rho^2\ln^2(\rho/\rho_0)}|u|^2
		\dd s\dd t
		 - 
		\eps\int_{\cT_\tt}
		t^3\left(\frac{4}{\rho_0^2}+\frac1{8}\right)|u|^2\dd s\dd t
	\]	
	holds for all $u\in\cC_0^\infty(\Omega_\tt)$. 
\label{prop:Hardy_1D}
\end{prop}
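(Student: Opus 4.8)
The plan is to prove the one-dimensional weighted inequality
\[
\int_1^{+\infty}|g'(\sigma)|^2 - \frac{|g(\sigma)|^2}{4\sigma^2}\dd\sigma
\;\ge\; c\int_1^{+\infty}\frac{|g(\sigma)|^2}{\sigma^2\ln^2\sigma}\dd\sigma
\]
for compactly supported $g$ with $g(1)=0$, and then restore the full statement by slicing $\Omega_\tt$ in the variable $s$ at fixed $t$, using the change of variable $\sigma=\rho(s,t)/\rho_0(t)$ to put the inner half-line into the form $(1,+\infty)$, and carefully tracking how $\rho_0(t)\asymp t$ produces the factor $t^3$ and the logarithm $\ln(\rho/\rho_0)$. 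The standard way to get the sharpened one-dimensional inequality is the substitution $g(\sigma)=\sigma^{1/2}h(\sigma)$, which turns the critical Hardy functional into $\int\sigma\,|h'(\sigma)|^2\dd\sigma$ (a positive but not coercive quantity), followed by a second substitution $x=\ln\sigma$ reducing matters to $\int_0^{+\infty}|k'(x)|^2\dd x \ge c\int_0^{+\infty}|k(x)|^2 x^{-2}\dd x$, i.e. the ordinary Hardy inequality at the endpoint. Thus the ``refined remainder'' is nothing but Hardy applied twice.

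Concretely, I would proceed as follows. First, for fixed $t\in(0,\pi)$, apply to the $s$-integral the change of variables $\sigma=\rho(s,t)=s+t\cot\tt$, so that the half-line $\{s>-t\cot\tt\}$ becomes $\{\sigma>0\}$ and the integrand becomes $|\p_\sigma u|^2-\tfrac1{4\sigma^2}|u|^2$. The function $\sigma\mapsto u(\cdot,t)$ vanishes near $\sigma=0$ and near $\sigma=+\infty$. Second, rescale by $\sigma=\rho_0(t)\tau$, equivalently work on $\tau>0$; I will only keep the part $\tau\ge 1$ (i.e. $\sigma\ge\rho_0$, i.e. $s\ge-\rho_0$, which together with the triangle $\cT_\tt$ being where $s<-\rho_0/2$ accounts for the two pieces in the statement — the correction term supported on $\cT_\tt$ absorbs the contribution of $\tau\in(\text{bdry},1)$ via a crude pointwise bound $\tfrac1{4\sigma^2}\le \tfrac{4}{\rho_0^2}$ plus a Poincaré-type estimate, giving the $4/\rho_0^2+1/8$ constant). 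Third, on $\tau\ge1$ apply the one-dimensional inequality above (with $\sigma$ replaced by $\tau$), which yields
\[
\int_{\tau\ge1}|\p_\tau u|^2-\frac{|u|^2}{4\tau^2}\dd\tau \;\ge\; c\int_{\tau\ge1}\frac{|u|^2}{1+\tau^2\ln^2\tau}\dd\tau,
\]
where I have replaced the bare $\tau^{2}\ln^2\tau$ denominator by $1+\tau^2\ln^2\tau$ to cover $\tau$ near $1$ (losing only a constant). Fourth, undo the scaling: $\tau=\sigma/\rho_0=\rho/\rho_0$, $\dd\tau=\dd\sigma/\rho_0=\dd s/\rho_0$, and note $\rho_0(t)=\tfrac12 t\cot\tt$, so that $\rho_0^{-1}$ produces one power of $t^{-1}$ while the measure factor produces $\rho_0^{-2}$ inside the weight from $\tau^2$, and combining these with the explicit $\rho_0^{-1}\dd s$ and then integrating in $t$ one reads off the weight $t^3/(1+\rho^2\ln^2(\rho/\rho_0))$ up to an explicit dimensional constant, giving the claimed $\eps/16$ after choosing $c$ appropriately; the smallness $\eps<\pi^{-3}$ comes from bounding $\cot^2\tt$ and absolute constants, since $t<\pi$.

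The main obstacle I expect is the bookkeeping near the corner of $\Omega_\tt$, i.e. matching the region $\{-2\rho_0(t)<s<-\rho_0(t)/2\}=\cT_\tt$ with the ``$\tau<1$'' part that the double-Hardy argument cannot control (there is genuinely no positive weight there because the half-line is short and the potential $-1/4\rho^2$ is large), and showing that its contribution can be thrown onto the right-hand side with the specific coefficient $4/\rho_0^2+1/8$ — this requires a clean one-dimensional estimate of the form $\int_a^b|g'|^2\ge -\int_a^b V|g|^2$ with controlled constants when $g(a)=0$ and $b-a$ is comparable to $\rho_0$. Everything else (the two substitutions, the scaling, the endpoint Hardy inequality) is routine once the geometry is unwound. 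I would therefore isolate the one-dimensional refined Hardy inequality as a lemma, prove it by the $\sigma^{1/2}h$ and $x=\ln\sigma$ trick, and then spend the bulk of the argument on the fixed-$t$ slicing and the corner correction.
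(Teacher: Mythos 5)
Your overall strategy coincides with the paper's: a one-dimensional logarithmic Hardy inequality on each horizontal slice (your double substitution $g=\sigma^{1/2}h$, $x=\ln\sigma$ is exactly equivalent to the paper's completion of the square in $\int\big|(\rho^{-1/2}g)'-\tfrac{\aa\,\rho^{-1/2}g}{\rho\ln(\rho/\rho_0)}\big|^2\rho\,\dd s\ge 0$, which yields the constant $\aa-\aa^2=1/4$ at $\aa=1/2$), followed by weighting with $\eps t^3$ and integrating in $t$. Two remarks on the bookkeeping: the rescaling $\tau=\rho/\rho_0$ is unnecessary (the sliced inequality is used directly in the variable $\rho$), and the factor $t^3$ is not produced by the scaling --- it is simply multiplied in at the end, the only role of $\eps<\pi^{-3}$ being to guarantee $\eps t^3<1$ so that the $t^3$-weighted left-hand side is dominated by the unweighted one (using that each slice integral is non-negative).

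The genuine gap is in your treatment of the corner region. You propose to keep only $\tau\ge1$ (i.e.\ $s\ge-\rho_0$) and to absorb $\tau<1$ via the pointwise bound $\tfrac{1}{4\sigma^2}\le\tfrac{4}{\rho_0^2}$; but this bound is false near $\sigma=0$, where the potential $-1/(4\rho^2)$ blows up, so the discarded piece $\int_{-2\rho_0}^{-\rho_0}\big(|\p_s u|^2-\tfrac{|u|^2}{4\rho^2}\big)\dd s$ is not controlled and may be very negative. Moreover, on $s>-\rho_0$ the sliced function does not vanish at $s=-\rho_0$, which is precisely where $\ln(\rho/\rho_0)=0$, so your one-dimensional lemma (which needs the Dirichlet condition at the left endpoint) does not apply to $u$ itself. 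The paper resolves both problems at once with a cutoff $\xi$ that vanishes on $(-2\rho_0,-\rho_0)$, is linear on $(-\rho_0,-\rho_0/2)$ and equals $1$ beyond: the lemma is applied to $\xi u\in H^1_0(-\rho_0,+\infty)$, and the identity $\int|g'|^2-\tfrac{|g|^2}{4\rho^2}\,\dd s=\int|(\rho^{-1/2}g)'|^2\rho\,\dd s$ is used to convert the form into a manifestly non-negative one \emph{before} splitting off the cutoff cross-terms, so that the singular potential on $(-2\rho_0,-\rho_0)$ is never confronted. The cross-term $|u\,\p_s\xi|^2\le 4\rho_0^{-2}|u|^2$ on $(-\rho_0,-\rho_0/2)$ and the $|(1-\xi)u|^2$ contribution are exactly what produce the coefficient $4/\rho_0^2+1/8$ on $\cT_\tt$. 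Without this ground-state-representation-plus-cutoff step (or an equivalent device), the slicing argument as you describe it does not close.
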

To prove Proposition~\ref{prop:Hardy_1D} we need the following lemma
whose proof follows the same lines as the one of~\cite[Lem.~3.1]{CK14}. 
However, we provide it here for the sake of completeness.
%
%
In the proofs of this lemma and of Proposition~\ref{prop:Hardy_1D},
we use that for $t\in (0,\pi)$ and $g \in H^1_0(-2\rho_0(t),+\infty) $
\begin{equation}\label{eq:transform}
	\begin{split}
		\int_{s>-2\rho_0}|(\rho^{-1/2}g)^\pp|^2 \rho\dd s 
		& =
		\int_{s>-2\rho_0}\big|\rho^{-1/2}g^\pp - 1/2 \rho^{-3/2} g\big|^2
		 \rho\dd s \\
		& =
		\int_{s>-2\rho_0}|g^\pp|^2 - \frac{1}{2\rho}(|g|^2)^\pp
		+ \frac{1}{4\rho^2}|g|^2\dd s \\
		& =
		\int_{s>-2\rho_0} | g^\pp|^2 - \frac{1}{4\rho^2}|g|^2\dd s. 
	\end{split}
\end{equation}

\begin{lem}
	For any fixed $t\in (0,\pi)$ the inequality
	\[
		\int_{s>-\rho_0(t)}
		|g^\pp(s)|^2 - \frac{1}{4\rho^2}|g(s)|^2 \dd s
		\geq 
		\frac14 \int_{s>-\rho_0(t)}\frac{|g(s)|^2}{\rho^2\ln^2(\rho/\rho_0)}\dd s
	\]
	holds for all $g\in H_0^1(-\rho_0(t),+\infty)$.
\label{lem:Hardy_CK14} 
\end{lem}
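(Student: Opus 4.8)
The plan is to reduce the two-dimensional estimate, for each fixed $t\in(0,\pi)$, to a one-dimensional Hardy inequality on the half-line $(-\rho_0(t),+\infty)$ with the critical coupling constant $1/4$. The key observation is the substitution in~\eqref{eq:transform}: writing $g = \rho^{1/2} h$ absorbs the singular potential $-\tfrac{1}{4\rho^2}$ into the weighted Dirichlet integral $\int \rho |h'|^2\,\dd s$. So the inequality I want is equivalent to a \emph{weighted} one-dimensional Hardy inequality of the form
\[
	\int_{s>-\rho_0}\rho\,|h'(s)|^2\,\dd s
	\ge
	\frac14\int_{s>-\rho_0}\frac{|h(s)|^2}{\rho\,\ln^2(\rho/\rho_0)}\,\dd s,
\]
valid for all $h$ with $\rho^{1/2} h\in H_0^1(-\rho_0,+\infty)$. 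This is the classical logarithmic refinement of Hardy's inequality at the critical exponent: on the half-line $(\rho_0,+\infty)$ in the variable $\sigma=\rho(s,t)$, with the weight $\sigma$ replacing the flat measure, one has $\int_{\rho_0}^\infty \sigma|h'|^2\,\dd\sigma \ge \tfrac14\int_{\rho_0}^\infty |h|^2/(\sigma\ln^2(\sigma/\rho_0))\,\dd\sigma$.

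First I would change variables $\sigma = s + t\cot\tt = \rho(s,t)$ for the fixed $t$, so that the integration range $s>-\rho_0(t)$ becomes $\sigma>\rho_0(t)$ and $\dd s = \dd\sigma$; under this change the claimed inequality of the lemma is exactly the weighted logarithmic Hardy inequality displayed above with the lower endpoint $\rho_0 = \rho_0(t)$. Second, I would prove that weighted inequality by the standard ``ground-state substitution'': the natural candidate for the virtual ground state of the critical weighted operator $-\tfrac{1}{\sigma}\partial_\sigma(\sigma\,\partial_\sigma\cdot)$ is $\Phi(\sigma) = \ln^{1/2}(\sigma/\rho_0)$, which vanishes at $\sigma=\rho_0$, and one computes that $-\tfrac1\sigma(\sigma\Phi')' = \tfrac{1}{4\sigma^2\ln^2(\sigma/\rho_0)}\Phi$ up to the correct sign. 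Writing $h = \Phi\,w$ and integrating by parts (the boundary terms at $\sigma=\rho_0$ and $\sigma=+\infty$ vanish because $h$ has compact support away from $+\infty$ and $\Phi(\rho_0)=0$, while $h(\rho_0)=0$ forces the product to behave well) yields
\[
	\int_{\rho_0}^\infty \sigma|h'|^2\,\dd\sigma
	- \frac14\int_{\rho_0}^\infty\frac{|h|^2}{\sigma\ln^2(\sigma/\rho_0)}\,\dd\sigma
	= \int_{\rho_0}^\infty \sigma\,\Phi^2\,|w'|^2\,\dd\sigma \ge 0,
\]
which is the desired inequality. I would then argue by density that it extends from the compactly supported $h$ (equivalently, $g\in C_0^\infty$) to all $g\in H_0^1(-\rho_0(t),+\infty)$ by truncating near infinity, since the right-hand side weight is integrable near $+\infty$ against any $L^2$ tail only after the cutoff—so some care with the cutoff near $\sigma=+\infty$ is needed, but this is routine.

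The main obstacle I anticipate is the boundary-term analysis at the \emph{left} endpoint $\sigma=\rho_0$: since $\Phi(\rho_0)=0$ but $\Phi'(\sigma)\sim (2\sigma\,\ln^{1/2}(\sigma/\rho_0))^{-1}$ blows up there, one must check that $\sigma\,\Phi'(\sigma)\,\Phi(\sigma)\,|w(\sigma)|^2 \to 0$ as $\sigma\to\rho_0^+$; this works because the product $\Phi'\Phi \sim (2\sigma)^{-1}$ stays bounded while $w = h/\Phi$ may be singular, so one instead keeps the computation in terms of $h$ directly and uses $h(\rho_0)=0$ together with $h\in H^1$ near $\rho_0$ to kill the term. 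A clean way to sidestep this entirely is to first prove the inequality only for $g$ supported in $(-\rho_0+\delta,+\infty)$ and then let $\delta\to0^+$, monitoring that the right-hand side converges by monotone convergence (the integrand is nonnegative). The rest of the argument—the change of variables and the integration by parts—is elementary once the substitution $g=\rho^{1/2}h$, which is already recorded in~\eqref{eq:transform}, is invoked.
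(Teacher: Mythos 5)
Your proposal is correct and takes essentially the same route as the paper: after the substitution $g=\rho^{1/2}h$ recorded in~\eqref{eq:transform}, your ground-state substitution with $\Phi=\ln^{1/2}(\rho/\rho_0)$ is algebraically identical to the paper's completion of the square $\int_{s>-\rho_0}\big|(\rho^{-1/2}g)^\pp-\alpha\rho^{-1/2}g/(\rho\ln(\rho/\rho_0))\big|^2\rho\,\dd s\ge 0$ evaluated at the optimal parameter $\alpha=1/2$. The boundary-term and density issues you flag are dealt with in the paper exactly as you suggest, by first taking $g\in\cC_0^\infty(-\rho_0(t),+\infty)$ (so all integrations by parts are free of boundary contributions) and then extending to $H_0^1(-\rho_0(t),+\infty)$ by a standard continuity argument.
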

%
%
\begin{proof}
	Let $t\in (0,\pi)$ and $g\in\cC_0^\infty(-\rho_0(t),+\infty)$
	be fixed.
	We notice that for any $\aa > 0$
	\begin{equation}\label{eq:gamma_est}
	\begin{split}
		& \int_{s>-\rho_0}\bigg|(\rho^{-1/2}g)^\pp 
		- 
		\frac{\aa\rho^{-1/2}g}{\rho\ln(\rho/\rho_0)}\bigg|^2\rho \dd s \\
		& \qquad\quad
		=
		\int_{s>-\rho_0}|(\rho^{-1/2}g)^\pp|^2 \rho\dd s 
		+ 
		\aa^2\int_{s>-\rho_0}\frac{|g|^2}{\rho^2\ln^2(\rho/\rho_0)}\dd s
		 - 
		\aa\int_{s>-\rho_0}\frac{(\big|\rho^{-1/2}g ) \big|^2)^\pp}
		{\ln(\rho/\rho_0)}\dd s.
	\end{split}
	\end{equation}
	For the first term on the right hand side in~\eqref{eq:gamma_est} we get
	by~\eqref{eq:transform} that
	\begin{equation}\label{eq:gamma_est_term1}
		\int_{s>-\rho_0}|(\rho^{-1/2}g)^\pp|^2 \rho\dd s 	
		=
		\int_{s>-\rho_0}| g^\pp|^2 - \frac{1}{4\rho^2}|g|^2 \dd s. 
	\end{equation}	
	Performing an integration by parts in the last term of the right-hand side
	in~\eqref{eq:gamma_est} we obtain
	\begin{equation}\label{eq:gamma_est_term3}
		\int_{s>-\rho_0}\frac{\big(|\rho^{-1/2}g ) \big|^2)^\pp}
		{\ln(\rho/\rho_0)}\dd s = 
		\int_{s>-\rho_0}
		\frac{|g|^2}{\rho^2\ln^2(\rho/\rho_0)}\dd s.
	\end{equation}
	Combining~\eqref{eq:gamma_est},~\eqref{eq:gamma_est_term1}, 
	and~\eqref{eq:gamma_est_term3} we get
	\[
		\int_{s>-\rho_0}|g^\pp|^2 - \frac{1}{4\rho^2}|g|^2 \dd s 
		\geq 
		(\aa -\aa^2)
		\int_{s>-\rho_0}\frac{|g|^2}{\rho^2\ln^2(\rho/\rho_0)}\dd s.
	\]
	It remains to set $\aa = 1/2$.
	
	The extension of this result to $g\in H^1_0(-\rho_0(t),+\infty)$ relies
	on the density of $\cC_0^\infty(-\rho_0(t),+\infty)$ in 
	$H^1_0(-\rho_0(t),+\infty)$ with respect to the $H^1$-norm
	and a standard continuity argument.
\end{proof}
Now we have all the tools to prove Proposition~\ref{prop:Hardy_1D}.
\begin{proof}[Proof of Proposition~\ref{prop:Hardy_1D}]  
	First, we define the cut-off function $\xi\colon\Omega_\tt\arr\dR$ by
	\[
		\xi(s,t) :=\left\{
			\begin{array}{lll}
				0,&& s \in (-2\rho_0(t),-\rho_0(t)),\\
				2\rho_0(t)^{-1}(s+\rho_0(t)),&&
				s\in(-\rho_0(t), -\rho_0(t)/2),\\
				1,&& s\in(-\rho_0(t)/2,+\infty).
			\end{array}
		\right.
	\]
	The partial derivative of $\xi$ with respect to the $s$-variable
	is given by
	\begin{equation}\label{eq:ps_xi}
		(\p_s\xi)(s,t) = 
		\left\{
		\begin{array}{lll}
			2\rho_0(t)^{-1}, && s \in (-\rho_0(t),-\rho_0(t)/2),\\ 
			0, && s \in (-2\rho_0(t), -\rho_0(t))\cup(-\rho_0(t)/2,+\infty),
		\end{array}
		\right.
	\end{equation}
	Further, for any 
	$u\in\cC_0^\infty(\Omega_\tt)$ and fixed $t\in(0,\pi)$
	using $(a+b)^2 \le 2a^2 + 2b^2$, $a,b\in\dR$, we get
	\[
		\int_{s>-2\rho_0}
		\frac{|u|^2}{1+\rho^2\ln^2(\rho/\rho_0)}\dd s
		\leq 
		2\int_{s>-\rho_0}\frac{|\xi u|^2}{\rho^2\ln^2(\rho/\rho_0)}\dd s 
		+ 2\int_{s>-2\rho_0}|(1-\xi)u|^2\dd s,
	\]
	where in both integrals we increased the integrands
	by making the denominators smaller.
	Note that for fixed $t\in (0,\pi)$ we have 
	$s\mapsto \xi(s,t)u(s,t) \in H^1_0(-\rho_0(t),+\infty)$.
	Applying Lemma~\ref{lem:Hardy_CK14} and using~\eqref{eq:transform} we get
	\[
	\begin{split}
		\int_{s>-2\rho_0}\frac{|u|^2}{1+\rho^2\ln^2(\rho/\rho_0)}\dd s
		& \leq 
		8
		\int_{s>-\rho_0}|\p_s (\xi u )|^2 	-\frac{|\xi u|^2}{4\rho^2} \dd s
		+ 
		2 \int_{s=-2\rho_0}^{-\rho_0/2}|u|^2\dd s\\
		& =
		8\int_{s>-\rho_0}|\p_s (\rho^{-1/2}\xi u )|^2 \rho \dd s
		+ 
		2 \int_{s=-2\rho_0}^{-\rho_0/2}|u|^2\dd s\\
		& \le 
		16\int_{s>-\rho_0}
		\Big(|\xi\p_s (\rho^{-1/2} u )|^2 \rho + |u \p_s \xi  |^2\Big) \dd s
		+ 
		2 \int_{s=-2\rho_0}^{-\rho_0/2}|u|^2\dd s\\
		& \le
		16\int_{s>-2\rho_0}
		|\p_s (\rho^{-1/2} u )|^2 \rho \dd s
		+ 
		\int_{s=-\rho_0}^{-\rho_0/2}\frac{64}{\rho_0^2} |u|^2\dd s
		+
		2\int_{s=-2\rho_0}^{-\rho_0/2} |u|^2\dd s\\
		& \le 
		16\int_{s>-2\rho_0}
		\bigg(|\p_s u|^2 - \frac{|u|^2}{4\rho^2}\bigg) \dd s
		+
		\int_{s=-2\rho_0}^{-\rho_0/2}\bigg(\frac{64}{\rho_0^2} + 2\bigg) |u|^2\dd s,
	\end{split}
	\]
	%
	which is equivalent to
	\[
		\int_{s>-2\rho_0}
		\bigg(|\p_s u|^2 - \frac{|u|^2}{4\rho^2}\bigg) \dd s
		\ge
		\frac{1}{16}
		\int_{s>-2\rho_0}\frac{|u|^2}{1+\rho^2\ln^2(\rho/\rho_0)}\dd s
		-
		\int_{s=-2\rho_0}^{-\rho_0/2}
		\bigg(\frac{4}{\rho_0^2} +\frac{1}{8}\bigg) |u|^2 \dd s 
	\]
	Finally, we multiply each side by $\eps t^3$ and integrate for $t\in(0,\pi)$
	\begin{equation*}
		\int_{\Omega_\tt}\eps t^3
		\bigg(|\p_s u|^2 - \frac{|u|^2}{4\rho^2}\bigg) \dd s\dd t 
		\geq
		\frac{\eps}{16}
		\int_{\Omega_\tt}
		\frac{t^3}{1+\rho^2\ln^2(\rho/\rho_0)}|u|^2\dd s\dd t 
		- \eps\int_{\cT_\tt} t^3
		\Big(\frac{4}{\rho_0^{2}} + \frac{1}{8}\Big) |u|^2 \dd s\dd t.
	\label{eqn:calc1}
	\end{equation*}
	Since for any $\eps \in (0, \pi^{-3})$ holds 
	$0< \eps t^3 < 1$,
	the inequality in Proposition~\ref{prop:Hardy_1D} follows.
\end{proof}

\subsection{Proof of Theorem \ref{thm:Hardy}}\label{subsec:Hardy}
By Propositions~\ref{prop:local_Hardy} and~\ref{prop:Hardy_1D} we have
\begin{equation}
\begin{split}
	\label{eq:non_optimized_Hardy}
	\frh_\tt[u] -\|u\|^2
	& = \int_{\Omega_\tt}
	\bigg(|\p_s u|^2 - \frac{|u|^2}{4\rho^2}\bigg) \dd s\dd t 
	+
	\int_{\Omega_\tt} |\p_t u|^2\dd s \dd t\\
	& \geq
	\frac{\eps}{16}
	\int_{\Omega_\tt}
	\frac{t^3}{1+\rho^2\ln^2(\rho/\rho_0)}|u|^2\dd s\dd t 
	+
	\int_{\cT_\tt}
	\bigg[f(t) - \eps t^3 
	\bigg(\frac{4}{\rho^2_0} + \frac{1}{8}\bigg)\bigg]|u|^2\dd s\dd t,
\end{split}	
\end{equation}
for all $u\in\cC_0^\infty(\Omega_\tt)$. 
For the second term on the right-hand side of~\eqref{eq:non_optimized_Hardy} 
to be positive it suffices to verify that for all $t\in(0,\pi)$
\begin{equation}
	h_{\eps}(t) := 
	f(t) -\frac{16}{\cot^2\tt} \eps t - 
	\frac{1}{8}\eps t^3\ge 0.
\label{eqn:positivity}
\end{equation}
By definition, $f$ in~\eqref{eq:f} is a $C^\infty$-smooth bounded
function on $(0,\pi)$ and for any $a\in(0,\pi)$ 
and all $t\in(a,\pi)$ we have $f(t)\geq f(a)>0$. 
Moreover, $f(t) = (2\pi)^{-1}t +\cO(t^2)$ when $t\arr 0+$. Consequently, 
we can find $\eps_0 > 0$ small enough such that for all 
$\eps \in (0,\eps_0)$ inequality~\eqref{eqn:positivity} holds. 
Going back to the form $\frq_\tt$ we get that 
there exists $c > 0$ such that for any $u\in \cC_0^\infty(\Gui(\tt))$ 
holds
\[
\begin{split}
	\frq_\tt[u] -\|u\|^2_{L^2(\Gui(\tt)} 
	& 
	= 	
	\frh_\tt[\sfU_\tt^{-1} u] -\|\sfU_\tt^{-1}u\|^2\\
	& \ge 
	c
	\int_{\Omega_\tt}
	\frac{t^3}{1+\rho^2\ln^2(\rho/\rho_0)}|(\sfU_\tt^{-1} u)(s,t)|^2
	\dd s\dd t \\
	& = 
	c
	\int_{\Gui(\tt)}
	\frac{(r\cos\tt - z\sin\tt)^3 }
	{1 + \frac{r^2}{\sin^2\tt} 
	\ln^2\big(\frac{r}{\cos\tt}\frac{2}{r\cos\tt-z\sin\tt}\big)}
	|u|^2\dd r\dd z,
\end{split}
\]
where we used the unitary transform $\sfU_\tt$ defined in~\eqref{eq:Utt}.
This finishes the proof of Theorem~\ref{thm:Hardy}.

\appendix
\section{Gauge invariance}
\label{app:A}

In this appendix we justify the unitary equivalence between 
the self-adjoint operators $\sfH_\omega$ and $\sfH_{\Phi_\omega + k}$
for all real-valued function $\omega\in L^2(\dS^1)$ and $k\in\dZ$.
The justification relies on the explicit construction of a unitary transform.

Throughout this appendix, $\omega$ always denotes a real-valued function. Before formulating the main result of this appendix we recall that
for $\omega\in L^2(\dS^1)$, 
we define the norm induced by the quadratic form $Q_{\omega,\tt}$ defined
in~\eqref{eq:Q_omega} as
\[
	\|u\|_{+1,\omega}^2 := Q_{\omega,\tt}[u] + \|u\|_{\Lcyl}^2,
	\qquad u \in \dom Q_{\omega,\tt}.
\]
Recall that the flux $\Phi_\omega\in\dR$, the function $V \in \cC([0,2\pi])$ 
and the unitary gauge transform  $\sfG_V\colon \Lcyl\arr\Lcyl$ are
associated with $\omega$ and $k$ as
\begin{equation}\label{eq:assoc}
	\Phi_\omega := \frac{1}{2\pi}\int_0^{2\pi}\omega(\phi)\dd \phi,
	\qquad
	V(\phi) := (\Phi_\omega + k)\phi - \int_0^\phi\omega(\xi) \dd \xi,
	\qquad
	\sfG_V\psi := e^{\ii V}\psi.
\end{equation}
The following proposition is the main result of this appendix.
\begin{prop}\label{prop:unit_fq}
	Let $\omega\in L^2(\dS^1)$ and $k\in\dZ$.
	Let $\Phi_\omega$, $V$ and $\sfG_V$ be as in~\eqref{eq:assoc}.
	Then, the following hold:
	\begin{myenum}
		\item $\dom Q_{\omega,\tt}
		 = \sfG_V\big(\dom Q_{\Phi_\omega + k,\tt}\big)$;
		\item $Q_{\omega,\tt}[\sfG_V u] = Q_{\Phi_\omega + k,\tt}[u]$ 
		for all $u\in \dom Q_{\Phi_\omega + k,\tt}$.
	\end{myenum}	
	In particular, the operators 
	$\sfH_{\omega,\tt}$ and $\sfH_{\Phi_\omega + k,\tt}$ are unitarily 	
	equivalent.
\end{prop}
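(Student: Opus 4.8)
The plan is to prove the two stated items about the quadratic forms and then derive the unitary equivalence of the operators as an immediate consequence of the first representation theorem. The key observation is that $\sfG_V$ is unitary on $\Lcyl$ (being multiplication by a unimodular function, since $V$ is real-valued) and that the gauge identity $\bfA_{\Phi_\omega+k} = \bfA_\omega + \nabla V$ from~\eqref{eqn:def_V_gauge} produces the algebraic relation
\[
	(\ii\nabla - \bfA_{\Phi_\omega+k})u
	=
	e^{-\ii V}(\ii\nabla - \bfA_\omega)(e^{\ii V}u)
\]
on smooth functions; equivalently $(\ii\nabla - \bfA_\omega)(\sfG_V u) = \sfG_V\big((\ii\nabla - \bfA_{\Phi_\omega+k})u\big)$. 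The natural strategy is therefore to establish this identity first on the core $\cC_0^\infty(\Lay(\tt))$, where everything is classical and $\sfG_V$ maps the core onto itself, and then to pass to the closures by a density/continuity argument.

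First I would verify that $V\in \cC([0,2\pi])$ with $V(0)=0$ and $V(2\pi) = 2\pi(\Phi_\omega+k) - 2\pi\Phi_\omega = 2\pi k$, so $e^{\ii V}$ is a well-defined $2\pi$-periodic (hence $\dS^1$-valued) continuous function; this is what makes $\sfG_V$ a unitary of $\Lcyl$ preserving $\cC_0^\infty(\Lay(\tt))$ in both directions. Then, for $u\in\cC_0^\infty(\Lay(\tt))$ I would compute, using $\nabla(e^{\ii V}u) = e^{\ii V}(\ii(\nabla V)u + \nabla u)$ and $\nabla V$ expressed via $\bfA_{\Phi_\omega+k}-\bfA_\omega$ in the distributional sense of~\cite[Chap. 3]{McL},
\[
	(\ii\nabla - \bfA_\omega)(\sfG_V u)
	= e^{\ii V}\big(\ii\nabla u - (\nabla V) u - \bfA_\omega u\big)
	= e^{\ii V}\big(\ii\nabla - \bfA_{\Phi_\omega+k}\big)u
	= \sfG_V\big((\ii\nabla - \bfA_{\Phi_\omega+k})u\big).
\]
Taking $\Lcyl$-norms and using unitarity of $\sfG_V$ gives $Q_{\omega,\tt,0}[\sfG_V u] = Q_{\Phi_\omega+k,\tt,0}[u]$ for all $u$ in the common core, and likewise $\|\sfG_V u\|_{+1,\omega}^2 = \|u\|_{+1,\Phi_\omega+k}^2$. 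Consequently $\sfG_V$ is an isometry from $(\cC_0^\infty(\Lay(\tt)), \|\cdot\|_{+1,\Phi_\omega+k})$ onto $(\cC_0^\infty(\Lay(\tt)), \|\cdot\|_{+1,\omega})$, so it extends to an isometric bijection between the respective completions, which are exactly $\dom Q_{\Phi_\omega+k,\tt}$ and $\dom Q_{\omega,\tt}$ by~\eqref{eq:Q_omega}. This yields item (i), and item (ii) follows by taking $\|\cdot\|_{+1,\omega}$-convergent approximating sequences and using continuity of both forms in their respective graph norms.

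The main obstacle — really the only non-cosmetic point — is the justification that $e^{\ii V}u$ genuinely lies in the form domain $\dom Q_{\Phi_\omega+k,\tt}$ when $u\in\dom Q_{\Phi_\omega+k,\tt}$ is a general element rather than a smooth one, i.e.\ that the closures match up correctly under $\sfG_V$; this is handled precisely by the isometry-of-cores argument above, since $\sfG_V$ and $\sfG_V^{-1}=\sfG_{-V}$ both preserve $\cC_0^\infty(\Lay(\tt))$ and both are bounded (indeed isometric) for the relevant graph norms, so no regularity is lost in the limit. Once (i) and (ii) are in hand, the unitary equivalence $\sfG_V^{-1}\sfH_{\omega,\tt}\sfG_V = \sfH_{\Phi_\omega+k,\tt}$ is immediate: the first representation theorem~\cite[Thm. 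VI.2.1]{Kato} associates a unique self-adjoint operator to each closed nonnegative form, and $Q_{\omega,\tt}(\sfG_V\cdot,\sfG_V\cdot) = Q_{\Phi_\omega+k,\tt}(\cdot,\cdot)$ together with the domain identity transports $\sfH_{\omega,\tt}$ to $\sfH_{\Phi_\omega+k,\tt}$ under the unitary $\sfG_V$.
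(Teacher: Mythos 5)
Your overall architecture --- establish the pointwise gauge identity on the core, deduce that $\sfG_V$ is an isometry for the two graph norms, and pass to the completions --- is the same as the paper's, and your final density step and the appeal to the first representation theorem are carried out correctly. The problem is the single assertion on which everything else rests: that $\sfG_V$ and $\sfG_{-V}$ map $\cC_0^\infty(\Lay(\tt))$ onto itself. This is true only when $\omega$ is smooth. For a general real-valued $\omega\in L^2(\dS^1)$, the function $V(\phi)=(\Phi_\omega+k)\phi-\int_0^\phi\omega(\xi)\,\dd\xi$ is merely absolutely continuous with $V'=\Phi_\omega+k-\omega\in L^2(\dS^1)$, so $e^{\ii V}$ is not $\cC^\infty$ and $\sfG_V u\notin\cC_0^\infty(\Lay(\tt))$ for $u\in\cC_0^\infty(\Lay(\tt))$ in general. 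Your formal computation still shows that $\|(\ii\nabla-\bfA_\omega)\sfG_V u\|_\Lcyl$ is finite and equal to $\|(\ii\nabla-\bfA_{\Phi_\omega+k})u\|_\Lcyl$, but finiteness of the energy does not place $\sfG_V u$ in $\dom Q_{\omega,\tt}$: by definition~\eqref{eq:Q_omega} this domain is the completion of $\cC_0^\infty(\Lay(\tt))$ in the graph norm $\|\cdot\|_{+1,\omega}$, and a finite-energy function need not lie in that closure (this is exactly the Dirichlet-type constraint encoded by the choice of core). So the ``only non-cosmetic point'' you yourself identify is not actually closed by your argument.

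The paper fills precisely this hole with an approximation scheme: it first proves the claim for $\omega\in\cC^\infty(\dS^1)$, where your core-preservation statement is valid (Lemma~\ref{lem:omega_smooth}); it then approximates a general $\omega\in L^2(\dS^1)$ by smooth $\omega_n$, shows $\sfG_{V_n}\arr\sfG_V$ strongly and, for fixed $u\in\cC_0^\infty(\Lay(\tt))$, that the sequence $(\sfG_{V_n}u)_n\subset\cC_0^\infty(\Lay(\tt))$ is Cauchy in $\|\cdot\|_{+1,\omega}$; this identifies the limit $\sfG_V u$ as an element of $\dom Q_{\omega,\tt}$ and transports the form identity (Lemmas~\ref{lem:omega_approx} and~\ref{lem:inclusions}). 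Once that is in place, your concluding density argument on $\dom Q_{\Phi_\omega+k,\tt}$ is exactly what the paper does. To repair your write-up you would either need to restrict to smooth (e.g.\ constant) $\omega$, or insert this intermediate approximation step.
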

Therefore, taking $k= -\argmin_{k\in\dZ}\{|k-\omega|\}$ in~\eqref{eq:assoc}
we can reduce the case of a general
$\omega\in L^2(\dS^1)$ {\it via} the transform $\sfG_V$ 
to a constant $\omega\in [-1/2,1/2]$. 


Before proving Proposition~\ref{prop:unit_fq} we need to state several
lemmas whose proofs are postponed until the end of this appendix.
\begin{lem}\label{lem:omega_smooth}
	Let $\omega\in \cC^\infty(\dS^1)$ and $k \in\dZ$. 
	Let $\Phi_\omega$, $V$ and $\sfG_V$ be associated with $\omega$
	and $k$ as in~\eqref{eq:assoc}.
	Then, the following statements hold:
	\begin{myenum}
		\item 
		$\cC^\infty_0(\Lay(\tt)) = \sfG_V\big(\cC^\infty_0(\Lay(\tt))\big)$;
		\item $Q_{\omega,\tt}[\sfG_V u] = Q_{\Phi_\omega + k,\tt}[u]$
		for all $u\in \cC^\infty_0(\Lay(\tt))$.
	\end{myenum}
\end{lem}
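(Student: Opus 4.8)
\textbf{Proof proposal for Lemma~\ref{lem:omega_smooth}.}

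The plan is to exploit the smoothness of $\omega$ to get a genuinely smooth gauge factor $e^{\ii V}$, so that multiplication by it maps test functions to test functions. First I would observe that when $\omega\in\cC^\infty(\dS^1)$, the function $V$ defined in~\eqref{eq:assoc} is smooth on $[0,2\pi]$ because it is built from $\phi\mapsto\phi$ and a primitive of $\omega$. The only subtlety is periodicity on $\dS^1$: by the definition of the flux $\Phi_\omega$ we have $V(2\pi) - V(0) = (\Phi_\omega + k)\,2\pi - \int_0^{2\pi}\omega(\xi)\dd\xi = 2\pi k \in 2\pi\dZ$, so $e^{\ii V}$ is a well-defined smooth function on $\dS^1$ (and similarly all its derivatives match at the junction). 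Hence $e^{\ii V}\in\cC^\infty(\dS^1)$ and likewise $e^{-\ii V} = \overline{e^{\ii V}}\in\cC^\infty(\dS^1)$.

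For part (i): if $u\in\cC^\infty_0(\Lay(\tt))$ then $\sfG_V u = e^{\ii V}u$ is a product of a function smooth in $\phi$ (hence smooth on $\Lay(\tt)$, constant in $r,z$) with a compactly supported smooth function, so $\sfG_V u\in\cC^\infty_0(\Lay(\tt))$ with the same support. The reverse inclusion follows the same way using $e^{-\ii V}$, since $\sfG_V^{-1} = \sfG_{-V}$ and $-V$ is the gauge function associated with the pair $(\Phi_\omega+k,\,-k)$ in the obvious sense; thus $\cC^\infty_0(\Lay(\tt)) = \sfG_V(\cC^\infty_0(\Lay(\tt)))$.

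For part (ii): this is a direct computation with the magnetic gradient. For $u\in\cC^\infty_0(\Lay(\tt))$, the product rule gives
\[
	(\ii\nabla - \bfA_\omega)(e^{\ii V}u)
	= e^{\ii V}\big(\ii\nabla u - (\nabla V)u - \bfA_\omega u\big)
	= e^{\ii V}\big(\ii\nabla - \bfA_{\Phi_\omega+k}\big)u,
\]
where I used $\nabla(e^{\ii V}) = \ii(\nabla V)e^{\ii V}$ and the gauge identity~\eqref{eqn:def_V_gauge}, namely $\bfA_{\Phi_\omega+k} - \bfA_\omega = \nabla V$ (here $\nabla V = V'(\phi)r^{-1}\bfe_\phi$ since $V$ depends only on $\phi$). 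Since $|e^{\ii V}| = 1$ pointwise, taking the $\Lcyl$-norm squared of both sides yields $Q_{\omega,\tt,0}[\sfG_V u] = Q_{\Phi_\omega+k,\tt,0}[u]$ on $\cC^\infty_0(\Lay(\tt))$, and this is~(ii) on that core.

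The main obstacle I anticipate is not in this smooth lemma but in the passage to general $\omega\in L^2(\dS^1)$ in Proposition~\ref{prop:unit_fq}: there $e^{\ii V}$ need no longer be smooth (only in $\cC([0,2\pi])$), so multiplication by $\sfG_V$ will not preserve $\cC^\infty_0(\Lay(\tt))$, and one must instead argue that $\sfG_V$ maps the form domain $\dom Q_{\omega,\tt}$ onto $\dom Q_{\Phi_\omega+k,\tt}$ by a density/closure argument — presumably approximating $\omega$ by smooth functions $\omega_j\to\omega$ in $L^2(\dS^1)$, invoking the present lemma for each $\omega_j$, and checking convergence of the corresponding norms $\|\cdot\|_{+1,\omega_j}$, which is the step requiring the additional lemmas alluded to after the statement of Proposition~\ref{prop:unit_fq}.
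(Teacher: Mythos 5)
Your proposal is correct and follows essentially the same route as the paper: part (i) rests on $e^{\ii V}\in\cC^\infty(\dS^1)$ (you supply the periodicity check $V(2\pi)-V(0)=2\pi k$ that the paper omits), and part (ii) is the same pointwise gauge computation $(\ii\nabla-\bfA_\omega)(e^{\ii V}u)=e^{\ii V}(\ii\nabla-\bfA_{\Phi_\omega+k})u$ followed by taking $\Lcyl$-norms.
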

\begin{lem}\label{lem:omega_approx} 
	Let $\omega\in L^2(\dS^1)$ and $(\omega_n)_{n\in\dN}$ be a sequence of real-valued
	functions $\cC^\infty(\dS^1)$ 
	such that $\|\omega_n - \omega\|_{L^2(\dS^1)}\arr 0$
	as $n\arr\infty$.
	Let $\Phi_\omega$, $V$, $\sfG_V$ be associated with $\omega$,$k$
	and $\Phi_{\omega_n}$, $V_n$, $\sfG_{V_n}$ 
	be associated with $\omega_n$, $k$ as in~\eqref{eq:assoc}.
	Then, as $n\arr \infty$, the following hold:
	\begin{myenum}
		\item $\|\omega_n - \omega\|_{L^1(\dS^1)}\arr 0$;
		\item $|\Phi_{\omega_n}-\Phi_\omega| \arr 0$;
		\item
		$V_n(\phi) \arr V(\phi)$ for any $\phi \in \dS^1$;
		\item $\sfG_{V_n} \arr \sfG_{V}$ in the strong sense;
		\item 
		$Q_{\omega_n,\tt}[\sfG_{V_n}u] - Q_{\omega,\tt}[\sfG_{V_n} u] \arr 0$
		and 
		$Q_{\Phi_{\omega_n} + k,\tt}[u] \arr Q_{\Phi_\omega + k,\tt}[u]$
		for any 
		$u \in \cC^\infty_0(\Lay(\tt))$.
	\end{myenum}
	\label{lem:appendixA}
\end{lem}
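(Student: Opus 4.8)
The plan is to prove the five items in order, each building on the earlier ones, and finally to combine them with Lemma~\ref{lem:omega_smooth}. Since $\dS^1$ has finite measure, the Cauchy--Schwarz inequality gives $\|\omega_n-\omega\|_{L^1(\dS^1)}\le\sqrt{2\pi}\,\|\omega_n-\omega\|_{L^2(\dS^1)}\arr 0$, which is (i). Item (ii) then follows from $2\pi\,|\Phi_{\omega_n}-\Phi_\omega|=\big|\int_0^{2\pi}(\omega_n-\omega)(\phi)\,\dd\phi\big|\le\|\omega_n-\omega\|_{L^1(\dS^1)}$. For (iii) I would write $V_n(\phi)-V(\phi)=(\Phi_{\omega_n}-\Phi_\omega)\phi-\int_0^\phi(\omega_n-\omega)(\xi)\,\dd\xi$ and bound the two summands by $2\pi\,|\Phi_{\omega_n}-\Phi_\omega|$ and $\|\omega_n-\omega\|_{L^1(\dS^1)}$, which vanish by (ii) and (i).

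For (iv), fix $\psi\in\Lcyl$. As $\sfG_{V_n}$ and $\sfG_V$ act by multiplication by the $\phi$-dependent unimodular factors $e^{\ii V_n(\phi)}$ and $e^{\ii V(\phi)}$, one has the pointwise identity $|(\sfG_{V_n}\psi)(r,z,\phi)-(\sfG_V\psi)(r,z,\phi)|^2=|e^{\ii V_n(\phi)}-e^{\ii V(\phi)}|^2|\psi(r,z,\phi)|^2$, which is dominated by $4|\psi|^2$ (integrable with respect to $r\,\dd r\,\dd z\,\dd\phi$) and, by (iii) together with $|e^{\ii a}-e^{\ii b}|\le|a-b|$, tends to $0$ for a.e. point. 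The dominated convergence theorem then yields $\|\sfG_{V_n}\psi-\sfG_V\psi\|_\Lcyl\arr 0$.

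For (v), the second convergence is elementary: for fixed $u\in\cC^\infty_0(\Lay(\tt))$ and a constant $c\in\dR$, the number $Q_{c,\tt}[u]$ is a quadratic polynomial in $c$ whose coefficients are integrals over the compact support of $u$ of fixed integrable functions (on this support $r$ stays in a compact subinterval of $(0,+\infty)$); hence $c\mapsto Q_{c,\tt}[u]$ is continuous and (ii) gives $Q_{\Phi_{\omega_n}+k,\tt}[u]\arr Q_{\Phi_\omega+k,\tt}[u]$. For the first convergence I would start from Lemma~\ref{lem:omega_smooth}(ii), applicable since each $\omega_n$ is smooth, to write $Q_{\omega_n,\tt}[\sfG_{V_n}u]=Q_{\Phi_{\omega_n}+k,\tt}[u]$. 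Then I would expand $Q_{\omega,\tt}[\sfG_{V_n}u]$ directly: as $e^{\ii V_n}$ depends only on $\phi$, the $\p_r$- and $\p_z$-parts coincide with those of $u$, while the angular term equals $|\ii\p_\phi u-(\Phi_{\omega_n}+k)u+(\omega_n-\omega)u|^2/r^2$, using $V_n'=\Phi_{\omega_n}+k-\omega_n$. Subtracting, and writing $\eta_n:=\ii\p_\phi u-(\Phi_{\omega_n}+k)u$, I obtain
\[
	Q_{\omega,\tt}[\sfG_{V_n}u]-Q_{\Phi_{\omega_n}+k,\tt}[u]
	=\int_{\Lay(\tt)}\frac{1}{r}\Big(2(\omega_n-\omega)\,\Re\big(\overline{\eta_n}\,u\big)+(\omega_n-\omega)^2|u|^2\Big)\,\dd r\,\dd z\,\dd\phi .
\]
On the compact support of $u$ the factor $r^{-1}$ is bounded and $|u|,|\eta_n|$ are bounded uniformly in $n$ (here one uses that $(\Phi_{\omega_n})$ is bounded, by (ii)); integrating first in $(r,z)$ and then in $\phi$ bounds the right-hand side by $C\|\omega_n-\omega\|_{L^1(\dS^1)}+C\|\omega_n-\omega\|_{L^2(\dS^1)}^2\arr 0$. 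Combining with $Q_{\omega_n,\tt}[\sfG_{V_n}u]=Q_{\Phi_{\omega_n}+k,\tt}[u]$ gives $Q_{\omega_n,\tt}[\sfG_{V_n}u]-Q_{\omega,\tt}[\sfG_{V_n}u]\arr 0$.

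I expect this last estimate to be the \emph{main obstacle}: since $\omega_n\arr\omega$ only in $L^2(\dS^1)$, the functions $\omega_n$ need not be bounded in $L^\infty$, so no naive pointwise bound on the integrand is available. The resolution is exactly the gauge cancellation, which makes $\omega_n$ enter the difference only linearly (in the cross term, controlled by the $L^1$-norm) and quadratically (in a term already controlled by the square of the $L^2$-norm), so that one must integrate out the $(r,z)$ variables before invoking the smallness in $\phi$.
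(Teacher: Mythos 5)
Your proposal is correct and follows essentially the same route as the paper: items (i)--(iii) via Cauchy--Schwarz and the $L^1$-bound, item (iv) via dominated convergence for the unimodular multipliers, and item (v) resting on the fact that $\bfA_{\omega_n}-\bfA_\omega=r^{-1}(\omega_n-\omega)\bfe_\phi$ is controlled by $\|\omega_n-\omega\|_{L^2(\dS^1)}$ on the support of a fixed $u\in\cC^\infty_0(\Lay(\tt))$, which stays away from the axis. The only cosmetic difference is in the first limit of (v), where you invoke Lemma~\ref{lem:omega_smooth}\,(ii) and expand the quadratic form difference explicitly, while the paper applies the reverse triangle inequality $|(Q_{\omega_n,\tt}[v])^{1/2}-(Q_{\omega,\tt}[v])^{1/2}|\le\|(\bfA_{\omega_n}-\bfA_\omega)v\|_\Lcyl$ to $v=\sfG_{V_n}u$; both hinge on the same estimate.
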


\begin{lem}\label{lem:inclusions} 
	Let $\omega\in L^2(\dS^1)$ and $k\in\dZ$. 
	Let $\Phi_\omega$, $V$, and $\sfG_V$ be associated with $\omega$ 
	and $k$ as in~\eqref{eq:assoc}.
	Then, the following 	statements hold:
	\begin{myenum}
		\item 
		$\sfG_V\big(\cC_0^\infty(\Lay(\tt))\big)\subset \dom Q_{\omega,\tt}$;
		\item 
		$Q_{\omega,\tt}[\sfG_V u] = Q_{\Phi_\omega + k,\tt}[u]$ 
		for all $u\in  \cC^\infty_0(\Lay(\tt))$.
	\end{myenum}	
\end{lem}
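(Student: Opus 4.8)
The plan is to transfer the smooth case, already settled in Lemma~\ref{lem:omega_smooth}, to a general $\omega\in L^2(\dS^1)$ by a density argument resting on the convergences collected in Lemma~\ref{lem:omega_approx}. Fix $u\in\cC_0^\infty(\Lay(\tt))$ and choose real-valued functions $\omega_n\in\cC^\infty(\dS^1)$ with $\|\omega_n-\omega\|_{L^2(\dS^1)}\to0$ (take, e.g., a mollification of $\omega$ and pass to real parts), and let $\Phi_{\omega_n}$, $V_n$, $\sfG_{V_n}$ be associated with $\omega_n$ and $k$ as in~\eqref{eq:assoc}. By Lemma~\ref{lem:omega_smooth}(i) the functions $\psi_n:=\sfG_{V_n}u=e^{\ii V_n}u$ lie in $\cC_0^\infty(\Lay(\tt))$, hence in $\dom Q_{\omega,\tt}$ (which contains $\cC_0^\infty(\Lay(\tt))$, since $Q_{\omega,\tt}=\overline{Q_{\omega,\tt,0}}$), and there $Q_{\omega,\tt}[\psi_n]=\|(\ii\nabla-\bfA_\omega)\psi_n\|_\Lcyl^2$.

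The key step is to show that $\{\psi_n\}$ is Cauchy for the form norm $\|\cdot\|_{+1,\omega}$. Convergence in $\Lcyl$ is immediate, since $\psi_n\to\sfG_V u$ in $\Lcyl$ by Lemma~\ref{lem:omega_approx}(iv). For the gradients, since $V_n$ is smooth the product rule together with the identity $\nabla V_n=\bfA_{\Phi_{\omega_n}+k}-\bfA_{\omega_n}$ (from~\eqref{eqn:def_V_gauge} applied to $\omega_n$) gives
\[
	(\ii\nabla-\bfA_\omega)\psi_n
	=
	e^{\ii V_n}\Big((\ii\nabla-\bfA_{\Phi_{\omega_n}+k})u+(\bfA_{\omega_n}-\bfA_\omega)u\Big).
\]
As the support of $u$ is a compact subset of $\Lay(\tt)$, hence bounded away from $\{r=0\}$, there is a constant $C$ depending only on $u$ with $\|(\bfA_{\omega_n}-\bfA_\omega)u\|_\Lcyl\le C\|\omega_n-\omega\|_{L^2(\dS^1)}\to0$ and $\|(\bfA_{\Phi_{\omega_n}+k}-\bfA_{\Phi_\omega+k})u\|_\Lcyl\le C|\Phi_{\omega_n}-\Phi_\omega|\to0$ by Lemma~\ref{lem:omega_approx}(ii). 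Combining this with the pointwise convergence $e^{\ii V_n}\to e^{\ii V}$ of Lemma~\ref{lem:omega_approx}(iii) and $|e^{\ii V_n}|\equiv1$, one obtains $(\ii\nabla-\bfA_\omega)\psi_n\to e^{\ii V}(\ii\nabla-\bfA_{\Phi_\omega+k})u$ in $\Lcyl$. Since $\psi_n-\psi_m\in\cC_0^\infty(\Lay(\tt))$, this sequence of gradients being Cauchy in $\Lcyl$ means precisely that $\{\psi_n\}$ is Cauchy in $\|\cdot\|_{+1,\omega}$.

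Because $Q_{\omega,\tt}$ is closed, $(\dom Q_{\omega,\tt},\|\cdot\|_{+1,\omega})$ is complete, so $\psi_n$ converges there; as $\psi_n\to\sfG_V u$ in $\Lcyl$, the limit is $\sfG_V u$, whence $\sfG_V u\in\dom Q_{\omega,\tt}$, which is assertion~(i). Since $Q_{\omega,\tt}$ is continuous for $\|\cdot\|_{+1,\omega}$, assertion~(ii) then follows from
\[
	Q_{\omega,\tt}[\sfG_V u]
	=\lim_{n\to\infty}Q_{\omega,\tt}[\psi_n]
	=\lim_{n\to\infty}\Big(Q_{\omega,\tt}[\psi_n]-Q_{\omega_n,\tt}[\psi_n]\Big)
	+\lim_{n\to\infty}Q_{\Phi_{\omega_n}+k,\tt}[u]
	=Q_{\Phi_\omega+k,\tt}[u],
\]
where Lemma~\ref{lem:omega_smooth}(ii) is used to rewrite $Q_{\omega_n,\tt}[\psi_n]=Q_{\Phi_{\omega_n}+k,\tt}[u]$ and both limits are evaluated by Lemma~\ref{lem:omega_approx}(v).

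I expect the only genuine subtlety to be exactly this upgrade from $\Lcyl$-convergence to form-norm convergence of $\{\psi_n\}$: abstract lower semicontinuity of the closed form $Q_{\omega,\tt}$ would already give $\sfG_V u\in\dom Q_{\omega,\tt}$ together with the inequality $Q_{\omega,\tt}[\sfG_V u]\le Q_{\Phi_\omega+k,\tt}[u]$, but to obtain the equality in~(ii) one really needs the strong convergence of the gradients, hence the explicit formula for $(\ii\nabla-\bfA_\omega)\psi_n$ above. A more direct alternative, avoiding the approximation altogether, would be to note that $V\in H^1(\dS^1)$ so that $e^{\ii V}$ is continuous and $2\pi$-periodic, that $\sfG_V u\in H^1_\cyl(\Lay(\tt))$ has compact support and hence lies in $\dom Q_{\omega,\tt}$ by mollification (this step needing a little care, since $\bfA_\omega$ is only locally $L^2$, not locally bounded), and that the product rule for weak derivatives yields the pointwise identity $(\ii\nabla-\bfA_\omega)(e^{\ii V}u)=e^{\ii V}(\ii\nabla-\bfA_{\Phi_\omega+k})u$.
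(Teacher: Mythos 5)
Your proof is correct and follows essentially the same route as the paper: approximate $\omega$ by smooth $\omega_n$, invoke Lemma~\ref{lem:omega_smooth} for the smooth case, show that $\sfG_{V_n}u$ is Cauchy in the form norm, conclude membership of $\sfG_V u$ in $\dom Q_{\omega,\tt}$ by closedness, and obtain (ii) by the same chain of limits through Lemma~\ref{lem:omega_approx}(v). The only (cosmetic) difference is that you establish the Cauchy property by exhibiting the explicit $\Lcyl$-limit of $(\ii\nabla-\bfA_\omega)\sfG_{V_n}u$ via the gauge identity, whereas the paper bounds the pairwise differences $Q_{\omega,\tt}[(\sfG_{V_{n+p}}-\sfG_{V_n})u]$ directly through the terms $J_{n,p}$ and $K_{n,p}$; your variant is slightly more streamlined but rests on the same estimates.
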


In the proof of Proposition~\ref{prop:unit_fq} we use 
Lemmas~\ref{lem:omega_smooth} and~\ref{lem:inclusions}.
The statement of Lemma~\ref{lem:omega_approx} is only needed later
in the proof of Lemma~\ref{lem:inclusions}.

\begin{proof}[Proof of Proposition \ref{prop:unit_fq}] 
	Let $u\in\dom Q_{\Phi_\omega+k,\tt}$ and let $(u_n)_{n\in\dN}$ 
	be a sequence of functions in $\cC_0^\infty(\Lay(\tt))$ such 
	that $\|u_n - u\|_{+1,\Phi_\omega + k}\arr 0$ as $n\arr \infty$. 
	The sequence $(u_n)_{n\in\dN}$ exists because $\cC_0^\infty(\Lay(\tt))$
	is a core for the form $Q_{\Phi_\omega+k,\tt}$.
	
	\noindent {\rm (i)} 
	Since the norm $\|\cdot\|_{+1,\Phi_\omega+k}$ is stronger
	than the norm $\|\cdot\|_\Lcyl$ we get 
	\begin{equation}\label{eq:un_convergence1}
		\|u_n - u\|_\Lcyl \arr 0,\qquad n\arr \infty.
	\end{equation}
	Let us consider the sequence $(\sfG_Vu_n)_{n\in\dN}$. 
	Due to~\eqref{eq:un_convergence1} we have
	\begin{equation}\label{eq:un_convergence2}
		\|\sfG_V u_n - \sfG_V u\|_\Lcyl \arr 0,\qquad n\arr \infty.
	\end{equation}
	By Lemma~\ref{lem:inclusions}\,(i),
	we know that $\sfG_Vu_n\in \dom Q_{\omega,\tt}$
	for all $n\in\dN$.
	Now, we prove that $(\sfG_Vu_n)_{n\in\dN}$ 
	is a Cauchy sequence in the norm $\|\cdot\|_{+1,\omega}$.
	Indeed, by Lemma~\ref{lem:inclusions}\,(ii) we have
	\begin{equation*}\label{eq:2norms}
	\begin{split}
			\|\sfG_{V}(u_{n+p} - u_n)\|_{+1,\omega}^2
			& =  
			Q_{\omega,\tt}[\sfG_V(u_{n+p} - u_n)] + 
			\|\sfG_V(u_{n+p} - u_n)\|_{L_\cyl^2(\Lay(\tt))}^2\\
			& =  
			Q_{\Phi_\omega+k,\tt}[u_{n+p} - u_n] + \|u_{n+p} - u_n\|_\Lcyl^2
			=\|u_{n+p} - u_n\|_{+1,\Phi_\omega+k}^2.	
	\end{split}
	\end{equation*}
	Thus, $(\sfG_V u_n)_{n\in\dN}$ is a Cauchy sequence 
	in the norm $\|\cdot\|_{+1,\omega}$
	and therefore it converges to a function $v\in\dom Q_{\omega,\tt}$ 
	in this norm. Since the norm $\|\cdot\|_{+1,\omega}$
	is stronger than $\|\cdot\|_\Lcyl$ we get 
	$\|\sfG_Vu_n - v\|_\Lcyl\arr 0$ as $n\arr \infty$.
	Taking~\eqref{eq:un_convergence2} into account
	we conclude $\sfG_V u = v \in \dom Q_{\omega,\tt}$, \ie
	we have proven that 
	$\sfG_V\big(\dom Q_{\Phi_\omega+k,\tt}\big) \subset \dom Q_{\omega,\tt}$.
	As a by-product we have strengthened~\eqref{eq:un_convergence2}
	up to
	\begin{equation}\label{eq:un_convergence3}
		\|\sfG_V u_n - \sfG_V u\|_{+1,\omega} \arr 0,\qquad n\arr\infty.
	\end{equation}
	Because the reverse inclusion
	$\sfG_V\big(\dom Q_{\Phi_\omega+k,\tt}\big) \supset \dom Q_{\omega,\tt}$
	can be proven in a similar way we omit this argument here.
	
	\noindent {\rm (ii)} 
	First, observe that
	\[
		\|u_n\|_{+1,\Phi_\omega+k} 
		\underset{n\rightarrow\infty}{\longrightarrow}\|u\|_{+1,\Phi_\omega+k}
		\quad\text{and}\quad 
		\|\sfG_V u_n\|_{+1,\omega} \underset{n\rightarrow\infty}
		{\longrightarrow}\|\sfG_V u\|_{+1,\omega},
	\]
	where the second limit is a particular 
	consequence of~\eqref{eq:un_convergence3} in the proof of~{\rm (i)}.
	Further, in view of the definition of the norms $\|\cdot\|_{+1,\omega}$ and
	$\|\cdot\|_{+1,\Phi_\omega+k}$, we obtain
	\begin{equation}
		Q_{\Phi_\omega+k,\tt}[u_n] 
		\underset{n\rightarrow\infty}{\longrightarrow}Q_{\Phi_\omega+k,\tt}[u]
		\quad\text{and}\quad 
		Q_{\omega,\tt}[\sfG_V u_n] 
		\underset{n\rightarrow\infty}{\longrightarrow}
		Q_{\omega,\tt}[\sfG_V u].
		\label{eqn:cvg_fq_n}
	\end{equation}
	Note that by Lemma~\ref{lem:inclusions}\,(ii) 
	we have $Q_{\omega,\tt}[\sfG_V u_n] = Q_{\Phi_\omega + k,\tt}[u_n]$ 
	for any $n\in\dN$. Thus, passing to the limit $n\arr\infty$ and 
	taking into account~\eqref{eqn:cvg_fq_n} we end up with
	\[
		Q_{\omega,\tt}[\sfG_V u] 
		= 
		\lim_{n\arr \infty} Q_{\omega,\tt}[\sfG_V u_n]
		=
		\lim_{n\arr \infty} Q_{\Phi_\omega + k,\tt}[ u_n]
		=
		Q_{\Phi_\omega + k,\tt}[u].
	\]
	Finally, the unitary equivalence of the operators $\sfH_{\omega,\tt}$
	and $\sfH_{\Phi_\omega + k,\tt}$ follows from the first
	representation theorem. The operator $\sfG_V$ plays the role
	of the corresponding transform which establishes
	unitary equivalence.
\end{proof}

Now, we deal with the proofs of Lemmas~\ref{lem:omega_smooth},
~\ref{lem:omega_approx}, and~\ref{lem:inclusions}. 

\begin{proof}[Proof of Lemma~\ref{lem:omega_smooth}]
	{\rm (i)} 
	The identity 
	$\cC^\infty_0(\Lay(\tt)) = \sfG_V\big(\cC^\infty_0(\Lay(\tt))\big)$ 
	is a straightforward consequence of $e^{\ii V(\cdot)} \in \mathcal{C}^\infty(\dS^1)$. 
	The details are omitted.
		
	\noindent {\rm (ii)} 
	For any $u\in \cC^\infty_0(\Lay(\tt))$ we get
	by direct computation
	\[
	\begin{split}
		Q_{\omega,\tt}[\sfG_V u] 
		& = 
		\|(\ii \nabla  - \bfA_\omega)e^{\ii V} u\|^2_\Lcyl\\
		& =
		\|e^{\ii V} (\ii \nabla  - \bfA_\omega - \nabla V)  u\|^2_\Lcyl\\
		& =
		\lbra\|\lbra(\ii \nabla  - \bfA_\omega - 
		r^{-1}\bfe_\phi V^\pp(\phi)\rbra)
		 u \rbra\|^2_\Lcyl\\
		& =
		\lbra\|\lbra(\ii \nabla  - \bfA_\omega - r^{-1}\bfe_\phi 
		(\Phi_\omega + k) + 
		\bfA_\omega\rbra)
		 u \rbra\|^2_\Lcyl\\
		& =
		\lbra\|\lbra(\ii \nabla  - \bfA_{\Phi_\omega + k}\rbra)
		 u \rbra\|^2_\Lcyl = Q_{\Phi_\omega + k,\tt}[u].\qedhere	
	\end{split}	
	\]
\end{proof}

\begin{proof}[Proof of Lemma \ref{lem:appendixA}]
	The claims of {\rm (i)} and {\rm (ii)} 
	are a direct consequence of the inclusion 
	$L^2(\dS^1)\subset L^1(\dS^1)$. 
	Indeed, thanks to the Cauchy-Schwarz inequality, we have
	\[
		|\Phi_{\omega_n} - \Phi_\omega| 
		\le 
		\|\omega_n - \omega\|_{L^1(\dS^1)} 	
		= 
		\int_0^{2\pi}|\omega_n(\xi) - \omega(\xi)|\dd\xi
		\le
		\sqrt{2\pi} \|\omega_n - \omega\|_{L^2(\dS^1)}
		\underset{n\rightarrow \infty}{\longrightarrow} 0.
	\]
	The claim of {\rm (iii)} follows from {\rm (i)} 
	and {\rm (ii)} as
	\[
		|V_n(\phi) - V(\phi)| 
		=
		\lbra |(\Phi_{\omega_n}-\Phi_\omega)\phi + 
		\int_0^\phi\big(\omega_n(\xi) - \omega(\xi)\big)\dd\xi\rbra|\\
		\leq 
		|\Phi_{\omega_n}-\Phi_\omega| \phi + 
		\|\omega_n - \omega\|_{L^1(\dS^1)} 
		\underset{n\rightarrow \infty}{\longrightarrow} 0.
	\]
	Using the identity $2i\sin(x) = e^{\ii x} - e^{-\ii x}$ we obtain
	for any $u\in \Lcyl$
	\begin{equation}
		\|\sfG_{V_n}u - \sfG_Vu\|_\Lcyl 
		= 
		\|(e^{\ii V_n} - e^{\ii V}) u\|_\Lcyl
		=
		2\|\sin\big((V-V_n)/2\big)u\|_\Lcyl.
	\label{eqn:do_cvg}
	\end{equation}
	Elementary properties 
	of the sine function give $|\sin\big((V-V_n)/2\big)u|^2\leq |u|^2$. 
	Thanks to {\rm (iii)} 
	we know that $\sin\big((V-V_n)/2\big) \arr 0$ as $n\arr \infty$
	(pointwise). 
	Consequently, passing to the limit in~\eqref{eqn:do_cvg}, we get
	the claim of~{\rm (iv)} by the Lebesgue dominated convergence theorem.
	Finally, for any $u \in \cC^\infty_0(\Lay(\tt))$ we get
	\[
		\big|(Q_{\omega_n,\tt}[u])^{1/2} -(Q_{\omega,\tt}[u])^{1/2}\big|
		\le 
		\|(\bfA_{\omega_n} - \bfA_\omega)u\|_\Lcyl 
		\le C\|\omega_n - \omega\|_{L^2(\dS^1)}\arr 0, 
	\]	
	where the constant $C > 0$ depends on $\|u\|_{L^\infty(\Lay(\tt))}$
	and $\supp u$ only. Hence, the second limit in~{\rm (v)} immediately follows. 
	The first limit in~{\rm (v)} is a consequence of the above bound 
	and of the fact that
	$\|\sfG_{V_n}u\|_{L^\infty(\Lay(\tt))}$ and $\supp (\sfG_{V_n}u)$
	are independent of $n$.
\end{proof}

\begin{proof}[Proof of Lemma~\ref{lem:inclusions}]
	{\rm (i)}
	By definition, $\dom Q_{\omega,\tt}$ 
	is the closure of $\cC_0^\infty(\Lay(\tt))$ 
	with respect to the norm $\|\cdot\|_{+1,\omega}$.
	Let $u\in\cC_0^\infty(\Lay(\tt))$ and 
	$(\omega_n)_{n\in\dN}$ be a sequence of real-valued functions 
	$\cC^\infty(\dS^1)$ such
	that $\|\omega_n - \omega\|_{L^2(\dS^1)}\arr 0$ as $n\arr \infty$.

	First, we prove that $\sfG_{V_n}u \in \cC_0^\infty(\Lay(\tt))$ is a Cauchy 
	sequence in the norm $\|\cdot\|_{+1,\omega}$.
	Due to Lemma~\ref{lem:omega_approx}\,(iv) we already know that 
	\begin{equation}\label{eq:conv0}
		\|\sfG_{V_n} u - \sfG_V u\|_\Lcyl \arr 0,\qquad n\arr\infty.
	\end{equation}
	Further, $Q_{\omega,\tt}[(\sfG_{V_{n+p}} - \sfG_{V_n})u]$ 
	can be bounded from above by
	\begin{equation}\label{??}
			Q_{\omega,\tt}[(\sfG_{V_{n+p}} - \sfG_{V_n})u]
			 = 
			\|(\ii\nabla - \bfA_\omega)(e^{\ii V_{n+p}} - e^{\ii V_n})u\|^2_\Lcyl	
			\le 2(J_{n,p} + K_{n,p}),
	\end{equation}
	where $J_{n,p}$ and $K_{n,p}$ are defined by
	\begin{equation}\label{eq:JK}
		J_{n,p}  := 
		\|(e^{\ii V_{n+p}} - e^{\ii V_n})(\ii\nabla - \bfA_\omega)u\|_\Lcyl^2,
		\qquad
		K_{n,p}  :=
		\|\big(\nabla(e^{\ii V_{n+p}} - e^{\ii V_n})\big)u\|_\Lcyl^2.
	\end{equation}		
	Because $(\ii\nabla - \bfA_\omega)u \in \cC^\infty_0(\Lay(\tt))$,
	Lemma~\ref{lem:omega_approx}\,(iv) implies that 
	$J_{n,p}\arr 0$ as $n,p\rightarrow\infty$. 
	Let us deal with the term $K_{n,p}$. 
	Computing the gradient taking into account the expression of $V_{n}$, 
	we get
	\begin{equation}\label{eq:grad}
	\begin{split}
		\nabla (e^{\ii V_{n+p}} - e^{\ii V_n}) 
		& = 
		\big[e^{\ii V_{n+p}}\Phi_{n+p} 
		- e^{\ii V_n}\Phi_n\big]
		\frac{\bfe_\phi}{r} 
		- 
		\big[e^{\ii V_{n+p}}\omega_{n+p}(\phi) - e^{\ii V_n}\omega_n(\phi)\big]	
		\frac{\bfe_\phi}{r}\\
		& = 
		\bfx_{n,p} + \bfy_{n,p},
	\end{split}	
	\end{equation}
	where, for all $q\in\mathbb{N}$, $\Phi_q := \Phi_{\omega_q} +k$ and the terms $\bfx_{n,p}$, $\bfy_{n,p}$
	on the right-hand side are defined by
	\[
	\begin{split}
		\bfx_{n,p}
		&:= 
		\big((e^{\ii V_{n+p}} - e^{\ii V_n})\Phi_{n+p} + 
		e^{\ii V_n} (\Phi_{n+p} - \Phi_n)\big)\frac{\bfe_\phi}{r},\\
		\bfy_{n,p} &:=
		\big((e^{\ii V_{n+p}} - e^{\ii V_n})\omega_{n+p}(\phi) + 
		e^{\ii V_n}(\omega_{n+p}(\phi) - \omega_n(\phi)\big)
		\frac{\bfe_\phi}{r}.
	\end{split}
	\]
	Note that $u\in\cC_0^\infty(\Lay(\tt))$ yields 
	$v := r^{-1}u\in\cC_0^\infty(\Lay(\tt))$.
	The norm of $\bfx_{n,p} u$ can be estimated as
	\begin{equation}
		\begin{split}
			\|\bfx_{n,p} u\|_\Lcyl 
			& \le 
			|\Phi_{n+p}| \cdot \|(\sfG_{V_{n+p}} - \sfG_{V_n})v\|_\Lcyl 
			+ 
			|\Phi_{n+p} - \Phi_n|\cdot\|v\|_\Lcyl.
		\end{split}
		\label{eqn:up_bond_K_1}
	\end{equation}
	Lemma~\ref{lem:omega_approx}\,(iv) implies
	\[
		\|(\sfG_{V_{n+p}} - \sfG_{V_n})v\|_\Lcyl
		\underset{n,p\rightarrow \infty}{\longrightarrow}0.
	\]
	By Lemma~\ref{lem:omega_approx}\,(ii) the sequence $|\Phi_{n+p}|$ 
	is bounded so that the first term on the right-hand side 
	of~\eqref{eqn:up_bond_K_1} tends to $0$ as $n,p\rightarrow\infty$.
	Again by Lemma~\ref{lem:omega_approx}\,(ii) 
	the sequence $\Phi_n$, being convergent,
	is a Cauchy sequence. 
	Consequently, the second term on the right-hand side 
	of~\eqref{eqn:up_bond_K_1} also tends to $0$ as $n,p\rightarrow\infty$.
	Hence, we have proved that
	\begin{equation}\label{eq:xnp_conv}
		\|\bfx_{n,p} u\|_\Lcyl \arr 0,\qquad n,p\arr \infty.
	\end{equation}
	For the norm of $\bfy_{n,p}u$ we get
	\begin{equation}\label{eqn:eq_upb_K_2}
		\begin{split}
			\|\bfy_{n,p} u\|_\Lcyl
			& \le 
			\|(\sfG_{V_{n+p}} - \sfG_{V_n})\omega_{n+p}v\|_\Lcyl 
			+ \|(\omega_{n+p}-\omega_n)v\|_\Lcyl\\
			& \le 
			\|(\sfG_{V_{n+p}} - \sfG_{V_n})(\omega_{n+p} -\omega)v\|_\Lcyl \\
			& \qquad\qquad + 
			\|(\sfG_{V_{n+p}} - \sfG_{V_n})\omega v\|_\Lcyl
			+ \|(\omega_{n+p}-\omega_n)v\|_\Lcyl.
		\end{split}
	\end{equation}
	Using that $\|\sfG_{V_{n+p}} - \sfG_{V_n}\|$ is bounded 
	and that $v \in\cC_0^\infty(\Lay(\tt))$ we get 
	that the first term
	on the right-hand side of~\eqref{eqn:eq_upb_K_2} 	
	satisfies
	\[
		\|(\sfG_{V_{n+p}} - \sfG_{V_n})(\omega_{n+p} -\omega)v\|_\Lcyl^2
		\leq 
		C \|\omega_{n+p} - \omega\|_{L^2(\dS^1)},\quad 
		\text{ for some } C>0.
	\]
	Consequently it goes to $0$ as $n,p\rightarrow\infty$. 
	The second term 
	$\|(\sfG_{V_{n+p}} - \sfG_{V_n})\omega v\|_\Lcyl$ 
	on the right-hand side of~\eqref{eqn:eq_upb_K_2} 
	tends to $0$ by Lemma~\ref{lem:omega_approx}\,(iv).
	Again employing that $v\in\cC_0^\infty(\Lay(\tt))$ 
	and that $\omega_n$ is convergent in the norm $\|\cdot\|_{L^2(\dS^1)}$
	we get that the last term $\|(\omega_{n+p}-\omega_n)v\|_\Lcyl$
	on the right-hand side of~\eqref{eqn:eq_upb_K_2} also tends 
	to zero as $n,p\rightarrow\infty$. 
	Thus, we have shown
	\begin{equation}\label{eq:ynp_conv}
		\|\bfy_{n,p}u\|_\Lcyl \arr 0, \qquad n,p\arr\infty.
	\end{equation}

	Finally, combining~\eqref{eq:JK},~\eqref{eq:grad},~\eqref{eq:xnp_conv}, 
	and~\eqref{eq:ynp_conv}, we get that 
	$K_{n,p} \arr 0$ as $n,p\rightarrow\infty$. 
	Thus, $\sfG_{V_n} u$ 
	is a Cauchy sequence in the norm $\|\cdot\|_{+1,\omega}$. 
	Hence, it converges to a function 
	$w\in\dom Q_{\omega,\tt}$ in this norm.
	In particular, $\|\sfG_{V_n} u - w\|_\Lcyl\arr 0$ as $n\arr\infty$.
	In view of \eqref{eq:conv0} we get $w = \sfG_V u\in\dom Q_{\omega,\tt}$.
	Thus, we obtain
	\[
		\|\sfG_{V_n} u - \sfG_V u\|_{+1,\omega}\arr 0,\qquad n\arr\infty.
	\]
	Finally, applying Lemma~\ref{lem:omega_smooth}\,(ii) and
	Lemma~\ref{lem:appendixA}\,(v) we end up with
	\[
		Q_{\omega,\tt}[\sfG_V u] 
		= 
		\lim_{n\arr\infty} Q_{\omega,\tt}[\sfG_{V_n} u] 
		= 
		\lim_{n\arr\infty} Q_{\omega_n,\tt}[\sfG_{V_n} u]		
		=
		\lim_{n\arr\infty} Q_{\Phi_{\omega_n} + k,\tt}[u]
		= 
		Q_{\Phi_\omega + k,\tt}[u].\qedhere
	\]
\end{proof}


\section{Description of the domain of \texorpdfstring{$\Fqf^{[m]}$}{Fqf}}
\label{app:B}

The aim of this appendix is to give a simple description of the domain of the quadratic forms $\frmq^{[m]}$ with $\omega \in (0,1/2]$ 
defined in~\eqref{eqn:fq_flat}. The main result
of this appendix reads as follows.
\begin{prop}\label{prop:desc_domfq} 
	Let $\omega \in (0,1/2]$. The domain of the form
	$\frmq^{[m]}$ defined in~\eqref{eqn:fq_flat}
	is given by
	\[
		\dom \frmq^{[m]} = H_0^1(\Gui(\tt)).
	\]
\end{prop}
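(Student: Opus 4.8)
The plan is to prove the two inclusions separately, the inclusion $\dom\frmq^{[m]}\subset H_0^1(\Gui(\tt))$ being the routine one and the reverse inclusion being the one that really uses the hypothesis $\omega\in(0,1/2]$. Recall from~\eqref{eqn:fq_flat} that $\dom\frmq^{[m]}=\sfU\big(\dom\fFrm^{[m]}\big)$ and from~\eqref{eq:forms} that $\dom\fFrm^{[m]}=\pi^{[m]}\big(\dom\Frm\big)$ where $\dom\Frm$ is the closure of $\cC_0^\infty(\Lay(\tt))$ in the magnetic form norm. For the easy inclusion, I would take $u\in\cC_0^\infty(\Lay(\tt))$, project onto the $m$-th angular mode and apply $\sfU$: the resulting function $\sfU\pi^{[m]}u$ lies in $\cC_0^\infty(\Gui(\tt))$, and a direct estimate shows $\|\sfU\pi^{[m]}u\|_{H^1(\Gui(\tt))}^2 \le C_m\big(\frmq^{[m]}[\sfU\pi^{[m]}u]+\|\sfU\pi^{[m]}u\|^2_{L^2(\Gui(\tt))}\big)$; the only point to be careful about is that the weight $\tfrac{(m-\omega)^2-1/4}{r^2}$ in $\frmq^{[m]}$ may be negative when $m=0$, so one absorbs the possibly-negative Hardy term using the classical one-dimensional Hardy inequality on each horizontal line $r\mapsto (\cdot)$ (valid because $\Gui(\tt)$ is bounded away from $r=0$ along $\p_1$, and precisely $(m-\omega)^2-1/4\ge -1/4$). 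Passing to the closure gives $\dom\frmq^{[m]}\subset H_0^1(\Gui(\tt))$, together with the fact that $\cC_0^\infty(\Gui(\tt))$ is a form core.

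For the reverse inclusion $H_0^1(\Gui(\tt))\subset\dom\frmq^{[m]}$, the strategy is to show that every $w\in\cC_0^\infty(\Gui(\tt))$ can be realized as $\sfU\pi^{[m]}u$ for some $u\in\dom\Frm$; by density this then gives the inclusion. The natural candidate is $u(r,z,\phi):=(\sfU^{-1}w)(r,z)\,v_m(\phi)=r^{-1/2}w(r,z)v_m(\phi)$. One has to check $u\in\dom\Frm$, i.e.\ that $u$ can be approximated in the magnetic form norm $\|\cdot\|_{+1,\omega}$ by functions in $\cC_0^\infty(\Lay(\tt))$. Since $w$ is compactly supported in the open set $\Gui(\tt)$, its support stays a positive distance from $\p_0\Gui(\tt)=\{r=0\}$, so $r^{-1/2}w$ is smooth and bounded with all derivatives, and multiplication by the smooth function $v_m$ keeps us in $\cC^\infty$; the compact support away from $\Gamma$ makes the singular potential $\bfA_\omega\sim\omega/r$ harmless on $\supp u$. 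Hence $u\in\cC_0^\infty(\Lay(\tt))\subset\dom\Frm$ and $\pi^{[m]}u=\sfU^{-1}w$, so $w=\sfU\pi^{[m]}u\in\dom\frmq^{[m]}$.

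The main obstacle — and the place where $\omega\neq 0$ is essential — is the claim that $\cC_0^\infty(\Gui(\tt))$ is dense in $H_0^1(\Gui(\tt))$ with respect to the norm $\big(\frmq^{[m]}[\cdot]+\|\cdot\|_{L^2}^2\big)^{1/2}$, equivalently that this form-norm is equivalent to the ordinary $H^1$-norm on $H_0^1(\Gui(\tt))$. For $m\neq 0$ the weight $(m-\omega)^2-1/4\ge 3/4-\sqrt{2}>0$... more carefully $(m-\omega)^2\ge 1/4$ so the term is nonnegative and the form-norm dominates the $H^1$-norm trivially, while the reverse bound needs a Hardy inequality near $\p_0\Gui(\tt)$; for $m=0$ the coefficient is $\omega^2-1/4\in[-1/4,0)$ when $\omega\in(0,1/2]$, and the key point is that the one-dimensional Hardy inequality $\int_a^b|f'|^2\,dr\ge\tfrac14\int_a^b\tfrac{|f(r)|^2}{(r-a)^2}\,dr$ on $(a,b)$ with $a=z\tan\tt$ (applied on each segment $\{z=\mathrm{const}\}\cap\Gui(\tt)$ to $f=w(\cdot,z)$, which vanishes at the endpoint on $\p_0\Gui(\tt)$ only when that endpoint is $r=0$... actually $\p_0$ is where $r=0$, and there $w|_{\p_0}=0$ for $w\in H_0^1$) allows the negative term $-\tfrac14\int r^{-2}|w|^2$ to be absorbed into $\|\p_r w\|^2$ with a strictly positive leftover constant, precisely because $\omega^2>0$ gives us the strict inequality $\omega^2-1/4>-1/4$. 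This is exactly what fails at $\omega=0$, where the critical constant $-1/4$ is reached and $H_0^1$ is no longer the form domain, consistent with the remark after~\eqref{eq:domain_incl}. I would carry out this absorption estimate carefully, deduce the equivalence of norms on $\cC_0^\infty(\Gui(\tt))$, and then conclude by density.
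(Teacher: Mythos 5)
Your proposal is correct and follows essentially the same route as the paper: the identification of $\dom\frmq^{[m]}$ with $H_0^1(\Gui(\tt))$ rests on (a) the fact that $\cC_0^\infty(\Gui(\tt))$ is a form core, obtained by projecting/lifting between $\Lay(\tt)$ and $\Gui(\tt)$ exactly as you describe, and (b) the equivalence of the form norm with the $H^1$-norm on $\cC_0^\infty(\Gui(\tt))$, which is precisely your Hardy absorption argument with the leftover constant $4\omega^2>0$ for $m=0$ (and the upper bound via Hardy for $m\neq0$). The only blemishes are self-corrected slips (the $3/4-\sqrt{2}$ miscalculation and the choice of base point $a=z\tan\tt$ rather than $a=0$ in the one-dimensional Hardy inequality, which is the cleaner and uniformly valid choice since every slice of $\supp u$ lies in $(0,\infty)$ and $u$ vanishes at its left endpoint); neither affects the validity of the argument.
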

Before proving Proposition~\ref{prop:desc_domfq} 
we introduce the norm $\|\cdot\|_{+1,m}$ associated 
to the quadratic form $\frmq^{[m]}$ as
\begin{equation}\label{eqn:fq_flat_norm}
	\|u\|_{+1,m}^2 := \frmq^{[m]}[u] + \|u\|_{L^2(\Gui(\tt))}^2,
	\qquad u\in \dom \frmq^{[m]}.
\end{equation}
The proof of Proposition~\ref{prop:desc_domfq} goes along the following lines. First, we remark that $\cC_0^\infty(\Gui(\tt))$ is a form core for 
$\frmq^{[m]}$ and, second, we prove that the norms 
$\|\cdot\|_{H^1(\Gui(\tt))}$ and $\|\cdot\|_{+1,m}$ are topologically equivalent on $\cC_0^\infty(\Gui(\tt))$. These properties are stated in the following two lemmas whose proofs are postponed to the end of this appendix.
\begin{lem}\label{lem:B1}
	Let $\omega\in (0,1/2]$. $\cC_0^\infty(\Gui(\tt))$ is a core for the form $\frmq^{[m]}$ defined in~\eqref{eqn:fq_flat}.
\end{lem}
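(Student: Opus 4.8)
We must show that $\cC_0^\infty(\Gui(\tt))$ is a form core for $\frmq^{[m]}$ defined in~\eqref{eqn:fq_flat}, i.e.\ that $\cC_0^\infty(\Gui(\tt))$ is dense in $\dom\frmq^{[m]}$ with respect to the norm $\|\cdot\|_{+1,m}$ introduced in~\eqref{eqn:fq_flat_norm}.

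The plan is to trace the definition of $\dom\frmq^{[m]}$ back through the unitary maps that produced it and exhibit an explicit dense family. Recall from~\eqref{eqn:fq_flat} that $\dom\frmq^{[m]} = \sfU(\dom\fFrm^{[m]})$ and from~\eqref{eq:forms} that $\dom\fFrm^{[m]} = \pi^{[m]}(\dom\Frm)$, while $\dom\Frm = \dom Q_{\omega,\tt}$ is by construction~\eqref{eq:Q_omega_0}--\eqref{eq:Q_omega} the closure of $\cC_0^\infty(\Lay(\tt))$ in the graph norm of $Q_{\omega,\tt}$. So the first step is to start from a function $f\in\cC_0^\infty(\Lay(\tt))$ and follow it forward: its fibre component $\pi^{[m]}f$ lies in $\dom\fFrm^{[m]}$, and after applying $\sfU$ we obtain $\sfU\pi^{[m]}f\in\dom\frmq^{[m]}$. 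Since $f$ is smooth and compactly supported in $\Lay(\tt)$, the map $\phi\mapsto f(\cdot,\cdot,\phi)$ is smooth with values in $\cC_0^\infty(\Gui(\tt))$ with a fixed compact support, so the integral $\pi^{[m]}f(r,z)=\langle f(r,z,\cdot),v_m\rangle_{L^2(\dS^1)}$ is again in $\cC_0^\infty(\Gui(\tt))$; multiplying by $\sqrt r$ (which is smooth and bounded away from zero on that compact support) keeps us in $\cC_0^\infty(\Gui(\tt))$. Thus the forward image of $\cC_0^\infty(\Lay(\tt))$ under $\sfU\circ\pi^{[m]}$ already lands inside $\cC_0^\infty(\Gui(\tt))$, so it suffices to show this image is dense in $\dom\frmq^{[m]}$.

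The second step is the density itself. Take an arbitrary $u\in\dom\frmq^{[m]}$ and write $u=\sfU\pi^{[m]}g$ with $g\in\dom Q_{\omega,\tt}$. Choose $g_n\in\cC_0^\infty(\Lay(\tt))$ with $g_n\to g$ in the $\|\cdot\|_{+1,\omega}$-norm. The key point is that both operations $\pi^{[m]}$ and $\sfU$ are norm-contracting (indeed norm-compatible) in the relevant graph norms: $\pi^{[m]}$ is, up to the orthogonal decomposition~\eqref{eq:orthogonal_decomposition}, the projection onto the $m$-th fibre, so $\fFrm^{[m]}[\pi^{[m]}g_n-\pi^{[m]}g]+\|\pi^{[m]}g_n-\pi^{[m]}g\|^2\le Q_{\omega,\tt}[g_n-g]+\|g_n-g\|^2\to 0$; and $\sfU$ is a unitary equivalence between $\fFrm^{[m]}$ and $\frmq^{[m]}$ by~\eqref{eqn:fq_flat}, hence $\|\sfU\pi^{[m]}g_n-u\|_{+1,m}\to 0$. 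Since each $\sfU\pi^{[m]}g_n\in\cC_0^\infty(\Gui(\tt))$ by Step~1, this exhibits $u$ as a $\|\cdot\|_{+1,m}$-limit of functions in $\cC_0^\infty(\Gui(\tt))$, which is exactly the assertion.

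The only delicate point, and the one I would treat most carefully, is the behaviour near the part $\p_0\Gui(\tt)$ of the boundary where $r=0$: one must be sure that $\pi^{[m]}f$ actually vanishes near $r=0$ for $f\in\cC_0^\infty(\Lay(\tt))$. This holds because $\Lay(\tt)=\Gui(\tt)\times\dS^1$ and $\Gui(\tt)$ is an \emph{open} subset of $\dR_+^2$, so the compact support of $f$ stays at a positive distance from $\{r=0\}$; this is precisely where the hypothesis $\omega\in(0,1/2]$ is implicitly used — it is what makes $\dom\frmq^{[m]}=H_0^1(\Gui(\tt))$ the ``right'' domain, as emphasised after~\eqref{eq:domain_incl}, the case $\omega=0$ being excluded. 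With that observation the two steps above combine to give the claim, and I would present the argument in essentially that order: (i) forward image of $\cC_0^\infty(\Lay(\tt))$ lies in $\cC_0^\infty(\Gui(\tt))$, (ii) continuity of $\sfU\circ\pi^{[m]}$ in graph norms transports the core property from $Q_{\omega,\tt}$ to $\frmq^{[m]}$.
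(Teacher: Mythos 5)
Your proposal is correct and follows essentially the same route as the paper: starting from $u\in\dom\frmq^{[m]}$ one approximates its lift $v=(2\pi)^{-1/2}r^{-1/2}u\,e^{\ii m\phi}\in\dom Q_{\omega,\tt}$ by functions in $\cC_0^\infty(\Lay(\tt))$, uses the orthogonality of the fibre decomposition to see that the projection onto the $m$-th mode is contractive in the graph norms, and pushes forward by $\sfU$, noting that the image of a $\cC_0^\infty(\Lay(\tt))$ function lies in $\cC_0^\infty(\Gui(\tt))$. (One small aside: the hypothesis $\omega\in(0,1/2]$ is not what keeps the supports away from $r=0$ --- that is automatic for compactly supported functions on the open set $\Lay(\tt)$ --- it is only needed in Lemma~\ref{lem:B2} to identify the closure with $H_0^1(\Gui(\tt))$.)
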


\begin{lem}\label{lem:B2}
	Let $\tt\in (0,\pi/2)$, $\omega\in (0,1/2]$, and $m\in \dZ$. 
	Then there exist $C_j = C_j(\omega,\tt,m) >0$, 
	$j=1,2$, such that 
	\[
		C_1\|u\|_{H^1(\Gui(\tt))} \leq 
		\|u\|_{+1,m} \leq 
		C_2 \|u\|_{H^1(\Gui(\tt))},\qquad \forall u\in \cC_0^\infty(\Gui(\tt)).
	\]
\end{lem}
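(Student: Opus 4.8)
The goal is to show that on $\cC_0^\infty(\Gui(\tt))$ the form norm $\|\cdot\|_{+1,m}$ and the Sobolev norm $\|\cdot\|_{H^1(\Gui(\tt))}$ are comparable. Writing out the definitions, this amounts to controlling the weighted term $\int_{\Gui(\tt)} r^{-2}|u|^2\,\dd r\dd z$ from above and below by $\|u\|_{H^1(\Gui(\tt))}^2$, with constants depending on $\omega,\tt,m$; the key point is that $(m-\omega)^2-1/4 \ge \omega^2 - 1/4$ only matters for $m=0$, where the coefficient $\omega^2-1/4$ is strictly negative, while for $m\ne 0$ it is non-negative and the upper bound is the only delicate direction. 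First I would split $\Gui(\tt)$ into the region near the axis $\{r<1\}$ and the region $\{r\ge 1\}$; on the latter $r^{-2}\le 1$, so that part is trivially bounded by $\|u\|_{L^2}^2$, and similarly from below $r^{-2}\ge c$ on any bounded piece is false, so the genuine work is all at small $r$.

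\textbf{The lower bound.} For $C_1\|u\|_{H^1}\le\|u\|_{+1,m}$: when $m\ne 0$ the weight coefficient $(m-\omega)^2-1/4\ge 0$, so $\frmq^{[m]}[u]\ge\|\nabla u\|_{L^2}^2$ directly and the lower bound is immediate with $C_1=1$. When $m=0$ the coefficient $\omega^2-1/4<0$, and here I would invoke the one-dimensional Hardy inequality in the $r$-variable: since $u(\cdot,z)\in\cC_0^\infty$ vanishes at $r=0$ (functions in $\cC_0^\infty(\Gui(\tt))$ have support away from $\p_0\Gui(\tt)$, hence away from $r=0$), one has $\int_0^\infty r^{-2}|u|^2\,\dd r\le 4\int_0^\infty|\p_r u|^2\,\dd r$. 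Therefore $\frmq[u]\ge\int(|\p_r u|^2+|\p_z u|^2)\,\dd r\dd z+(\omega^2-1/4)\cdot 4\int|\p_r u|^2\,\dd r\dd z = (4\omega^2)\int|\p_r u|^2 + \int|\p_z u|^2$, which since $\omega>0$ is a strictly positive multiple of $\|\nabla u\|_{L^2}^2$; this gives $C_1=C_1(\omega)>0$. (If $\omega=1/2$ the coefficient vanishes and there is nothing to do.)

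\textbf{The upper bound.} For $\|u\|_{+1,m}\le C_2\|u\|_{H^1}$ one needs $\int r^{-2}|u|^2\le C\|u\|_{H^1}^2$. Again on $\{r\ge 1\}$ this is trivial, so fix attention on $\{r<1\}\cap\Gui(\tt)$; crucially, in this region the meridian domain, because of the corner at the origin described by $\max(0,z\tan\tt)<r$, looks near the axis like a thin wedge whose thickness in $r$ shrinks — but more to the point, for $(r,z)\in\Gui(\tt)$ with $r$ small, the interval of admissible $r$ at fixed $z$ still extends up to distance comparable to $1$, so a function supported there, vanishing at the inner boundary $r=\max(0,z\tan\tt)$, obeys a one-dimensional Hardy inequality in $r$ about that inner boundary point. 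This is where I expect the main obstacle: one must check that the lower boundary $\{r=z\tan\tt,\ z>0\}$ together with $\{r=0,\ -\pi/\sin\tt<z<0\}$ — call it the inner boundary — is uniformly "Hardy-admissible," i.e. that for each fixed $z$ the left endpoint of the $r$-fiber is where $u$ vanishes and Hardy gives $\int_{\text{fiber}} r^{-2}|u|^2\,\dd r \le C\int_{\text{fiber}}|\p_r u|^2\,\dd r$ with $C$ independent of $z$. For $z\le 0$ the fiber starts at $r=0$ and this is the classical Hardy inequality with constant $4$; for $z>0$ the fiber starts at $r=z\tan\tt>0$, and there $r^{-2}\le (z\tan\tt)^{-2}$ but we also need a uniform bound — here one uses instead that $r$ ranges over $(z\tan\tt,\ z\tan\tt+\pi/\cos\tt)$, an interval of fixed length $\pi/\cos\tt$, and applies the Hardy/Poincaré inequality on that interval to get $\int r^{-2}|u|^2 \le (z\tan\tt)^{-2}\int|u|^2$ only when $z$ is bounded below, while for $z$ small the weight $r^{-2}$ is bounded by the Hardy constant times $|\p_r u|^2$ as before. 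Patching these two regimes with a partition of unity in $z$ near $z=0$ and integrating over $z$ yields $\int_{\{r<1\}} r^{-2}|u|^2\,\dd r\dd z\le C(\tt)\|\nabla u\|_{L^2}^2$, and combining with the trivial estimate on $\{r\ge1\}$ and absorbing the $m$-dependent coefficient $(m-\omega)^2-1/4$ gives $C_2=C_2(\omega,\tt,m)$. The argument for Lemma~\ref{lem:B1} is the standard one: truncate and mollify an element of the form domain, using that $\cC_0^\infty(\Gui(\tt))$ is dense in $H_0^1(\Gui(\tt))$ and that, by Lemma~\ref{lem:B2}, $H^1$-convergence of a sequence in $\cC_0^\infty(\Gui(\tt))$ forces convergence in $\|\cdot\|_{+1,m}$. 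Proposition~\ref{prop:desc_domfq} then follows by taking closures: $\dom\frmq^{[m]} = \overline{\cC_0^\infty(\Gui(\tt))}^{\|\cdot\|_{+1,m}} = \overline{\cC_0^\infty(\Gui(\tt))}^{\|\cdot\|_{H^1}} = H_0^1(\Gui(\tt))$.
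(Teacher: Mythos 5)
Your argument is correct and rests on the same two ingredients as the paper's proof: the identity $\|u\|_{+1,m}^2 = \|u\|_{H^1(\Gui(\tt))}^2 + \big[(m-\omega)^2-\tfrac14\big]\int_{\Gui(\tt)} r^{-2}|u|^2\,\dd r\dd z$ together with the one-dimensional Hardy inequality in the radial variable, the sign of the coefficient deciding which direction actually needs Hardy. The one place where you diverge is the upper bound, and there your splitting into $\{r<1\}$ and $\{r\ge1\}$, the case analysis between fibers with $z\le 0$ and $z>0$, and the partition of unity in $z$ are all solving a problem that is not there: since $u\in\cC_0^\infty(\Gui(\tt))$, every radial fiber $r\mapsto u(r,z)$ extended by zero belongs to $\cC_0^\infty(0,+\infty)$, and the classical Hardy inequality $\int_0^\infty r^{-2}|f|^2\,\dd r\le 4\int_0^\infty|f'|^2\,\dd r$ taken about the point $r=0$ applies to it uniformly in $z$, regardless of whether the left endpoint of the fiber is at $r=0$ or at $r=z\tan\tt>0$. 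Integrating over $z$ yields $\int_{\Gui(\tt)} r^{-2}|u|^2\,\dd r\dd z\le 4\|\p_r u\|_{L^2(\Gui(\tt))}^2$ in one stroke --- this is exactly the paper's inequality~\eqref{eqn:form_dom_hardy}, and it serves the $m=0$ lower bound and the $m\ne 0$ upper bound simultaneously.
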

We now have all the tools to prove Proposition~\ref{prop:desc_domfq}.

\begin{proof}[Proof of Proposition~\ref{prop:desc_domfq}] 
	Combining Lemmas~\ref{lem:B1},~\ref{lem:B2} and~\cite[Thm. VI 1.21]{Kato}
	we obtain
	\[
		\dom\frmq^{[m]} 
		= 
		\ov{\cC^\infty_0(\Gui(\tt))}^{\|\cdot\|_{+1,m}}
		= 
		\ov{\cC^\infty_0(\Gui(\tt))}^{\|\cdot\|_{H^1(\Gui(\tt))}}
		= H^1_0(\Gui(\tt)).\qedhere
	\]
\end{proof}
Finally, we conclude this appendix 
by the proofs of Lemmas~\ref{lem:B1} and~\ref{lem:B2}.
\begin{proof}[Proof of Lemma~\ref{lem:B1}] 
	Let the projection $\pi^{[m]}$ 
	be defined as in~\eqref{eq:projector}. Let us introduce the associated
	orthogonal projector $\Pi^{[m]}$ in $\Lcyl$ by
	\[
		\Pi^{[m]} u := v_m(\phi)(\pi^{[m]}u)(r,z)
	\]
	with $v_m$ as in~\eqref{eq:vm}.
	For any $v\in \dom Q_{\omega,\tt}$ we have
	\begin{subequations}
	\begin{align}
			\|v\|_{L_\cyl^2(\Lay(\tt))}^2 &= \|\Pi^{[m]} v\|_\Lcyl^2 
			+ \|(\sfI - \Pi^{[m]})v\|_\Lcyl^2,\\
			\Frm[v] & =  Q_{\omega,\tt}[\Pi^{[m]} v] 
			+
			\Frm[(\sfI-\Pi^{[m]}) v].
		\label{eqn:maj_proj}
	\end{align}
	\end{subequations}
	Let $u\in\dom \frmq^{[m]}$ be fixed.
	Thanks to~\eqref{eq:orthogonal_decomposition} 
	and~\eqref{eqn:fq_flat}, we know that 
	$v = (2\pi)^{-1/2}r^{-1/2} u e^{\ii m\phi} \in \dom Q_{\omega,\tt}$.
	Consequently, there exists $v_n\in\cC_0^\infty(\Lay(\tt))$ such that
	\[
		\Frm[v_n - v] + \|v_n - v\|_\Lcyl^2 \arr0,
		\qquad n\arr\infty.
	\]
	By~\eqref{eqn:maj_proj} and using the non-negativity of $Q_{\omega,\tt}$
	we obtain
	\[
		\Frm[\Pi^{[m]}(v_n - v)] + 
		\|\Pi^{[m]}(v_n - v)\|_\Lcyl^2
		\longrightarrow 0,\qquad n\rightarrow\infty.
	\]
	Letting 
	$u_n(r,z) =\sqrt{r}(\pi^{[m]} v_n)(r,z)$, 
	the last equation rewrites
	\[
		\|u_n-u\|_{+1,m}^2 = \frmq^{[m]}[u_n - u] + 
		\|u_n - u\|_{L^2(\Gui(\tt))}^2 	
		\arr 0,\qquad n\arr\infty.
	\]
	Since $v_n\in \cC_0^\infty(\Lay(\tt))$, we get that 
	$u_n\in\cC_0^\infty(\Gui(\tt))$	which concludes the proof.
\end{proof}

\begin{proof}[Proof of Lemma~\ref{lem:B2}]
	Let $u\in\cC_0^\infty(\Gui(\tt))$ be fixed. 
	The claim of the lemma is a consequence of the non-negativity 
	of $\frq_{0,\tt}[u]$
	\begin{equation}\label{eqn:form_dom_hardy}
		\frq_{0,\tt}[u] 
		= 
		\int_{\Gui(\tt)}|\p_r u|^2 + |\p_z u|^2 - \frac{1}{4r^2}|u|^2 
		\dd r\dd z \geq 0.
	\end{equation}
	The inequality~\eqref{eqn:form_dom_hardy}
	can be easily derived from Hardy inequality
	in the form as stated in~\cite[\S VI.4, eq. 4.6]{Kato}.
	Further, we remark that
	\begin{equation}\label{eqn:eq_norm_proof}
		\|u\|_{+1,m}^2 = 
		\frmq^{[m]}[u] + \|u\|_{L^2(\Gui(\tt))}^2
		= 
		\|u\|_{H^1(\Gui(\tt))}^2 +
		\big[(m-\omega)^2- 1/4\big]\int_{\Gui(\tt)}\frac{|u|^2}{r^2}\dd r\dd z
	\end{equation}
	Now, we distinguish the special case $m=0$ from $m\neq0$.

	\noindent\underline{\boldmath{$m=0$}.}
	In this case,~\eqref{eqn:eq_norm_proof} simplifies as
	\begin{equation}
		\|u\|_{+1,0}^2 = 
		\|u\|_{H^1(\Gui(\tt))}^2 - [1/4-\omega^2]\int_{\Gui(\tt)}\frac{|u|^2}	
		{r^2}\dd r\dd z
		\label{eqn:up_low_m0}
	\end{equation}
	Since the second term on the right-hand side of~\eqref{eqn:up_low_m0} 
	is non-positive, we immediately get the upper bound
	\[
		\|u\|_{+1,0} \leq \|u\|_{H^1(\Gui(\tt))}.
	\]
	To obtain the lower bound, we combine \eqref{eqn:up_low_m0} 
	with inequality~\eqref{eqn:form_dom_hardy}
	\[
	\begin{split}
		\|u\|_{+1,0}^2 &= \|u\|_{H^1(\Gui(\tt))}^2 -
		\Big(\frac14 - \omega^2\Big)
		\int_{\Gui(\tt)}\frac{|u|^2}{r^2}\dd r\dd z\\
		&\geq 
		\|u\|_{H^1(\Gui(\tt))}^2 - 
		(1-4\omega^2)
		\big(\|\p_r u\|_{L^2(\Gui(\tt))}^2 +\|\p_z u\|_{L^2(\Gui(\tt))}^2\big)
		\geq 
		4\omega^2 \|u\|_{H^1(\Gui(\tt))}^2.
	\end{split}
	\]
	\noindent\underline{\boldmath{$m\neq 0$}.}
	In this case the second term on the right-hand side of~\eqref{eqn:eq_norm_proof} 
	is non-negative and we get the lower bound
	\[
		\|u\|_{+1,m} \geq \|u\|_{H^1(\Gui(\tt))}.
	\]
	To get an upper bound we combine~\eqref{eqn:eq_norm_proof} 
	with~\eqref{eqn:form_dom_hardy}
	\[
	\begin{split}
		\|u\|_{+1,m}^2	
		&	= 
		\|u\|_{H^1(\Gui(\tt))}^2 
		+
		\big [(m-\omega)^2 - 1/4\big]
		 \int_{\Gui(\tt)}\frac{|u|^2}{r^2}\dd r\dd z\\
		&\leq 
		\|u\|_{H^1(\Gui(\tt))}^2 + \big[4(m-\omega)^2 - 1\big]\big(
		\|\p_r u\|_{L^2(\Gui(\tt))}^2 + \|\p_z u\|_{L^2(\Gui(\tt))}^2\big)\\
		&\leq 4(m-\omega)^2 \|u\|_{H^1(\Gui(\tt))}^2.\qedhere
	\end{split}
	\]
	%
\end{proof}

\subsection*{Acknowledgements}
D.~K. and V.~L. are supported by the project RVO61389005
and by the Czech Science Foundation (GA\v{C}R) within the project 14-06818S. 
T.~O.-B. is supported by the Basque Government through the BERC 2014-2017 program and by Spanish Ministry of Economy and Competitiveness MINECO: BCAM Severo Ochoa excellence accreditation SEV-2013-0323. He is grateful for the stimulating research stay and the hospitality of the Nuclear Physics Institute of Czech Republic in January 2016 where part of this paper was written.

\bibliographystyle{alpha}

\newcommand{\etalchar}[1]{$^{#1}$}

\end{document}